 \pgfplotsset{width=10cm,compat=1.9}
\numberwithin{equation}{section}
\theoremstyle{plain}
        \newtheorem{theorem}{Theorem}[section]
        \newtheorem{proposition}[theorem]{Proposition}
        \newtheorem{lemma}[theorem]{Lemma}
        \newtheorem{corollary}[theorem]{Corollary}
        \newtheorem{remark}[theorem]{Remark}  
        \newtheorem*{claim*}{Claim}  
\newtheorem*{theorem*}{Theorem}
\newtheorem*{definition*}{Definition}
\newtheorem*{proposition*}{Proposition}
\let\oldmarginpar\marginpar
\renewcommand\marginpar[1]{\-\oldmarginpar[\raggedleft\footnotesize #1]
{\raggedright\footnotesize #1}}
\def\dt'{\,{\rm d}t'\,}
\renewcommand\div{\mathrm{div\,}}
\renewcommand \Re {\text{Re}}
\newcommand{\R}{\mathbb{R}}
\newcommand{\N}{\mathbb{N}}
\renewcommand{\d}{\partial} 
\newcommand{\dtau}{\partial_{\overline{\tau}}}
\newcommand{\taub}{\overline{\tau}}
\newcommand{\brl}{\Bigl(}
\newcommand{\brr}{\Bigr)}
\newcommand{\ddiv}{\mathrm{div}_2\,}
\newcommand{\rttoo}{P_1}
\newcommand{\rto}{P_2}
\newcommand{\Dj}{D^{(j)}}
\renewcommand\dt{\mathrm{dt}}
\title {Global-in-time well-posedness for the  two-dimensional   incompressible Navier-Stokes equations with freely transported  viscosity coefficient}
\author {Xian Liao and Rebekka Zimmermann}
\begin{document}

\date{}

\maketitle
\begin{abstract}
We establish the global-in-time well-posedness of the two-dimensional incompressible Navier-Stokes equations with   freely transported viscosity coefficient, under a scaling-invariant smallness condition on the initial data. The viscosity coefficient is allowed to exhibit large jumps across $W^{2,2+\epsilon}$-interfaces.
%, whose regularity is preserved by the Navier-Stokes flow.

The viscous stress tensor $\mu Su$ is carefully analyzed. Specifically, $(R^\perp\otimes R):(\mu Su)$, where $R$ denotes the Riesz operator, defines a ``good  unknown'' that satisfies time-weighted $H^1$-energy estimates. Combined with tangential regularity, this leads to the $W^{1,2+\epsilon}$-regularity of another ``good  unknown'', $(\bar{\tau}\otimes n):(\mu Su)$, where $\bar{\tau}$ and $n$ denote the unit tangential and normal vectors of the interfaces, respectively. These results collectively provide a Lipschitz estimate for the velocity field, even in the presence of significant discontinuities in $\mu$.

As applications, we investigate the well-posedness of the    Boussinesq equations without heat conduction and the density-dependent incompressible Navier-Stokes equations in two spatial dimensions.
\end{abstract}

\noindent MSC 2020: 35Q30, 76D03\\
Keywords: Incompressible Navier-Stokes equations, variable viscosity coefficient, free interface problem, tangential regularity, Boussinesq equations, density-dependent incompressible  Navier-Stokes equations
\tableofcontents

\section{Introduction}\label{sect:intro}

This paper addresses the global-in-time well-posedness of the Cauchy problem for systems of equations that describe the evolution of an incompressible inhomogeneous viscous fluid in two spatial  dimensions. We primarily focus on constant-density fluids where the viscosity coefficient exhibits large variation, such as in the mixing of two rivers with different temperatures.
The motion can be described by the following incompressible   Navier-Stokes equations with freely transported, variable viscosity coefficient
\begin{equation}\label{muNS}
    \begin{cases}
        \d_t\mu+u\cdot\nabla \mu=0, \quad (t,x)\in (0,\infty)\times \R^2,\\
         \d_t  u +   u\cdot \nabla u  - \div (\mu Su)+\nabla \pi = 0, \\
        \div u=0 .
    \end{cases}
\end{equation}
Here, $t\in [0,\infty)$ and $x=\begin{pmatrix}
    x_1\\x_2
\end{pmatrix} \in\R^2$ denote the time and space variables, respectively. The unknowns of the equations are  the velocity vector field $u=u(t,x)=\begin{pmatrix}
    u_1\\ u_2
\end{pmatrix}\in\R^2$, the viscosity coefficient $\mu=\mu(t,x)\in (0,\infty)$ and the gradient of the pressure $\nabla\pi=\nabla \pi(t,x)=\begin{pmatrix}
    \d_1\pi\\ \d_2\pi
\end{pmatrix}\in\R^2$, which is the Lagrangian multiplier associated to the divergence-free condition on the velocity $\eqref{muNS}_3$.

We aim to investigate the nonlinear interplay between the unknown viscosity coefficient $\mu$, which satisfies the free transport equation $\eqref{muNS}_1$,  and the velocity field $u$, which satisfies the incompressible Navier-Stokes equations $\eqref{muNS}_2$ with this varying viscosity coefficient $\mu$. 
%, in the presence of a free interface separating two immiscible fluids with different viscosities.

\subsection{Divergence of the viscous stress tensor}
We start with a detailed analysis of the divergence of the viscous stress tensor   in $\eqref{muNS}_2$:
\begin{equation}\label{ViscosityTerm}
     \div(\mu Su),
\end{equation}
where the matrix $Su\in \R^{2\times 2}$ denotes twice the symmetric part of the velocity gradient: 
$$(Su)_{ij}=2\cdot\frac12(\d_{i}u_j+\d_{j}u_i),\quad i,j=1,2.$$

If $\mu=\nu>0$ is a positive constant, then  the divergence-free condition $\div u=0$ simplifies the above viscosity term
\eqref{ViscosityTerm} into
\begin{equation}\label{nuDelta}
\div(\mu Su)= \nu\Delta u,
\end{equation}
a diffusion term that plays an important role in the classical Navier-Stokes solution theory     in  J. Leray's pioneer work \cite{leray1933essai}. 
It is well-known, following    the celebrated work of O. A. Ladyzhenskaya \cite{ladyzhenskaya1969mathematical},  that in space dimension \textit{two}, J. Leray's weak solutions in the energy space $L^2(\R^2;\R^2)$ are unique and  the Cauchy problem for the classical Navier-Stokes equations (i.e. the system \eqref{muNS} with $\mu=\nu>0$) is well-posed globally in time.
In \textit{three} spatial dimensions, the uniqueness and the regularity of  Leray's weak solutions  are extensively studied, and at the same time, it has been shown that strong solutions with \textit{small} initial data exist uniquely for all time; see the
 recent monographs \cite{Lemarie2016,Tsai2018} and references therein. %\cite{ Kato, KochTataru} and 
%the books \cite{ConstantinFoias, giga, lemarie2002, seregin2014lecture,  temam1995navier, }.
The global-in-time well-posedness problem for arbitrarily \textit{large} initial data  in three dimensions  remains  open and is famously known as the Millennium Problem for the Navier-Stokes equations \cite{CMI}.

The study of   fluid motion   with \textit{variable} viscosity coefficient $$\mu=\mu(t,x)$$
is of significant physical  relevance, cf.  \cite[Section 6]{Lide}, 
and has attracted considerable interest in the  mathematical community,  cf. the books \cite{antontsev1989boundary,feireisl, lions1996mathematical,lions1996mathematical2,Majda}. 
In the following we  present three prototypical   \textit{incompressible} inhomogeneous models     in the literature, highlighting   their relations with our model \eqref{muNS}.
We also review briefly three typical approaches for handling   the viscosity term \eqref{ViscosityTerm}, before introducing our own approach in Subsection \ref{subss:decomposition}. 
Finally, as  applications of our main result, Theorem \ref{exthm}, for the system \eqref{muNS} in Section \ref{subs:result}, we give mathematical results for these models in Corollary \ref{propthm}.

\bigbreak  

\noindent\textbf{Smooth viscosity  case.}
Variable viscosity coefficients have been successfully incorporated  into the study of the two-dimensional Boussinesq equations with heat conduction
\begin{equation}\label{Boussinesq}
    \begin{cases}
        \d_t\vartheta+u\cdot\nabla \vartheta-\div(\kappa\nabla\vartheta)=0, \quad (t,x)\in (0,\infty)\times \R^2,\\
         \d_t  u +   u\cdot \nabla u - \div (\mu Su)+\nabla \pi = \vartheta e_2, \\
        \div u=0.
    \end{cases}
\end{equation} 
Here, the unknowns are the temperature $\vartheta\in\R$, the velocity field $u\in \R^2$ and the pressure $\pi\in\R$.
The heat conduction coefficient and the viscosity coefficient
$$\kappa=\kappa_\vartheta(\vartheta),\quad \mu=\mu_\vartheta(\vartheta)$$
are both smooth functions
\footnote{It is common to adapt  cf. \cite[Part I]{PTBC} 
\begin{align*}
    &\hbox{constant heat conductivity law }\kappa_\vartheta=C_1\hbox{  and exponential viscosity law } \mu_\vartheta(\vartheta)=C_2\exp(C_3/(C_4+\vartheta)) \hbox{ for liquids,}
    \\
    &\hbox{while }\kappa_\vartheta(\vartheta)=C_5\mu(\vartheta)
    \hbox{ and Sutherland's Law }\mu_\vartheta(\vartheta)=\underline{\mu}(\frac{\vartheta}{\underline{\vartheta}})^{\frac32}\frac{\underline{\vartheta}+C_6}{\vartheta+C_7}   \hbox{ for gases},
\end{align*}
where $C_j$, $j=1,\cdots,7$ are  constants and $\vartheta_0,\underline{\mu}=\mu_\vartheta(\underline{\vartheta})$ are reference temperature and viscosity coefficient.
In particular,  Andrade's Law: $\mu_\vartheta(\vartheta)=C_2\exp(C_3/\vartheta)$
with $C_2=e^{-12.9896}, C_3=1780.622, C_4=0$ gives good accurate values in the range of $[10-100^\circ]$ for waters, and Sutherland's Law $\mu_\vartheta(\vartheta)=\underline{\mu}
(\frac{\vartheta}{\underline{\vartheta}})^{\frac32}\frac{\underline{\vartheta}+C_6}{\vartheta+C_7}$ with $\underline{\vartheta}=273 \,K$, $\underline{\mu}=1.716\times10^{-5}$, $C_6=C_7=110.5\,K$ is good approximation for air close to the reference temperature $273\,K$.}
 of the unknown temperature $\vartheta$.
 The buoyancy force  term $\vartheta e_2$ in $\eqref{Boussinesq}_2$ accounts for  the gravitational effects. 
 The Boussinesq equations \eqref{Boussinesq} has been known as one of the most important models in geophysical fluid dynamics \cite{Gill}.

In the case of strong heat conduction $\kappa(t,x)\geq \kappa_\ast>0$, the diffusion term $\div(\kappa\nabla\vartheta)$ regularizes the temperature $\vartheta$ over time, leading to a smooth viscosity coefficient $\mu=\mu_\vartheta(\vartheta)$. 
Consequently, the viscosity term \eqref{ViscosityTerm} can be rewritten as
%\footnote{In more general setting, when applying some differential operator $\Lambda$, we may write $-\Lambda \div(\mu Su)=-\div(\mu S\Lambda u)-\div([\mu, \Lambda]Su)$, with the commutator $-\div([\mu, \Lambda]Su)$ viewed as lower-order term with respect to $u$. } 
\begin{equation}\label{ViscosityTerm1}
 \div(\mu Su)= \mu \Delta u+\nabla \mu \cdot Su,
\end{equation}
where  $\nabla \mu\cdot Su$ is considered as a lower-order term with respect to $u$.
This formulation results in  global-in-time well-posedness results, as discussed in  \cite{ heliao-correction, heliao, lorcaBoldrini, WangZhang} and references therein.
The classical  constant coefficient scenario has been extensively studied in the literature, 
see 
%cf. \cite{HmidiKeraani,HmidiKeraaniRousset} and 
the review notes \cite{WuJiahong} for more general results.

In the case of very weak heat conduction with $\kappa=0$,   the temperature $\vartheta$ satisfies the free transport equation, transforming \eqref{Boussinesq} into
\begin{equation}\label{eq:theta}
    \begin{cases}
        \d_t\vartheta+u\cdot\nabla \vartheta=0, \quad (t,x)\in (0,\infty)\times \R^2,\\
         \d_t  u +   u\cdot \nabla u - \div (\mu Su)+\nabla \pi = \vartheta e_2, \\
        \div u=0.
    \end{cases}
\end{equation}  
This  motivates our consideration of \eqref{muNS}, which is derived from \eqref{eq:theta} by neglecting  the buoyancy effect $\vartheta e_2$ on the right hand side of $\eqref{eq:theta}_2$.
Specifically, multiplying $\eqref{eq:theta}_1$  by $\mu_\vartheta'(\vartheta)$ (formally) yields  the free transport equation of $\mu$ in $\eqref{muNS}_1$.

Recently there has been notable progress in the mathematical analysis of \eqref{muNS} and \eqref{eq:theta}, cf. \cite{Abidi09, AbidiZhang19, NiuLu24}, under either the smoothness assumption $\nabla \mu_0\in L^p$ or small variation assumption (see \eqref{small-visc} below).
It remains an open problem whether global-in-time well-posedness results still hold in the presence of large rough variation in the initial data. 
Our primary global-in-time well-posedness result for the system \eqref{muNS}, under a scaling-invariant smallness assumption, is presented in Theorem \ref{exthm} below. Notably, this result permits   \textit{large jumps} in the viscosity coefficient. As a corollary, we establish a lower bound on the existence time of solutions to \eqref{eq:theta}, expressed in terms of the initial data, in Corollary \ref{propthm} that follows.
%\todo{References for (1.6)?}

\bigbreak 
\noindent\textbf{Small variation case.}
Variable viscosity coefficients have  also been investigated recently  in the context of density-dependent  incompressible fluids with freely transported  density function, described by the system
\begin{equation}\label{NS}
    \begin{cases}
        \d_t\rho+u\cdot\nabla \rho=0, \quad (t,x)\in (0,\infty)\times \R^2,\\
        \rho(\d_t  u +   u\cdot \nabla u) - \div (\mu Su)+\nabla \pi = 0, \\
        \div u=0 .
    \end{cases}
\end{equation} 
Here   $\rho=\rho(t,x)\geq 0$ is the unknown density function, and the viscosity coefficient $\mu$ is a given smooth function of $\rho$  as
%\begin{equation}\label{mu}
$$\mu=  \mu_\rho(\rho): [0,\infty)\to (0,\infty).$$
%\end{equation} 
%We remark that in this case,  multiplying the density equation   $\eqref{NS}_1$ (formally) by $\nu'(\rho)$ gives the free transport equation  $\eqref{muNS}_1$ for the viscosity coefficient $\mu$:  $\d_t\mu+u\cdot\nabla\mu=0$.
%Therefore, 
The three equations in \eqref{NS} represent the mass conservation law, the momentum conservation law, and the incompressibility condition, respectively.
Formally, the system  \eqref{muNS} can be seen as the density-dependent  incompressible Navier-Stokes equations \eqref{NS} with   the density dependence in the transport term in the momentum equation $\eqref{NS}_2$ being neglected. Specifically, similarly as above, multiplying $\eqref{NS}_1$ by $\mu_\rho'(\rho)$ gives $\eqref{muNS}_1$,  while $\eqref{NS}_2$ simplifies to  $\eqref{muNS}_2$ by replacing $\rho(\d_t u+u\cdot\nabla u)$ by $(\d_t u+u\cdot\nabla u)$   (similar as in the Boussinesq-approximation).

The system (\ref{NS}) has been widely explored by numerous  mathematicians.   P.-L. Lions  establishes the existence of global-in-time weak solutions in \cite{lions1996mathematical}, which improves an earlier work  
\cite{simon1990nonhomogeneous}
for the constant viscosity case.
In the case of constant viscosity $\mu=\nu>0$, 
the existence and uniqueness of strong solutions of \eqref{NS} in the case of smooth initial data $(\rho_0, u_0)$ are demonstrated by O. A. Ladyzhenskaya and V. A. Solonnikov \cite{ladyzhenskaya1978unique}.
Motivated by the natural scaling of (\ref{NS}), a number of works have been dedicated to the study of the system in critical functional spaces which are invariant under the same scaling, see for example \cite{abidi2007equation, abidi2021global,      danchin2003density,   huang2013global2} and references therein.
Recently, the global-in-time well-posedness results in the more general case with   discontinuous densities in the presence of vaccuum  are now known to hold true, thanks to the remarkable contributions by  
R. Danchin and P. B. Mucha \cite{danchin2012lagrangian, danchin2013incompressible, danchin2019incompressible}.

%See also a later work 
%\cite{jun2003strong} 
%for improved results  allowing vacuum regions in the fluid. 
%Motivated by the natural scaling of (\ref{NS}), a number of works have been dedicated to the study of the system in critical functional   spaces which are invariant under the same scaling, see for example \cite{abidi2007equation, abidi2021global, abidi2013well, abidi2007existence, danchin2003density, danchin2023global, huang2013global2, paicu2012global} and references therein. 
%Finally, the global-in-time well-posedness results in the more general case with   discontinuous densities in the presence of vaccuum  are now known to hold true, thanks to the remarkable contributions by % well-posedness results were obtained in the case that the density jump is small and the initial velocity sufficiently smooth by 
%R. Danchin and P. B. Mucha \cite{danchin2012lagrangian, danchin2013incompressible, danchin2019incompressible}, see also %M. Paicu, P. Zhang and Z. Zhang 
%\cite{paicu2013global} for the case away from vaccuum.

For general viscosity $\mu=\mu_\rho(\rho)$,  local-in-time well-posedness  for smooth initial data for \eqref{NS} was established in Y. Cho and H. Kim \cite{cho2004unique}, see also the book \cite{antontsev1989boundary}.
Under \textit{small variation} assumptions, either with small density variation   \cite{gui2009global, huang2013global, liu2020global}  or   small viscosity variation  \cite{abidi2015global2D,gancedo2023global, huang2012decay,huang2014global,paicu2020striated},  global-in-time well-posedness results have been achieved in two spatial  dimensions. 
%The global results holds also true, provided  with regular initial density $\rho_0\in W^{1,p}$ while small initial velocity
An earlier work by Desjardins \cite{desjardins1997regularity} addresses the regularity of  P.-L. Lions' weak solutions.
For the three spatial dimensional case, see  \cite{abidi2015global, he2021global, huang2015global, zhang2015global} and references therein. 

In the case where   $\mu$ is close to a positive constant $\nu>0$:
\begin{align}\label{small-visc}
    \Vert \mu-\nu \Vert_{L^\infty(\R^2)} \ll 1,
\end{align}
%and the density is bounded away from zero, then global-in-time results are successfully obtained by \cite{paicu2020striated, gancedo2023global}: M. Paicu and P. Zhang \cite{paicu2020striated} proved the propagation of $H^\frac52$-regularity, and F. Gancedo and E. Garcia-Juarez \cite{gancedo2023global} the propagation of $C^{1,\alpha}$-regularity, $\alpha \in (0,1)$, both in two space dimensions. 
%However, to the best of the authors' knowledge, the density patch problem for (\ref{NS}) with general viscosity which might have \textit{large jumps} remains open and has not yet been addressed in the literature.
a key ingredient in the analysis is  the following decomposition of  the viscosity term \eqref{ViscosityTerm}: 
\begin{align}\label{divmuSu:small-decomp}
     \div(\mu Su) =\nu \Delta u +\div((\mu-\nu )Su),
\end{align}
where $\div((\mu-\nu)Su)$ is considered as a perturbation term.
However, this decomposition does not apply when $\mu$ varies significantly.
It remains open whether the global-in-time wellposedness of \eqref{NS} holds in two space dimensions with large   initial data. 
We give in Corollary \ref{propthm} below the global-in-time wellposedness of \eqref{NS}, assuming some smallness condition  while allowing  for large variations in the density.
%A key difficulty resides  in the analysis of the viscosity term $-\div(\mu Su)$.
% where $-\div(\mu Su)$ should be largely away from the classical Laplace operator.

\bigbreak 

\noindent\textbf{Piecewise-constant case.}
When describing the time evolution of two immiscible  fluids,  which are separated by a free  interface, one considers the following two-phase Navier-Stokes equations
\begin{align}\label{2pNS}
\left\{
\begin{array}{ll}
   \rho( \d_t u + u\cdot \nabla u) -\div(\mu Su) +   \nabla \pi =0,
   \quad \div u=0 & \text{in}\; \Omega_t^-\cup \Omega_t^+, \\
%     \div u=0 &  \text{in}\; \Omega_t^-\cup \Omega_t^+, \\
 %   u=0 & \text{on}\; S,  \\
    \llbracket u \rrbracket =0,\quad \llbracket T(u,\pi)n \rrbracket =\sigma Hn,
    \quad V=u\cdot n & \text{on}\; \Gamma_t.
%    \llbracket T(u,\pi)n \rrbracket =\sigma Hn & \text{on}\; \Gamma_t,
%    \\
%    V=u\cdot n& \text{on}\; \Gamma_t.
\end{array}
\right.
\end{align}  
Here, two fluids occupy the domains $\Omega_t^+, \Omega_t^-$  respectively,  with $\Gamma_t$ as the separating interface. 
The vector $n=n(t,x)$ denotes the outward unit normal to $\Omega_t^+$, and $\llbracket \cdot \rrbracket$ represents the jump of a function across the interface $\Gamma_t$ in the direction of $n$.
The functions $H=H(t,x)$ and $V=V(t,x)$ denote   the  curvature and the normal velocity of $\Gamma_t$  with respect to $n$, respectively, and $\sigma\geq 0$ is the surface tension coefficient. The total stress tensor $T(u,\pi)$ is defined by
\begin{align*}
    T(u,\pi)= \mu  Su -\pi \mathrm{Id},\hbox{ with }\mathrm{Id}\in \R^{2\times 2}\hbox{ denoting the unit matrix}.
    %, \quad \mu^\pm = \rho^\pm \nu^\pm .
\end{align*}

In the case where two different fluids having positive constant densities $\rho^+, \rho^-$ and positive constant viscosity coefficients $\mu^+=\rho^+\nu^+, \mu^-=\rho^-\nu^-$,  the momentum equation in $\eqref{2pNS}_1$ reads as
\begin{align}\label{2pNS-pw}
     \d_t u + u\cdot \nabla u - \nu^\pm \Delta u + \frac{1}{\rho^\pm} \nabla \pi =0 & \hbox{ in }\; \Omega_t^-\cup \Omega_t^+.
\end{align}
In this scenario, the viscosity term \eqref{ViscosityTerm} simplifies to 
\begin{equation}\label{ViscosityTerm3}
    \div(\mu Su)= \mu^\pm\Delta u \quad \hbox{ in }\; \Omega_t^-\cup \Omega_t^+,
\end{equation}
which reduces the problem \eqref{2pNS} to solving the Navier-Stokes equations with a \textit{constant} viscosity coefficient within each domain. The main challenge then lies in  determining the free interface $\Gamma_t$. 

Notice that in the absence of surface tension ($\sigma=0$), if $(\rho, u, \nabla\pi)$   solves the density-dependent  incompressible Navier-Stokes equations \eqref{NS} with the initial density $\rho_0=\rho^+1_{\Omega_0^+}+\rho^-1_{\Omega_0^-}$, then 
it also satisfies \eqref{2pNS}-\eqref{2pNS-pw}, provided that both the vectors $u$ and $T(u,\pi)n$ are continuous across the  freely transported interface $\Gamma_t$ (as long as $\Gamma_t$ remains well-defined).
%(e.g. when $\Gamma_0$ is regular and  $u$ is Lipschitz continuous).  
Similarly, in the case of   constant density function $\rho^\pm=1$, if $(\mu,u,\nabla\pi)$ solves \eqref{muNS} with the initial viscosity $\mu_0=\mu^+1_{\Omega_0^+}+\mu_-1_{\Omega_0^-}$ and both   $u$ and $T(u,\pi)n$ are continuous across the well-defined free-transported interface $\Gamma_t$,
then it satisfies   \eqref{2pNS}, which in this context becomes
\begin{align}\label{mu2pNS}
\left\{
\begin{array}{ll}
    \d_t u + u\cdot \nabla u - \div(\mu Su) +  \nabla \pi =0,\quad \div u=0 & \text{in}\; \Omega_t^-\cup \Omega_t^+, \\ 
 %   u=0 & \text{on}\; S,  \\
    \llbracket u \rrbracket= \llbracket T(u,\pi)n \rrbracket =0,\quad V=u\cdot n & \text{on}\; \Gamma_t.
\end{array}
\right.
\end{align}   
The  model \eqref{ViscosityTerm3}-\eqref{mu2pNS} is known as a sharp interface model.
For discussions on the sharp interface limit of Navier-Stokes/Allen-Cahn or Navier-Stokes/Cahn-Hilliard equations, see \cite{AbelsFei,LiuShen} and the references therein.
%Similarly regular velocity field with piecewise constant viscosity function of the system \eqref{muNS}  solve \eqref{2pNS} with constant density and zero surface tension. Contrary to the local description in \eqref{2pNS}, the systems \eqref{muNS} and \eqref{NS} describe the dynamic globally in the whole domain. {\textcolor{blue}{To be checked...}}

The two-phase Navier-Stokes equations \eqref{2pNS} with piecewise-constant densities and viscosity coefficients \eqref{2pNS-pw} have been thoroughly studied  since the 1980s in various configurations of $\Omega_t^-$ and $\Omega_t^+$; see the books \cite{denisova2021motion, pruss2016moving}
for a comprehensive overview. 
In the presence of surface tension ($\sigma>0$), local-in-time existence and uniqueness results are provided in e.g. \cite{denisova1996classical, pruss2011analytic} and global-in-time well-posedness is proved in \cite{denisova2012global, solonnikov2014p}. 
See also \cite{Abels} for the global-in-time existence  of varifold solutions with rather general initial data.
When the surface tension is absent ($\sigma =0$), global-in-time well-posedness has   been obtained in e.g. \cite{ denisova2014global, denisova2007global, saito2020some}. 
However, it remains unclear whether $\rho^\pm, \mu^\pm$ can be taken as \textit{largely variable}   smooth functions within their respective  domains $\Omega_t^\pm$. 
In Corollary \ref{propthm} below we address this issue for the systems \eqref{2pNS} (with $\sigma=0$) and \eqref{mu2pNS}.

\bigbreak 

The literature includes extensive discussions on the \textit{regularity} of solutions for other evolutionary models with variable viscosity coefficients. 
This includes for instance compressible models \cite{hoff2002dynamics, matsumura1979initial, %vaigant1995existence, 
zodji2023discontinuous}, zero Mach-number systems and combustion models \cite{danchinliao},  MHD equations with density-dependent viscosity \cite{huang2013mhd}.
%, the two-phase Stokes problem. 
However, to our knowledge, at least one of the above decompositions \eqref{ViscosityTerm1} (regular case), \eqref{divmuSu:small-decomp} (perturbed case), and \eqref{ViscosityTerm3} (piecewise-constant case) for the viscosity term \eqref{ViscosityTerm} has been applied  in the regularity theory.  In this paper, we aim to  address more general variable viscosity coefficients, relying on the following decomposition.

\subsubsection{Decomposition for the divergence of the viscous stress tensor}\label{subss:decomposition}
In the present paper, building on insights from the previous work \cite{he2020solvability} by Z. He and the first author, for the stationary Navier-Stokes equations with variable viscosity coefficient, we  decompose the divergence of the viscous stress tensor \eqref{ViscosityTerm} straightforwardly into a divergence-free component and a curl-free component.
This approach allows us to handle more general variable viscosity coefficients effectively.
%in $L^2([0,\infty);L^2(\Omega))$:
\begin{lemma}[Decomposition for the divergence of the viscous stress tensor]\label{lemma:decomp}
Let $u=\nabla^\perp\phi$ with $\nabla^\perp:=\begin{pmatrix}
    -\partial_{2}\\ \partial_{1}
\end{pmatrix}$. Then the following (formal) decomposition holds
\begin{align}\label{divmuSu:decomp}
    \div(\mu Su)= \nabla^\perp a + \nabla b
\end{align}
where
\begin{align}
    \Delta a&=L_\mu\phi,\hbox{ with }L_\mu:=   (\d_{22}-\d_{11})\mu (\d_{22}-\d_{11})+(2\d_{12})\mu(2\d_{12}), \label{Lmu:def}\\
    \Delta   b&=A_\mu\phi,\hbox{ with }A_\mu:=   (\d_{22}-\d_{11})\mu(2\d_{12})-(2\d_{12}) \mu (\d_{22}-\d_{11}) . \label{Amu:def}
\end{align}

Let $\mu\in L^\infty(\R^2)$, $\nabla u\in L^2(\R^2; \R^{2\times2})$.
In the $L^2(\R^2)$-functional setting (where the Fourier transform applies), $a,b\in L^2(\R^2)$ can be determined by  $\mu Su$ as follows:
\begin{align}
    &a=-(-\Delta)^{-1}\nabla^\perp\cdot\div(\mu Su)=-(-\Delta)^{-1} (\nabla^\perp\otimes\nabla):(\mu Su)=(R^\perp\otimes R):(\mu Su),\label{a:R}\\
    &b=-(-\Delta)^{-1}\nabla\cdot\div(\mu Su)
    =-(-\Delta)^{-1}(\nabla\otimes\nabla):(\mu Su)=(R\otimes R):(\mu Su),\label{b:R}
\end{align}
where $R=\frac{\frac 1i\nabla}{\sqrt{-\Delta}}$ and $R^\perp=\frac{\frac 1i\nabla^\perp}{\sqrt{-\Delta}}$ are the Riesz operators.
If we introduce the scalar fluid vorticity $\omega=\nabla^\perp \cdot u=\Delta\phi$,  then 
 $a,b$ can be respresented in terms of $\mu, \omega$ and Riesz operators as follows:
\begin{align}
  a=R_\mu\omega, \hbox{ with }  R_\mu &:=(R_2R_2-R_1R_1)\mu(R_2R_2-R_1R_1) + (2R_1R_2)\mu (2R_1R_2), \label{Rmu:def}\\
  b=Q_\mu\omega, \hbox{ with }  Q_\mu &:=(R_2R_2-R_1R_1)\mu(2R_1R_2)- (2R_1R_2)\mu (R_2R_2-R_1R_1). \label{Qmu:def}
\end{align}

Here and in what follows the tensor product $u\otimes v$ of any two vectors $u=\begin{pmatrix}
    u_1\\ u_2
\end{pmatrix},v=\begin{pmatrix}
    v_1\\v_2
\end{pmatrix}$ refers to the matrix with the entries $(u\otimes v)_{ij}=u_iv_j$, $i,j=1,2$, and the product $A:B$ of any two matrices $A=\begin{pmatrix}
    A_{11}& A_{12}\\ A_{21}&A_{22}
\end{pmatrix},B=\begin{pmatrix}
    B_{11}& B_{12}\\ B_{21}&B_{22}
\end{pmatrix}$ refers to $A:B=\sum_{i,j=1}^2 A_{ij}B_{ij}$.
\end{lemma}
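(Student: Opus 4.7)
The proof is essentially a direct computation verifying a Helmholtz-type decomposition. First, since $u = \nabla^\perp\phi$, expand the symmetric gradient as
$$
Su = \begin{pmatrix} -2\d_{12}\phi & (\d_{11}-\d_{22})\phi \\ (\d_{11}-\d_{22})\phi & 2\d_{12}\phi \end{pmatrix},
$$
which is automatically trace-free (reflecting $\div u = 0$). The vector $\div(\mu Su)$ will be realized as $\nabla^\perp a + \nabla b$ by computing its scalar curl $\nabla^\perp\cdot(\div(\mu Su))$ and its divergence $\nabla\cdot(\div(\mu Su))$: the former should equal $\Delta a$ and the latter $\Delta b$.

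Carry out these two scalar computations by direct expansion of the expression for $Su$, grouping the resulting sums of third-order derivatives of $\phi$ into the operators in \eqref{Lmu:def} and \eqref{Amu:def}. This yields
$$
\Delta a = \nabla^\perp\cdot\div(\mu Su) = L_\mu \phi,\qquad \Delta b = \nabla\cdot\div(\mu Su) = A_\mu\phi,
$$
where one uses in passing the identity $(\d_{22}-\d_{11})\mu(\d_{22}-\d_{11}) = (\d_{11}-\d_{22})\mu(\d_{11}-\d_{22})$ to match conventions. Checking that the residue $\div(\mu Su)-\nabla^\perp a - \nabla b$ has vanishing curl and divergence then establishes \eqref{divmuSu:decomp}, modulo the usual harmonic-function ambiguity which the $L^2$ setting below removes.

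For the representations \eqref{a:R}, \eqref{b:R}, note that under $\mu\in L^\infty$ and $\nabla u\in L^2$ we have $\mu Su\in L^2(\R^2;\R^{2\times 2})$, so inverting $-\Delta$ via Fourier multiplier is legitimate. The identities $a = -(-\Delta)^{-1}\nabla^\perp\cdot\div(\mu Su)$ and $b = -(-\Delta)^{-1}\nabla\cdot\div(\mu Su)$ then follow from the preceding step, and the tensor-product Riesz-operator forms in \eqref{a:R}, \eqref{b:R} are obtained by absorbing the two derivatives via $R_j R_k = -(-\Delta)^{-1}\d_j\d_k$, which is immediate on the Fourier side from $\widehat{R_jf}(\xi) = (\xi_j/|\xi|)\hat f(\xi)$.

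Finally, for \eqref{Rmu:def}, \eqref{Qmu:def} it suffices to eliminate $\phi$ in favor of $\omega = \Delta\phi$. The Fourier multiplier identities $\Delta^{-1}(\d_{22}-\d_{11}) = R_2R_2 - R_1R_1$ and $\Delta^{-1}(2\d_{12}) = 2R_1R_2$ give both $(\d_{22}-\d_{11})\phi = (R_2R_2-R_1R_1)\omega$ and $(2\d_{12})\phi = (2R_1R_2)\omega$, while the same identities convert the outer derivatives in $L_\mu,A_\mu$ (after applying $\Delta^{-1}$) into the matching Riesz pairs. Substituting yields $a = R_\mu\omega$ and $b = Q_\mu\omega$. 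The only real care needed is the bookkeeping of signs and Fourier conventions; $L^2$-boundedness of all operators involved is standard Calder\'on--Zygmund theory, which justifies both the inversion of $-\Delta$ and the uniqueness of the Helmholtz decomposition in this class.
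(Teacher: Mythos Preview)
Your proposal is correct and follows essentially the same approach as the paper: compute $\nabla^\perp\cdot\div(\mu Su)=L_\mu\phi$ and $\nabla\cdot\div(\mu Su)=A_\mu\phi$ by direct expansion, then read off the Riesz-operator representations from the Fourier-multiplier identities $\Delta^{-1}(\d_{22}-\d_{11})=R_2R_2-R_1R_1$ and $\Delta^{-1}(2\d_{12})=2R_1R_2$. The paper's own proof is a two-line sketch of exactly this computation; you have simply supplied the details (explicit matrix form of $Su$, the harmonic-residue remark, and the $L^2$ justification) that the paper leaves implicit.
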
 

The decomposition \eqref{divmuSu:decomp} can be checked (see also \cite{he2020solvability}) by straightforward computations: 
\begin{equation}\label{Lmu:compute}
    \nabla^\perp\cdot\div(\mu S\nabla^\perp\phi)=L_\mu\phi
    \quad \hbox{ and }\quad 
    \nabla\cdot\div(\mu S\nabla^\perp\phi)=A_\mu\phi.\end{equation}
\eqref{divmuSu:decomp} is equivalent to \eqref{a:R}-\eqref{b:R}.
The relations \eqref{Rmu:def}-\eqref{Qmu:def}  between $a, b$ and $\omega$ follow from \eqref{Lmu:def}-\eqref{Amu:def} directly.
This completes the proof of Lemma \ref{lemma:decomp}.

We aim to obtain global-in-time wellposedness of the system \eqref{muNS} with possibly large jumps across certain regular interfaces in the variable viscosity coefficient $\mu$.
In this case, none of the decompositions \eqref{ViscosityTerm1}, \eqref{divmuSu:small-decomp} and \eqref{ViscosityTerm3} for $\div(\mu Su)$ applies.  
With the above decomposition \eqref{divmuSu:decomp} we can apply $\nabla^\perp\cdot$ to the velocity equation $\eqref{muNS}_2$ to derive the equation for the vorticity
\begin{equation}\label{intro:omega}
    \d_t\omega+u\cdot\nabla\omega-\Delta a=0, 
\end{equation}
where $u=\nabla^\perp\Delta^{-1}\omega$ is given by the Biot-Savart law.
With $\mu$ freely transported by the velocity field $u$ as in $\eqref{muNS}_1$, 
$a=R_\mu\omega$ is given by applying nonlocal Riesz operators $R$ composed with the local multiplication operator by $\mu$ to $\omega$.
This ``nonlocal" vorticity equation \eqref{intro:omega} is essence of the system \eqref{muNS}.
We show later (time-weighted) $H^1$-energy estimates for the ``good  unknown'' $a$.
The challenge is then to derive the bounds for $\omega$ or $\nabla u$ from the estimates of $a$.

%, and  we aim to establish global-in-time results for this nonlocal equation with a jump-type viscosity coefficient.
 
\subsubsection{Assumptions on the initial viscosity: $L^{2+\epsilon}$-estimate and tangential regularity}\label{subss:epsilon}
We now recall some facts from \cite{he2020solvability} for the stationary case of \eqref{muNS}, which motivate our assumptions on the initial viscosity $\mu_0$ in our main Theorem \ref{exthm} below:
\begin{enumerate}[(i)]\label{Assumption}
    \item Assume positive lower and upper bounds for $\mu_0$: $\mu_\ast\leq \mu_0\leq\mu^\ast$, with $\mu_\ast, \mu^\ast>0$ being two positive constants. These bounds are preserved by virtue of the free transport equation for $\mu$ a priori:
    \begin{equation}\label{mu:bound}
        0<\mu_\ast\leq \mu(t,x)\leq\mu^\ast.
    \end{equation}
    Then the operator given in \eqref{Lmu:def} above
    $$L_\mu=(\d_{22}-\d_{11})\mu (\d_{22}-\d_{11})+(2\d_{12})\mu(2\d_{12})$$
    is a fourth-order elliptic operator, since   we can reformulate $L_\mu$ as (see also \cite{he2020solvability})
%We first check the ellipticity of the operator $L_\mu$:  
\begin{align*}
    L_\mu &= \d_{11}(\mu \d_{11})+\d_{22}(\mu\d_{22})- \d_{11}\Bigl((\mu-\frac{\mu_\ast}{2})\d_{22}\Bigr) - \d_{22}\Bigl((\mu-\frac{\mu_\ast}{2})\d_{11}\Bigr) + \d_{12}\Bigl((4\mu- \mu_\ast)\d_{12}\Bigr) \\
    &=: \sum_{\vert\alpha\vert=\vert\beta\vert=2} D^\alpha (l_{\alpha\beta}^\mu D^\beta),
\end{align*}
where
\begin{align}\label{ellipticity}
    \frac{\mu_\ast}{2} \vert \xi \vert^2 \leq \sum_{\vert \alpha \vert = \vert \beta \vert=2} l_{\alpha \beta}^\mu \xi_\alpha \xi_\beta \leq 2\mu^\ast \vert \xi \vert^2, \quad\forall \xi=(\xi_\alpha)_{\vert\alpha\vert=2} \in \R^3.
\end{align}
    
    Note that if $\mu=\nu$ is a positive constant, then $L_\mu=\nu\Delta^2$ is a biharmonic operator, while   $a=\nu\omega$ and  $b=0$ by \eqref{Rmu:def} and \eqref{Qmu:def}, respectively.
    
    \item Assume tangential regularity for $\mu_0$:
    \begin{align}\label{TR:mu0}
    &\d_{\tau_0}\mu_0\in L^{p_0}(\R^2),\hbox{ for some }p_0>2,
    \\
&    \hbox{ where }\tau_0\in (L^\infty\cap \dot W^{1,p_0})(\R^2;\R^2)\hbox{ is some 
     nondegenerate regular vector field}.\nonumber
     \end{align}
    
      %the homogeneous elliptic equation $L_\mu\phi=0$ can be reformulated as a second-order Beltrami equation
 %     \begin{align}
  %        \d_{\bar z}^2\Lambda=\frac{1-\mu}{1+\mu}\overline{\d_z^2\Lambda},
 %     \end{align}
 %     where $z=x_1+ix_2$ is the complex coordinate and $\Lambda=\phi+i\psi$ with $\psi$ the conjugate of $\phi$ in the sense of $ \begin{pmatrix}
 %   \mu (\d_{22}-\d_{11})\phi \\\mu 2\d_{12}\phi
 %   \end{pmatrix}= \begin{pmatrix}
%      -2\d_{12}\psi\\ (\d_{22}-\d_{11})\psi
%    \end{pmatrix}$.
    For any $p>2$, there exists a bounded measurable (highly oscillating)  function $\tilde \mu$ taking only two possible values, $\tilde \mu\in \{\frac{1}{K}, K\}$ with $K=\frac{2}{p-2}+1>1$, such that there exist solutions to the \textit{homogeneous} elliptic equation $L_{\tilde \mu}\phi=0$ with \begin{equation}\label{failure:Lp}
        \nabla u=\nabla\nabla^\perp\phi\not\in L^p_{\textrm{loc}}(\R^2).
    \end{equation}  
    In particular, this case corresponds to $a=0$ while $\nabla u\not\in L^p_{\textrm{loc}}(\R^2)$  by \eqref{Lmu:def}, that is, $a$ can not control $\nabla u$ in $L^p(\R^2)$.
    \eqref{failure:Lp} represents a generalization of the second-order elliptic operator $\div(\mu \nabla)$ studied in \cite{astala2008} to a fourth-order elliptic operator 
$L_\mu$.
    
    Therefore, since the regularity propagation   requires the Lipschitz-continuity of the velocity field: $\nabla u\in L^\infty(\R^2)$ (after integration in time), the boundedness assumption above \eqref{mu:bound} alone is not sufficient.
    We have to assume some regularity for $\mu_0$, and in this paper we take primarily  the   tangential regularity assumption \eqref{TR:mu0} on the coefficient $\mu_0$ with respect to some nondegenerate regular vector field $\tau_0$.
    
%    \item Assume regularity on $a_0$: $R_{\mu_0}\omega_0\in H^1(\R^2)$.
%    We do not expect $L^2$-type energy estimates for $\omega$: indeed, there is an explicit counter example where $\nabla\omega$ involves a Dirac-function and $\mu$ is piecewise constant.
%    We hence aim to establish $H^1$-energy estimates for our good unknown $a$, hoping that $\nabla a\in L^2_tH^1(\R^2)\subset L^2_t L^p(\R^2)$, $p>2$, helps to give the Lipschitz-bound for the velocity.   
\end{enumerate} 

Let us discuss the above assumptions further.

\noindent\textbf{$L^{2+\epsilon}$-estimate.}
Under the boundedness condition \eqref{mu:bound} for the viscosity coefficient, it is straightforward to derive the equivalence of   the $L^2$-norms between $\omega$ and  $a=R_\mu\omega$ (defined in \eqref{Rmu:def} above)
\begin{equation}\label{equiv:a,omega}
    \mu_\ast\|\omega\|_{L^2(\R^2)}\leq \|a\|_{L^2(\R^2)}\leq 8\mu^\ast\|\omega\|_{L^2(\R^2)}.
\end{equation}
Indeed, on one side, by use of the operator norm $1$   of the Riesz operators on $L^2(\R^2)$, we have 
\begin{align}\label{L2:a,omega}
    \|a\|_{L^2(\R^2)}\leq 8\mu^\ast\|\omega\|_{L^2(\R^2)}.
\end{align}
On the other side, by  the fact that $\hbox{id}=R_1R_1+R_2R_2$ and $(R_1R_1+R_2R_2)^2=(R_2R_2-R_1R_1)^2+(2R_1R_2)^2$ (understood as operators defined on $L^2(\R^2)$) and the symmetry of the double Riesz transform on $L^2(\R^2)$, we derive
\begin{align*}
    \mu_\ast &\Vert \omega \Vert_{L^2(\R^2)}^2 
     = \mu_\ast \Bigl\langle  \omega, (R_1R_1+R_2R_2)^2\omega \Big\rangle_{L^2(\R^2)} = \mu_\ast  \Bigl\langle  \omega, \bigl((R_2R_2-R_1R_1)^2+(2R_1R_2)^2\bigr)\omega \Big\rangle_{L^2(\R^2)} \\
    &=\mu_\ast \Bigl\langle (R_2R_2-R_1R_1) \omega, (R_2R_2-R_1R_1) \omega \Big\rangle_{L^2(\R^2)}
    +\mu_\ast\Bigl\langle (2R_1R_2)\omega,  (2R_1R_2)\omega \Big\rangle_{L^2(\R^2)}\\
    &\leq \Bigl\langle \mu (R_2R_2-R_1R_1) \omega, (R_2R_2-R_1R_1) \omega \Big\rangle_{L^2(\R^2)}
    + \Bigl\langle \mu (2R_1R_2)\omega,  (2R_1R_2)\omega \Big\rangle_{L^2(\R^2)}
    \\
    &= \Bigl\langle (R_2R_2-R_1R_1) \mu (R_2R_2-R_1R_1) \omega, \omega \Big\rangle_{L^2(\R^2)}
    + \Bigl\langle (2R_1R_2) \mu (2R_1R_2)\omega,  \omega \Big\rangle_{L^2(\R^2)}
    \mathop{=}\limits^{\hbox{\eqref{Rmu:def}}}\langle a, \omega\rangle_{L^2(\R^2)},
\end{align*}
which, together with the Cauchy-Schwarz  inequality, implies that
\begin{align}\label{L2:omega,a}
    \Vert \omega \Vert_{L^2(\R^2)} \leq \frac{1}{\mu_\ast} \Vert a \Vert_{L^2(\R^2)} .
\end{align}

Without any further regularity assumptions on $\mu$ than \eqref{mu:bound}, we can indeed improve this estimate in $L^2(\R^2)$ to $L^p(\R^2)$ for $p>2$ close to $2$, as described in the following lemma.
\begin{lemma}[$L^{2+\epsilon}(\R^2)$-estimate] \label{Rmu-inv:prop}
Let $\mu\in L^\infty(\R^2;[\mu_\ast,\mu^\ast])$ be a function with a positive lower and upper bound. Then there exists an $\epsilon_0>0$ depending only on $\mu_\ast,\mu^\ast$, such that the operator $R_\mu$ in (\ref{Rmu:def}) defines an isomorphism on $L^{2+\epsilon}(\R^2)$, for all $\epsilon\in (0,\epsilon_0]$.
\end{lemma}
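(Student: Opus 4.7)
The plan is to combine the $L^2$-isomorphism that has already been established in \eqref{L2:a,omega}--\eqref{L2:omega,a} with the stability of invertibility on the complex interpolation scale of Lebesgue spaces. Only a small gain of integrability beyond $p=2$ is needed, since we already know $\Vert R_\mu\Vert_{L^2\to L^2}\leq 8\mu^\ast$ and $\Vert R_\mu^{-1}\Vert_{L^2\to L^2}\leq 1/\mu_\ast$.

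The first observation is that $R_\mu$ is bounded on $L^p(\R^2)$ for every $p\in(1,\infty)$. Indeed, its definition \eqref{Rmu:def} displays it as a composition of double Riesz transforms (bounded on every $L^p$ with $1<p<\infty$, being Calder\'on--Zygmund operators) and of pointwise multiplication by $\mu$, which has norm at most $\mu^\ast$ on every $L^p$. Hence, fixing any compact interval $I\subset(1,\infty)$ containing $2$, there is a constant $C_I=C_I(\mu^\ast)$, independent of the particular admissible $\mu$, such that $\Vert R_\mu\Vert_{L^p\to L^p}\leq C_I$ for all $p\in I$. The next step is to invoke \v{S}nejberg's stability lemma: on a complex interpolation scale, the set of exponents at which a bounded operator is invertible is open, with a quantitative neighbourhood whose size depends only on the inverse norm at the base point and the boundedness constant of the operator on the scale. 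Applied with base point $p_0=2$ and the interval $I$, this yields $\epsilon_0=\epsilon_0(\mu_\ast,\mu^\ast)>0$ such that $R_\mu$ is an isomorphism on $L^{2+\epsilon}(\R^2)$ for every $\epsilon\in(0,\epsilon_0]$.

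The delicate point is the uniformity of $\epsilon_0$ in the admissible class $\{\mu_\ast\leq\mu\leq\mu^\ast\}$, in particular the fact that no further regularity of $\mu$ is assumed. This uniformity is precisely what the quantitative form of \v{S}nejberg's lemma provides, once combined with the universality of the Calder\'on--Zygmund constants of the Riesz transforms. A self-contained alternative, closer in spirit to \cite{astala2008}, would be to run a Meyers--Gehring-type higher-integrability argument directly on the fourth-order elliptic equation $L_\mu\phi=\Delta a$: starting from the uniform ellipticity \eqref{ellipticity}, a Caccioppoli estimate and Gehring's reverse H\"older inequality yield the same intrinsic small gain in integrability and hence the claimed isomorphism on $L^{2+\epsilon}(\R^2)$.
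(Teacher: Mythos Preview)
Your proposal is correct, and it takes a genuinely different route from the paper's own proof.

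The paper proceeds by passing to the associated fourth-order elliptic operator $L_\mu$. It first constructs, via Lax--Milgram on $\dot H^2(\R^2)$, the bounded operator $\mathfrak{A}=\nabla^2 L_\mu^{-1}\mathrm{div}_2$ on $L^2(\R^2;\R^3)$, and then upgrades this to $L^{2+\epsilon}$ using Z.~Shen's real-variable Calder\'on--Zygmund criterion. The hypothesis of Shen's theorem (a local reverse H\"older inequality for $\mathfrak{A}G$ on balls where $G$ vanishes) is verified by observing that $L_\mu v_G=0$ on such balls and invoking A.~Barton's higher-order Meyers estimate for solutions of homogeneous elliptic systems. Since both the ellipticity constants of $L_\mu$ and the Calder\'on--Zygmund constants entering Shen's argument depend only on $\mu_\ast,\mu^\ast$, the resulting $\epsilon_0$ is uniform in the admissible class.

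Your argument bypasses the PDE machinery entirely: you use only that $R_\mu$ is bounded on every $L^p$ (a composition of Calder\'on--Zygmund operators and a bounded multiplier) together with the $L^2$-inverse bound $\Vert R_\mu^{-1}\Vert_{L^2\to L^2}\le 1/\mu_\ast$ already recorded in \eqref{L2:omega,a}, and then appeal to the quantitative \v{S}nejberg stability theorem on the complex interpolation scale $(L^p)_{p}$. This is shorter and more functional-analytic in flavour; the uniformity of $\epsilon_0$ is indeed built into the quantitative \v{S}nejberg statement, since the only inputs are the uniform $L^p$-bounds on $R_\mu$ (depending on $\mu^\ast$) and the $L^2$-inverse bound (depending on $\mu_\ast$). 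The paper's approach, by contrast, stays closer to the elliptic structure and to the Meyers--Gehring philosophy you also mention as an alternative; it is perhaps more informative about \emph{why} the gain occurs (local higher integrability of solutions to $L_\mu\phi=0$), but for the statement of the lemma both routes are equivalent.
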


The proof is postponed to Appendix \ref{sect:intro-proofs}, and is strongly related to the ellipticity \eqref{ellipticity}  of the operator $L_\mu$.
For the remainder of this paper we fix   $\epsilon>0$ given by Lemma \ref{Rmu-inv:prop}, and without loss of generality we assume $\epsilon\leq 2$.
By the relation $\nabla u = RR^\perp \omega$ with the Riesz transform $R=\frac{\frac 1i\nabla}{\sqrt{-\Delta}}$ and $R^\perp=\frac{\frac 1i\nabla^\perp}{\sqrt{-\Delta}}$, we have the a priori $L^{2+\epsilon}(\R^2)$-estimate
\begin{equation}\label{nablau:eps}
\|\nabla u\|_{L^{2+\epsilon}(\R^2)}=\|RR^\perp\omega\|_{L^{2+\epsilon}(\R^2)}\leq C\|\omega\|_{L^{2+\epsilon}(\R^2)}\leq C(\mu_\ast,\mu^\ast)\|R_\mu\omega\|_{L^{2+\epsilon}(\R^2)}
=C(\mu_\ast,\mu^\ast)\|a\|_{L^{2+\epsilon}(\R^2)},
\end{equation}
where we used the $L^{2+\epsilon}(\R^2)$-boundedness of the Riesz-transform $R,R^\perp$ and $R_\mu^{-1}$ in the first and second inequality, respectively. 
Notice that by virtue of \eqref{failure:Lp} above, $\epsilon$ depends on $\mu_\ast,\mu^\ast$ and we can not take $\epsilon$ arbitrarily large.

The $L^{2+\epsilon}$-estimate of $R_\mu^{-1}$ plays an important role in deriving the Lipschitz estimate for the velocity field later.
It is related to the Sobolev embedding $W^{1,2+\epsilon}(\R^2)\hookrightarrow L^\infty(\R^2)$, which can be compared to the failure of the Sobolev embedding $H^1(\R^2)\not\hookrightarrow L^\infty(\R^2)$ in space dimension two. 
Specifically, we use the \textit{a priori} estimate of the tangential derivative $\d_\tau\nabla u$ in terms of $\d_\tau a$ in $L^{2+\epsilon}(\R^2)$ later.

\bigbreak 

\noindent\textbf{Vector field $\tau$.}
It is time to discuss the nondegenerate vector field  
$\tau=\tau(t,x)\in \R^2$.
It  is transported by the velocity field $u$ of the Navier-Stokes flow as follows
\begin{equation}\label{eq:tau}
\left\{\begin{array}{l}
    \d_t\tau+u\cdot\nabla\tau=\tau\cdot\nabla u,
    \\
    \tau|_{t=0}=\tau_0,
    \end{array}\right.
\end{equation}
that is, the tangential derivative $\d_\tau:=\tau\cdot\nabla$ commutes with the material derivative $\frac{D}{Dt}:=\d_t+u\cdot\nabla$, as
\begin{equation*}
[\frac{D}{Dt}, \d_\tau]=(\frac{D}{Dt}\tau-\d_\tau u)\cdot\nabla
=(\d_t\tau+u\cdot\nabla\tau-\tau\cdot\nabla u)\cdot\nabla=0.
\end{equation*}
This, together with the free transport equation $\eqref{muNS}_1$:  $\frac{D}{Dt}\mu=0$, implies  the free transport of the tangential derivative $\d_\tau \mu$:
\begin{equation}\label{eq:dtaumu} 
  \d_\tau\frac{D}{Dt}\mu=0
  \Leftrightarrow \frac{D}{Dt}(\d_\tau\mu)=0.
\end{equation}
The $L^p(\R^2)$-norm of $\d_\tau\mu$ is hence preserved by the flow a priori, $p\in [1,\infty]$. 
Nevertheless, the tangential regularity of $\mu$ with respect to the vector field $\tau$ involves not only $\|\d_\tau\mu\|_{L^{p_0}([0,t)\times\R^2)}$, but also the  regularity of the vector field $\tau$ itself  (see e.g. \cite{chemin1993persistance})
$$ %\|\tau\|_{L^\infty([0,t)\times\R^2)},\quad 
\|\nabla \tau\|_{L^\infty([0,t); L^{p_0}(\R^2; \R^{2\times2}))}, 
%\quad  \|\frac{1}{|\tau|}\|_{L^\infty([0,t)\times\R^2)},
$$
for some ${p_0}\in (2,\infty)$.
Technically this regularity requirement comes for instance from   estimating  the commutator of type 
$[\d_\tau, \nabla]f=\nabla\tau\cdot\nabla f.$

 We take the spatial derivative to the $\tau$-equation \eqref{eq:tau} and test it by $|\nabla\tau|^{p_0-2}\nabla\tau$, to derive the following bound for $\nabla \tau$ 
    \begin{align}\label{tau:exp}
        \Vert \nabla \tau \Vert_{L^\infty([0,t); L^{p_0}(\R^2))} &\leq  \Bigl(\Vert \nabla \tau_0 \Vert_{L^{p_0}(\R^2)} + \int_0^t \Vert \nabla \d_\tau  u \Vert_{L^{p_0}(\R^2)} dt'\Bigr)
        \exp( \|\nabla u \|_{L^1([0,t];L^\infty(\R^2))}).
    \end{align}
    Notice that the time-space norm $\|\nabla \tau\|_{L^\infty([0,t); L^{p_0}(\R^2))}$ grows exponentially in the time integration of the Lipschitz-norm of the velocity field as
$\exp( \|\nabla u \|_{L^1([0,t];L^\infty(\R^2))})$. 
In order to finally achieve a global-in-time control of $ \|\nabla u \|_{L^1([0,t];L^\infty(\R^2))}$ by use of the tangential regularity, we need some \textit{smallness} assumption on initial data 
to complete the bootstrap argument.

\subsection{Main results}\label{subs:result}

Our main result reads as follows.

\begin{theorem}[Global-in-time well-posedness of (\ref{muNS})-\eqref{eq:tau}]\label{exthm}
Let $\mu_0 \in L^\infty(\R^2;[\mu_\ast,\mu^\ast])$, $0<\mu_\ast\leq \mu^\ast$, be an initial viscosity function satisfying $\mu_0-1\in L^2(\R^2)$, and let $u_0\in H^1 \cap \dot H^{-1}(\R^2;\R^2)$ be a divergence-free vector field.
Furthermore, let $\tau_0\in L^\infty(\R^2;\R^2)$ be a nondegenerate  vector field  such that $\vert \tau_0 \vert^{-1}\in L^\infty(\R^2)$ and $(\nabla\tau_0,  \d_{\tau_0}\mu_0)\in  L^{2+\epsilon}(\R^2; \R^{2\times2+1})$ in the sense of distributions, where $\epsilon=\epsilon(\mu_\ast,\mu^\ast)>0$ is given by Lemma \ref{Rmu-inv:prop}.  
% Let   $u_0\in H^1 \cap \dot H^{-1}(\R^2;\R^2)$ be a divergence-free vector field such that  

If the following smallness condition is fulfilled
\begin{align}\label{u0:cond}
&\Vert u_0 \Vert_{L^2(\R^2)}^{\frac{\epsilon}{2}} \cdot \Bigl(\|u_0\|_{\dot H^{-1}(\R^2)}+\|\mu_0-1\|_{L^2(\R^2)}\|u_0\|_{L^2(\R^2)}\Bigr)
\cdot\Bigl(\|\nabla u_0\|_{L^2(\R^2)}+\bigl\|(\nabla\taub_0, \d_{\taub_0}\mu_0)\bigr\|_{L^{2+\epsilon}(\R^2)}^{\frac{2+\epsilon}{\epsilon}}\Bigr)    \leq c_0,  
\end{align}
where $\taub_0=\frac{\tau_0}{|\tau_0|}$ and $c_0$ is a positive constant  depending only on $\mu_\ast, \mu^\ast$, then the system (\ref{muNS})-\eqref{eq:tau} supplemented with the initial data $(\mu_0,u_0, \tau_0)$ has a unique global-in-time solution $(\mu,u, \nabla \pi, \tau)$ satisfying
\begin{align}\label{exspace}
\begin{split}
    &\mu \in L^\infty([0,\infty)\times\R^2;[\mu_\ast,\mu^\ast]),
    \quad \mu-1\in C_b([0,\infty); L^q(\R^2) ), \,\,\forall q\in [2,\infty), \\
    &u\in C_b([0,\infty);L^2(\R^2;\R^2)) \cap L^2([0,\infty);\dot H^1(\R^2;\R^2)), \\
    &\nabla u \in C_b([0,\infty);L^2(\R^2;\R^{2\times 2}))\cap L^1((0,\infty);L^\infty(\R^2; \R^{2\times 2})) , \\
    &\nabla(\pi-b)\in L^2((0,\infty);L^2(\R^2;\R^2)) ,
  % \quad t^{\frac12}\nabla(\pi-b)\in L^2((0,\infty);\dot H^1(\R^2;\R^2)),
  \\
    &\tau \in L^\infty([0,\infty);L^\infty \cap \dot W^{1,2+\epsilon}(\R^2;\R^2)),\quad \frac{1}{|\tau|}\in L^\infty([0,\infty)\times\R^2),
    \\
    &\d_\tau\mu\in L^\infty([0,\infty);L^{2+\epsilon}(\R^2)) \hbox{ in the distribution sense},
\end{split}
\end{align}
where $b=Q_\mu\omega$, with $\omega=\nabla^\perp\cdot u$, is defined in (\ref{Qmu:def}) above. 

Furthermore, we have 
\begin{itemize}
    \item Energy estimates for the ``good  unknown'' $a=R_\mu \omega$ defined in \eqref{Rmu:def}
\begin{align}\label{a:reg}
    \begin{split}
        &a\in C_b([0,\infty);L^2(\R^2))\cap L^2((0,\infty);\dot H^1(\R^2)), \\
        &t^{\frac12}\nabla a\in L^\infty((0,\infty);L^2(\R^2; \R^2)) \cap L^2((0,\infty);\dot H^1(\R^2; \R^2));
    \end{split}
\end{align} 
 \item $W^{1,2+\epsilon}(\R^2)$-boundedness 
    \begin{equation}\label{alpha:intro0}
      a, \alpha, \d_\tau u \in L^1((0,\infty); W^{1,2+\epsilon}(\R^2)), \hbox{ with } \alpha= (\taub\otimes n):  (\mu Su)=\taub\cdot(n\cdot\mu Su)=\taub\cdot(\mu Su \, n),
    \end{equation}
where  $\taub=\frac{\tau}{|\tau|}$ and $n=\frac{\tau^\perp}{|\tau|}$ denote the (unit) tangential and normal vectors respectively;
\item $H^1(\R^2)$-boundedness for the material derivative $\frac{D}{Dt}u=\d_t u+u\cdot\nabla u$  and the divergence of the total stress tensor $T(u,\pi)=\mu Su-\pi\hbox{Id}$
\begin{align}\label{Tensor}
 %   t^{\frac12}\nabla(\pi-b), \,
 \frac{D}{Dt}u=\div T(u,\pi)\in L^2((0,\infty); L^2(\R^2;\R^2)), \quad 
 t^{\frac12}\frac{D}{Dt}u=t^{\frac12}\div T(u,\pi)\in   L^2((0,\infty);\dot H^1(\R^2; \R^2)).
\end{align}
\end{itemize}  

%The solution has the additional properties
%\begin{align}
%    &\nabla u \in L^4((0,\infty);L^4(\R^2; \R^{2\times 2})) \cap L^2((0,\infty);L^\infty(\R^2;\R^{2\times 2})), \label{gradu:reg}\\
%    &a=R_\mu \omega \in C_b([0,\infty); H^1_0(\R^2)) \cap L^2((0,\infty);H^2_0(\R^2)), \label{a:reg}
%\end{align}
%where $\omega=-\d_2u_1+\d_1u_2$ denotes the vorticity of the fluid and the operator $R_\mu$ is given in \eqref{Rmu:def}.

%Furthermore, there exists a divergence-free vector field $\tau \in L^\infty([0,\infty);L^\infty \cap \dot H^1 \cap \dot W^{1,p}(\Omega;\R^2))$ with $\tau(0,\cdot)=\tau_0$ and 
%\begin{align}\label{mu-str}
%    \d_{\tau} \mu=0
%\end{align}
%for all times in the sense of distributions.
\end{theorem}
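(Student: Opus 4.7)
My plan is to construct the solution via a Friedrichs-mollified (or artificially viscous) approximation, derive a priori bounds that close under \eqref{u0:cond}, and then pass to the limit and prove uniqueness. For a smooth approximate solution, testing $\eqref{muNS}_2$ against $u$ and using $\mu\geq\mu_\ast$ yields the $L^\infty_tL^2_x\cap L^2_t\dot H^1_x$ bound for $u$; the corresponding $\dot H^{-1}$ estimate follows from a duality argument in which the $L^2$-transport of $\mu-1$ enters, producing precisely the factor $\|u_0\|_{\dot H^{-1}}+\|\mu_0-1\|_{L^2}\|u_0\|_{L^2}$ seen in \eqref{u0:cond}.

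\textbf{The good unknown $a=R_\mu\omega$ and its time-weighted $H^1$ bound.} Applying $\nabla^\perp\cdot$ to $\eqref{muNS}_2$ gives the nonlocal vorticity equation \eqref{intro:omega}: $\d_t\omega+u\cdot\nabla\omega=\Delta a$. Testing against $a$ and exploiting the coercivity $\langle R_\mu\omega,\omega\rangle_{L^2}\geq\mu_\ast\|\omega\|_{L^2}^2$ established via \eqref{L2:omega,a}, while handling the differentiation of $R_\mu$ in time through $\eqref{muNS}_1$ (the commutator $[\d_t+u\cdot\nabla,R_\mu]$ contributes only lower-order terms), yields $a\in L^\infty_tL^2_x\cap L^2_t\dot H^1_x$. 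To obtain \eqref{a:reg} and \eqref{Tensor} I would test with $t\,\frac{D}{Dt}u$, using the identity $\frac{D}{Dt}u=\nabla^\perp a+\nabla b-\nabla\pi$ coming from the decomposition \eqref{divmuSu:decomp}; this simultaneously furnishes the bound on $\nabla(\pi-b)$ in \eqref{exspace}.

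\textbf{Tangential regularity, the good unknown $\alpha$, and the Lipschitz bound.} The core upgrade is $a\in L^1_t W^{1,2+\epsilon}_x$, which by Lemma \ref{Rmu-inv:prop}, \eqref{nablau:eps}, and the embedding $W^{1,2+\epsilon}(\R^2)\hookrightarrow L^\infty(\R^2)$ will give $\nabla u\in L^1_tL^\infty_x$. Since $\d_\tau$ commutes with $\frac{D}{Dt}$ and $\d_\tau\mu$ is freely transported by \eqref{eq:dtaumu}, applying $\d_\tau$ to the vorticity equation produces a controlled source at the $L^{2+\epsilon}$ level for $\d_\tau a$; the ellipticity \eqref{ellipticity} of $L_\mu$ then upgrades this to a $W^{1,2+\epsilon}$ bound on the mixed normal-tangential component $\alpha=(\taub\otimes n):(\mu Su)$, which is ``good'' precisely because differentiating along $\tau$ never falls on the normal jumps of $\mu$. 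Combined with the bound on $a$ and the nondegeneracy $|\tau|^{-1}\in L^\infty$, I can reconstruct $\nabla u\in L^1_t W^{1,2+\epsilon}_x$ via the pointwise expansion of $\nabla u$ in the $(\taub,n)$-frame. The bound \eqref{tau:exp} then propagates $\nabla\tau\in L^\infty_tL^{2+\epsilon}_x$, closing the loop on the regularity of the transporting frame.

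\textbf{Bootstrap, limit passage, and uniqueness.} Each nonlinear remainder in the above estimates can be absorbed provided the scaling-invariant quantity on the left of \eqref{u0:cond} is sufficiently small, so a continuity-in-time argument closes the bounds globally and uniformly in the approximation parameter. Standard compactness gives a solution in the class \eqref{exspace}, with some care needed to pass to the limit inside the nonlocal operator $R_\mu$ (weak-$*$ convergence of $\mu$ suffices once $\omega$ is strongly compact, which follows from the time-weighted regularity). For uniqueness I would pass to Lagrangian coordinates in the spirit of Danchin–Mucha so that $\mu$ is frozen along trajectories, turning the difference equation into a linear Stokes-type system with coefficients controlled by the Lipschitz norm of $u$, and conclude by Gronwall. \textbf{The main obstacle} is the third step: transferring tangential $L^{2+\epsilon}$ regularity through the nonlocal operator $R_\mu$ into a $W^{1,2+\epsilon}$ bound on $\alpha$ in the presence of large jumps of $\mu$. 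Since the admissible exponent $\epsilon$ is sharply constrained by Lemma \ref{Rmu-inv:prop} (the counterexample \eqref{failure:Lp} prevents any larger exponent), every estimate must be performed at exactly the $2+\epsilon$ scale, and this rigidity is what dictates the precise structure of \eqref{u0:cond}.
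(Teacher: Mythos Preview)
Your overall architecture (mollified approximation, energy estimates for $u$ and $a$, bootstrap under \eqref{u0:cond}, compactness, Lagrangian uniqueness) matches the paper. The genuine gap is in your third paragraph, where the mechanism for passing from $a\in L^1_tW^{1,2+\epsilon}$ to $\nabla u\in L^1_tL^\infty$ is misstated in two ways.

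First, Lemma \ref{Rmu-inv:prop} and \eqref{nablau:eps} only give $\|\nabla u\|_{L^{2+\epsilon}}\lesssim\|a\|_{L^{2+\epsilon}}$; they do \emph{not} transfer $W^{1,2+\epsilon}$-regularity from $a$ to $\omega$ (that would require differentiating through $R_\mu^{-1}$, which involves the uncontrolled $\nabla\mu$). This is exactly the obstruction the paper flags: one cannot control $\nabla u$ in $L^\infty$ directly from $a$. Second, your target ``$\nabla u\in L^1_tW^{1,2+\epsilon}$'' is false when $\mu$ jumps: by \eqref{dnu}, $\d_n u=\frac{\alpha}{\mu}\taub-2(n\cdot\dtau u)\taub-(\dtau u)^\perp$, so $\d_n u$ inherits the jumps of $1/\mu$ even though $\alpha$ and $\dtau u$ are in $W^{1,2+\epsilon}$. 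What one actually obtains is $\alpha,\dtau u\in L^1_tW^{1,2+\epsilon}$ and hence only $\nabla u\in L^1_tL^\infty$. The paper's route (Proposition \ref{u:Lip:prop}, Corollary \ref{coro}) is purely time-independent and elliptic: from the algebraic identity \eqref{eq:a,alpha} relating $\nabla(a+\alpha)$ to tangential quantities, one bounds $\|\nabla\alpha\|_{L^{2+\epsilon}}$ by $\|\nabla a\|_{L^{2+\epsilon}}$ plus tangential-regularity terms, then uses \eqref{alpha:reform} and Gagliardo--Nirenberg to get $\|\nabla u\|_{L^\infty}$. The bound $\|\nabla a\|_{L^{2+\epsilon}}$ itself comes not from applying $\d_\tau$ to the vorticity equation as you propose, but simply from interpolating the time-weighted $\dot H^1$ and $\dot H^2$ energy estimates for $a$ in Proposition \ref{energy:prop}; the tangential derivative $\dtau\omega$ is then recovered at fixed time via $\dtau\omega=R_\mu^{-1}(\dtau a+[R_\mu,\dtau]\omega)$ and the commutator estimate \eqref{Rmu:comm}. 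A smaller point: closing the bootstrap in Step III is not just ``continuity in time''---the paper rescales by $\lambda=\sigma_0/\sigma_{-1}$ (see \eqref{lambda}) to reconcile the different scalings of $\|u_0\|_{L^2}$ and $\|u_0\|_{\dot H^{-1}}$ before the interpolation argument can produce the scaling-invariant combination in \eqref{u0:cond}.
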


Let us make a few comments on the   results in Theorem \ref{exthm}.
The proof ideas for the global-in-time a priori estimates are discussed in Subsection \ref{subs:proof} below, and the proof of Theorem \ref{exthm} is found in  Subsection \ref{sect:proofs}.
\begin{remark}
\begin{enumerate}[(i)]
    \item (Jump of $\d_n u$ in case of jumping $\mu$).
%    We have (in the proof)
%    \begin{itemize}
 %       \item Boundedness of $\nabla u\in L^\infty(\R^2)$ (when integrated in time) from \eqref{exspace},
 %       \item $W^{1,p}(\R^2)$, $p>2$-Boundedness of the tagential derivative $\dtau u=\dtau\nabla^\perp\phi$ and $\alpha:=\bigl( \taub\cdot(\mu Su)\bigr)\cdot n$,  with  See Corollary \ref{coro:Lp} below.
  %  \end{itemize} 
    %and we show in the proof that the tangential derivative $\d_{\tau}\nabla\phi$ as well as the good unknown defined in \eqref{alpha:def} below:
 %   $$\alpha:=\bigl( \taub\cdot(\mu Su)\bigr)\cdot n,\hbox{ with }\taub:=\frac{\tau}{|\tau|},\quad n:=\frac{\tau^\perp}{|\tau|},$$
 %   belong to $W^{1,p}(\Omega)$ for some $p>2$. 
  We have the following expression for the normal derivative of the velocity $\d_n u$ by use of $\alpha,$ $\mu$, $\taub$, $n$ and the tangential derivative $\dtau u$  (see \eqref{alpha:reform} below)
    \begin{equation}\label{dnu}
    \d_n u=\d_n\nabla^\perp \phi= \frac{\alpha}{\mu}   \taub  - 2 (n \cdot \dtau  u)   \taub  -  (\dtau u)^\perp.\end{equation}
    The regularity of $\tau$ in \eqref{exspace} and the regularity of $\alpha, \dtau u$ in \eqref{alpha:intro0} imply that $\d_n u$ has  a jump exactly when $\mu$ has a jump. 
    See Corollary \ref{propthm} below also for the smooth viscosity coefficient case.

    \item (``Local good  unknown'' $\alpha$). 
    Motivated by the derivation of the fourth-order elliptic operator $L_\mu$ as $L_\mu\phi=\nabla^\perp\cdot(\nabla\cdot (\mu Su))$ (recalling \eqref{Lmu:compute}), we define $\alpha$  as $\alpha=\taub\cdot(n\cdot\mu Su)$ (recalling \eqref{alpha:intro0}). We have the  relation between $L_\mu\phi$ and $\alpha$ of the following form (see \eqref{alpha:eq} below for more details)  
    \begin{align}\label{alpha:intro}
        L_\mu\phi=\d_n^\ast\d_n\alpha+\sum_{j=1}^2\d_j\Bigl(  \bigl(A_j (\dtau\mu)+B_j(\mu \nabla\taub)\bigr):    \nabla u 
        +C_j \mu :\dtau(\nabla u)\Bigr),
 %       +\d_2\Bigl( A_2: \dtau (\mu Su)+B_2\nabla\taub:(\mu Su) \Bigr),
    \end{align}
    where the coefficients $A_j=A_j(\taub), B_j=B_j(\taub)$, $C_j=C_j(\taub)$, $j=1,2$ are uniformly bounded. 
    %and we denote $A:B=\sum_{i,j=1}^2A_{ij}B_{ij}$ for two matrices $A=(A_{ij})_{i,j=1,2}$, $B=(B_{i,j})_{i,j=1,2}$.
    Thus, the $L^{2+\epsilon}(\R^2)$-boundedness of $\nabla \alpha$ follows from the tangential regularity 
    $$(TR):  \quad\bigl(  (\dtau\mu, \nabla\taub)\otimes\nabla u,\,\, \dtau \nabla u \bigr)  \in L^{2+\epsilon}(\R^2),  $$
    and $L_\mu\phi\in \dot W^{-1,2+\epsilon}(\R^2)$, which is equivalent to $\nabla a\in L^{2+\epsilon}(\R^2)$  by virtue of the relation $L_\mu\phi=\Delta a$ (recalling \eqref{Lmu:def}). 
    Hence, the fact that $a\in W^{1,2+\epsilon}(\R^2)$ and the tangential regularity  (TR) imply $\alpha\in W^{1,2+\epsilon}(\R^2)$, and thus $\nabla u\in L^\infty(\R^2)$ follows from \eqref{dnu} and Gagliardo-Nirenberg's inequality (GN)   (see \eqref{u:Lip:eps} below):
    \begin{align*}
        &\|\nabla u\|_{L^\infty(\R^2)}
         \mathop{\lesssim }\limits^{\hbox{\eqref{dnu}}}\|\alpha\|_{L^\infty(\R^2)}+\|\d_{\taub} u\|_{L^\infty(\R^2)}
        \mathop{\lesssim}\limits^{\hbox{(GN)}} \|\alpha\|_{L^{2+\epsilon}(\R^2)}^{\frac{\epsilon}{2+\epsilon}}\|\nabla\alpha\|_{L^{2+\epsilon}(\R^2)}^{\frac{2}{2+\epsilon}}+ \|\d_{\taub} u\|_{L^{2+\epsilon}(\R^2)}^{\frac{\epsilon}{2+\epsilon}}\|\nabla\d_{\taub} u\|_{L^{2+\epsilon}(\R^2)}^{\frac{2}{2+\epsilon}}
        \\
        &\mathop{\lesssim}\limits^{\hbox{\eqref{nablau:eps}}} \|a\|_{L^{2+\epsilon}(\R^2)}^{\frac{\epsilon}{2+\epsilon}}
        \|(\nabla\alpha, \nabla\d_{\taub}u)\|_{L^{2+\epsilon}(\R^2)}^{\frac{2}{2+\epsilon}},
        \hbox{ with }\nabla\d_{\taub}u=\d_{\taub}\nabla u+[\nabla,\d_{\taub}]u=\d_{\taub}(RR^\perp R_\mu^{-1}a)+\nabla\taub\cdot\nabla u,
        \\
&        \mathop{\lesssim}\limits^{\hbox{\eqref{alpha:intro}}}
        \|a\|_{L^{2+\epsilon}(\R^2)}^{\frac{\epsilon}{2+\epsilon}}
        \Bigl( \|\nabla a\|_{L^{2+\epsilon}(\R^2)} + 
        \Vert (\nabla \taub, \dtau\mu) \Vert_{L^{2+\epsilon}}    \Vert (\nabla u, a) \Vert_{L^\infty} \Bigr)^{\frac{2}{2+\epsilon}}.
    \end{align*}
    This is the key step in deriving the Lipschitz estimate for the velocity field, where the smallness assumption \eqref{u0:cond} helps to close the bootstrap argument. 
    
    Observe that if we multiply the  jump condition $\sigma Hn=\llbracket 
    T(u,\pi)n  \rrbracket$ in (\ref{2pNS})  on the interface  $\Gamma_t$ by the continuous tangent vector $\overline{\tau}$ we derive that
\begin{align*}
  0=\taub\cdot\sigma Hn = \llbracket 
  \taub\cdot(T(u,\pi)n) \rrbracket =\llbracket 
  \taub\cdot\bigl(  \mu Su\, n\bigr) \rrbracket=\llbracket\alpha\rrbracket,
\end{align*}
where we used the definition  $T(u,\pi)=\mu Su-\pi\hbox{Id}$.
Thus, $\alpha$ is continuous, which is consistent with our analysis.
The idea of multiplication by the tangent vector has appeared e.g. in Nalimov's formulation of the one-dimensional water waves problem \cite{Nalimov}.
We believe that our definition and analysis of $\alpha$ in the variable viscosity setting is new.

Recall the decomposition \eqref{divmuSu:decomp}:
\begin{equation*}
    \div T(u,\pi)=\nabla^\perp a+\nabla(b-\pi)=\div\Bigl(a\begin{pmatrix}
        0&-1\\1&0
    \end{pmatrix}+(b-\pi)\hbox{Id}\Bigr)=:\div\widetilde T(u,\pi).
\end{equation*} 
Hence on any (well-defined) interface $\Gamma_t$ with $\taub$ and $n$ as the tangential and normal unit vectors respectively, 
\begin{align}
    &T(u,\pi)n=\widetilde T(u,\pi)n=-a\overline{\tau}+(b-\pi)n\hbox{ is continuous on }\Gamma_t;\label{Tn}
    \\
    &\alpha=\taub\cdot(T(u,\pi)n)=\taub\cdot(\widetilde T(u,\pi)n)=-a \hbox{ on }\Gamma_t.\label{alpha:-a}
\end{align}
%\begin{itemize}
%    \item $T(u,\pi)n=\widetilde T(u,\pi)n=-a\overline{\tau}+(b-\pi)n$ is continuous;
%    \item $\alpha=\taub\cdot(T(u,\pi)n)=\taub\cdot(\widetilde T(u,\pi)n)=-a$. 
%\end{itemize}  
    Notice that $\alpha$ is determined by $\nabla u$ in \eqref{alpha:intro0}  ``locally'', while $a=R_\mu\omega$ is determined by $\nabla u$ in terms of the Riesz operators in \eqref{Rmu:def} ``nonlocally''.
    The ``local good unknown" $\alpha$ and the ``global good unknown" $-a$ coincide on the interface $\Gamma_t$, and indeed also in $W^{1,2+\epsilon}(\R^2)$, up to   tangential regularity terms (see \eqref{eq:a,alpha} below).
%\todo{to be checked...}
%Recall the velocity equation $\eqref{muNS}_2$ and the decomposition \eqref{divmuSu:decomp}:
%\begin{align*}
%    \d_t u+u\cdot\nabla u-\div T=0, 
%    \quad\hbox{with } T=\mu Su-\pi \hbox{Id} \hbox{ and } \div T=\nabla^\perp a-\nabla(\pi-b).
%\end{align*}
%We decompose the equation into divergence free and curl free parts as
%\begin{align*}
%    \Bigl( \d_t u+R^\perp \, R^\perp\cdot(u\cdot\nabla u)-\nabla^\perp a\Bigr) +\Bigl( R\, R\cdot(u\cdot\nabla u)+\nabla(\pi-b)\Bigr)=0.
%\end{align*}
%By virtue of $\nabla R R\cdot(u\cdot\nabla u)= RR\div(u\cdot\nabla u) =RR(\nabla u:\nabla u)\in L^2((0,\infty); L^2(\R^2), t\,dt)$
    
    \item (Assumptions revisited). The proof of Theorem \ref{exthm} shows that the condition $u_0\in  \dot H^{-1}(\R^2)$ can  be relaxed to $u_0\in  \dot H^{-2\delta}(\R^2)$ for some $\delta\in (0,\frac12)$ sufficiently close to $\frac12$ (depending on $\epsilon$). %The   exponents $\epsilon, \delta$ depend on $\mu_\ast, \mu^\ast$ in an implicit way,  and  more precisely, the operator $R_\mu$ extends to an isomorphism on $L^{2+\epsilon}(\R^2)$ (see Lemma  \ref{Rmu-inv:prop} below) and $\|u(t)\|_{L^2}$ decays as $t^{-\delta_-}$ (see Proposition \ref{u-decay:prop}).
    We can also replace the assumption $\d_{\tau_0}\mu_0 \in L^{2+\epsilon}(\R^2)$ by $\d_{\tau_0}\mu_0 \in L^r(\R^2)$ for some $r\in (2,\infty]$, as seen from the proof of Proposition \ref{u:Lip:prop} and the commutator estimates \eqref{Rmu:comm}, \eqref{Rmu:comm+}. 
     It is however unclear whether $\dot W^{1,p}$-regularity, for $p>2+\epsilon_0$ away from $2$, of the vector field $\tau_0$ can be propagated.
     %under the assumptions in Theorem \ref{exthm}.
    This is related to the question whether $a\in H^2$ can control  $\dot W^{1,p}$-regularity of the right hand side $\tau\cdot\nabla u=\d_\tau u$ of \eqref{eq:tau}, or equivalently  $\d_\tau\omega\in L^p$.
    Heuristically, for this one has to show $\d_\tau^2\omega\in L^2$, and the latter requires further regularity assumptions on $\tau$, say $\nabla\d_{\tau_0}{\tau_0}\in L^2$.
    We plan to investigate this high regularity case in the near future.

    The low frequency control by $\|u_0\|_{\dot H^{-1}}$ and $\|\mu_0-1\|_{L^2}\|u_0\|_{L^2}$ provides sufficient time decay (see Proposition \ref{u-decay:prop} below), while the high frequency control by $\|\nabla u_0\|_{L^2}$ and $\|(\nabla\bar\tau_0, \d_{\bar\tau_0}\mu_0)\|_{L^{2+\epsilon}}$  provides sufficient regularity (see Proposition \ref{energy:prop}).
    The combination of these   bounds on the left hand side in \eqref{u0:cond}, which is invariant under the scaling 
    $$(\mu_{0,\lambda}, u_{0,\lambda}, \taub_{0,\lambda})(x)=(\mu_0, \lambda^{-1}u_0, \taub_0)(\lambda^{-1}x),\quad \lambda>0,$$
    controls the critical norm $\|\nabla u\|_{L^1_tL^\infty_x}$ (see Proposition \ref{u:L1Lip:prop}).   
    In particular, \eqref{u0:cond} permits arbitrarily large initial norms $\|\mu_0-1\|_{L^2}$ and $\|(\nabla\bar\tau_0, \d_{\bar\tau_0}\mu_0)\|_{L^{2+\epsilon}}$, as long as the norm $\|u_0\|_{L^2}$ is sufficiently small.

    Due to \eqref{failure:Lp}, we expect finite-time formation of singularity if no regularity assumptions are imposed on the significantly varying viscosity coefficient.
   
\end{enumerate}
\end{remark}

%One can apply Theorem \ref{exthm} to the viscosity  patch problem described as follows.
%Let   $\mu_\ast, \mu^\ast$   be two arbitrary positive constants with $\mu_\ast\leq1\leq\mu^\ast$.
%Let the initial viscosity be of the form
%\begin{align}\label{vpatch-init}
%    \mu_0(x)=  \eta_{0}(x) 1_{D}(x) + 1_{D^C}(x),
%    \hbox{ such that }\mu_0\in [\mu_\ast, \mu^\ast],
%\end{align} 
%where 
%  $D\subset\R^2$ is bounded, simply connected domains, such that its boundary $\d D$ is of class $W^{2,2+\epsilon}(\R^2)$, and 
  %and are mutually non-intersecting: $\d \Dj \cap \d D^{(i)} =\emptyset$ for $i\neq j$, and 
%  $\eta_{0}\in W^{1,2+\epsilon}(\overline{D})$   is continuous bounded function defined on $\overline{D}$.
%  Here  $\epsilon>0$ depends only on $\mu_\ast,\mu^\ast$ and  is given in Lemma \ref{Rmu-inv:prop}. 
  %It may hold that either all the domains $D^{(j)}$, $j=1,\cdots,N$ are  disajoint,  or   $D^{(i)}\subset D^{(j)}$ for some $i\neq j$. 

%$\tauj_0$ via \eqref{tau0} with $\fj_0,\chi^{(j)}_\delta$ and $\chi^{(j),\pm}_\delta$. Finally, let $$\tau_0=\frac1N \sum_{j=1}^N \tauj_0.$$ 

We have the following  consequences of (the proof of) Theorem \ref{exthm}. The proofs can be found in Subsection \ref{sect:d-prop}.

\begin{corollary}\label{propthm}
Recall the systems \eqref{muNS}, \eqref{eq:theta}, \eqref{NS}, \eqref{2pNS}, \eqref{mu2pNS} and \eqref{eq:tau}.
\begin{enumerate}
    \item \label{Coro1}(Viscosity patch-type problem  for \eqref{muNS}). 
   Let the initial viscosity be of the form
\begin{align}\label{vpatch-init}
    \mu_0(x)=  \mu_{0}^+(x) 1_{D}(x) +\mu_0^-(x) 1_{D^C}(x),
    \hbox{ such that }\mu_0\in [\mu_\ast, \mu^\ast]
    \hbox{ with }0<\mu_\ast\leq1\leq\mu^\ast,
\end{align} 
where 
  $D\subset\R^2$ is a bounded, simply connected domain, such that its boundary $\d D$ is of class $W^{2,2+\epsilon}(\R^2)$, and 
  %and are mutually non-intersecting: $\d \Dj \cap \d D^{(i)} =\emptyset$ for $i\neq j$, and 
  $\mu_{0}^+\in W^{1,2+\epsilon}(\overline{D})$   is a positive continuous bounded function defined on $\overline{D}$ while $\mu_0^--1\in L^2\cap W^{1,2+\epsilon}(\overline{D^C})$ is a  continuous bounded function defined on $\overline{D^C}$.
  Here  $\epsilon>0$ depends only on $\mu_\ast,\mu^\ast$ and  is given in Lemma \ref{Rmu-inv:prop}.    Let  $u_0\in H^1 \cap \dot H^{-1}(\R^2;\R^2)$ be divergence-free.
    
    If there exists a vector field $\tau_0\in L^\infty\cap \dot W^{1,2+\epsilon} (\R^2;\R^2)$ with $\frac{1}{|\tau_0|}\in L^\infty(\R^2)$ such that $\tau_0$ is tangential to   the boundary $\d D$   and the initial condition \eqref{u0:cond} holds,
    %, such that
   % \begin{align}
   %     \Vert u_0 \Vert_{L^2(\R^2)}^\frac{\epsilon}{2} \cdot (\Vert u_0 \Vert_{\dot H^{-1}} + \Vert \mu_0-1 \Vert_{L^2(\R^2)} \Vert u_0 \Vert_{L^2(\R^2)}) \cdot (\Vert \nabla u_0 \Vert_{L^2} + \Vert \taub_0 \Vert_{L^{2+\epsilon}(\R^2)}^\frac{2+\epsilon}{\epsilon}) \leq c_1
  %  \end{align}
 %   where $\taub_0=\frac{\tau_0}{\vert\tau_0\vert}$.
    %and $c_1$ is some positive constant depending only on $\mu_\ast,\mu^\ast$, 
    then the system (\ref{muNS}) supplemented with the initial data $(\mu_0,u_0)$ has a unique global-in-time solution $(\mu,u,\nabla\pi)$ which satisfies the estimates in Theorem \ref{exthm}.
    Furthermore,  for all times $t>0$, $$\mu(t,\cdot)=   \mu^+(t,\cdot) 1_{D_t}(x) +  \mu^-(t,\cdot) 1_{ (D_t)^C}(x),$$
    where $D_t\subset \R^2$ is a bounded, simply connected domain   whose boundary is of class $W^{2,2+\epsilon}(\R^2)$, and $\mu^+(t,\cdot)\in W^{1,2+\epsilon}(\overline{D_t})$, $\mu^-(t,\cdot)-1\in L^2\cap W^{1,2+\epsilon}(\overline{D_t^C})$.
    Correspondingly, this solution solves the two-phase Navier-Stokes equations with constant density \eqref{mu2pNS}, with  $\Omega_t^+=D_t$, $\Omega_t^-=D_t^C$ and the interface $\Gamma_t= \partial D_t$.

\item \label{Coro2}(Smooth viscosity coefficient case for \eqref{muNS}).
Let $\mu_0\in L^\infty \cap \dot W^{1,q}(\R^2; [\mu_\ast,\mu^\ast])$ with $q\in (2,\infty]$ and $0<\mu_\ast\leq\mu^\ast$, such that $\mu_0-1\in L^2(\R^2)$. Let $u_0\in H^1\cap \dot H^{-1}(\R^2;\R^2)$ be divergence-free. 

If there exists a nondegnerate vector field $\tau_0\in L^\infty\cap \dot W^{1,2+\epsilon}(\R^2;\R^2)$ such that  \eqref{u0:cond} holds for some $\epsilon=\epsilon(\mu_\ast,\mu^\ast)\in (0,q-2]$ given in Lemma \ref{Rmu-inv:prop}, then   Theorem \ref{exthm} holds, and   additionally 
%$\mu_0\in \dot W^{1,q}(\R^2)$ for some $q\in (2,\infty]$, then this regularity is propagated, i.e. 
$\mu\in L^\infty([0,\infty);\dot W^{1,q}(\R^2))$ and
\begin{align}\label{gradu:H1}
    \nabla u \in L^\infty([0,\infty);L^2(\R^2;\R^{2\times 2})) \cap L^2((0,\infty);\dot H^1(\R^2;\R^{2\times 2})).
\end{align}
In particular, the following smallness condition, which is the initial condition \eqref{u0:cond} with a nonzero constant vector field $\taub_0=\begin{pmatrix}
       1\\0
   \end{pmatrix}$,   
   \begin{align*}
       \Vert u_0 \Vert_{L^2(\R^2)}^{\frac{\epsilon}{2}} \cdot \bigl(\|u_0\|_{\dot H^{-1}(\R^2)}+\|\mu_0-1\|_{L^2(\R^2)}\|u_0\|_{L^2(\R^2)}\bigr)
\cdot\Bigl(\|\nabla u_0\|_{L^2(\R^2)}+\|  \d_{1}\mu_0 \|_{L^{2+\epsilon}(\R^2)}^{\frac{2+\epsilon}{\epsilon}}\Bigr)    \leq c_0
   \end{align*}
implies the well-posedness results in Theorem \ref{exthm}.

\item  \label{Coro3}(Lower bound for  existence time of solutions to the Boussinesq equations without heat conduction \eqref{eq:theta}). 
Let $u_0\in H^1(\R^2;\R^2)$ be a divergence-free vector field and $\vartheta_0 \in L^1\cap L^r(\R^2)$ for some $r\in (2,\infty]$.
Assume the dependence of the viscosity coefficient $\mu$ on the temperature function $\vartheta$ to be $\mu={\mu}_\vartheta(\vartheta)$ for some $\mu_\vartheta\in C_b(\R;[\mu_\ast,\mu^\ast])$, $0<\mu_\ast\leq \mu^\ast$.
%where $\epsilon=\epsilon(\mu_\ast,\mu^\ast)$ is given by Lemma \ref{Rmu-inv:prop}.
Let $\tau_0\in L^\infty(\R^2;\R^2)$ be a vector field such that $\vert \tau_0 \vert^{-1} \in L^\infty(\R^2)$ and $(\nabla \tau_0,\d_{\tau_0}\mu_0)\in L^{2+\epsilon}(\R^2;\R^{2\times 2+1})$, for some $\epsilon=\epsilon(\mu_\ast,\mu^\ast)\in (0,r-2]$ given in Lemma \ref{Rmu-inv:prop}.

Then there exists a positive time $T>0$, which can be bounded from below as follows
\begin{align}\label{B:cond}
\begin{split}
&T \geq   c_1 \Bigl( \max    \Bigl\{
\Vert \vartheta_0 \Vert_{L^q(\R^2)}^{\frac{1}{\frac32-\frac1q}}, \,
\Bigl( \Vert \vartheta_0\Vert_{L^q(\R^2)}^{\frac{1}{\frac32-\frac1q}} +\sigma_1^2\Bigr)
\cdot\Bigl(\Vert u_0 \Vert_{L^2(\R^2)}^{\theta_1^B}+\|u_0\|_{L^2(\R^2)}^{ \theta_2^B}\Bigr), \,
\Bigl(\Vert \vartheta_0 \Vert_{L^q(\R^2)} 
 \sigma_1 ^{\theta_3^B} \Bigr)^{\frac{1}{\frac32-\frac1q+\frac{\theta_3^B}{2}}},\,
  \\
& \Bigl(\Vert \vartheta_0 \Vert_{L^q(\R^2)} 
 \sigma_1^{ \theta_4^B} \Bigr)^{\frac{1}{\frac32-\frac1q+\frac{ \theta_4^B}{2}}},\, q=1,2+\epsilon
%\sigma_1 ^2 (\Vert u_0 \Vert_{L^2(\R^2)}^{\theta_3}+\Vert u_0 \Vert_{L^2(\R^2)}^{\tilde \theta_3}),   
%&\bigl(\Vert \vartheta_0 \Vert_{L^q(\R^2)} (\Vert \omega_0 \Vert_{L^2(\R^2)} + \Vert (\nabla \taub_0, \d_{\taub_0}\mu_0) \Vert_{L^{2+\epsilon}(\R^2)}^\frac{2+\epsilon}{\epsilon})^{\tilde \theta_2}\bigr)^{\frac{1}{\frac32-\frac1q-\frac{\tilde \theta_2}{2}}}, \\
%&\bigl(\Vert \omega_0 \Vert_{L^2(\R^2)} + \Vert (\nabla \taub_0, \d_{\taub_0}\mu_0) \Vert_{L^{2+\epsilon}(\R^2)}^\frac{2+\epsilon}{\epsilon}\bigr)^2 \Vert u_0 \Vert_{L^2(\R^2)}^{\tilde \theta_3},
\Bigr\}\Bigr)^{-1},\, \sigma_1 =\Vert \nabla u_0 \Vert_{L^2(\R^2)} + \Vert (\nabla \taub_0, \d_{\taub_0}\mu_0) \Vert_{L^{2+\epsilon}(\R^2)}^\frac{2+\epsilon}{\epsilon},
\end{split}
\end{align}
where $\theta_1^B,  \theta_2^B, \theta_3^B,  \theta_4^B, %\theta_3, \tilde \theta_3,
c_1$ are positive constants depending only on $\mu_\ast,\mu^\ast$, 
such  that the system \eqref{Boussinesq}-\eqref{eq:tau} supplemented with the initial data $(\vartheta_0,u_0,\tau_0)$ has a unique solution $(\vartheta,u,\nabla\pi,\tau)$ on the time interval $[0,T]$, which  satisfies
$\vartheta\in C_b([0,T]; \cap_{1\leq \tilde r\leq r, \tilde r<\infty} L^{\tilde r}(\R^2))\cap  L^\infty([0,T];L^1\cap L^r(\R^2))$  and \eqref{exspace} on   $[0,T]$, except the property for $\mu-1$ in \eqref{exspace}.
%\begin{align*}
%    &\vartheta\in L^\infty([0,T];L^1\cap L^r(\R^2)), \\
%    &u\in L^\infty ([0,T];L^2(\R^2;\R^2)) \cap L^2 ([0,T];\dot H^1(\R^2;\R^2)) \cap L^1 ([0,T]; \dot W^{1,\infty}(\R^2)), \\
%    &\nabla (\pi-b) \in L^2 ([0,T];L^2(\R^2;\R^2)), \quad t^{\frac12}\nabla(\pi-b)\in L^2([0,T];\dot H^1(\R^2;\R^2)),\\
%    &\tau \in L^\infty ([0,T];L^\infty \cap \dot W^{1,2+\epsilon}(\R^2)), \quad \frac{1}{\vert \tau \vert} \in L^\infty ([0,T];L^\infty(\R^2)), \\
%    &\d_\tau\mu \in L^\infty ([0,T];L^{2+\epsilon}(\R^2)) \quad \text{in the distribution sense,}
%\end{align*}
%where $b=Q_\mu \omega$ with $\omega =\nabla^\perp\cdot u$ is defined in \eqref{Qmu:def}.

Furthermore, for the quantity $a_\vartheta: = a - \mathcal{R}_{-1}\vartheta$, with $a=R_\mu \omega$  defined in \eqref{Rmu:def} and $\mathcal{R}_{-1}:= \d_1 (-\Delta)^{-1}$, we have the energy estimates
\begin{align*}
    & a_\vartheta \in  C_b([0,T]; L^2(\R^2))\cap L^2 ([0,T];L^2(\R^2;\R^2)), \\
    &t^{\frac12} \nabla a_\vartheta \in L^\infty ([0,T];L^2(\R^2;\R^2)) \cap L^2 ([0,T];\dot H^1(\R^2;\R^2)).
\end{align*}
We also have   $a,\alpha,\d_{\tau}u\in L^1([0,T]; W^{1,2+\epsilon}(\R^2))$ and $\frac{D}{Dt}u=\div T(u,\pi)+\vartheta e_2\in L^1([0,T]; L^{2+\epsilon}(\R^2))$, with the same notations $ \alpha, \frac{D}{Dt}u, T(u,\pi)$ as given in Theorem \ref{exthm}.

\item\label{d-propthm} (Global-in-time well-posedness of   the density-dependent incompressible Navier-Stokes equations \eqref{NS}-\eqref{eq:tau}). Let $\rho_0\in L^\infty(\R^2;[\rho_\ast,\rho^\ast])$, $0<\rho_\ast\leq\rho^\ast$, be an initial density satisfying $\rho_0-1\in L^2(\R^2)$. Assume the dependence of the viscosity coefficient $\mu$ on the density function $\rho$ to be $\mu={\mu}_\rho(\rho)$ for some $ \mu_\rho \in W^{1,\infty}([\rho_\ast,\rho^\ast];[\mu_\ast,\mu^\ast])$ with $0<\mu_\ast\leq\mu^\ast$. 
Let $u_0\in H^1 \cap \dot H^{-1}(\R^2;\R^2)$  and $\tau_0 \in L^\infty(\R^2;\R^2)$   such that $\vert \tau_0 \vert^{-1}\in L^\infty(\R^2)$, $(\nabla \tau_0, \d_{\tau_0}\rho_0)\in L^{2+\epsilon}(\R^2;\R^{2\times 2+1})$,  and
\begin{equation}\label{NSu0:cond}
\begin{split}
   &e^{c_2\Vert u_0 \Vert_{L^2(\R^2)}^2}
   %\bigl(
   \Bigl(\Vert u_0 \Vert_{L^2(\R^2)} + \Vert \rho_0-1 \Vert_{L^2(\R^2)} \Vert \nabla u_0 \Vert_{L^2(\R^2)}\Bigr)^{\frac{\epsilon}{2}}%+ (\Vert u_0 \Vert_{L^2(\R^2)} + \Vert \rho_0-1 \Vert_{L^2(\R^2)} \Vert \nabla u_0 \Vert_{L^2(\R^2)})^{(4+\epsilon) \frac{\epsilon}{2}}\bigr)  
   \\
   &\quad \cdot 
   \Bigl(\|u_0\|_{\dot H^{-1}(\R^2)}+\|\rho_0-1\|_{L^2(\R^2)} \|u_0\|_{L^2(\R^2)}\Bigr)\cdot \Bigl(\|\nabla u_0\|_{L^2(\R^2)} + \Vert (\nabla \taub_0, \d_{\taub_0}\mu_0) \Vert_{L^{2+\epsilon}(\R^2)}^\frac{2+\epsilon}{\epsilon}\Bigr) \leq c_3,  
\end{split}
\end{equation}
for some  $\epsilon>0$  given by Lemma \ref{Rmu-inv:prop},
where $c_2, c_3$ are positive constants depending only on $\rho_\ast, \rho^\ast, \mu_\ast,\mu^\ast$ and $\Vert \mu_\rho' \Vert_{L^\infty([\rho_\ast,\rho^\ast])}$.
Then the  system (\ref{NS})-\eqref{eq:tau} supplemented with the initial data $(\rho_0,u_0,\tau_0)$ has a unique global-in-time solution $(\rho,u,\nabla\pi,\tau)$ such that \eqref{exspace} holds, with $\mu$ replaced by $\rho$.
{}
Furthermore, we have the energy estimates
\begin{align*}  &a\in C_b([0,\infty);L^2(\R^2))\cap L^2((0,\infty);\dot H^1(\R^2)), \\
        &t^{\frac12}\frac{D}{Dt}u\in L^\infty((0,\infty);L^2(\R^2; \R^2)) \cap L^2((0,\infty);\dot H^1(\R^2; \R^2));
\end{align*}
and the bounds $a,\alpha,\d_{\tau}u\in L^1((0,\infty); W^{1,2+\epsilon}(\R^2))$ and $ \rho\frac{D}{Dt}u= \div T(u,\pi) \in L^1((0,\infty); L^{2+\epsilon}(\R^2))$, with the same notations $ a, \alpha, \frac{D}{Dt}u, T(u,\pi)$ as given in Theorem \ref{exthm}. 

In particular, if the initial density is of the patch-type
\[
\rho_0(x)=\rho_0^+(x)1_D(x)+\rho_0^-(x)1_{D^C}(x), \; \text{such that } \rho_0\in [\rho_\ast,\rho^\ast]\hbox{ with }0<\rho_\ast\leq 1\leq \rho^\ast,
\]
for some bounded, simply connected domain $D\subset\R^2$ with $W^{2,2+\epsilon}$-boundary, and functions $\rho_0^+\in W^{1,2+\epsilon}(\overline{D})$, $\rho_0^- -1\in L^2\cap W^{1,2+\epsilon}(\overline{D^C})$, and if the vector field $\tau_0$ from above is tangential to the boundary $\d D$, then the unique solution above satisfies for all times $t>0$,
\[
\rho(t,x)= \rho^+(t,x)1_{D_t}(x)+\rho^-(t,x)1_{(D_t)^C}(x),
\]
for some bounded, simply connected domain $D_t\subset\R^2$ with $W^{2,2+\epsilon}$-boundary, and functions $\rho^+(t,\cdot)\in W^{1,2+\epsilon}(\overline{D_t})$, $\rho^-(t,\cdot)-1\in L^2\cap W^{1,2+\epsilon}(\overline{D_t^C})$. 
Thus, the density-patch-type problem in the absence of vacuum  for the density-dependent incompressible Navier-Stokes equations \eqref{NS} is uniquely globally-in-time solvable under the smallness assumption \eqref{NSu0:cond}. 
This solution also solves the two-phase Navier-Stokes equations \eqref{2pNS} without surface tension ($\sigma=0$) with $\Omega_t^+=D_t$, $\Omega_t^-=D_t^C$ and the interface $\Gamma_t=\d D_t$.
\end{enumerate}
\end{corollary}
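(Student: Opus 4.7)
The plan is to derive each of the four parts from (the proof of) Theorem \ref{exthm}, by (i) verifying the smallness condition \eqref{u0:cond} for an appropriate auxiliary vector field $\tau_0$, and (ii) propagating the additional structure (patch shape, Sobolev regularity of $\mu$, non-zero source term, density factor) along the Lipschitz flow produced by the theorem. For part \ref{Coro1} the key point is that if $\tau_0$ is tangent to $\partial D$, then the singular contribution $(\mu_0^+-\mu_0^-)|_{\partial D}\,n\,\delta_{\partial D}$ in the distributional gradient of $\mu_0$ is annihilated by $\tau_0\cdot$, so $\partial_{\tau_0}\mu_0=\tau_0\cdot(\nabla\mu_0^+\,1_D+\nabla\mu_0^-\,1_{D^C})\in L^{2+\epsilon}(\R^2)$; a tangent field $\tau_0\in L^\infty\cap\dot W^{1,2+\epsilon}$ with $|\tau_0|^{-1}\in L^\infty$ exists thanks to the $W^{2,2+\epsilon}$-regularity of $\partial D$ (extend a unit tangent from a tubular neighbourhood smoothly to $\R^2$), and Theorem \ref{exthm} then produces a global solution. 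Since $\mu$ is freely transported and $\nabla u\in L^1_tL^\infty_x$, the flow $X_t$ of $u$ is a bi-Lipschitz $W^{2,2+\epsilon}$-diffeomorphism, so $\mu(t)=\mu_0\circ X_t^{-1}$ and $D_t=X_t(D)$ provide the patch structure; the commutation $[\frac{D}{Dt},\partial_\tau]=0$ together with $\nabla\tau\in L^\infty_tL^{2+\epsilon}_x$ shows that $\tau(t)$ stays tangent to $\partial D_t$, which therefore remains a $W^{2,2+\epsilon}$-Jordan curve, and the decomposition \eqref{Tn} gives continuity of $T(u,\pi)n$ across $\Gamma_t$, yielding the equivalence with \eqref{mu2pNS}. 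For part \ref{Coro2} one takes $\tau_0$ a nonzero constant vector so that $\nabla\tau_0=0$ and \eqref{u0:cond} reduces to the ``in particular'' condition; the propagation $\mu\in L^\infty(\dot W^{1,q})$ is a Gronwall bound on $\|\nabla\mu\|_{L^q}$ using the transport equation together with $\nabla u\in L^1_tL^\infty_x$, and \eqref{gradu:H1} follows from the standard $\dot H^1$-energy estimate on $u$, which closes thanks to $\nabla\mu\in L^q$ with $q>2$.

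For part \ref{Coro3}, the buoyancy $\vartheta e_2$ enters as a source in the vorticity equation: applying $\nabla^\perp\cdot$ to the momentum equation of \eqref{eq:theta} gives $\partial_t\omega+u\cdot\nabla\omega-\Delta a=\partial_1\vartheta$, and since $\partial_1\vartheta=-\Delta\mathcal{R}_{-1}\vartheta$ with $\mathcal{R}_{-1}=\partial_1(-\Delta)^{-1}$, the corrected good unknown $a_\vartheta:=a-\mathcal{R}_{-1}\vartheta$ satisfies $\partial_t\omega+u\cdot\nabla\omega=\Delta a_\vartheta$. The $L^2$- and time-weighted $\dot H^1$-energy estimates for $a$ of Theorem \ref{exthm} are then reproduced for $a_\vartheta$, with the extra source terms absorbed using $\|\vartheta\|_{L^1\cap L^{2+\epsilon}}$, which is conserved by the free transport of $\vartheta$. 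Since the buoyancy is not small, the smallness in \eqref{u0:cond} is gradually destroyed; the bound \eqref{B:cond} is obtained by bookkeeping the growth of each norm appearing in \eqref{u0:cond} under the forcing $\vartheta e_2$ on a time interval $[0,T]$ and choosing $T$ so that the perturbed quantity stays below $c_0$, the various exponents $\theta_j^B$ simply recording the $L^p$-scalings entering this bookkeeping.

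Part \ref{d-propthm} is the technical core and the \textbf{main obstacle}. The new feature is the density factor $\rho$ in front of $\partial_t u+u\cdot\nabla u$ in $\eqref{NS}_2$: the conserved energy is $\|\sqrt\rho\,u\|_{L^2}^2$ rather than $\|u\|_{L^2}^2$, so the $L^2$-control of $u$ carries factors of $\rho_\ast,\rho^\ast$; more importantly, writing the momentum equation as $\rho\,\frac{D}{Dt}u=\div T(u,\pi)$ and testing by $\frac{D}{Dt}u$ gives only $\int\rho\,|\frac{D}{Dt}u|^2\le\|\sqrt\rho\,\frac{D}{Dt}u\|_{L^2}\|\div T(u,\pi)\|_{L^2}$, while commutator terms involving $\nabla\rho^{-1}$ appear in the time-weighted $\dot H^1$-estimate for $a$. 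These commutators are closed via the chain rule $\partial_\tau\mu=\mu_\rho'(\rho)\partial_\tau\rho$ with $\mu_\rho'\in L^\infty$ together with the tangential regularity of $\rho$, and the Gronwall argument used to absorb the lower-order density-dependent terms is what produces the exponential factor $e^{c_2\|u_0\|_{L^2}^2}$ in \eqref{NSu0:cond}. Once these a priori estimates close, the remaining tangential regularity, uniqueness, and patch-preservation statements proceed as in parts \ref{Coro1} and \ref{Coro2} with the role of $\mu$ replaced by $\rho$.
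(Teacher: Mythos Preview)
Your outline for parts~\ref{Coro1}--\ref{Coro3} is essentially the paper's argument. Two small corrections for part~\ref{Coro1}: the flow $X_t$ is only bi-Lipschitz (from $\nabla u\in L^1_tL^\infty_x$), not a $W^{2,2+\epsilon}$-diffeomorphism; the boundary regularity $\partial D_t\in W^{2,2+\epsilon}$ is obtained exactly as you indicate afterwards, by parametrising $\partial D_t$ via $s\mapsto X(t,\gamma_0(s))$, noting $\partial_s X(t,\gamma_0(s))=\tau(t,X(t,\gamma_0(s)))$, and invoking the trace theorem with $\tau\in L^\infty_t\dot W^{1,2+\epsilon}_x$. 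For part~\ref{Coro3} the paper works with $\Gamma=\omega-R_\mu^{-1}\mathcal R_{-1}\vartheta$ at the $\omega$-level (so that $R_\mu\Gamma=a_\vartheta$), which is exactly your corrected good unknown.

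For part~\ref{d-propthm} your description of the closing mechanism is off, and as written would not work. You say that ``commutator terms involving $\nabla\rho^{-1}$ appear in the time-weighted $\dot H^1$-estimate for $a$'' and that these are closed via tangential regularity of $\rho$. But tangential regularity gives only $\partial_\tau\rho\in L^{2+\epsilon}$, not $\nabla\rho$, and a direct $\dot H^1$-estimate for $a$ via the vorticity equation (which for \eqref{NS} reads $\dot\omega=\nabla^\perp\!\cdot(\rho^{-1}\div(\mu Su))-\nabla^\perp\!\cdot(\rho^{-1}\nabla\pi)$) indeed produces uncontrolled $\nabla\rho^{-1}$ terms. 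The paper avoids this entirely: the higher-order estimate is obtained for $\dot u$, not for $a$. One applies $\frac{D}{Dt}$ to $\rho\dot u-\nabla^\perp a+\nabla\tilde\pi=0$, uses $\frac{D}{Dt}\rho=0$ (so no $\nabla\rho$ ever appears), tests with $\dot u$, and handles the pressure via $\nabla\tilde\pi=-\nabla\Delta^{-1}\div(\rho\dot u)$ together with $\div\dot u=\nabla u:(\nabla u)^T$. This yields $\|t'^{1/2}\sqrt\rho\,\dot u\|_{L^\infty_tL^2}+\|t'^{1/2}\dot\omega\|_{L^2_tL^2}$, and the required control of $\nabla a$ then follows from the identity $\nabla^\perp a=\mathbb P(\rho\dot u)$. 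The factor $e^{c_2\|u_0\|_{L^2}^2}$ arises earlier, in the $\dot H^1$-estimate for $u$ (testing $\eqref{NS}_2$ with $\dot u$ and using Hardy--BMO duality for the pressure), where Gronwall over $\int_0^t\|\nabla u\|_{L^2}^2\lesssim\|u_0\|_{L^2}^2$ produces the exponential. Tangential regularity of $\rho$ enters only at Step~II (the time-independent Lipschitz estimate), exactly as for $\mu$ in Theorem~\ref{exthm}, via $\partial_{\taub}\mu=\mu_\rho'(\rho)\,\partial_{\taub}\rho$.
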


We give some comments below on the above  results.
\begin{remark}
\begin{enumerate}[(i)]
\item (Construction of  a vector field for the viscosity patch-type problem.) There are many different ways to construct a nondegenerate vector field $\tau_0\in L^\infty\cap \dot W^{1,2+\epsilon}(\R^2;\R^2)$ which is tangent to   the boundary $\d D$ given in \eqref{vpatch-init}. 
One way can be described as follows. 

We begin with the simplest case in which   $D=B$ is the unit disk in $\R^2$ with the origin as the center.  
We aim to construct a nondegenerate regular vector field $\tau_B\in L^\infty\cap \dot W^{k,p}(\R^2;\R^2)$, $\forall k\in \N$, $p\in [1,\infty]$ with $|\tau_B|\geq\frac12$, such that the renormalized unit vector field
\begin{align*}
&    \bar \tau_B(x)=\frac{\tau_B}{|\tau_B|}(x) = 
\begin{cases}
\begin{pmatrix}
    -\frac{x_2}{\vert x \vert} \\ \frac{x_1}{\vert x \vert}
\end{pmatrix}=:e_\theta, \quad \text{for } \vert x \vert \in [\frac34, \frac 54], \\ 
  \begin{pmatrix}
    1 \\ 0
\end{pmatrix}=:e_1, \quad \text{for } \vert x \vert \in [0,\frac14] \cup [\frac74,\infty),
\end{cases} 
\end{align*} 
is tangent to the boundary $\d D=\d B=\{x\in\R^2\,|\, |x|=1\}$.
To this end, we connect the tangential vector $e_\theta$ at $|x|=\frac34, \frac54$ to the unit vector $e_1$ at $|x|=\frac14, \frac74$ respectively as follows
\begin{align}\label{tau0-ball}
    \tau_B(r\cos\theta,r\sin\theta) 
    =\begin{cases}
      \begin{pmatrix}
         \sin\bigl(3\pi(r-\frac34)-2\theta(r-\frac14)\bigr) \\ \cos\bigl(3\pi(r-\frac34)-2\theta(r-\frac14)\bigr)
    \end{pmatrix}=:\tau_B^-(r\cos\theta, r\sin\theta),
    &  r\in [\frac14,\frac34],
    \\
   \begin{pmatrix}
         -\sin\bigl(3\pi(r-\frac54)-2\theta(r-\frac74)\bigr) \\ \cos\bigl(3\pi(r-\frac54)-2\theta(r-\frac74)\bigr)
    \end{pmatrix}=:\tau_B^+(r\cos\theta, r\sin\theta),
    &  r\in [\frac54,\frac74],
    \\
    h(r)e_\theta, & r\in [\frac34,\frac54],
   % \begin{pmatrix}
   %     -\sin\theta\\
  %      \cos\theta
 %   \end{pmatrix}, 
 %   & r\in [\frac78, \frac98],
    \\
    \tilde h(r,\theta) e_1,  
%     \begin{pmatrix}
%        1 \\ 0
%    \end{pmatrix},
     &  r\in [0,\frac14]\cup [\frac{7}{4},\infty),  
    \end{cases} 
\end{align}
where we have   connected $e_1|_{r\in [0,\frac18]}, \tau_B^-|_{r\in [\frac14,\frac34]}, e_\theta|_{r\in [\frac78, \frac98]}, \tau_B^+|_{r\in [\frac54,\frac74]}, e_1|_{r\in [\frac{15}{8},\infty)}$ smoothly (noticing $\tau_B^-|_{r=\frac34}=\tau_B^+|_{r=\frac54}=e_\theta$ and $\tau_B^-|_{r=\frac14}=\tau_B^+|_{r=\frac74}=e_1$)  by use of two smooth   functions $h(r),\tilde h(r,\theta)$    satisfying
\[
h(r)\begin{cases}
    =1,  \text{ for  }  
    r\in [\frac14,\frac34]\cup [\frac78,\frac98]  \cup [\frac54, \frac74],
    \\
    \in [\frac12,1],\text{ for  }  
    r\in  [\frac34, \frac78]\cup [\frac98, \frac54],  
\end{cases}  \quad 
\tilde h(r,\theta)  \begin{cases}
    \in [\frac12,1], \text{ for  }  
    r\in  [\frac18,\frac14]\cup [\frac74, \frac{15}8], \\
    =1, \text{ for  }  
    r\in [0,\frac18] \cup [\frac14,\frac74] \cup [\frac{15}8,\infty).
\end{cases} 
\]

Now, for a general bounded, simply connected domain  $D\subset \R^2$, %with a $W^{2,2+\epsilon}$-boundary, 
%and we are going to construct a tangential vector field from \eqref{tau0-ball} satisfying the assumptions of Theorem \ref{exthm}. 
 by the Riemann mapping theorem there exists a bijective, holomorphic map $\varphi:D\to B$. Since $\varphi\in W^{1,2+\epsilon}(D;\R^2)$ and the boundary $\d D$ is of class $W^{2,2+\epsilon}$ there exists a $W^{1,2+\epsilon}$-extension $\tilde \varphi :\R^2\to \R^2$ of $\varphi$. Then the vector field
\begin{align}\label{tau0-D}
    \tau_D(x) := \tau_B(\tilde \varphi(x)), \quad x\in \R^2 ,
\end{align}
  is what we search for, since $\tau_D$ is tangent to $\d D$ and
\[
\Vert \tau_D \Vert_{L^\infty(\R^2)}\leq \Vert \tau_B \Vert_{L^\infty(\R^2)}, \quad 
\Vert \nabla \tau_D \Vert_{L^{2+\epsilon}(\R^2)} \leq \Vert \nabla \tau_B \Vert_{L^\infty(\R^2)} \Vert \nabla \tilde \varphi \Vert_{L^{2+\epsilon}(\R^2)} \leq C \Vert \nabla \tau_B \Vert_{L^\infty(\R^2)} ,
\quad |\tau_D|\geq\frac12,
\]
for some constant $C$ depending only on the domain $D$.

As the functions $\mu_0^+\in W^{1,2+\epsilon}(\overline{D})$, $\mu_0^--1\in L^2\cap W^{1,2+\epsilon}(\overline{D^C})$ are arbitrarily large  functions, a large jump across $\d D$ in $\mu_0$ is allowed. The smallness assumption \eqref{u0:cond} with $\bar\tau_D$ above implies the smallness of the initial velocity field $u_0$, in terms of $\mu_\ast,\mu^\ast, \|\mu_0-1\|_{L^2}$, $\|\mu_0^+\|_{W^{1,2+\epsilon}(\overline{D})}$, $\|\mu_0^--1\|_{W^{1,2+\epsilon}(\overline{D^C})}$ and $\|\nabla\bar\tau_D\|_{L^{2+\epsilon}}$.

    \item (Viscosity layer problem and smallness condition revisited.) We can  straightforwardly generalize the results for the viscosity patch-type problem \eqref{vpatch-init} to   the $N$-viscosity layer problem with the initial viscosity   
\begin{align}\label{vlayer-init}
    \mu_0(x)=\sum_{j=1}^N \eta_{j,0}(x) 1_{\Dj}(x) + 1_{(\cup_{j=1}^N \Dj)^C}(x),
    \hbox{ such that }\mu_0\in [\mu_\ast, \mu^\ast].
\end{align} 
Here  $\Dj\subset\R^2$, $j=1,...,N$, are bounded, simply connected domains, such that the boundaries $\d \Dj$ are of class $W^{2,2+\epsilon}(\R^2)$ and are mutually non-intersecting: $\d \Dj \cap \d D^{(i)} =\emptyset$ for $i\neq j$, and $\eta_{j,0}\in W^{1,2+\epsilon}(\overline{\Dj})$ are continuous bounded functions defined on $\overline{\Dj}$, $j=1,\cdots,N$, where $\epsilon>0$ depends only on $\mu_\ast,\mu^\ast$ and  is given in Lemma \ref{Rmu-inv:prop}. Hence, either all the domains $D^{(j)}$, $j=1,\cdots,N$ are  disdjoint,  or   $D^{(i)}\subset D^{(j)}$ for some $i\neq j$.
The key is to construct an initial nondegenerate regular vector field $\tau_0$ which is tangential to all the boundaries $\d\Dj$, $j=1,\cdots,N$.

%We construct a nondegenerate regular vector field $\tau$ which are tangent to all boundaries $\d \Dj$, $j=1,\cdots,N$ as follows: Let $\delta<\frac12 \min_{i\neq j} \mathrm{dist}( \d \Dj,\d D^{(i)})$. 
%and set
%\begin{align}\label{tau0-Dj}
%    \tau(x)=\frac1N \sum_{j=1}^N \chi^{(j)} \tau_{D^{(j)}} (\frac{x}{\delta}),
%\end{align}
%where $\tau_{D^{(j)}}$ are defined by \eqref{tau0-D} with $D$ replaced by $D^{(j)}$.% and $\delta^{(j)}\leq \delta$ are chosen such that $\tau_{D^{(j)}}(\frac{x}{\delta^{(j)}})=\tau_B(\tilde\varphi^{(j)}(\frac{x}{\delta^{(j)}}))=e_1$ for $x$ with $\mathrm{dist}(x,\cup_{i\neq j}\d D^{(i)})>2\delta$. 

As an illustrative example, we consider the case where $\eta_{j,0}$ are positive constants, $D^{(j)}$ are discs with strictly increasing radii $r^{(j)}$ and with the origin as the center. There are different choices of initial nondegenerate regular vector fields, e.g.
\begin{itemize}
    \item For each $j=1,...,N$ let $\delta^{(j)}<\frac13\min(r^{(j+1)}-r^{(j)}, r^{(j)}-r^{(j-1)})$ with $r^{(0)}:=0$, let $\chi^{(j)}:\R^2\to[0,1]$ be a smooth cut-off function such that
$\chi^{(j)}(x) = \begin{cases}
    1, \, \hbox{if dist}(x, \d D^{(j)})<\delta^{(j)},  \\
    0, \, \hbox{if dist}(x, \d D^{(j-1)})<\delta^{(j-1)} \hbox{ or dist}(x, \d D^{(j+1)})<\delta^{(j+1)},
\end{cases} 
$ with $\sum_{j}\chi^{(j)}=1$,
   and  let $\tau^{(j)}(x)=\tau_B^{(j)}(\frac{x}{r^{(j)}})$, where $\tau_B^{(j)}(y)$ is defined as in \eqref{tau0-ball} with  $r=|y|$ replaced by $ 1-\frac{1-|y|}{\delta^{(j)}/{r^{(j)}}}$.
   Then $\tau_0(x)=\frac1N \sum_{j=1}^N \chi^{(j)} \tau^{(j)}$ is one choice, such that $\d_{\tau_0}\mu_0=0$ and $\|\nabla\bar \tau_0\|_{L^{2+\epsilon}}^{\frac{2+\epsilon}{\epsilon}}\sim \bigl(\sum_{j=1,\cdots,N}\frac{r^{(j)}}{(\delta^{(j)})^{1+\epsilon}}\bigr)^{\frac1\epsilon}$.
   
   This construction can be easily generalized to other more general cases where the profiles of different boundaries vary largely, such that the distance between two layers  play an important role in the construction and hence the estimates.

   \item Alternatively, we can simply connect $e_1|_{r\in [0,\frac18r^{(1)}]}, e_\theta|_{r\in [r^{(1)}, r^{(N)}]}, e_1|_{r\in [\frac{15}{8}r^{(N)},\infty)}$ smoothly, similarly as in \eqref{tau0-ball}, such that     $\|\nabla\bar\tau_0\|_{L^{2+\epsilon}}^{\frac{2+\epsilon}{\epsilon}}\sim\frac{1}{r^{(1)}}$.
   The smallness assumption \eqref{u0:cond}
   \begin{align}\label{u0:cond,ball}
       \Vert u_0 \Vert_{L^2(\R^2)}^{\frac{\epsilon}{2}} \cdot \bigl(\|u_0\|_{\dot H^{-1}(\R^2)}+\|\mu_0-1\|_{L^2(\R^2)}\|u_0\|_{L^2(\R^2)}\bigr)
\cdot\bigl(\|\nabla u_0\|_{L^2(\R^2)}+\frac{1}{r^{(1)}}\bigr)    \leq \tilde c_0,
   \end{align}
   implies the smallness of $u_0$, (only) in terms of   $\mu_\ast, \mu^\ast$, $\|\mu_0-1\|_{L^2}$ and $\frac{1}{r^{(1)}}$, but not of $r^{(j)}-r^{(i)}$ or $N$.
   That is, there can be arbitrarily many concentric discs and the  boundaries $\d D^{(j)}$, $j=1,...,N$ can be arbitrarily close. 
   
   The smallness condition \eqref{u0:cond,ball} is the smallness condition \eqref{u0:cond} for the  viscosity patch-type problem \eqref{vpatch-init} when   $\mu_0^+>0$ is a positive constant function, $\mu_0^-=1$ and $D=B_{r^{(1)}}$ is the disc with radius $r^{(1)}$ and with center at the origin.
\end{itemize} 

The density layer  problem for the density-dependent Navier-Stokes equations \eqref{NS} can be formulated similarly. We omit details here.

   \item The main observation that allows us to apply the methods used to study the system \eqref{muNS} to  the Boussinesq system  \eqref{eq:theta} and the density-dependent case \eqref{NS} is the validity of the corresponding $H^1(\R^2)$-energy estimates, which imply the $\|a\|_{W^{1,2+\epsilon}(\R^2)}$-estimate and finally the  $\|\nabla u\|_{L^\infty(\R^2)}$-estimate follows.
   \begin{itemize}
       \item For the Boussinesq equations the $H^1(\R^2)$-energy estimates hold for $a_\vartheta$, which is $a=R_\mu\omega$ corrected by $\mathcal{R}_{-1}\vartheta$ due to the additional buoyancy force $\vartheta e_2$ in \eqref{eq:theta}.
      As there is no regularity assumption on $\vartheta$, we do not have $H^1(\R^2)$-energy estimates for $a$ in this case.
The bound \eqref{B:cond} is inspired by the invariance of the quantities
\begin{align*}
    t^{\frac32-\frac1q}\|\vartheta_0\|_{L^q(\R^2)},\quad 
    \|u_0\|_{L^2(\R^2)},
    \quad t^{\frac12}\Bigl(\|\nabla u_0\|_{L^2(\R^2)}+\Vert (\nabla \taub_0, \d_{\taub_0}\mu_0) \Vert_{L^{2+\epsilon}(\R^2)}^\frac{2+\epsilon}{\epsilon}\Bigr),
\end{align*}
under the scaling
\begin{align*} (\vartheta_{\lambda}, u_{\lambda})(t,x)=(\lambda^{-3}\vartheta, \lambda^{-1}u) (\lambda^{-2}t, \lambda^{-1}x),\quad \lambda>0.
\end{align*}

      \item For the density-dependent case \eqref{NS} the $H^1(\R^2)$-energy estimates hold for the material derivative $\frac{D}{Dt} u=(\d_t +u\cdot\nabla)u$. 
      As there is no regularity assumption on $\rho$, we do not have $H^1(\R^2)$-energy estimates for $a$, which is related to $\frac{D}{Dt} u$ by $\nabla^\perp a=\mathbb{P}(\rho\frac{D}{Dt} u)$, with $\mathbb{P}$ denoting the Leray-Helmholtz projection on the divergence-free vector fields. 
      The left hand side of \eqref{NSu0:cond} is   invariant under the scaling 
      $$(\rho_{0,\lambda}, u_{0,\lambda})(x)=(\rho_0, \lambda^{-1}u_0)(\lambda^{-1}x),\quad \lambda>0.$$
   \end{itemize}  

%\item The density layer/patch problem for the density-dependent Navier-Stokes equations \eqref{NS} is   uniquely globally-in-time solvable under the   smallness assumption \eqref{NSu0:cond}, in the same spirit as for the viscosity layer/patch problem for \eqref{muNS}.
%Consequently, the solution solves the two-phase density-dependent Navier-Stokes equations with freely transported interfaces, e.g. \eqref{2pNS}. We omit  details here.
\end{enumerate}
        
%        \item 

%$\dot W^{1,q}$ regularity of the density is enough to ensure that, a priori, the regularity properties of $a=R_\mu \omega$ in (\ref{a:reg}) stay valid.
%It is however not clear whether the $H^1$-estimate for (possibly some modified) $a$ holds if the density function $\rho$ also has a jump in \eqref{NS}. We plan to consider this case in the future. 
%    \end{enumerate}
\end{remark}

To conclude this subsection, we review very briefly the progress in the   analysis  of the vortex-patch problem and the density-patch problem in   fluid mechanics:
\begin{itemize}
    \item Vortex-patch problem for the (classical) incompressible Euler equations with the initial vorticity $\omega_0=1_{D_0}$.
    
    J.-Y. Chemin's celebrated works \cite{chemin1998perfect,chemin1993persistance} confirm the regularity propagation of the domain boundary $\d D_0$ for all time, by use of a nondegenerate family of vector fields.
    See also A. L. Bertozzi and P. Constantin's work \cite{bertozzi1993global} from a more geometric viewpoint. Their strategy was also used recently to solve the regularity propagation of temperature-fronts for the Boussinesq equations \eqref{Boussinesq} in \cite{chae2022global}.
A thorough review of results on the two-dimensional vortex-patch problem can be found in \cite{gerard1992resultats}. See also \cite{gamblin1995three} for the problem in three space dimensions and \cite{fanelli2012conservation} for the inhomogeneous case. 
    \item Density-patch problem for the inhomogeneous Navier-Stokes equations with the initial density $\rho_0=1_{D_0}$. 

    In the case of constant viscosity coefficient $\mu=\nu>0$ and in the absence of vacuum with $\rho_0=\rho^+ 1_{D_0}+1_{D_0^C}$, $\rho^+>0$, it was proven by the first author and P. Zhang \cite{liao2019global, liao2016global} that the $W^{k+2,p}$-regularity of the interface $\d D_0$ is propagated throughout time, $k\in\N$, $p\in (2,4)$. A similar result was obtained by F. Gancedo and E. Garcia-Juarez in \cite{gancedo2018global} using bootstrapping arguments. 
The density-patch problem in a bounded domain was solved by R. Danchin and P. B. Mucha in \cite{danchin2019incompressible}. Specifically, they showed that the $C^{1,\alpha}$-regularity of the fluid-vacuum interface is preserved over time ($\alpha \in (0,1)$ in dimension two and $\alpha \in (0,\frac12)$ in dimension three). Very recently, an analogous result for the density-patch problem in $\R^2$ was obtained by T. Hao et al. \cite{hao2024density}. See also the earlier works \cite{danchin2017persistence, liao2016global} for a small density jump and \cite{liu2019global} for the three-dimensional case.

If $\mu$ is variable but close to a positive constant \eqref{small-visc} and the density is bounded away from zero, then global-in-time results were successfully obtained: M. Paicu and P. Zhang \cite{paicu2020striated} proved the propagation of $H^\frac52$-regularity, and F. Gancedo and E. Garcia-Juarez \cite{gancedo2023global} the propagation of $C^{1,\alpha}$-regularity, $\alpha \in (0,1)$, both in two space dimensions. 
\end{itemize}  
To the best of the authors' knowledge, the density patch problem for (\ref{NS}) with general viscosity which might have \textit{large jumps} was not addressed in the literature before. 

\subsection{Proof ideas for the global-in-time a priori estimates}\label{subs:proof}

We prove the global-in-time a priori estimates for \eqref{muNS} in three steps:
\begin{itemize}
    \item Step I. $L^2(\R^2)$-energy estimates for $u$ and    $H^1(\R^2)$-energy estimates for $a$ in terms of $ \nabla u\in {L^\infty(\R^2)}$;
    \item Step II. Time-independent Lipschitz estimate for $u$ in terms of $a\in W^{1,2+\epsilon}(\R^2)$, $ \nabla \taub, \d_{\taub}\mu, \nabla \d_{\taub} u\in L^{2+\epsilon}(\R^2)$;
    \item Step III. $L^1_t\textrm{Lip}(\R^2)$-bound for $u$ and the conclusion of  $H^1(\R^2)$-energy estimates for $a$.
\end{itemize}
%In the following we explain these steps and recall the related ideas and results in the literature one by one.
%In Section \ref{sect:outline} below we  formulate these estimates in a series of propositions in detail.
In the following we explain the main ideas.

\subsubsection{Step I. (Time-weighted) energy estimates}\label{subss:energy}

Smooth solutions of the density-dependent Navier-Stokes equations (\ref{NS}) in $d$ space dimensions, $d\geq 2$, come with the following energy balance
\begin{align}\label{energy}
    \int_{\R^d} \rho \vert u \vert^2 dx + \int_0^t \int_{\R^d} \mu \vert Su \vert^2 dxdt' = \int_{\R^d} \frac{\vert m_0 \vert^2}{\rho_0} dx.
\end{align}
In the above, $m_0$ denotes the initial momentum of the fluid. Based on this energy balance, P.-L. Lions \cite{lions1996mathematical} proved the global in time existence of weak solutions to (\ref{NS}) with finite energy in any space dimension $d\geq 2$. The uniqueness and regularity of such weak solutions are still open questions even in two space dimensions. 
Under the additional assumption that the viscosity jump is sufficiently small \eqref{small-visc} and the initial velocity belongs to $H^1(\mathbb{T}^2)$, B. Desjardins \cite{desjardins1997regularity} proved that the global weak solution $(\rho,u,\nabla \pi)$ of \cite{lions1996mathematical} on the two-dimensional torus $\mathbb{T}^2$ satisfies $u\in L^\infty_{\mathrm{loc}}([0,\infty);H^1(\mathbb{T}^2))$. % and $\rho \in L^\infty_{\mathrm{loc}}([0,\infty)\times\mathbb{T}^2)$. 
With additional regularity assumptions on the initial data he could also establish $u\in L^2([0,T_\ast];H^2(\mathbb{T}^2))$ for some short time $T_\ast$. However, these regularity results still do not give an answer to the uniqueness and regularity question.

In the same spirit, for the Navier-Stokes equations with freely transported viscosity coefficient (\ref{muNS}) we aim to establish
\begin{itemize}
    \item an energy balance similar to (\ref{energy}) as well as its time weighted version for  $\|(u, 
    {t'}^{(\frac12)_-}u)\|_{L^\infty_tL^2\cap L^2_t \dot H^1}$, by use of the initial data $u_0\in L^2\cap \dot H^{-1}(\R^2)$, $\mu_0-1\in L^2(\R^2)$;
    \item an $L^2$-estimate as well as its time weighted version for $\|(a,  {t'}^{\frac12}a, {t'}^{1_-}a)\|_{L^\infty_t L^2\cap L^2_t  \dot H^1 }$ in terms of $V(t):= \exp ( C\|\nabla u\|_{L^1_tL^\infty}  )$ 
    and the initial data $u_0\in H^1\cap \dot H^{-1}(\R^2)$, 
    based on the vorticity equation (\ref{intro:omega});
    \item a time-weighted $\dot H^1$-estimate  for $\|  t'^{\frac12}\nabla a \|_{L^\infty_t L^2\cap L^2_t  \dot H^1 }$ in terms of $V(t)$,  $\|t'^{\frac12}\nabla u\|_{L^2_tL^\infty}$ and the initial data $u_0\in \dot H^1(\R^2)$,
    based on the vorticity equation (\ref{intro:omega}).
\end{itemize}
The time-weighted estimate $\| {t'}^{(\frac12)_-}u \|_{L^\infty_tL^2\cap L^2_t \dot H^1}$  has   been established for the density-dependent Navier-Stokes equations (\ref{NS}) in e.g. \cite{abidi2015global2D, wiegner1987decay}; see also \cite{abidi2015global} for the three-dimensional case.
Roughly speaking, the strong decay assumption in the low frequency part $u_0\in \dot H^{-1}(\R^2)$ implies  stronger decay in time of the solution  $u$. A similar consideration applies to the time-weighted estimates for $a$.
Compared with the derivation of  classical energy estimates for $u$, due to the non-local representation of $a=R_\mu \omega$ (recalling \eqref{Rmu:def}) in terms of $\omega$, we have to make use of  commutator estimates for the Riesz transform, as well as the commutation relation $[\mu, \frac{D}{Dt}]=0$, that is the transport equation $\frac{D}{Dt}\mu=0$, when deriving energy estimates for $a$.
%Based on the vorticity equation (\ref{intro:omega}) we perform further (time-weighted) energy estimates to in particular achieve $a\in L^\infty_tH^1_x(\R^2)$, $\nabla a \in L^2_tH^1_x(\R^2)$ with norms bounded in terms of the $L^4_tL^4_x(\R^2)$-norm of the velocity gradient $\nabla u$. 
%The above energy estimates play an essential role in our analysis, see Subsection \ref{subss:energy} for details.
Notice that  in the energy estimates for $a$  we simply use the Lipschitz norm of the   velocity field $\|\nabla u\|_{L^1_tL^\infty}$ and $\|t'^{\frac12}\nabla u\|_{L^2_tL^\infty}$, instead of the classical $\|\nabla u\|_{L^4_tL^4}$-norm (see e.g. \cite{paicu2020striated}). 
Indeed, although a priori the initial lower and upper bounds $\mu_\ast, \mu^\ast$ for $\mu_0$ are transported by the free transport equation $\d_t\mu+u\cdot\nabla\mu=0$ as in \eqref{mu:bound}:
$$\mu_\ast\leq\mu(t,x)\leq\mu^\ast,$$
we can not control $\|\omega\|_{L^4(\R^2)}$ or $\|\nabla u\|_{L^4(\R^2)}$ by $\Vert a \Vert_{H^1(\R^2)}$ by use of $a=R_\mu\omega$  with only positive bounded $\mu$, unless we have regularity or small variation assumptions on $\mu$ (recalling \eqref{failure:Lp}).
See more discussions in Step II below.

The energy estimates for $a$ are not yet closed, and we discuss in Step II the (time-independent) Lipschitz  estimate for $u$ in terms of $\| a\|_{ W^{1,2+\epsilon}(\R^2)}$ and the tangential regularity. Finally, a bootstrap argument concludes the global-in-time estimates in Step III.

\subsubsection{Step II. The time-independent Lipschitz estimate}

It is well-known that for   evolution equations arising in fluid mechanics, the $L^1_t\mathrm{Lip}(\R^2)$-regularity of the fluid velocity is crucial for regularity theory. In order to obtain such an estimate we begin by establishing a \textit{time-independent} Lipschitz estimate for the velocity vector field, which is  key step.  

%\begin{paragraph}{A preliminary estimate in $L^{2+\epsilon}(\R^2)$.}
The main obstacle to derive the desired Lipschitz estimate is that one can not bound $\Vert \nabla u \Vert_{L^\infty(\R^2)}$ by $\Vert a \Vert_{H^2(\R^2)}$ (from the energy estimates in Step I) directly, and even worse, we can not control $\|\nabla u\|_{L^4(\R^2)}$  a priori by $\Vert a \Vert_{H^1(\R^2)}$ or $\Vert a \Vert_{L^4(\R^2)}$,  provided with   the a priori bound   $\mu_\ast\leq\mu(t,x)\leq\mu^\ast$, as mentioned above.  

Recall that the velocity gradient $\nabla u=\nabla \nabla^\perp\phi$ is related to $a$ by \eqref{Lmu:def}:
\begin{align}\label{Lmu,a}
    L_\mu \phi = \Delta a, \hbox{ with } L_\mu= (\d_{22}-\d_{11})\mu (\d_{22}-\d_{11})+(2\d_{12})\mu(2\d_{12}),
\end{align}
where $L_\mu$ is a fourth-order elliptic operator \eqref{ellipticity},
or equivalently, the velocity gradient $\nabla u=\nabla\nabla^\perp\Delta^{-1}\omega$ with $\omega=\Delta\phi$ denoting the vorticity is related to $a$ by \eqref{Rmu:def}:
\begin{align}\label{Rmu,a}
    \nabla u=R R^\perp\omega,\quad a=R_\mu\omega,
    \hbox{ with }R_\mu=(R_2R_2-R_1R_1)\mu(R_2R_2-R_1R_1) + (2R_1R_2)\mu (2R_1R_2). 
\end{align}
%where $R=\frac{\frac 1i\nabla}{\sqrt{-\Delta}}$ denotes the Riesz operator and $R^\perp:=\frac{\frac 1i\nabla^\perp}{\sqrt{-\Delta}}$.
%It is straightforward to derive an $L^2_x$-estimate for $\nabla u=\nabla\nabla^\perp\phi$, or equivalently for $\|\Delta\phi\|_{L^2_x}$, in terms of $\|a\|_{L^2_x}$ as follows:
%\begin{align}
%   &\mu_\ast\|\Delta\phi\|_{L^2_x} ^2=\mu_\ast \bigl(\|(\d_{22}-\d_{11})\phi \|_{L^2_x}^2+\|(2\d_{12})\phi \|_{L^2_x}^2\bigr)
%  \leq  \int_{\R^2}\mu \bigl( ( (\d_{22}-\d_{11})\phi )^2+(2\d_{12}\phi)^2\bigr) \,dx=\int_{\R^2} a\Delta\phi \,dx \nonumber
% \\
% &%\leq \|a\|_{L^2_x}\|\Delta\phi\|_{L^2_x}
% \Rightarrow \|\Delta\phi\|_{L^2_x}\leq \frac{1}{\mu_\ast}\|a\|_{L^2_x},\label{L2:omega,a}
%\end{align} 
%where in the above we tested \eqref{Lmu,a} by $\phi$, integrated by parts and used the fact that (for Schwartz functions)
%$$\|\Delta\phi\|_{L^2_x} ^2 = \int_{\R^2}\phi\cdot(\d_{11}+\d_{22})^2\phi \, dx
%= \int_{\R^2} \phi\cdot\bigl( (\d_{22}-\d_{11})^2  +(2\d_{12}\phi)^2\bigr)\phi \, dx= \|(\d_{22}-\d_{11})\phi \|_{L^2_x}^2+\|(2\d_{12})\phi \|_{L^2_x}^2.$$
%with the fourth order elliptic operator $L_\mu$ defined in (\ref{Lmu:def}) above and the stream function $\phi$, i.e. $u=\nabla^\perp \phi$. 
Given the failure of the $L^p(\R^2)$-estimate \eqref{failure:Lp}, we impose a certain tangential regularity assumption \eqref{TR:mu0} on the initial  viscosity $\mu_0$ with respect to some vector field $\tau_0$, aiming to obtain the Lipschitz estimate for the velocity by exploiting ellipticity and tangential regularity.   
Note that the discontinuity of $\mu$ in the normal direction $\tau_0^\perp$ is allowed.

In the past twenty years significant developments have been made in the study of elliptic and parabolic systems with rough coefficients, see e.g. the book \cite{krylov2008lectures}. 
H. Dong and D. Kim established in \cite{dong2011parabolic} $L^p$-estimates for solutions of higher order elliptic and parabolic systems %in various domains $\tilde \Omega \subset \R^d$, $d\geq 2$, 
  with so-called variably partially BMO coefficients, which in particular includes discontinuous coefficients which may have jumps in one direction and are continuous in the other directions. 
  Roughly speaking, this means that for every localized cylinder  there exists a local coordinate system such that the coefficients $\mu(y', y_d)$ are BMO with respect to the first $d-1$ components $y'\in \R^{d-1}$, while only measurable and bounded in the last component $y_d\in \R$. 
  This partial regularity in the coefficients implies then the regularity of the solution in $y'$, and finally the ellipticity (or parabolicity) of the equation   allows one to recover the desired regularity of the solution in $y_d$ as well.
%The basic idea of these $L^p$-estimates is the following: the partial regularity of the coefficients in the first $d-1$ directions a priori implies regularity of the solution in these directions. The ellipticity (or parabolicity) of the equation then allows one to recover the desired regularity of the solution in the last direction as well.

Observe that functions with tangential regularity, e.g. the initial data $\mu_0$ given in Theorem \ref{exthm}, fall into Dong-Kim's coefficient category.
Indeed, for the \textit{stationary} Navier-stokes equation, it was shown by use of Dong-Kim's results in \cite{he2020solvability}  that on a bounded $C^{1,1}$-domain $\Omega \subset \R^2$, given a weak solution $(\rho,u)\in L^\infty(\Omega;[0,\infty)) \times H^1( \Omega)$ satisfying appropriate boundary conditions 
%with the boundary value of $u$ on $\d  \Omega$ belonging to $W^{1,\infty}(\d  \Omega)$ and
and provided the coefficient $\mu$ has tangential regularity, we have
%the divergence-free part $\nabla^\perp a$ of $\div (\mu Su)$ (recalling the decomposition \eqref{divmuSu:decomp}) satisfies
\begin{align*}
    \nabla u \in L^p(\Omega) \quad \text{for any } p\in (1,\infty) ,
\end{align*}
(note that   $p=\infty$ can not be achieved by Dong-Kim's results).
%for general solutions $(\rho,u)$ %it is not true that  
Unfortunately, Dong-Kim's estimates for $L_\mu\phi=\Delta a$ can not give the explicit dependence on the tangential regularity of the coefficient $\mu$, which is extremely important for us since the tangential regularity also evolves in time and should be tracked. We follow the essential idea to separate the ``good" and ``bad" directions, but in a more transparent way,  below.

\begin{lemma}[Decomposition of $L_\mu$ in tangential and normal directions in terms of ``good unknown'' $\alpha$]\label{lemma:tau,n}
Let $\tau=\begin{pmatrix}
    \tau_1\\ \tau_2
\end{pmatrix}(x)$ %\in L^\infty\cap \dot W^{1,p}(\R^2; \R^2)$,  for some $p\in (2,\infty)$ with $\frac{1}{|\tau|}\in L^\infty(\R^2)$ 
be a regular nondegenerate vector field such that
\begin{align}\label{tau}
    \tau\in L^\infty(\R^2;\R^2), \quad \nabla\tau\in L^p(\R^2;\R^{2\times2}),\hbox{ for some }p\in (2,\infty),\quad \frac{1}{|\tau|}\in L^\infty(\R^2).
\end{align}  
We introduce correspondingly
\begin{itemize}
    \item The unit tangential and normal vectors
\begin{equation}
    \overline{\tau}=\frac{\tau}{\vert\tau\vert}=\begin{pmatrix}
         \frac{\tau_1}{|\tau|}\\ \frac{\tau_2}{|\tau|}
    \end{pmatrix}=:\begin{pmatrix}
        \taub_1\\ \taub_2
    \end{pmatrix},
    \quad n=\overline{\tau}^\perp=\frac{\tau^\perp}{\vert \tau \vert}=\begin{pmatrix}
        -\frac{\tau_2}{|\tau|}\\ \frac{\tau_1}{|\tau|}
    \end{pmatrix}=\begin{pmatrix}
        -\taub_2\\ \taub_1
    \end{pmatrix},
\end{equation}
and their tensor products
\begin{align*}
    \overline{\tau}\otimes \overline{\tau} = \begin{pmatrix}
       \taub_1^2 & \taub_1\taub_2 \\ \taub_1\taub_2 & \taub_2^2
     \end{pmatrix}, \quad 
    n\otimes n =   
    \begin{pmatrix}
        \taub_2^2 & -\taub_1\taub_2 \\ -\taub_1\taub_2 & \taub_1^2
    \end{pmatrix}, \quad 
    \overline{\tau}\otimes n = (n\otimes \overline{\tau})^T = 
    \begin{pmatrix}
        -\taub_1\taub_2 & \taub_1^2 \\ -\taub_2^2 & \taub_1\taub_2
    \end{pmatrix} .
\end{align*}
\item The associated directional differential operators 
\begin{align}\label{dtaun:def}
    \d_{\overline{\tau}}=\overline{\tau}\cdot \nabla, \quad  \d_n=n\cdot \nabla, 
\end{align}
and their adjoint operators 
\begin{align}\label{dtaun:def,ast}
    \d_{\overline{\tau}}^\ast = -\div \overline{\tau}  , \quad  \d_n^\ast=-\div n, 
\end{align}
where the operator $\div v$ is understood as $\div v (f)=\div(v f)=\sum_{j=1}^2\d_{j}(v_j f)$, for $v=\overline{\tau}$, $n$.
%We can reformulate the usual differential operators in terms of $\d_{\overline{\tau}}, \d_n, \d_{\overline{\tau}}^\ast,  \d_n^\ast$ as follows.
\end{itemize} 
Then the following formulas hold
\begin{enumerate}
    \item 
    \begin{enumerate}
        \item \label{nabla:tau,n} $\nabla=\overline{\tau}\d_{\overline{\tau}}+n\d_n=-\d_{\overline{\tau}}^\ast \, \overline{\tau}  -\d_n^\ast\, n$  and $\nabla^\perp=n\d_{\overline{\tau}}-\overline{\tau}\d_n=-\d_{\overline{\tau}}^\ast \, n  +\d_n^\ast\,  \overline{\tau}$.
        More precisely,  
        \begin{align}\label{dtaun:formula}
    \begin{array}{ll}
        \d_{1} = \taub_1 \dtau - \taub_2\d_n & =-\dtau^\ast(\taub_1\cdot) + \d_n^\ast(\taub_2\cdot), \\
        \d_{2} = \taub_2\dtau +\taub_1 \d_n & = -\dtau^\ast(\taub_2\cdot) - \d_n^\ast(\taub_1\cdot),
    \end{array}
\end{align}
   %     \item $\nabla^\perp=n\d_{\overline{\tau}}-\overline{\tau}\d_n=-\d_{\overline{\tau}}^\ast \, n  +\d_n^\ast\,  \overline{\tau} ,$
        \item \label{nLaplace} 
        $\Delta=\nabla\cdot\nabla=-\d_{\overline{\tau}}^\ast \, \d_{\overline{\tau}} -\d_n^\ast\,  \d_n $ and $n\Delta =\dtau \nabla^\perp + \d_n\nabla,$
        \item \label{nabpnab} $\nabla^\perp\otimes\nabla =-\d_{\overline{\tau}}^\ast(n\otimes \overline{\tau} )\d_{\overline{\tau}} -\d_{\overline{\tau}}^\ast(n \otimes n )\d_n + \d_n^\ast( \overline{\tau} \otimes  \overline{\tau})\d_{\overline{\tau}} +\d_n^\ast(\overline{\tau}  \otimes n)\d_n.$ 
    \end{enumerate}
\item Let $\mu\in L^\infty(\R^2)$, and denote the operator
$$L_\mu \phi  = (\nabla^\perp \otimes \nabla) : (\mu S\nabla^\perp\phi),
\hbox{ with }S\nabla^\perp\phi=\nabla\nabla^\perp\phi+(\nabla\nabla^\perp\phi)^T=\begin{pmatrix}
        -2\d_{12}\phi & (\d_{11}-\d_{22})\phi\\
        (\d_{11}-\d_{22})\phi & 2\d_{12}\phi
    \end{pmatrix}.$$ 
    \begin{itemize}
   \item  We can reformulate the operator $L_\mu$ as follows
    \begin{align}\label{Lmu:taun}
    \begin{split}
        L_\mu \phi &= -\dtau^\ast\brl(\taub_2^2-\taub_1^2)\dtau\omega_1\brr - \dtau^\ast \brl 2\taub_1\taub_2 \d_n \omega_1\brr  - \d_n^\ast\brl2\taub_1\taub_2 \dtau\omega_1\brr + \d_n^\ast\brl(\taub_2^2-\taub_1^2)\d_n \omega_1\brr \\
        &\quad - \dtau^\ast \brl2\taub_1\taub_2 \dtau\omega_2\brr + \dtau^\ast \brl(\taub_2^2-\taub_1^2) \d_n\omega_2\brr  + \d_n^\ast\brl(\taub_2^2-\taub_1^2)\dtau\omega_2\brr + \d_n^\ast\brl2\taub_1\taub_2 \d_n\omega_2\brr,
    \end{split}
    \end{align}
    where we denote
    \begin{equation*}
        \omega_1=\mu(\d_{22}-\d_{11})\phi,
        \quad \omega_2=\mu 2\d_{12}\phi,
        \quad \hbox{ such that }\quad \mu S\nabla^\perp\phi=\begin{pmatrix}
            -\omega_2& -\omega_1\\
            -\omega_1&\omega_2
        \end{pmatrix}.
    \end{equation*}

  \item  We can furthermore decompose $L_\mu\phi$ into  
\begin{align}\label{alpha:eq}
\begin{split}
    L_\mu\phi  =& \d_n^\ast\d_n \alpha +L_\mu^\tau\phi,
\end{split}
\end{align}
where
\begin{align*}
    L_\mu^\tau\phi
    =&- \dtau^\ast \brl(\taub_2^2-\taub_1^2) \dtau\omega_1+2\taub_1\taub_2 \dtau\omega_2\brr - 2\d_n^\ast\brl2\taub_1\taub_2 \dtau\omega_1  -   (\taub_2^2-\taub_1^2) \dtau \omega_2\brr \\
    & - \d_1\brl \d_2(2\taub_1\taub_2)\omega_1-\d_2 (\taub_2^2-\taub_1^2)\omega_2 \brr 
    + \d_2\brl \d_1(2\taub_1\taub_2)\omega_1-\d_1(\taub_2^2-\taub_1^2)\omega_2\brr   \\
    & - \d_n^\ast \brl\d_n(\taub_2^2-\taub_1^2)\omega_1 + \d_n(2\taub_1\taub_2) \omega_2\brr \\
    =& \nabla\cdot \Bigl( \bigl(\taub(\taub_2^2-\taub_1^2)+2n(2\taub_1\taub_2) \bigr)\dtau \omega_1
    + \bigl(\taub 2\taub_1\taub_2 +2n(\taub_2^2-\taub_1^2) \bigr)\dtau \omega_2\Bigr) \\
    &+\nabla^\perp\cdot\Bigl(- \omega_1\nabla (2\taub_1\taub_2) +\omega_2\nabla(\taub_2^2-\taub_1^2)\Bigr) 
    +\nabla\cdot\brl\omega_1  \,\d_n(\taub_2^2-\taub_1^2) n + \omega_2\, \d_n(2\taub_1\taub_2) n\brr.
\end{align*}
In the above,  we denote
$$\alpha=(\taub_2^2-\taub_1^2) \omega_1 +2\taub_1\taub_2 \omega_2 =(\taub_2^2-\taub_1^2) \mu(\d_{22}-\d_{11})\phi+2\taub_1\taub_2 \mu(2\d_{12})\phi,$$
that is, 
\begin{align}\label{alpha:def}
    \alpha = (\overline{\tau} \otimes n): (\mu S\nabla^\perp\phi),
\end{align}
  or equivalently, \begin{align}\label{alpha:reform0} 
 \alpha  = -\mu\Delta \phi + 2\mu (\overline{\tau}\cdot \dtau \nabla \phi),
\end{align} 
which implies the relation between $\d_n\nabla^\perp\phi$ and $\alpha$ below (if $\mu\neq0$)  \begin{align}\label{alpha:reform} 
 \d_{n}\nabla^\perp \phi =  \frac{\alpha}{\mu}   \overline{\tau}  - 2 (\overline{\tau} \cdot \dtau  \nabla \phi)   \overline{\tau}  +  \dtau \nabla  \phi .
\end{align} 
 
\item Define $a$ as in \eqref{Lmu:def}: $\Delta a=L_\mu\phi$, then we have the following relation
\begin{align}\label{eq:a,alpha}
    \nabla (a+\alpha)
    =&R R\cdot\Bigl( \taub\dtau \alpha + \bigl(\taub(\taub_2^2-\taub_1^2)+2n(2\taub_2\taub_2) \bigr)\dtau \omega_1
    + \bigl(2\taub_2\taub_2 +2n(\taub_2^2-\taub_1^2) \bigr)\dtau \omega_2\Bigr) \\
    &+R R^\perp\cdot\Bigl(- \omega_1\nabla(2\taub_2\taub_2)+\omega_2\nabla(\taub_2^2-\taub_1^2)\Bigr) 
    +RR\cdot\brl \omega_1  \,\d_n(\taub_2^2-\taub_1^2) n + \omega_2\, \d_n(2\taub_2\taub_2) n\brr,\nonumber
\end{align}
where $R=\frac{\frac1i\nabla}{\sqrt{-\Delta}}$ denotes the Riesz operator.
Here the equality can be  understood in $L^p(\R^2)$ if $a,\alpha\in W^{1,p}(\R^2)$, $\dtau\mu, \dtau\nabla^2\phi, \nabla\taub\in L^p(\R^2)$ and $\mu, \nabla^2\phi\in L^\infty(\R^2)$. 
\end{itemize}
\end{enumerate}
\end{lemma}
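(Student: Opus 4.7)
The statement is a long sequence of algebraic identities, and my strategy is to exploit the fact that at every point $\{\bar\tau(x), n(x)\}$ is an orthonormal basis of $\R^2$, so every first-order operator and every $2\times2$ matrix can be decomposed in this moving frame. For Part 1(a), I would start from the pointwise identity $\nabla f = (\bar\tau\cdot\nabla f)\bar\tau + (n\cdot\nabla f)n = \bar\tau\dtau f + n\d_n f$, which gives the componentwise formulas $\d_j = \bar\tau_j\dtau + n_j\d_n$. The ``adjoint'' form $\nabla = -\dtau^\ast\bar\tau - \d_n^\ast n$ follows by pairing against a smooth test function and integrating by parts, using the definitions $\dtau^\ast = -\div\,\bar\tau$, $\d_n^\ast = -\div\, n$. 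The analogous decomposition of $\nabla^\perp = n\dtau - \bar\tau\d_n = -\dtau^\ast n + \d_n^\ast \bar\tau$ is obtained by rotating $\bar\tau \mapsto n$, $n\mapsto -\bar\tau$. Part 1(b) then follows by writing $\Delta = \nabla\cdot\nabla$ with one factor in the ``primal'' and one in the ``dual'' form, exploiting $|\bar\tau|^2=|n|^2=1$ and $\bar\tau\cdot n=0$ to kill the cross terms. Part 1(c) is a double substitution: take the tensor product of the dual form of $\nabla^\perp$ and the primal form of $\nabla$, and collect the four resulting terms.

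For Part 2, I would first encode the viscous stress as $M := \mu S\nabla^\perp\phi = \begin{pmatrix}-\omega_2 & -\omega_1\\ -\omega_1 & \omega_2\end{pmatrix}$ and compute the four scalar contractions that appear in Part 1(c):
\begin{align*}
(n\otimes\bar\tau):M &= (\bar\tau\otimes n):M = (\taub_2^2-\taub_1^2)\omega_1 + 2\taub_1\taub_2\,\omega_2 = \alpha,\\
(\bar\tau\otimes\bar\tau):M &= -(n\otimes n):M = (\taub_2^2-\taub_1^2)\omega_2 - 2\taub_1\taub_2\,\omega_1,
\end{align*}
where the first equality in each line uses symmetry of $M$, and the sign in the second line uses tracelessness of $M$ (equivalent to $\div u=0$). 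Substituting these into the Part 1(c) formula and letting each outer $\dtau^\ast, \d_n^\ast$ act on the corresponding $\dtau M$ or $\d_n M$ directly gives \eqref{Lmu:taun}, after expanding $(\cdot\otimes\cdot):\dtau M$ and $(\cdot\otimes\cdot):\d_n M$ componentwise against $\omega_1$, $\omega_2$.

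For the splitting \eqref{alpha:eq}, I would isolate the purely normal piece $\d_n^\ast[(\bar\tau\otimes n):\d_n M] = \d_n^\ast[(\taub_2^2-\taub_1^2)\d_n\omega_1 + 2\taub_1\taub_2\,\d_n\omega_2]$, and then rewrite via Leibniz as $\d_n^\ast\d_n\alpha - \d_n^\ast[\d_n(\taub_2^2-\taub_1^2)\,\omega_1 + \d_n(2\taub_1\taub_2)\,\omega_2]$; the subtracted pieces carry derivatives of $\bar\tau$ and form part of $L_\mu^\tau\phi$. Every other term in \eqref{Lmu:taun} already carries a tangential derivative $\dtau$ on $\omega_1$ or $\omega_2$, so it belongs to $L_\mu^\tau\phi$; the second displayed divergence-form expression then follows by applying the identities $\dtau^\ast = -\div\,\bar\tau$, $\d_n^\ast = -\div\, n$ and the Cartesian decompositions $\d_j = \taub_j\dtau - (n\cdot e_j)\d_n$ from Part 1(a) to convert $(\dtau^\ast,\d_n^\ast)$ back into $(\d_1,\d_2, \nabla^\perp)$. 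The alternative forms of $\alpha$ come from a direct expansion: using $\taub_1^2+\taub_2^2=1$, so $2\taub_1^2-1 = -(\taub_2^2-\taub_1^2)$ and likewise for $\taub_2^2$, one checks $-\mu\Delta\phi + 2\mu\,\bar\tau\cdot\dtau\nabla\phi = (\taub_2^2-\taub_1^2)\mu(\d_{22}-\d_{11})\phi + 2\taub_1\taub_2\,\mu(2\d_{12})\phi = \alpha$, and then \eqref{alpha:reform} is verified componentwise by substituting $n=(-\taub_2,\taub_1)^T$ into $\d_n\nabla^\perp\phi$ and comparing.

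Finally, for \eqref{eq:a,alpha}, start from $\Delta a = L_\mu\phi = \d_n^\ast\d_n\alpha + L_\mu^\tau\phi$ and use Part 1(b) in the form $\d_n^\ast\d_n = -\Delta - \dtau^\ast\dtau$ to obtain
\[
\Delta(a+\alpha) = -\dtau^\ast\dtau\alpha + L_\mu^\tau\phi = \div\bigl(\bar\tau\dtau\alpha\bigr) + L_\mu^\tau\phi,
\]
since $-\dtau^\ast\dtau\alpha = \div(\bar\tau\,\bar\tau\cdot\nabla\alpha)$. The right-hand side is now in the divergence form $\nabla\cdot F + \nabla^\perp\cdot G$ read off from the second expression for $L_\mu^\tau\phi$. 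Applying $\nabla\Delta^{-1}$ and using the Riesz identities $\nabla\Delta^{-1}\nabla\cdot = RR\cdot$ and $\nabla\Delta^{-1}\nabla^\perp\cdot = RR^\perp\cdot$ (which are immediate from the Fourier symbols, given the paper's sign convention $R = \tfrac{\nabla/i}{\sqrt{-\Delta}}$) yields \eqref{eq:a,alpha}. The $L^p$ interpretation is standard once $a,\alpha\in W^{1,p}$ and $\mu,\nabla^2\phi\in L^\infty$, $\dtau\mu,\dtau\nabla^2\phi,\nabla\bar\tau\in L^p$, since the Riesz transforms are bounded on $L^p$ for $1<p<\infty$. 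The main obstacle throughout is bookkeeping — keeping careful track of where Leibniz produces derivatives of $\bar\tau$ (which are tangential-regularity friendly) versus derivatives of $\omega_1,\omega_2$ — rather than any conceptual difficulty.
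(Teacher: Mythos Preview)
Your overall architecture matches the paper's: the moving-frame decomposition for Part 1, substituting \eqref{nabpnab} into $L_\mu\phi=(\nabla^\perp\otimes\nabla):M$ for \eqref{Lmu:taun}, Leibniz to extract $\d_n^\ast\d_n\alpha$, and the identity $\d_n^\ast\d_n=-\Delta-\dtau^\ast\dtau$ to reach \eqref{eq:a,alpha}. Your contraction computations $(\bar\tau\otimes n):M=(n\otimes\bar\tau):M=\alpha$ and $(\bar\tau\otimes\bar\tau):M=-(n\otimes n):M$ are correct, as is the derivation of \eqref{eq:a,alpha} from $\Delta(a+\alpha)=\div(\bar\tau\dtau\alpha)+L_\mu^\tau\phi$.

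There is one concrete gap. Your sentence ``Every other term in \eqref{Lmu:taun} already carries a tangential derivative $\dtau$ on $\omega_1$ or $\omega_2$'' is false: the two cross terms
\[
-\dtau^\ast\bigl(2\taub_1\taub_2\,\d_n\omega_1\bigr)
\quad\text{and}\quad
+\dtau^\ast\bigl((\taub_2^2-\taub_1^2)\,\d_n\omega_2\bigr)
\]
carry $\d_n\omega_j$, not $\dtau\omega_j$. These cannot simply be absorbed into $L_\mu^\tau\phi$ as written; they must first be converted. The paper does this via the commutator identity
\[
\dtau^\ast(f\,\d_n g)-\d_n^\ast(f\,\dtau g)
=-\d_1(f\,\d_2 g)+\d_2(f\,\d_1 g)
=\d_1\bigl((\d_2 f)g\bigr)-\d_2\bigl((\d_1 f)g\bigr),
\]
which swaps $\dtau^\ast(\cdot\,\d_n)$ for $\d_n^\ast(\cdot\,\dtau)$ at the cost of curl-type terms with derivatives landing on $f$ (the $\bar\tau$-coefficients). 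This is exactly why the $\d_n^\ast$ bracket in $L_\mu^\tau\phi$ picks up a factor of $2$ and why the $\nabla^\perp\cdot(-\omega_1\nabla(2\taub_1\taub_2)+\omega_2\nabla(\taub_2^2-\taub_1^2))$ term appears. Without this step the bookkeeping does not close, and one cannot conclude that $L_\mu^\tau\phi$ has the claimed structure (all derivatives on $\omega_j$ are tangential, or the derivative falls on $\bar\tau$). Once you insert this identity your plan goes through and coincides with the paper's proof.
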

The formulas \eqref{nabla:tau,n}-\eqref{nabpnab} in the first statement follow from straightforward calculations.
The formula \eqref{Lmu:taun} follows from \eqref{nabpnab} directly.
We derive \eqref{alpha:eq} from \eqref{Lmu:taun}, by applying the following commutator identities (with appropriately chosen $f,g$)  to \eqref{Lmu:taun}:
\begin{align*}
    &\dtau^\ast ( f\d_n g)-\d_n^\ast(f\dtau g)=-\d_1(f\d_2g)+\d_2(f\d_1g)=\d_1((\d_2 f) g)-\d_2((\d_1 f) g),\\
    &\d_n^\ast (f\d_n g)-\d_n^\ast\d_n(fg)=-\d_n^\ast((\d_n f)g).
\end{align*}
%and noticing $\alpha  =\frac{\tau_2^2-\tau_1^2}{\vert\tau\vert^2}\omega_1
     %\mu(\d_{22}-\d_{11})\phi 
%     + \frac{2\tau_1\tau_2}{\vert\tau\vert^2}\omega_2$ by   the definitions of $\omega_1, \omega_2$.
     The reformulation \eqref{alpha:reform0} follows from \eqref{alpha:def} by direct computation and the relation \eqref{alpha:reform} follows from   \eqref{alpha:reform0} and \eqref{nLaplace}.
     Finally, \eqref{nLaplace} and  \eqref{alpha:eq} implies $\Delta a=L_\mu\phi=-\Delta \alpha-\dtau^\ast\dtau\alpha+L_\mu^\tau\phi$ and hence \eqref{eq:a,alpha} follows.
This completes the proof of Lemma \ref{lemma:tau,n}.
\smallbreak 
\begin{figure}
\centering
\begin{tikzpicture}
    \node[] at (4,0) (alpha) {$\alpha$};
    \node[] at (0,0) (a) {$a=R_\mu \omega$};
    \node[] at (0,2) (gradu) {$\nabla u=\nabla\nabla^\perp\phi=RR^\perp\omega$};
    \node[] at (4,2) (dnu) {$\d_n u=\d_n\nabla^\perp\phi$};
    \draw[->] (a.east) -- node [below] {%(\ref{alpha:eq}): $\d_n^\ast\d_n\alpha=\Delta a+\cdots$
    Step 3} (alpha.west);
    \draw[->] (dnu.west) -- node [above] {Step 1} (gradu.east) ;
    \draw[->] (alpha.north) -- node [right] {Step 2 } (dnu.south);
    \draw[dashed,->] (a.north) -- node [left] {$L^\infty$} (gradu.south);
    %\draw[->] (a.west) edge[bend left=90] node [left] {} (gradu.west);
%    \draw[->] (-0.2,0) .. controls (-0.6,0.4) and (-1.3,1) .. node [left] {$L^{2+\epsilon}$} (gradu.west);
\end{tikzpicture}
\caption{Idea of the proof of Proposition \ref{u:Lip:prop}. }
\label{diagram}
\end{figure}
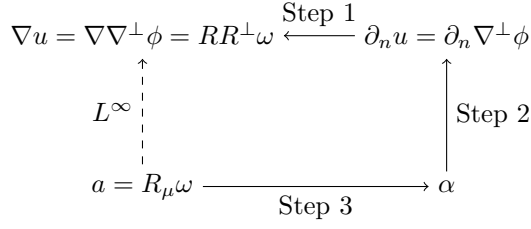
%%%%%%%%%%%%%%%%%%%%%%%%%%%%%%%%%%%%%%%%%%%%%%%%%%%%%%%%%%%

Making use of Lemma \ref{lemma:tau,n} we can derive   the following $L^\infty(\R^2)$-bound for $\nabla u=\nabla\nabla^\perp\phi$ in terms of $\omega=\Delta\phi, a=R_\mu\omega$ and the tangential regularity (see Proposition \ref{u:Lip:prop} below):
\begin{align}\label{intro:u,Lip}
    \Vert \nabla u \Vert_{L^\infty(\R^2)} \leq C(p) \Vert \omega \Vert_{L^p(\R^2)}^{1-\frac2p} \Bigl(\Vert \nabla a \Vert_{L^p(\R^2)} +  \Vert (\nabla \taub,  \dtau \mu)\Vert_{L^p(\R^2)} \Vert \nabla u \Vert_{L^\infty(\R^2)} 
     + \Vert \dtau \omega \Vert_{L^p(\R^2)}
    \Bigr)^ \frac{2}{p}.%\quad\hbox{where }p\in (2,\infty);
\end{align}
To prove \eqref{intro:u,Lip} we start with the bound for the ``good" direction   in terms of the tangential regularity:
$$\|\dtau\nabla^2\phi\|_{L^p}, \|\nabla\dtau\nabla\phi\|_{L^p},
\|\dtau\nabla u\|_{L^p},
\|\nabla\dtau u\|_{L^p}\lesssim \Vert \nabla\taub \Vert_{L^\infty  } \Vert \nabla u \Vert_{L^\infty} + \Vert \dtau  \omega \Vert_{L^p},$$
 by use of commutator estimates.
Now, with the relations  \eqref{alpha:reform}, \eqref{eq:a,alpha} between $\alpha$ and  $\d_n u$,   $a$ respectively, we can derive the Lipschitz estimate for the velocity $u=\nabla^\perp\phi$ following the steps illustrated in  Figure \ref{diagram} (it is not possible to control $\nabla u$ by $a$ in $L^\infty$ directly):  
\begin{itemize}
         \item Step 1. It remains to control $\|\d_nu\|_{L^\infty}$, since the control in the ``good" direction $\|\dtau u\|_{L^\infty}$ follows from the interpolation between $\|\dtau u\|_{L^p}\sim \|\omega\|_{L^p}$ and $\|\nabla\dtau u\|_{L^p}$, which is the righthand side of \eqref{intro:u,Lip}.
         \item Step 2. It remains to control $\|\alpha\|_{L^\infty}$, by view of the expression \eqref{alpha:reform} of $\d_n u$ in terms of $\alpha$ and $\dtau u$.
         %$\d_n\nabla\phi=-\frac{\alpha}{\mu}n+ 2 (\overline{\tau} \cdot \dtau  \nabla \phi)   n  -  \dtau \nabla^\perp \phi$;
         \item Step 3. The control on $\|\alpha\|_{L^\infty}$ follows from $\nabla a\in L^p$ and the   tangential regularity  by \eqref{eq:a,alpha}.
     \end{itemize}  
   
We  later take   $p=2+\epsilon$ (see Corollary \ref{coro} below),   with $\epsilon>0$ given in Lemma \ref{Rmu-inv:prop}, since we have to estimate the $L^{2+\epsilon}$-norm of $\omega, \dtau\omega$ in \eqref{intro:u,Lip} in terms of $a, \dtau a$, respectively,  
where the boundedness of $R_\mu^{-1}$ in $L^{2+\epsilon}$ is used.

We remark that although one can simply perform Young's inequality in \eqref{intro:u,Lip} to get a uniform bound for $\|\nabla u\|_{L^\infty(\R^2)}$, we don't do so since $\|\nabla \taub\|_{L^p(\R^2)}$ grows exponentially in (the time integration of) $\|\nabla u\|_{L^\infty(\R^2)}$ (recalling \eqref{tau:exp}).
Instead, we use the smallness assumption \eqref{u0:cond} to close the bootstrap argument in Step III.

%By taking $p=2+\epsilon$ in  Proposition  \ref{u:Lip:prop}  we obtain the following time-independent Lipschitz estimate for the fluid velocity.
%The proof makes use of commutator estimates and Lemma \ref{Rmu-inv:prop}, and is found in Section  \ref{sect:Lip}.

%\end{paragraph}

\subsubsection{Step III. The $L^1_t\textrm{Lip}(\R^2)$-estimate}

After establishing the time-independent Lipschitz estimate for the velocity \eqref{intro:u,Lip}, we conclude the uniform-in-time bound for $\|\nabla u\|_{L^1_tL^\infty_x}$ by a bootstrap argument.
%need to make sure that the $L^1_t$-in-time norm of the Lipschitz-in-space norm $\Vert \nabla u \Vert_{L^\infty(\R^2)}$ can be bounded uniformly in time. To do so, we will employ 

Recall 
\begin{itemize} 
    \item Time-weighted energy estimates for $u$ and $a$ from Step I, which imply the estimates for $\|a\|_{L^1_tW^{1,2+\epsilon}}$ and $\|t'^{\frac12}a\|_{L^2_t W^{1,2+\epsilon}}$ in terms of $\|\nabla u\|_{L^1_tL^\infty}$ and $\|{t'}^{\frac12}\nabla u\|_{L^2_tL^\infty}$;
    \item Time-independent Lipschitz estimate \eqref{intro:u,Lip} from Step II.
    \item $L^{2+\epsilon}$-estimate for $\nabla\tau(t,\cdot)$ in \eqref{tau:exp}, which depends linearly on $\|\nabla\d_\tau u\|_{L^1_t L^{2+\epsilon}}$ (which can be bounded by $\|\nabla a\|_{L^1_t L^{2+\epsilon}}$ up to $\int^t_0 \|\nabla\tau\|_{L^{2+\epsilon}}\|(\nabla u, \nabla a)\|_{L^\infty} \,dt'$) and  exponentially   on $\Vert \nabla u \Vert_{L^1_tL^\infty}$. 
\end{itemize}
%We thus achieve a bound of the form
%\begin{align*}
%    \Vert \nabla u \Vert_{L^1([0,t];L^\infty(\R^2))} &\leq \Vert u_0 \Vert_{L^2(\R^2)}^\bsexp A\exp(B\exp(C \Vert \nabla u \Vert_{L^1([0,t];L^\infty(\R^2))}))
%\end{align*}
%where the constants $A,B,C$ depend only on the initial data and $\bsexp>0$ depends on $\epsilon$ from Lemma \ref{Rmu-inv:prop}. Using the smallness condition on the initial velocity \eqref{u0:cond} and a bootstrap argument, we can finally close the estimate.
%
%Next, we include the time decay in our Lipschitz estimate.  
In order to close the estimate for the scaling-invariant quantity $\Vert \nabla u \Vert_{L^1_tL^\infty}$, we make use of the scaling-invariant smallness condition \eqref{u0:cond}.
However, since  the norms $\|u_0\|_{L^2}$ and $\|u_0\|_{\dot H^{-1}}$, which appear both in the time-weighted estimate for $\|(t')^{(\frac12)_-}u\|_{L^\infty_t L^2}$, do not share the same scaling, it turns out to be more convenient to consider directly the rescaled solution.
See Subsection \ref{sect:L1Lip} for more details.

\bigbreak

\noindent\textbf{Organization of the paper.} 
The remainder of this paper is structured as follows.

In Section \ref{sect:aprioripf} we first establish the a priori estimates mentioned in Subsection \ref{subs:proof} step by step, and afterwards we prove Theorem \ref{exthm} and Corollary \ref{propthm}. 

The proof of Lemma \ref{Rmu-inv:prop} is given in Appendix \ref{sect:intro-proofs}. 
Some commutator estimates involving $L^\infty$-norms are proved in   Appendix \ref{sect:proof,comm}.
Finally in Appendix \ref{sect:u-en}  we show  the decay estimates for the fluid velocity.

\bigbreak

\noindent\textbf{Notation. }
Throughout this article we denote by $\dot f\equiv \frac{D}{Dt}f=\d_tf+u\cdot\nabla f$ the material derivative of a function $f$. For a vector field $\tau$ we write $\d_\tau=\tau\cdot \nabla$ for the directional derivative along $\tau$. For $t>0$ and $p,q\in [1,\infty]$ we denote $L_t^pL^q = L^p([0,t];L^q(\R^2))$ and $L^pL^q = L^p([0,\infty);L^q(\R^2))$. We denote $L^p(\R^2;\R^n)$ simply by $L^p(\R^2)$ for $n\in\N$, if the dimension $n$ is clear from the context, with norm $\Vert\cdot\Vert_{L^p} = \Vert\cdot \Vert_{L^p(\R^2;\R^n)}$. The commutator of two operators $A$ and $B$ is defined as $[A,B]=AB-BA$. Moreover, $\nabla^\perp= \begin{pmatrix}
    -\d_2 \\ \d_1
\end{pmatrix}$ and $v^\perp = \begin{pmatrix}
    -v_2 \\ v_1
\end{pmatrix}$, $v\in\R^2$, indicate a rotation of the vector in the plane by ninety degrees. 
We denote the exponential growth in the time integration of the velocity gradient by 
\begin{align}\label{V:def}
    V(t):=\exp\Bigl(C \Vert \nabla u  \Vert_{L^1_tL^\infty} \Bigr),
    \hbox{ and } \tilde V(t):=V(t)\exp\Bigl(C  \Vert t'^{\frac12}\nabla u  \Vert_{L^2_tL^\infty}  \Bigr).
\end{align}
Here and in what follows $C$ denotes some positive constant, which may depend only on $\mu_\ast, \mu^\ast$ and may vary from line to line.
Lastly, we denote $\langle t \rangle = e+t$ for times $t\in [0,\infty)$.

%\section{Proofs of the A Priori Estimates and the Main Results}\label{sect:aprioripf}
\section{Proofs}\label{sect:aprioripf}

The goal of this section is to prove Theorem \ref{exthm} and Corollary \ref{propthm}. 
To this end, we first establish  a priori estimates  in a series of propositions in Subsections \ref{sect:energy}, \ref{sect:Lip} and \ref{sect:L1Lip}. 
The proofs of Theorem \ref{exthm} and Corollary \ref{propthm} are found in Subsection \ref{sect:proofs} and Subsection \ref{sect:d-prop}, respectively.

%\subsection{Interpolation Inequalities and Commutator Estimates}\label{subs:inter,comm}
We are going to use frequently the following well-known interpolation inequalities, see  e.g. \cite{bahouri2011fourier}.

\begin{lemma}  
If $g\in H^1(\R^2)\cap W^{1,r}(\R^2)$, $r\in (2,\infty)$, 
then
\begin{align}
    \Vert g\Vert_{L^r(\R^2)} &\lesssim_r \Vert g \Vert_{L^2(\R^2)}^\frac2r \Vert \nabla g \Vert_{L^2(\R^2)}^{1-\frac2r}, \label{ip-lr22}\\
    \Vert g \Vert_{L^\infty(\R^2)} &\lesssim_r \Vert g \Vert_{L^r(\R^2)}^{1-\frac2r} \Vert \nabla g \Vert_{L^r(\R^2)}^\frac2r.\label{ip-linfty}
\end{align}
%\item 
%\item If $g\in W^{1,r}(\tilde \Omega)$, then
%\begin{align}
%    \Vert g \Vert_{L^r(\tilde\Omega)}&\lesssim\Vert g \Vert_{L^2(\tilde\Omega)}^\frac2r \Vert \nabla g \Vert_{L^2(\R^2)}^{1-\frac2r} + \Vert g \Vert_{L^2(\tilde\Omega)}, \label{ip-lr22-bdd}\\
%    \Vert g \Vert_{L^\infty(\tilde\Omega)} &\lesssim \Vert g \Vert_{L^r(\tilde\Omega)}^{1-\frac2r} \Vert \nabla g \Vert_{L^r(\tilde\Omega)}^\frac2r + \Vert g \Vert_{L^r(\tilde\Omega)}. \label{ip-linfty-bdd}
%\end{align} 
\end{lemma}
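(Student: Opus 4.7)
Both estimates are classical Gagliardo--Nirenberg interpolation inequalities on $\R^2$, and the cleanest route to both is through a Littlewood--Paley decomposition combined with the Bernstein inequalities on $\R^2$:
\begin{equation*}
\|\Delta_j f\|_{L^q} \lesssim 2^{2j(\frac{1}{p}-\frac{1}{q})} \|\Delta_j f\|_{L^p}, \qquad 1\leq p\leq q\leq\infty,\quad j\in \Z,
\end{equation*}
applied to dyadic blocks $\Delta_j g$ of $g$. I will split the frequency range at an integer $N$ to be chosen at the end, estimate the low and high frequencies separately, and optimize in $N$.

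For \eqref{ip-lr22}, I will write $\|g\|_{L^r}\leq \sum_{j\in\Z}\|\Delta_j g\|_{L^r}$, use Bernstein with $p=2$, $q=r$ to obtain $\|\Delta_j g\|_{L^r}\lesssim 2^{2j(\frac12-\frac1r)}\|\Delta_j g\|_{L^2}$, and then apply Cauchy--Schwarz in $j$ separately on $\{j\leq N\}$ (controlled by $\|g\|_{L^2}$) and on $\{j>N\}$ (controlled by $\|\nabla g\|_{L^2}$ after pulling out a factor $2^j$). This yields
\begin{equation*}
\|g\|_{L^r}\lesssim_r 2^{2N(\frac12-\frac1r)}\|g\|_{L^2}+2^{-\frac{2N}{r}}\|\nabla g\|_{L^2},
\end{equation*}
and choosing $2^N\sim \|\nabla g\|_{L^2}/\|g\|_{L^2}$ gives \eqref{ip-lr22}.

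For \eqref{ip-linfty}, the argument is analogous with $p=r$, $q=\infty$: Bernstein gives $\|\Delta_j g\|_{L^\infty}\lesssim 2^{\frac{2j}{r}}\|\Delta_j g\|_{L^r}$, and splitting $\sum_{j\in\Z}$ at level $N$ yields
\begin{equation*}
\|g\|_{L^\infty}\lesssim_r 2^{\frac{2N}{r}}\|g\|_{L^r}+2^{-N(1-\frac{2}{r})}\|\nabla g\|_{L^r},
\end{equation*}
where for the high-frequency part one writes $2^{\frac{2j}{r}}=2^{-j(1-\frac{2}{r})}\cdot 2^j$ and uses $2^j\|\Delta_j g\|_{L^r}\lesssim \|\nabla\Delta_j g\|_{L^r}\lesssim \|\nabla g\|_{L^r}$, with the geometric series in $j>N$ converging thanks to $r>2$. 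Optimizing $2^N\sim \|\nabla g\|_{L^r}/\|g\|_{L^r}$ gives \eqref{ip-linfty}.

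The only mild subtlety is that the high-frequency geometric sum in \eqref{ip-linfty} requires strict inequality $r>2$ (so that $1-\frac2r>0$), which is exactly the hypothesis. No obstacle of substance is expected; the two inequalities are standard, but stating and proving them cleanly here makes the rest of the paper self-contained. An alternative elementary proof of \eqref{ip-lr22} could proceed via the one-dimensional identity $|g(x_1,x_2)|^2\leq 2\|g(\cdot,x_2)\|_{L^2_{x_1}}\|\partial_1 g(\cdot,x_2)\|_{L^2_{x_1}}$ (and its transpose) to first get the $r=4$ case, and then bootstrap to general $r$ by applying it to $|g|^s$ for a suitable exponent $s$; this avoids Littlewood--Paley entirely but is less unified with \eqref{ip-linfty}.
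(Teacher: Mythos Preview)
Your proof is correct. The paper itself does not prove this lemma at all; it simply states the inequalities as well-known and cites the book \cite{bahouri2011fourier} for a proof. Your Littlewood--Paley argument is precisely the kind of proof one finds in that reference, so in spirit you are supplying what the paper outsources. The optimization step and the use of $r>2$ to make the high-frequency geometric series converge in \eqref{ip-linfty} are handled correctly.
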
 

Let us recall some classical commutator estimates for the Riesz transform. 
\begin{lemma}\label{commutator:lem}
Let $R=\frac{\frac1i\nabla}{\sqrt{-\Delta}}$ denote the Riesz transform on $\R^2$. 
\begin{enumerate}
    \item For $p,p_1\in (1,\infty)$ and $p_2\in [1,\infty]$ satisfying
$  \frac{1}{p_1}+\frac{1}{p_2}=\frac{1}{p},
 $  
 we have the following commutator estimate
\begin{align}\label{comm}
   \Vert [R^2,\d_X]g \Vert_{L^p} \lesssim_{p,p_1,p_2} \Vert \nabla X \Vert_{L^{p_2}} \Vert g \Vert_{L^{p_1}},
\end{align}
where $g\in L^{p_1}(\R^2)$ and   $X\in C_c^1(\R^2;\R^2)$.
\item  
For $p\in (2,\infty)$, we have the following commutator estimate
\begin{align}
    \Vert \d_X R^2g \Vert_{L^p} &\lesssim_p \Vert \d_Xg \Vert_{L^p} + \Vert \nabla X \Vert_{L^p} \Vert R^2g \Vert_{L^\infty} ,\label{comm:infty-0} \\
    \Vert \d_X R^2g-R^2\div(Xg) \Vert_{L^p} &\lesssim_p \Vert \nabla X \Vert_{L^p} \Vert R^2g \Vert_{L^\infty},\label{comm:infty-1}  
\end{align}
for any  $g\in C^1_c(\R^2)$ and $X\in C^1_c(\R^2;\R^2)$.
Furthermore, for $\mu \in L^\infty(\R^2)$ with $\|\mu\|_{L^\infty}\leq\mu^\ast$ and $\d_X\mu\in L^q(\R^2)$, $q\in [p,\infty]$, we have  
\begin{align} 
    \Vert [R_\mu,\d_X]g \Vert_{L^p} &\lesssim_{p,\mu^\ast} 
    (\Vert \nabla X \Vert_{L^p}+\|\d_X\mu\|_{L^p}) \bigl(\Vert R^2 g \Vert_{L^\infty} + \Vert R_\mu g \Vert_{L^\infty} \bigr), 
    \hbox{ if }q=p,
    %+ \Vert \d_X\mu \Vert_{L^q} \Vert g \Vert_{L^\frac{qp}{q-p}}, 
    \label{Rmu:comm}\\
    \Vert [R_\mu,\d_X]g \Vert_{L^p} &\lesssim_{p,q,\mu^\ast} \Vert \nabla X \Vert_{L^p} \bigl(\Vert R^2 g \Vert_{L^\infty} + \Vert R_\mu g \Vert_{L^\infty} \bigr) + \Vert \d_X\mu \Vert_{L^q} \Vert g \Vert_{L^\frac{qp}{q-p}},  \hbox{ if } q\in (p,\infty], \label{Rmu:comm+}
\end{align}
where   $R_\mu=(R_2R_2-R_1R_1)\mu(R_2R_2-R_1R_1) + (2R_1R_2)\mu (2R_1R_2)$ is defined in \eqref{Rmu:def}.
%\item If $f\in C^1(\tilde \Omega)$, $X\in C^1(\tilde \Omega;\R^n)$ and $\eta \in L^\infty(\tilde \Omega)$ such that $\d_X\eta=0$, then
%\begin{align}
%    \Vert \d_XR^2f \Vert_{L^q(\tilde\Omega)} &\lesssim \Vert \d_Xf \Vert_{L^q(\tilde\Omega)} + \Vert X \Vert_{W^{1,q}(\tilde \Omega)} \Vert R^2f \Vert_{L^\infty(\tilde \Omega)} , \label{comm:infty-0,bdd} \\
%    \Vert \d_x R^2f-R^2\div(Xf) \Vert_{L^q(\tilde\Omega)} &\lesssim \Vert X \Vert_{W^{1,q}(\tilde\Omega)} \Vert R^2f \Vert_{L^\infty(\tilde\Omega)} , \label{comm:infty-1,bdd} \\
%    \Vert [R_\eta,\d_X]f \Vert_{L^q(\tilde \Omega)}   &\lesssim \Vert X \Vert_{W^{1,q}(\tilde \Omega)} \bigl(\Vert R^2f \Vert_{L^\infty(\tilde \Omega)} + \Vert R_\eta f \Vert_{L^\infty(\tilde \Omega)} \bigr) . \label{Rmu:comm,bdd}
%\end{align}
\end{enumerate}
\end{lemma}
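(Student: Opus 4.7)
I plan to approach this lemma by establishing the four estimates in sequence, building up from the classical Calderón commutator bound. For \eqref{comm}, I would use the kernel representation $R^2 f(x) = \mathrm{p.v.}\int K(x-y) f(y)\,dy$ with a Calderón--Zygmund kernel $K$ of homogeneity $-2$ in the plane, and expand $X(x)-X(y) = (x-y)\cdot\int_0^1 \nabla X(y+s(x-y))\,ds$ via the mean value theorem. After integration by parts transferring $\d_j$ from $g$ onto the kernel, the commutator $[R^2,\d_X]g(x)$ is expressed as a bilinear operator in $(\nabla X, g)$ whose kernel remains of Calderón--Zygmund type; the bound then follows from the Coifman--Meyer theorem (or Calderón--Rochberg--Weiss), which is standard and can be cited from Bahouri--Chemin--Danchin.

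For the $L^\infty$-flavored estimates \eqref{comm:infty-0} and \eqref{comm:infty-1}, my plan is first to derive, using only the commutation $\nabla R^2 = R^2\nabla$ and the Leibniz rule, the algebraic identity
\begin{equation*}
 \d_X R^2 g - R^2\div(Xg) = -\div\bigl([R^2,X]g\bigr) - (\div X)\,R^2 g,
\end{equation*}
where $[R^2,X]g$ is the vector with components $R^2(X_l g) - X_l R^2 g$. The last summand is immediately bounded in $L^p$ by $\|\nabla X\|_{L^p}\|R^2 g\|_{L^\infty}$, so the work lies in controlling $\|\div([R^2,X]g)\|_{L^p}$ by the same quantity. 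Expanding $[R^2,X_l]g(x) = \int K(x-y)(X_l(y)-X_l(x))g(y)\,dy$, applying $\d_{x_l}$, and integrating by parts in $y$ to shift the (now more singular) derivative onto $(X_l(y)-X_l(x))g(y)$, one can reshuffle the resulting expression into a CZ-type operator acting on $R^2 g$ with a multiplicative $\nabla X$ factor; alternatively, one works with Bony's paraproduct decomposition $X_l g = T_{X_l}g + T_g X_l + R(X_l,g)$ together with the continuities $R^2:L^p\to L^p$ and $R^2:L^\infty\to \mathrm{BMO}$. This would yield \eqref{comm:infty-1}, and then \eqref{comm:infty-0} follows by writing $\d_X R^2 g = R^2\div(Xg) + (\d_X R^2 g - R^2\div(Xg))$ together with $\|R^2\div(Xg)\|_{L^p} = \|\div R^2(Xg)\|_{L^p} \lesssim \|\d_X g\|_{L^p} + \|\nabla X\|_{L^p}\|R^2 g\|_{L^\infty}$ (the last bound again following from the identity above).

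For the $R_\mu$-commutator estimates \eqref{Rmu:comm} and \eqref{Rmu:comm+}, I would open up $R_\mu = P\mu P + Q\mu Q$ with $P=R_2R_2-R_1R_1$ and $Q=2R_1R_2$, and expand
\begin{equation*}
 [\d_X, R_\mu]g = [\d_X, P](\mu P g) - P(\d_X\mu\cdot Pg) + P\mu\,[\d_X, P]g + \text{(symmetric $Q$-terms)},
\end{equation*}
using $[\d_X,\mu]=\d_X\mu$ as pointwise multiplication. The outer two summands are controlled by \eqref{comm:infty-0}--\eqref{comm:infty-1} applied with $R^2$ replaced by $P$ (resp.\ $Q$), contributing a factor $\|\nabla X\|_{L^p}$ times $L^\infty$-norms of double Riesz transforms that can be dominated by $\|R^2 g\|_{L^\infty}+\|R_\mu g\|_{L^\infty}$ using $\|\mu\|_{L^\infty}\leq \mu^\ast$. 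For the middle summand, $L^p$-boundedness of $P$ gives $\|P(\d_X\mu\cdot Pg)\|_{L^p}\leq\|\d_X\mu\cdot Pg\|_{L^p}$; Hölder with exponents $(p,\infty)$ then produces the $\|\d_X\mu\|_{L^p}\|R^2 g\|_{L^\infty}$ contribution in \eqref{Rmu:comm} (noting $\|Pg\|_{L^\infty}\leq \|R^2 g\|_{L^\infty}$ since $P$ is a difference of double Riesz transforms), while Hölder with exponents $(q,\tfrac{pq}{q-p})$ together with $L^{pq/(q-p)}$-boundedness of $P$ yields the $\|\d_X\mu\|_{L^q}\|g\|_{L^{pq/(q-p)}}$ term in \eqref{Rmu:comm+} when $q>p$.

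The principal difficulty I foresee is the second paragraph: rigorously showing that the $\nabla X$ factor in $[R^2,X]g$ absorbs correctly so that the $L^\infty$-norm on the right-hand side lands on $R^2 g$ rather than on $g$. This is genuinely subtle because $R^2$ does not map $L^\infty$ to $L^\infty$ (only to $\mathrm{BMO}$); it is the algebraic cancellation in $\d_X R^2 g - R^2\div(Xg)$ that eliminates the most singular part of $[\d_X,R^2]$ and allows the $L^\infty$-norm to act on $R^2 g$. A careful paraproduct decomposition separating the high-low, low-high, and high-high frequency interactions between $X$ and $g$ is likely needed, exploiting the fact that in the high-high piece one can integrate by parts without loss while in the mixed pieces one of the two factors is essentially frequency-localized and can be evaluated in $L^\infty$ via the appropriate Bernstein inequality.
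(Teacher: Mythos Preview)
Your plan for \eqref{comm}, \eqref{comm:infty-0} and \eqref{comm:infty-1} is sound and matches the paper's strategy: the paper also cites Calder\'on for \eqref{comm} and uses the Bony paraproduct decomposition you allude to in order to establish \eqref{comm:infty-0}--\eqref{comm:infty-1}. (A minor remark: in your expansion of $[\d_X,R_\mu]$ the middle term should carry a $+$ sign, since $[\d_X,\mu]=+\d_X\mu$ as a multiplication operator; this is cosmetic.)

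There is, however, a genuine gap in your treatment of the outermost piece $[\d_X,P](\mu Pg)$ in the $R_\mu$-commutator. Applying \eqref{comm:infty-1} with input $\mu Pg$ yields a bound of the form $\|\nabla X\|_{L^p}\,\|R_iR_j(\mu Pg)\|_{L^\infty}$, and this quantity is \emph{not} dominated by $\|R^2 g\|_{L^\infty}+\|R_\mu g\|_{L^\infty}$. Indeed, for merely bounded (e.g.\ piecewise constant) $\mu$ the function $\mu Pg$ lies only in $L^\infty$, and a double Riesz transform of an $L^\infty$ function generically fails to be bounded; there is no mechanism by which $\|\mu\|_{L^\infty}\le\mu^\ast$ alone converts $\|R_iR_j(\mu Pg)\|_{L^\infty}$ into the desired right-hand side. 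The appearance of $\|R_\mu g\|_{L^\infty}$ in \eqref{Rmu:comm} is not a crude domination but reflects a specific cancellation between the $P$- and $Q$-pieces.

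The paper resolves this by not applying \eqref{comm:infty-1} as a black box to $[\d_X,P_j](\mu P_j g)$, but by reopening the paraproduct decomposition for the quantity $\d_X P_j(\mu P_j g)-P_j\div(X\mu P_j g)$. Most pieces are then controlled by $\|\nabla X\|_{L^p}\|\mu P_j g\|_{L^\infty}\lesssim \mu^\ast\|\nabla X\|_{L^p}\|R^2 g\|_{L^\infty}$, \emph{except} for the low-high and diagonal pieces
\[
\mathcal{T}_{\d_k P_j\mu P_j g}X_k+\mathcal{R}(X_k,\d_k P_j\mu P_j g),\qquad j=1,2.
\]
These are individually uncontrolled, but summing over $j=1,2$ and using $P_1\mu P_1+P_2\mu P_2=R_\mu$ collapses them by bilinearity to $\mathcal{T}_{\d_k R_\mu g}X_k+\mathcal{R}(X_k,\d_k R_\mu g)$, which \emph{is} bounded by $\|\nabla X\|_{L^p}\|R_\mu g\|_{L^\infty}$ via the standard paraproduct estimates. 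This recombination step is the heart of \eqref{Rmu:comm} and is missing from your outline; without it the argument does not close.
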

The proof of the first estimate (\ref{comm}) can be found in A. P. Calder\'on's article \cite[Theorem 1]{calderon1965commutators}. 
%The second statement is proved in Appendix \ref{sect:proof,comm}.
The proof of the second statement is very much in the spirit of \cite[Lemma 5.1]{paicu2020striated} and \cite[Lemma 2.10]{danchin2020well}, and is postponed in Appendix \ref{sect:proof,comm}.

%\subsection{Proof of Proposition \ref{energy:prop}: Energy Estimates for $a$}\label{sect:energy}
\subsection{Step I. (Time-weighted) energy estimates}\label{sect:energy}

We start with some basic energy estimates for \eqref{muNS}. 
These have already been established for the density-dependent Navier-Stokes equations (\ref{NS}) in e.g. \cite{abidi2015global2D, wiegner1987decay}; see also \cite{abidi2015global} for the three-dimensional case. 
Using the same ideas we prove the following estimates  for our system \eqref{muNS} in  Appendix \ref{sect:u-en}.

\begin{proposition}[Energy estimates for $u$]\label{u-decay:prop}
Let $(\mu,u)$ be a sufficiently smooth solution of (\ref{muNS}) on some time interval $[0,T^\ast)$ with $\mu_0-1\in L^2(\R^2)$ and $u_0\in L^2(\R^2) \cap \dot H^{-2\delta}(\R^2)$ for some $\delta \in (0,\frac12)$. Then  the following energy estimates  hold for $t\in [0,T^\ast)$:
\begin{align}
    \Vert u \Vert_{L^\infty_tL^2}+ \Vert \nabla u \Vert_{L^2_tL^2} &
    \leq C(\mu_\ast)\Vert u_0 \Vert_{L^2}, \label{u:energy} \\
    \Vert \langle t \rangle^{\delta_-} u \Vert_{L^2} + \Vert \langle t' \rangle^{\delta_-} \nabla u \Vert_{L^2_tL^2} &\leq C(\mu_\ast, \delta, \delta-\delta_-) (\|u_0\|_{L^2\cap \dot H^{-2\delta}}+ \|\mu_0-1\|_{L^2}\|u_0\|_{L^2}), \label{u:decay+}
\end{align}
where $\delta_->0$ stands for any positive number strictly smaller than $\delta$.
\end{proposition}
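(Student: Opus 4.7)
The first estimate is the classical energy identity. Multiplying the momentum equation $\eqref{muNS}_2$ by $u$ and integrating, the transport and pressure terms vanish by $\div u=0$, and since $\int Su:\nabla u\,dx=\tfrac12\int |Su|^2\,dx=\|\nabla u\|_{L^2}^2$ for divergence-free $u$ (the cross term $\int \partial_i u_j\partial_j u_i$ integrates to zero), we obtain
\[
\tfrac12\tfrac{d}{dt}\|u\|_{L^2}^2+\tfrac12\int \mu|Su|^2\,dx=0,\qquad \tfrac12\int\mu|Su|^2\,dx\geq \mu_\ast\|\nabla u\|_{L^2}^2.
\]
Integrating in time yields \eqref{u:energy}.

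For the decay estimate \eqref{u:decay+} I would use the Fourier splitting method of Schonbek/Wiegner, exploiting that the variable-viscosity term is a genuine perturbation of the heat semigroup. Rewriting $\div(\mu Su)=\Delta u+\div((\mu-1)Su)$, the divergence-free vector field $u$ satisfies
\[
\partial_t u-\Delta u=\mathbb{P}\brl\div((\mu-1)Su)-u\cdot\nabla u\brr,
\]
where $\mathbb{P}$ is the Leray projection. Plancherel and the energy identity give
\[
\tfrac{d}{dt}\|u\|_{L^2}^2+2\mu_\ast g(t)^2\|u\|_{L^2}^2\leq C\,g(t)^2\int_{\{|\xi|\leq g(t)\}}|\hat u(t,\xi)|^2\,d\xi
\]
for any radius $g(t)>0$ (I would choose $g(t)^2\sim \langle t\rangle^{-1}$). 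The low-frequency integral on the right is then estimated via the Duhamel formula
\[
\hat u(t,\xi)=e^{-t|\xi|^2}\hat u_0(\xi)+\int_0^t e^{-(t-s)|\xi|^2}\widehat{\mathbb{P}\brl\div((\mu-1)Su)-u\cdot \nabla u\brr}(s,\xi)\,ds,
\]
using the Hausdorff--Young bound $|\widehat{\div F}(\xi)|\leq |\xi|\,\|F\|_{L^1}$ with $\|(\mu-1)Su\|_{L^1}\leq \|\mu-1\|_{L^2}\|Su\|_{L^2}\leq C(\mu^\ast)\|\mu_0-1\|_{L^2}\|\nabla u\|_{L^2}$ (since $\|\mu-1\|_{L^2}$ is preserved by the transport $\eqref{muNS}_1$ for $\mu_0-1\in L^2$) and $\|u\otimes u\|_{L^1}=\|u\|_{L^2}^2$. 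The $\dot H^{-2\delta}$-assumption gives $\int_{|\xi|\leq g(t)}|\hat u_0|^2\,d\xi\leq g(t)^{4\delta}\|u_0\|_{\dot H^{-2\delta}}^2$.

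Combining these estimates, the low-frequency contribution is bounded by $C g(t)^{4\delta}\brl \|u_0\|_{\dot H^{-2\delta}}^2+(\|\mu_0-1\|_{L^2}\|u_0\|_{L^2})^2\brr$ plus an integrated nonlinear term $\brl\int_0^t\|u\|_{L^2}^2\,ds\brr^2\cdot g(t)^4$, and a Grönwall-type argument on $M(t):=\sup_{0\leq s\leq t}\langle s\rangle^{2\delta_-}\|u(s)\|_{L^2}^2$ closes the inequality for any $\delta_-<\delta$, with the constant blowing up as $\delta_-\to\delta$ (which is why the bound depends on $\delta-\delta_-$). The time-weighted dissipation norm is then recovered by multiplying the differential inequality above by $\langle t\rangle^{2\delta_-}$ and integrating. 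The main obstacle is controlling the low-frequency contribution of the nonlinear self-interaction $u\cdot\nabla u$, which is precisely why only $\delta_-<\delta$ (rather than $\delta_-=\delta$) is attainable; the term $\|\mu_0-1\|_{L^2}\|u_0\|_{L^2}$ appears naturally from the perturbative viscosity contribution.
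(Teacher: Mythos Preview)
Your proposal is correct and follows essentially the same route as the paper: the basic energy identity for \eqref{u:energy}, and Schonbek--Wiegner Fourier splitting combined with the Duhamel formula for \eqref{u:decay+}, treating $\div((\mu-1)Su)$ and $u\cdot\nabla u$ via the Hausdorff--Young bound on the low-frequency ball $\{|\xi|\le g(t)\}$ with $g(t)^2\sim\langle t\rangle^{-1}$. The only cosmetic difference is that the paper closes the decay claim by tracking Wiegner's integrated quantity $Y(t)=\max_{1\le s\le t}\int_{s-1}^s\langle t'\rangle^{2\delta_-}\|u(t')\|_{L^2}^2\,dt'$ rather than your pointwise supremum $M(t)$, which renders the final Gr\"onwall step linear.
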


We now turn to establishing energy estimates for the quantity $a$ introduced in \eqref{a:R} above. 
With the decomposition (\ref{divmuSu:decomp}) the velocity equation (\ref{muNS})$_2$ becomes
\begin{align*}
    \d_tu+u\cdot\nabla u-\nabla^\perp a + \nabla (\pi-b)=0.
\end{align*} 
where $a=R_\mu\omega$, $b=Q_\mu\omega$ are given in \eqref{Rmu:def}, \eqref{Qmu:def} respectively.
The vorticity equation is obtained by applying the curl operator $\nabla^\perp\cdot$ to the velocity equation:
\begin{align}\label{omega:eq}
\left\{\begin{array}{c}
    \d_t\omega+u\cdot\nabla\omega -\Delta a =0, \quad  (t,x)\in (0,\infty)\times \R^2 , \\
    u=\nabla^\perp\Delta^{-1}\omega,\quad a=R_\mu\omega.
\end{array}\right.
\end{align}
If $\mu$ is smooth, then the vorticity equation (\ref{omega:eq}) is parabolic. However, for more general (discontinuous) viscosities, it is not clear whether the equation has a parabolic character. This is largely because of the non-local operator $R_\mu$, which itself is composed of local and non-local operators.
Nevertheless, we have the following (time-weighted) energy estimates for the vorticity equation (\ref{omega:eq}).

\begin{proposition}[$H^1$-energy estimates for $a$]\label{energy:prop}
Let $\mu \in L^\infty([0,\infty)\times\R^2;[\mu_\ast,\mu^\ast])$ be a positive, bounded function with $0<\mu_\ast\leq \mu^\ast$. Let $u$ be a sufficiently regular divergence-free vector field with vorticity $\omega=\nabla^\perp \cdot u$ satisfying the vorticity equation (\ref{omega:eq}) on some time interval $[0,T^\ast)$. Then for all times $t\in [0,T^\ast)$,
\begin{align}
    \Vert a \Vert_{L^\infty_tL^2}^2 + \Vert \nabla a \Vert_{L^2_tL^2}^2 &\leq C(\mu_\ast,\mu^\ast) \Vert \omega_0 \Vert_{L^2}^2V(t),
    \label{a:energy-low}\\
    %\Vert \nabla a \Vert_{L^\infty_tL^2}^2 + \Vert \Delta a \Vert_{L^2_tL^2}^2 &\leq \bigl(\Vert a_0 \Vert_{\dot H^1}^2 + C(\mu_\ast,\mu^\ast) \Vert u_0 \Vert_{L^2}^2 \Vert \nabla u \Vert_{L^2_tL^\infty}^2 V\bigr)e^{C \Vert u_0 \Vert_{L^2}^2}, \label{a:energy-high}
    \Vert  {t'}^{\frac{1}{2}} a \Vert_{L^\infty_t L^2}^2 + \Vert  {t'}^{\frac{1}{2}}\nabla a \Vert_{L^2_tL^2}^2 &\leq C(\mu_\ast,\mu^\ast) \Vert u_0 \Vert_{L^2}^2 V(t), \label{a:tenergy-low}\\
    \Vert {t'}^{\frac{1}{2}}\nabla a \Vert_{L^\infty_t L^2}^2 + \Vert{t'}^{\frac{1}{2}}\Delta a \Vert_{L^2_tL^2}^2 
    &\leq C(\mu_\ast, \mu^\ast)
    %e^{C(\mu_\ast)\|u_0\|_{L^2}^2}
    \bigl(\Vert \nabla a \Vert_{L^2_tL^2}^2 
    %+ \Vert u_0 \Vert_{H^1}^2 \Vert\nabla u \Vert_{L^2_tL^\infty}^2 V 
    +\|{t'}^{\frac12}\nabla u\|_{L^2_tL^\infty}^2\Vert  a \Vert_{L^\infty_t L^2}^2\bigr)V(t), \label{a:tenergy-high} 
\end{align}
where $V(t)=\exp(\int^t_0 C\|\nabla u\|_{L^\infty} dt')$ denotes the exponential growth in the time integration of the velocity  gradient. 
Moreover, if we additionally assume that the hypotheses of Proposition \ref{u-decay:prop} are satisfied, then
\begin{align}\label{a:tenergy-low+}
    \Vert   {t'} ^{\frac12+\delta_-}a \Vert_{L^\infty_t L^2}^2 + \Vert   {t'} ^{\frac12+\delta_-} \nabla a \Vert_{L^2_tL^2}^2 \leq C(\mu_\ast,\mu^\ast) \|\langle t'\rangle^{\delta_-}\nabla u\|_{L^2_tL^2}^2\,V(t).
\end{align}
\end{proposition}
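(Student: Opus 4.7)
The plan is to establish a master energy identity by testing the vorticity equation \eqref{omega:eq} with $a = R_\mu\omega$ and exploiting the $L^2$-symmetry of $R_\mu$ together with the transport relation $\tfrac{D}{Dt}\mu = 0$. Using $\div u = 0$, integration by parts, and $\tfrac{D}{Dt}\omega = \Delta a$, one arrives at
\[
\tfrac12\tfrac{d}{dt}\langle R_\mu\omega,\omega\rangle_{L^2} + \|\nabla a\|_{L^2}^2 = \tfrac12\langle[\tfrac{D}{Dt},R_\mu]\omega,\omega\rangle_{L^2}.
\]
The key observation is that, writing $R_\mu = T_1\mu T_1 + T_2\mu T_2$ with $T_1 = R_2R_2-R_1R_1$, $T_2 = 2R_1R_2$, the commutator expands as
\[
[\tfrac{D}{Dt},R_\mu]f = \sum_{j=1,2}\Bigl(T_j\bigl(\tfrac{D}{Dt}\mu\bigr)T_jf + [u\!\cdot\!\nabla,T_j](\mu T_jf) + T_j\mu\,[u\!\cdot\!\nabla,T_j]f\Bigr).
\]
The first term vanishes by $\eqref{muNS}_1$, and the remaining Calder\'on-type commutators are controlled via Lemma \ref{commutator:lem} (with $p=2$ and $p_2=\infty$) by $C(\mu^\ast)\|\nabla u\|_{L^\infty}\|\omega\|_{L^2}$. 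Combined with the two-sided bound $\mu_\ast\|\omega\|_{L^2}^2 \leq \langle R_\mu\omega,\omega\rangle_{L^2} \leq C(\mu^\ast)\|\omega\|_{L^2}^2 \simeq \|a\|_{L^2}^2$ from \eqref{L2:a,omega}--\eqref{L2:omega,a}, Gr\"onwall's inequality yields \eqref{a:energy-low}.

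The time-weighted estimates \eqref{a:tenergy-low} and \eqref{a:tenergy-low+} would follow by multiplying the same identity by $t$ and $t^{1+2\delta_-}$ respectively. The extra boundary term $t^{2\delta_-}\langle R_\mu\omega,\omega\rangle \lesssim t^{2\delta_-}\|\omega\|_{L^2}^2 = t^{2\delta_-}\|\nabla u\|_{L^2}^2$ integrates via Proposition \ref{u-decay:prop}, producing the $\|u_0\|_{L^2}^2$ factor when $\delta_- = 0$ and the $\|\langle t'\rangle^{\delta_-}\nabla u\|_{L^2_tL^2}^2$ factor otherwise.

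For the $\dot H^1$-estimate \eqref{a:tenergy-high} I would rewrite the vorticity equation in the equivalent form $\tfrac{D}{Dt}a = R_\mu\Delta a + [\tfrac{D}{Dt},R_\mu]\omega$ (using $\tfrac{D}{Dt}\omega = \Delta a$) and test with $-\Delta a$. The left-hand transport contribution gives $\tfrac12\tfrac{d}{dt}\|\nabla a\|_{L^2}^2 + \int(\partial_j u)\cdot\nabla a\,\partial_ja$ after integration by parts (the purely convective piece of $|\nabla a|^2$ cancels by $\div u=0$), the first right-hand term yields the coercive contribution $-\langle R_\mu\Delta a,\Delta a\rangle \leq -\mu_\ast\|\Delta a\|_{L^2}^2$ by the same argument as \eqref{L2:omega,a}, and the commutator term is Cauchy-Schwarz'd against $\|\Delta a\|_{L^2}$ with half of $\mu_\ast\|\Delta a\|_{L^2}^2$ absorbed via Young's inequality. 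Multiplying by $t$, integrating, and applying Gr\"onwall to the $\|\nabla u\|_{L^\infty}\|\nabla a\|_{L^2}^2$ remainder produces \eqref{a:tenergy-high}; the factor $\|t'^{1/2}\nabla u\|_{L^2_tL^\infty}^2\|a\|_{L^\infty_tL^2}^2$ arises from the time integral of $t'\|\nabla u\|_{L^\infty}^2\|\omega\|_{L^2}^2$ coming from the commutator bound.

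The main obstacle is precisely the commutator $[\tfrac{D}{Dt},R_\mu]$: since $R_\mu$ intertwines the nonlocal Riesz operators with the local (merely $L^\infty$) multiplication by $\mu$, neither $\partial_tR_\mu$ nor $[u\!\cdot\!\nabla,R_\mu]$ is individually of acceptable size, and it is only the transport-driven cancellation $(\partial_t+u\!\cdot\!\nabla)\mu=0$ that converts the whole expression into genuine Calder\'on commutators with the $\mu$-independent operators $T_j$. This is what enables the argument without any regularity hypothesis on $\mu$; the price is the exponential factor $V(t)=\exp(C\|\nabla u\|_{L^1_tL^\infty})$ coming from Gr\"onwall.
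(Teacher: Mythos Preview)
Your proposal is correct and follows essentially the same approach as the paper. For \eqref{a:energy-low}--\eqref{a:tenergy-low} and \eqref{a:tenergy-low+} the arguments are identical: test the vorticity equation with $a=R_\mu\omega$, use $\dot\mu=0$ to convert the commutator $[\tfrac{D}{Dt},R_\mu]$ into Calder\'on commutators $[u\cdot\nabla,T_j]$, and apply Gr\"onwall. For \eqref{a:tenergy-high} the paper applies $R_\mu$ to the vorticity equation and tests with $\dot\omega$, whereas you test the equivalent transported equation $\tfrac{D}{Dt}a=R_\mu\Delta a+[\tfrac{D}{Dt},R_\mu]\omega$ against $-\Delta a$; since $\dot\omega=\Delta a$ and $R_\mu$ is self-adjoint, the two computations produce the same differential inequality $\tfrac12\tfrac{d}{dt}\|\nabla a\|_{L^2}^2+\mu_\ast\|\Delta a\|_{L^2}^2\lesssim\|\nabla u\|_{L^\infty}\|\nabla a\|_{L^2}^2+\|\Delta a\|_{L^2}\|\nabla u\|_{L^\infty}\|\omega\|_{L^2}$.
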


\begin{proof}
\begin{itemize} 
\item \textbf{Proof of (\ref{a:energy-low}):}
We  take the $L^2(\R^2)$-inner product between    the $\omega$-equation (\ref{omega:eq}): 
\begin{equation}\label{omega:eq,dot}
    \dot\omega-\Delta a=0, \hbox{ with }\dot \omega:=\frac{D}{Dt}\omega=(\d_t+u\cdot\nabla)\omega,
\end{equation}   and $a=R_\mu\omega$ to obtain
\begin{align*}
    \int_{\R^2} \dot \omega  R_\mu \omega dx +\int_{\R^2}|\nabla a|^2 dx=0, 
\end{align*}
where the self-adjointness of the double Riesz transform yields (recalling the transport equation $\frac{D}{Dt}\mu=\dot \mu=0$ and the divergence free condition $\div u=0$ in \eqref{muNS})
\begin{align*}
     &\int_{\R^2} \dot \omega  R_\mu \omega dx 
     \\
     &= \int_{\R^2} 
     \Bigl( (R_2 R_2-R_1R_1)\dot \omega \,\cdot\, \mu  (R_2 R_2-R_1R_1)\omega
     + (2R_1R_2)\dot \omega  \,\cdot\,  \mu  (2R_1R_2)\omega\Bigr)  dx 
     \\
     %&=\int_{\Omega} \Bigl( [(R_2 R_2-R_1R_1), \frac{D}{Dt}] \omega  \mu  (R_2 R_2-R_1R_1)\omega + [(2R_1R_2),\frac{D}{Dt}] \omega  \mu  (2R_1R_2)\omega\Bigr) dx \\
     %&\quad + \int_{\Omega} \mu \Bigl((R_2 R_2-R_1R_1)\omega \frac{D}{Dt} (R_2 R_2-R_1R_1)\omega +  (2R_1R_2)\omega \frac{D}{Dt} (2R_1R_2)\omega\Bigr) dx \\
     &=\frac12\frac{d}{dt}\int_{\R^2} \mu \Bigl( ( (R_2 R_2-R_1R_1)\omega)^2
     +((2R_1R_2)\omega)^2\Bigr) dx
     \\
     &\quad +\int_{\R^2} 
     \mu \Bigl( [(R_2 R_2-R_1R_1), u\cdot\nabla ] \omega   \,\cdot\,    (R_2 R_2-R_1R_1)\omega
     + [(2R_1R_2), u\cdot\nabla ] \omega   \,\cdot\,   (2R_1R_2)\omega\Bigr) dx  .
\end{align*}
Thus,
\begin{align}\label{a:low-en-eq}
\begin{split}
    &\frac{1}{2}\frac{d}{dt} \int_{\R^2} \mu \Bigl( ((R_2R_2-R_1R_1) \omega)^2 + ((2R_1R_2) \omega)^2 \Bigr) dx + \int_{\R^2} \vert\nabla a\vert^2 dx \\
    &= - \int_{\R^2} \mu \Bigl([R_2R_2-R_1R_1,u\cdot\nabla]\omega  \,\cdot\, (R_2R_2-R_1R_1) \omega + [2R_1R_2,u\cdot\nabla]\omega  \,\cdot\,  (2R_1R_2)\omega\Bigr)dx .
\end{split}
\end{align}
Recall (the proof of) \eqref{L2:omega,a} for    the first integral on the left hand side  
\begin{align*}
    \int_{\R^2} \mu \Bigl( ((R_2R_2-R_1R_1) \omega)^2 + ((2R_1R_2) \omega)^2 \Bigr) dx
    =\langle a, \omega\rangle_{L^2(\R^2)}
   % \geq \mu_\ast \int_{\R^2}   \Bigl( ((R_2R_2-R_1R_1) \omega)^2 + ((2R_1R_2) \omega)^2 \Bigr)  dx
   \geq \mu_\ast\|\omega\|_{L^2}^2.
\end{align*}
The integral on the righthand side can be bounded with the help of the commutator estimate from Lemma \ref{commutator:lem} by 
$C\mu^\ast \Vert\nabla u \Vert_{L^\infty} \Vert \omega \Vert_{L^2}^2$, 
%$C\mu^\ast \Vert \nabla u \Vert_{L^2}\Vert\omega\Vert_{L^4}^2$ 
and thus integrating the result over $[0,t]$ yields
\begin{align*}
    \frac{\mu_\ast}{2}\Vert\omega (t)\Vert_{L^2}^2 + \int_0^t \Vert \nabla a \Vert_{L^2}^2dt' %&\leq \frac{\mu_\ast}{2}\Vert \omega_0 \Vert_{L^2}^2 + C\int_0^t \Vert\nabla u \Vert_{L^2} \Vert\omega\Vert_{L^4}^2dt' \nonumber\\
    &\leq \frac{\mu^\ast}{2}\Vert \omega_0 \Vert_{L^2}^2 
    + C\mu^\ast \int_0^t \Vert \nabla u \Vert_{L^\infty} \Vert \omega \Vert_{L^2}^2 dt' .
    %+ C\mu^\ast  \Vert \nabla u \Vert_{L^2_tL^2} \Vert \omega \Vert_{L^4_tL^4}^2.
\end{align*}
%Together with (\ref{u:energy}) and the $L^2$-bound in \eqref{Lp:a,omega}, this 
An application of Gronwall's inequality and the bound \eqref{L2:a,omega}
imply the estimate (\ref{a:energy-low}).

%\end{itemize}
%\end{proof}
%We continue with the time-weighted energy estimates for $a$.
%\begin{proof}[Proof of Proposition \ref{time-energy:prop}]
%\begin{itemize}
    \item \textbf{Proof of (\ref{a:tenergy-low}):} 
We multiply (\ref{a:low-en-eq}) by $ t $ to obtain
\begin{align*}
    &\frac{1}{2}\frac{d}{dt} \Bigl(  t \int_{\R^2} \mu \Bigl( ((R_2R_2-R_1R_1) \omega)^2 + ((2R_1R_2) \omega)^2 \Bigr) dx\Bigr) +   t \int_{\R^2} \vert\nabla a\vert^2 dx \\
    &= \frac{1}{2} \int_{\R^2} \mu \Bigl(  ((R_2R_2-R_1R_1) \omega)^2 + ((2R_1R_2) \omega)^2 \Bigr)  dx \\
    &-  t \int_{\R^2} \mu \Bigl([R_2R_2-R_1R_1,u\cdot\nabla]\omega \,\cdot\, (R_2R_2-R_1R_1) \omega + [2R_1R_2,u\cdot\nabla]\omega\,\cdot\, (2R_1R_2)\omega\Bigr)dx ,
\end{align*}
where integration over $[0,t]$ together with the commutator estimate \eqref{comm} implies
\begin{align*}
    \frac{\mu_\ast}{2} t \Vert\omega\Vert_{L^2}^2 + \int_0^t  t'  \Vert \nabla a \Vert_{L^2}^2 dt' 
    \lesssim_{\mu^\ast} 
    %\Vert \omega_0 \Vert_{L^2}^2 + 
    \int_0^t \Vert\omega\Vert_{L^2}^2 dt' + \int_0^t \Vert \nabla u \Vert_{L^\infty}   \Vert {t'}^{\frac12}\omega\Vert_{L^2}^2 dt' .
\end{align*}
Thus, (\ref{a:tenergy-low}) follows from Gronwall's inequality, \eqref{L2:a,omega}   and (\ref{u:energy}).

\item \textbf{Proof of (\ref{a:tenergy-high}):} 
For the higher order estimates we apply $R_\mu$ to the vorticity equation (\ref{omega:eq,dot}) to get
\begin{align*}
    R_\mu \dot\omega-R_\mu\Delta a=0,
\end{align*} 
and take the $L^2$ inner product with $\dot\omega$ to derive
\begin{align*}
    \int_{\R^2}R_\mu \dot\omega\, \dot\omega dx
    -\int_{\R^2} R_\mu\Delta a\, \dot\omega dx=0.
\end{align*}
We have by integration by parts that (recalling $\dot\omega=\frac{D}{Dt}\omega$)
\begin{align*}
    \int_{\R^2}R_\mu \dot\omega \,\dot\omega dx
    &=\int_{\R^2} \mu \Bigl(((R_2R_2-R_1R_1)\dot\omega)^2+((2R_1R_2)\dot\omega)^2\Bigr)dx ,
     \\
    -\int_{\R^2} R_\mu\Delta a\, \dot\omega dx
    &= -\int_{\R^2}(\Delta a)(\frac{D}{Dt}R_\mu\omega)dx - \int_{\R^2}(\Delta a)\,[R_\mu, \frac{D}{Dt}]\omega dx \\
    &=:I_1+I_2.
\end{align*}
As $R_\mu\omega=a$, we have by integration by parts (noticing $[\nabla, \frac{D}{Dt}]=[\nabla, u\cdot\nabla]$)
\begin{align*}
    I_1&=\int_{\R^2}\nabla a\cdot \frac{D}{Dt}\nabla adx
   +\int_{\R^2}\nabla a\cdot [\nabla, u\cdot\nabla] adx
   \\
   &=\frac12\frac{d}{dt}\int_{\R^2}|\nabla a|^2 dx
   +\int_{\R^2} \nabla a\cdot\nabla u\cdot\nabla a dx .
\end{align*}
Furthermore, since $\frac{D}{Dt}\mu=0$, the commutator in the second integral $I_2$ reads
\begin{align*}
    [R_\mu,\frac{D}{Dt}]&= (R_2R_2-R_1R_1)\mu[R_2R_2-R_1R_1,u\cdot \nabla] + (2R_1R_2)\mu [2R_1R_2,u\cdot \nabla] \\
    &\quad + [R_2R_2-R_1R_1,u\cdot \nabla]\mu (R_2R_2-R_1R_1) + [2R_1R_2,u\cdot \nabla] \mu (2R_1R_2) ,
\end{align*}
so that we arrive at (recalling the vorticity equation $\dot\omega=\Delta a$)
\begin{align}\label{grad:en-inequality}
    &\frac{1}{2}\frac{d}{dt}\Vert\nabla a \Vert_{L^2}^2 + \mu_\ast \Vert\Delta a \Vert_{L^2}^2\nonumber \\
    &\leq - \int_{\R^2} \nabla a\cdot\nabla u\cdot\nabla a dx\\
    &\quad -\int_{\R^2} (\Delta a)\Bigl((R_2R_2-R_1R_1) \bigl(\mu [R_2R_2-R_1R_1 , u\cdot\nabla]\omega\bigr) +(2R_1R_2) \bigl(\mu [2R_1R_2,u\cdot\nabla] \omega\bigr)\Bigr)dx \nonumber \\
    &\quad - \int_{\R^2} (\Delta a)\Bigl([R_2R_2-R_1R_1, u\cdot\nabla] \bigl(\mu(R_2R_2-R_1R_1)\omega \bigr)+ [2R_1R_2,u\cdot\nabla] \bigl(\mu (2R_1R_2) \omega\bigr)\Bigr)dx.\nonumber 
\end{align}
The last two integrals on the right hand side are bounded by $C\mu^\ast\Vert\Delta a \Vert_{L^2}\|\nabla u\|_{L^\infty}\Vert\omega\Vert_{L^2}$ due to the commutator estimate \eqref{comm}, and the first integral satisfies
\begin{align}\label{a:high-en-ineq}
    \Bigl\vert -\int_{\R^2} \nabla a\cdot\nabla u\cdot\nabla a dx \Bigr\vert \leq 
    \|\nabla u\|_{L^\infty}\|\nabla a\|_{L^2}^2.
    %\Vert\nabla u \Vert_{L^2}\Vert\nabla a \Vert_{L^4}^2 \lesssim \Vert\omega\Vert_{L^2}\Vert\nabla a \Vert_{L^2}\Vert\Delta a \Vert_{L^2},
\end{align}
%by the interpolation inequality \eqref{ip-lr22} with $r=4$.
Consequently,
\begin{align*}
    \frac{1}{2}\frac{d}{dt}\Vert\nabla a \Vert_{L^2}^2 + \mu_\ast \Vert\Delta a \Vert_{L^2}^2 \lesssim\mu^\ast\Vert\Delta a \Vert_{L^2}\Vert\nabla u \Vert_{L^\infty} \Vert \omega\Vert_{L^2} + 
    \|\nabla u\|_{L^\infty}\|\nabla a\|_{L^2}^2.%\Vert\omega\Vert_{L^2}\Vert\nabla a \Vert_{L^2}\Vert\Delta a \Vert_{L^2}.
\end{align*}
%where Young's inequality implies
%\begin{align*}
%    \frac{1}{2}\frac{d}{dt}\Vert\nabla a \Vert_{L^2}^2 + \frac{\mu_\ast}{2} \Vert\Delta a \Vert_{L^2}^2 \lesssim \frac{(\mu^\ast)^2}{\mu_\ast} \Vert \nabla u \Vert_{L^\infty}^2\Vert \omega \Vert_{L^2}^2 + \frac{1}{\mu_\ast} \Vert\omega\Vert_{L^2}^2 \Vert\nabla a \Vert_{L^2}^2 .
%\end{align*}
%Gronwall's inequality finally yields
%\begin{align*}
%    \Vert\nabla a(t)\Vert_{L^2}^2 + \Vert\Delta a \Vert_{L^2_t L^2}^2 \leq 
%    \Bigl(\Vert a_0 \Vert_{\dot H^1}^2 + C(\mu_\ast,\mu^\ast) \int_0^t \Vert \nabla u \Vert_{L^\infty}^2 \Vert \omega \Vert_{L^2}^2 dt'\Bigr) e^{\frac{C}{\mu_\ast}\Vert\omega \Vert_{L^2_tL^2}^2} .
%\end{align*}
%Inserting the estimates (\ref{u:energy}) and \eqref{a:energy-low} concludes the proof of (\ref{a:energy-high}).

We multiply (\ref{a:high-en-ineq}) by $ t $ to obtain
\begin{align*}
    &\frac{1}{2}\frac{d}{dt}\bigl(  t \Vert\nabla a \Vert_{L^2}^2\bigr) + \mu_\ast   t  \Vert \Delta a \Vert_{L^2}^2 \lesssim \Vert\nabla a\Vert_{L^2}^2 +  \mu^\ast   t \Vert\Delta a \Vert_{L^2} \Vert\nabla u \Vert_{L^\infty}\Vert\omega\Vert_{L^2} +     
    \|\nabla u\|_{L^\infty} t\|\nabla a\|_{L^2}^2.
    %\Vert\omega \Vert_{L^2}\Vert\nabla a \Vert_{L^2} \Vert\Delta a \Vert_{L^2} .
\end{align*}
This implies
\begin{align*}
    \frac{1}{2}\frac{d}{dt}\bigl(t\Vert\nabla a \Vert_{L^2}^2\bigr) + \frac{\mu_\ast}{2} t \Vert \Delta a \Vert_{L^2}^2 
    \lesssim \Vert\nabla a\Vert_{L^2}^2 + \frac{(\mu^\ast)^2}{\mu_\ast }  t \Vert\nabla u \Vert_{L^\infty}^2 \Vert\omega\Vert_{L^2}^2 + \|\nabla u\|_{L^\infty} t\|\nabla a\|_{L^2}^2.
    %\frac{1}{\mu_\ast}\Vert \omega \Vert_{L^2}^2 t\Vert\nabla a \Vert_{L^2}^2 ,
\end{align*}
so that (\ref{a:tenergy-high}) follows again by Gronwall's inequality and (\ref{u:energy}). 
%and \eqref{a:tenergy-low}.

%\textbf{Proof of (\ref{a:tenergy-higher}):} 
%We multiply (\ref{a:high-en-ineq}) by $\langle t\rangle^{1+2\eta}$ to derive
%\begin{align*}
%    \frac12 \frac{d}{dt}\bigl(\langle t\rangle^{1+2\eta} \Vert \nabla a \Vert_{L^2}^2\bigr) &+ \frac{\mu_\ast}{2} \langle t\rangle^{1+2\eta} \Vert \Delta a \Vert_{L^2}^2 \lesssim \langle t\rangle^{2\eta} \Vert \nabla a \Vert_{L^2}^2 \\
%    &\quad + \langle t\rangle^{1+2\eta} \Vert \Delta a \Vert_{L^2} \Vert \omega \Vert_{L^4}^2 + \langle t\rangle^{1+2\eta} \Vert \nabla u \Vert_{L^2} \Vert \nabla a \Vert_{L^2} \Vert \Delta a \Vert_{L^2},
%\end{align*}
%which by Young's inequality implies
%\begin{align*}
%    \frac12 \frac{d}{dt}\bigl(\langle t\rangle^{1+2\eta} \Vert \nabla a \Vert_{L^2}^2\bigr) + \frac{\mu_\ast}{4} \langle t\rangle^{1+2\eta} \Vert \Delta a \Vert_{L^2}^2 &\lesssim \langle t\rangle^{2\eta} \Vert \nabla a \Vert_{L^2}^2 + \langle t\rangle^{1+2\eta} \Vert \omega \Vert_{L^4}^4 \\
%    &\quad + \langle t\rangle^{1+2\eta} \Vert \nabla u \Vert_{L^2}^2 \Vert \nabla a \Vert_{L^2}^2.
%\end{align*}
%Applying Gronwall's inequality results in (\ref{a:tenergy-higher}).

\item \textbf{Proof of (\ref{a:tenergy-low+}):} 
We multiply (\ref{a:low-en-eq}) by $\langle t\rangle^{1+2\delta_-}$ to obtain
\begin{align*}
    \frac12 \frac{d}{dt} \Bigl(  t ^{1+2\delta_-} \int_{\R^2}\mu \Bigl ((R_2R_2-R_1R_1)\omega)^2 &+(2R_1R_2\omega)^2\Bigr)  dx\Bigr) +   t ^{1+2\delta_-} \Vert \nabla a \Vert_{L^2}^2 \\
    &\quad \lesssim \mu^\ast  t ^{2\delta_-} \Vert \omega \Vert_{L^2}^2 + \mu^\ast  t ^{1+2\delta_-} \Vert \nabla u \Vert_{L^\infty} \Vert \omega \Vert_{L^2}^2,
\end{align*}
where integration over $[0,t]$ yields
\begin{align*}
    \frac{  t ^{1+2\delta_-}}{\mu^\ast} \Vert \omega \Vert_{L^2}^2 + \Vert  {t'} ^{\frac12+\delta_-} \nabla a \Vert_{L^2_tL^2}^2  
    \lesssim 
    %\mu^\ast \|\omega_0\|_{L^2}^2+
    \mu^\ast \|  {t' }^{ \delta_-}   \omega \|_{L^2_tL^2}^2  + \mu^\ast\int_0^t {t'}^{1+2\delta_-} \Vert \omega \Vert_{L^2}^2 \Vert \nabla u \Vert_{L^\infty} dt'.
\end{align*}
Then (\ref{a:tenergy-low+}) follows from Gronwall's inequality. %and (\ref{u:decay+}).

\end{itemize}
\end{proof}

%\subsection{Proof of Proposition \ref{u:Lip:prop}: The Time-Independent Lipschitz Estimate}\label{sect:Lip}

\subsection{Step II. The time-Independent Lipschitz estimate}\label{sect:Lip}

%This subsection is devoted to the proof of Proposition \ref{u:Lip:prop}, following the steps demonstrated in Figure \ref{diagram}. Corollary \ref{coro} follows subsequently.
In this subsection we establish the time-independent Lipschitz estimate for the fluid velocity. To do so, we follow the steps demonstrated in Figure \ref{diagram}.
Throughout this subsection time evolution is neglected, so that all quantities only depend on the spacial variable $x\in\R^2$.

\begin{proposition}[Time-independent Lipschitz estimate]\label{u:Lip:prop}
Let $a\in L^2\cap W^{1,p}(\R^2)$, $p\in (2,\infty)$ and $\mu \in L^\infty(\R^2;[\mu_\ast,\mu^\ast])$, $0<\mu_\ast\leq \mu^\ast$.
Assume further that  $\d_\tau \mu\in L^p(\R^2)$, where  $\tau\in L^\infty\cap \dot W^{1,p}(\R^2;\R^2)$ is a non-degenerate vector field.
Let $\phi\in H^2(\R^2)$ be the unique solution of \eqref{Lmu,a} on $\R^2$. 
Then $\|\nabla^2\phi\|_{L^\infty}$ can be bounded in terms of $\omega:=\Delta\phi$ and $\taub:=\frac{\tau}{|\tau|}$ as follows
\begin{align}\label{u:Lip}
    \Vert \nabla^2 \phi \Vert_{L^\infty} \lesssim \Vert \omega \Vert_{L^p}^{1-\frac2p} \Bigl(\Vert \nabla a\Vert_{L^p} +  \Vert (\nabla \taub,  \dtau\mu )\Vert_{L^p}   \Vert \nabla^2 \phi \Vert_{L^\infty}
     + \Vert \dtau\omega \Vert_{L^p}
    \Bigr)^ \frac{2}{p}.
\end{align}
In the above, the term $ \Vert (\nabla \taub,  \dtau\mu )\Vert_{L^p}   \Vert \nabla^2 \phi \Vert_{L^\infty} $ can be replaced by $ \Vert (\nabla \taub,  \dtau\mu )\Vert_{L^{p_1}}     \Vert \omega \Vert_{L^{p_2}} $  for $p_1,p_2\in (p,\infty)$, $\frac{1}{p_1}+\frac{1}{p_2}=\frac1p$.
\end{proposition}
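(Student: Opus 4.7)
The plan follows the three-step scheme of Figure~\ref{diagram}. Using Lemma~\ref{lemma:tau,n}, I decompose $\nabla = \taub\,\dtau + n\,\d_n$ so that, since $\nabla\nabla^\perp\phi$ is just a coordinate rearrangement of $\nabla^2\phi$,
\[
\|\nabla^2\phi\|_{L^\infty}\lesssim \|\dtau\nabla\phi\|_{L^\infty} + \|\d_n\nabla^\perp\phi\|_{L^\infty}.
\]
The identity \eqref{alpha:reform}, together with $\mu\geq\mu_\ast$ and $|\taub|=1$, immediately gives $\|\d_n\nabla^\perp\phi\|_{L^\infty}\lesssim \mu_\ast^{-1}\|\alpha\|_{L^\infty} + \|\dtau\nabla\phi\|_{L^\infty}$, so it suffices to control the tangential quantity $\|\dtau\nabla\phi\|_{L^\infty}$ and the ``good unknown'' $\|\alpha\|_{L^\infty}$.

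Both are handled by Gagliardo--Nirenberg \eqref{ip-linfty}. For the tangential part, since $\nabla^2\phi = RR^\perp\omega$, I have $\|\dtau\nabla\phi\|_{L^p}\lesssim\|\omega\|_{L^p}$, and after writing $\nabla\dtau = \dtau\nabla + (\nabla\taub)\cdot\nabla$ and then commuting $\dtau$ past the Riesz operators via \eqref{comm:infty-0},
\[
\|\nabla\dtau\nabla\phi\|_{L^p}\lesssim \|\dtau\omega\|_{L^p} + \|\nabla\taub\|_{L^p}\|\nabla^2\phi\|_{L^\infty}.
\]
Interpolation yields the bound on $\|\dtau\nabla\phi\|_{L^\infty}$. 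For $\|\alpha\|_{L^\infty}$: by the definition \eqref{alpha:def}, $\|\alpha\|_{L^p}\lesssim \mu^\ast\|\omega\|_{L^p}$. For $\|\nabla\alpha\|_{L^p}$, I split $\nabla\alpha = \taub\,\dtau\alpha + n\,\d_n\alpha$, bound the tangential piece by differentiating the product \eqref{alpha:def} directly and reusing the commutator estimate above,
\[
\|\dtau\alpha\|_{L^p}\lesssim \|(\nabla\taub,\dtau\mu)\|_{L^p}\|\nabla^2\phi\|_{L^\infty} + \mu^\ast\|\dtau\omega\|_{L^p},
\]
and for the normal piece use $\nabla\alpha = \nabla(a+\alpha) - \nabla a$ together with the key identity \eqref{eq:a,alpha}. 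The terms inside the Riesz operators in \eqref{eq:a,alpha} are either $\taub\,\dtau\alpha$, $\dtau\omega_j$, or products of $\omega_j$ with $\nabla\taub$; since $\|\omega_j\|_{L^\infty}\lesssim\mu^\ast\|\nabla^2\phi\|_{L^\infty}$ and the Riesz transform is bounded on $L^p$, combining with the bound for $\|\dtau\alpha\|_{L^p}$ gives
\[
\|\nabla\alpha\|_{L^p}\lesssim \|\nabla a\|_{L^p} + \|(\nabla\taub,\dtau\mu)\|_{L^p}\|\nabla^2\phi\|_{L^\infty} + \|\dtau\omega\|_{L^p}.
\]

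Assembling the pieces yields \eqref{u:Lip}. The variant with $\|(\nabla\taub,\dtau\mu)\|_{L^{p_1}}\|\omega\|_{L^{p_2}}$, $1/p_1+1/p_2=1/p$, is obtained by re-pairing the H\"older exponents in the commutator and product estimates above, using $L^{p_2}$-boundedness of $RR^\perp$ in place of an $L^\infty$-bound on $\nabla^2\phi$. The main technical obstacle is ensuring that each commutator between $\dtau$ and a zero-order Riesz operator produces precisely the structure $\|\nabla\taub\|_{L^p}\|\nabla^2\phi\|_{L^\infty}$ rather than $\|\nabla^2\phi\|_{L^\infty}^2$, so that the resulting inequality is sub-quadratic in $\|\nabla^2\phi\|_{L^\infty}$ and can be closed later by a bootstrap using the smallness hypothesis \eqref{u0:cond}.
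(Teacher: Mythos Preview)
Your proposal is correct and follows essentially the same approach as the paper: the same three-step reduction (Figure~\ref{diagram}), the same key identities \eqref{alpha:reform} and \eqref{eq:a,alpha} from Lemma~\ref{lemma:tau,n}, the same commutator estimate \eqref{comm:infty-0}, and the same Gagliardo--Nirenberg interpolation \eqref{ip-linfty}. The only minor presentational difference is that you split $\nabla\alpha=\taub\,\dtau\alpha+n\,\d_n\alpha$ before invoking \eqref{eq:a,alpha}, whereas the paper bounds $\|\dtau\alpha\|_{L^p}$ first (needed because it appears on the right of \eqref{eq:a,alpha}) and then applies \eqref{eq:a,alpha} directly to the full $\nabla\alpha$; your split is harmless but slightly redundant, since \eqref{eq:a,alpha} already handles the whole gradient.
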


\begin{proof}
%We start with the case $\Omega=\R^2$. 
Our goal  is to control $\|\nabla^2\phi\|_{L^\infty}$ by the right hand side of \eqref{u:Lip}
\begin{equation}\label{I}
 I:=   \Vert \omega \Vert_{L^p}^{1-\frac2p}  \Bigl(\Vert \nabla a \Vert_{L^p} +  \Vert (\nabla \taub,  \dtau\mu )\Vert_{L^p}   \Vert \nabla^2 \phi \Vert_{L^\infty}  
  + \Vert \dtau \omega \Vert_{L^p}
 \Bigr)^\frac{2}{p}.
\end{equation}
\noindent\textbf{Preliminary Estimate in the tangential direction $\|\dtau\nabla^2\phi\|_{L^p}$.} 
We first apply \eqref{comm:infty-0} with $X=\taub$ and $f=\omega$ to derive the following tangential regularity  (noticing $\nabla^2\phi=R^2\omega$) %that $\dtau\omega=\dtau\Delta\phi$ can control $\dtau\nabla^2\phi$ up to a commutator:
\begin{align}\label{dtau2phi:Lp}
    \|\dtau\nabla^2\phi\|_{L^p}\lesssim \Vert \nabla\taub \Vert_{L^p} \Vert \nabla^2\phi  \Vert_{L^\infty}+\|\dtau\omega\|_{L^p}.
\end{align}
%Indeed, we use Lemma \ref{commutator:lem} to bound the commutator
%\begin{align*}
%    \dtau \nabla^2\phi- R^2\dtau\omega=\dtau R^2\omega-R^2\dtau\omega=[\dtau, R^2]\omega 
%\end{align*}it follows from the $L^p$-boundedness of the Riesz-Operator $R$, the fact that $\nabla^2\phi=R^2\omega$ and    %the commutator estimates from Lemma \ref{commutator:lem} for the commutator  
%\begin{align*}
%     \dtau\nabla^2\phi - RR\dtau \omega
%     =\dtau RR\omega- RR \dtau  \omega
%     =-[RR,\dtau]\omega.
%\end{align*} 
%the estimate from Lemma \ref{commutator:lem} for $\dtau \nabla^2 \phi = \dtau R^2 \omega$.

\noindent\textbf{Step 1. Reduction to $\Vert  \d_n \nabla \phi \Vert_{L^\infty}$. }Using formula \eqref{nabla:tau,n} from Lemma \ref{lemma:tau,n} we write
\begin{align}\label{preliminary}
    \Vert \nabla^2\phi \Vert_{L^\infty} \leq \Vert \overline{\tau} \otimes\dtau \nabla \phi \Vert_{L^\infty} + \Vert n\otimes \d_n \nabla \phi \Vert_{L^\infty}
    \leq \Vert  \dtau \nabla \phi \Vert_{L^\infty} + \Vert  \d_n \nabla \phi \Vert_{L^\infty}.
\end{align}
It remains to control $\Vert  \d_n \nabla \phi \Vert_{L^\infty}$ by $I$, since we can use 
  the interpolation inequality \eqref{ip-linfty} and the above estimate \eqref{dtau2phi:Lp} to control the tangential derivative $\Vert  \dtau \nabla \phi \Vert_{L^\infty}$   by $I$:
\begin{align}\label{dtauu:Lp}
     \Vert  \dtau \nabla \phi \Vert_{L^\infty}
    \lesssim \Vert   \dtau \nabla \phi \Vert_{L^p}^{1-\frac2p} \Vert \nabla \bigl( \dtau \nabla \phi\bigr) \Vert_{L^p}^{\frac2p} 
    \lesssim \Vert \omega \Vert_{L^p}^{1-\frac{2}{p}} \Bigl(\Vert \nabla\taub \Vert_{L^p} \Vert \nabla^2\phi \Vert_{L^\infty} + \Vert \dtau \omega \Vert_{L^p} \Bigr)^{\frac{2}{p}}.
\end{align} 
where in the second inequality we   used also the definition  $\dtau=\taub\cdot\nabla$ and $L^p$-boundedness of Riesz operator.

\noindent\textbf{Step 2. Reduction to $\|\alpha\|_{L^\infty}$.}
We consider the normal derivative of $\nabla\phi$. 
Recall the reformulation \eqref{alpha:reform} in Lemma \ref{lemma:tau,n} such that 
%By use of Lemma \ref{lemma:tau,n} \ref{nLaplace} we have
%\begin{align*}
%    \d_{n}\nabla\phi = n \Delta\phi - \dtau \nabla^\perp \phi .
%\end{align*} 
%With the quantity $\alpha  = -\mu( \Delta \phi -2\overline{\tau}\cdot \dtau \nabla \phi)$ introduced in (\ref{alpha:reform}) in Lemma \ref{lemma:tau,n}  we can write
%\begin{align*}
%    \Delta \phi = -\frac{\alpha}{\mu} + 2\overline{\tau}\cdot \dtau \nabla \phi, 
%\end{align*}
%so that
\begin{align}\label{dnu-alpha}
     \d_{n}\nabla \phi = -\frac{\alpha}{\mu}   n  + 2 (\overline{\tau} \cdot \dtau  \nabla \phi)   n  -  \dtau \nabla^\perp \phi .
\end{align}
%\begin{align}\label{dnu-alpha}
%    \overline{\tau}\otimes  \d_{n}\nabla \phi = -\frac{\alpha}{\mu} \overline{\tau}\otimes n  + 2 (\overline{\tau} \cdot \dtau  \nabla \phi) \overline{\tau}\otimes n  - \overline{\tau}\otimes  \dtau \nabla^\perp \phi .
%\end{align}
The last two terms on the right hand side are related to tangential derivatives and can be bounded  by $I$ by Step 1.
It remains to control $\|\alpha\|_{L^\infty}$ by $I$, since the first term satisfies
$\Vert n \frac{\alpha}{\mu} \Vert_{L^\infty} \leq \frac{1}{\mu_\ast} \Vert \alpha \Vert_{L^\infty}.
    $
    %\lesssim \Vert \frac{\alpha}{\vert\tau\vert^2} \Vert_{L^p}^{1-\frac2p} \Vert \nabla(\frac{\alpha}{\vert\tau\vert^2}) \Vert_{L^p}^{\frac2p} 
 %   \lesssim \Vert \omega \Vert_{L^p}^{1-\frac2p} \Vert \nabla \alpha \Vert_{L^p}^{\frac2p}, 

\noindent\textbf{Step 3. Estimate for $\|\alpha\|_{W^{1,p}}$ and conclusion.}
Recall the  definition (\ref{alpha:def}) of $\alpha$:
\begin{align}\label{alpha:omega}
    \alpha= (\taub_2^2-\taub_1^2)\mu(\d_{22}-\d_{11})\phi + 2\taub_1\taub_2 \mu (2\d_{12}\phi)=(\taub_2^2-\taub_1^2)\mu (R_2R_2-R_1R_1)\omega + 2\taub_1\taub_2 \mu (2R_1R_2\omega).
\end{align}
We derive from the $L^p$-boundedness of the Riesz operator $R$  that 
\begin{equation}\label{alpha:Lp}
    \|\alpha\|_{L^p}\leq C(p,\mu^\ast) \|\omega\|_{L^p}.
\end{equation}
Applying $\dtau$ to \eqref{alpha:omega}  and recalling \eqref{dtau2phi:Lp} we derive 
\begin{align*}
    \|\dtau\alpha\|_{L^p}
    &\lesssim_{\mu^\ast} (\Vert \nabla\taub \Vert_{L^p} + \Vert \dtau\mu \Vert_{L^p}) \Vert \nabla^2 \phi \Vert_{L^\infty}+ \|\dtau\nabla^2\phi\|_{L^p}
    \\
    &\lesssim_{\mu^\ast} (\Vert \nabla\taub \Vert_{L^p} +\Vert \dtau\mu \Vert_{L^p}) \Vert \nabla^2 \phi \Vert_{L^\infty}+\|\dtau\omega\|_{L^p}.
\end{align*}
Now we bound $\|\nabla\alpha\|_{L^p}$ by use of  the relation between $a$ and $\alpha$ in \eqref{eq:a,alpha} and the $L^p$-boundedness of the Riesz operator as (recalling also \eqref{dtau2phi:Lp})
\begin{align}
    \|\nabla \alpha\|_{L^p}
    &\lesssim \|\nabla a\|_{L^p}
    +\|\dtau\alpha\|_{L^p}
    +\|\dtau(\mu\nabla^2\phi)\|_{L^p}
    +\|\nabla\taub\|_{L^p}\|\nabla^2\phi\|_{L^\infty}
    \nonumber \\
    &\lesssim \|\nabla a\|_{L^p}
    +(\Vert \nabla\taub \Vert_{L^p} +\Vert \dtau\mu \Vert_{L^p}) \Vert \nabla^2 \phi \Vert_{L^\infty}+\|\dtau\omega\|_{L^p}. \label{gradalpha}
\end{align}

Consequently, by use of the interpolation inequality
$$\|\alpha\|_{L^\infty}\lesssim \|\alpha\|_{L^p}^{1-\frac2p}\|\nabla\alpha\|_{L^p}^{\frac2p}$$
and the estimate \eqref{alpha:Lp} we achieve $\|\alpha\|_{L^\infty}\lesssim I$.
%by the terms on the right hand side of the above estimate and we obtain
%\begin{align}\label{alpha:Linfty}
%    \Vert n\frac{\alpha}{\mu} \Vert_{L^\infty} \lesssim \Vert \omega \Vert_{L^p}^{1-\frac2p}\Bigl(\Vert\nabla a \Vert_{L^p} + \Vert \nabla \taub \Vert_{L^p} \Vert \nabla u \Vert_{L^\infty} + \Vert \dtau \omega \Vert_{L^p}\Bigr)^{\frac2p}.
%\end{align}
Hence, $\Vert \d_{n}\nabla\phi \Vert_{L^\infty}$ and $\Vert \nabla^2\phi \Vert_{L^\infty}$ are both controlled by $I$ by Step 1 and Step 2. In particular, this proves the desired estimate (\ref{u:Lip}).
%In the case when $\Omega$ is a bounded Lipschitz domain, the proof of (\ref{u:Lip:bdd}) works exactly the same using the interpolation inequality \eqref{ip-linfty-bdd} and the space $\dot W^{-1,p}(\R^2)$ is replaced by $W^{-1,p}(\Omega)$.
\end{proof}

We fix $\epsilon>0$ from Lemma \ref{Rmu-inv:prop}, which depends only on $\mu_\ast, \mu^\ast$, and we may assume that $\epsilon\leq 2$. 
%By taking $p=2+\epsilon$ in  Proposition  \ref{u:Lip:prop}  we obtain the following time-independent Lipschitz estimate for the fluid velocity.
%The proof makes use of commutator estimates and Lemma \ref{Rmu-inv:prop}, and is found in Section  \ref{sect:Lip}.
Choosing $p=2+\epsilon$ in \eqref{u:Lip} and combining Proposition \ref{u:Lip:prop} with Lemma \ref{Rmu-inv:prop} leads to the following corollary.

\begin{corollary}\label{coro}
Under the hypotheses of Proposition \ref{u:Lip:prop}, we have for $u:=\nabla^\perp\phi$
\begin{align}\label{u:Lip:eps}
    \Vert \nabla u \Vert_{L^\infty} \lesssim \Vert a \Vert_{L^{2+\epsilon}}^{\frac{\epsilon}{2+\epsilon}} \Bigl(\Vert \nabla a \Vert_{L^{2+\epsilon}} 
    + \Vert (\nabla \taub, \dtau\mu) \Vert_{L^{2+\epsilon}}    \Vert (\nabla u, a) \Vert_{L^\infty}  \Bigr)^{\frac{2}{2+\epsilon}}.
\end{align}
%if $\Omega=\R^2$, and
%\begin{align}\label{u:Lip:bdd:eps}
%    \Vert \nabla u \Vert_{L^\infty} \lesssim \Vert a \Vert_{L^{2+\epsilon}}^{\frac{\epsilon}{2+\epsilon}} \Bigl(\Vert \nabla a \Vert_{L^{2+\epsilon}} + \Vert\taub \Vert_{L^\infty \cap \dot W^{1,2+\epsilon}} (\Vert \nabla u \Vert_{L^\infty} + \Vert a \Vert_{L^\infty}) \Bigr)^{\frac{2}{2+\epsilon}} + \Vert a \Vert_{L^{2+\epsilon}},
%\end{align}
%if $\Omega$ is a bounded Lipschitz domain.
\end{corollary}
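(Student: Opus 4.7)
\textbf{Proof plan for Corollary \ref{coro}.} The strategy is to specialize the general estimate \eqref{u:Lip} from Proposition \ref{u:Lip:prop} to the exponent $p = 2+\epsilon$ fixed above, and then convert the right-hand side, which is phrased in terms of $\omega = \Delta\phi$ and $\dtau\omega$, into one phrased in terms of $a = R_\mu\omega$ and $\nabla a$, using the isomorphism property of $R_\mu$ on $L^{2+\epsilon}(\R^2)$ from Lemma \ref{Rmu-inv:prop} together with the Riesz commutator estimates from Lemma \ref{commutator:lem}.

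First, since $\nabla u = \nabla\nabla^\perp\phi$, we have $\|\nabla u\|_{L^\infty} = \|\nabla^2\phi\|_{L^\infty}$ (up to a harmless permutation of entries), so the left-hand side is already in the right form. Next, by Lemma \ref{Rmu-inv:prop}, $R_\mu$ is an isomorphism on $L^{2+\epsilon}(\R^2)$ with operator norm depending only on $\mu_\ast,\mu^\ast$, hence
\begin{align*}
    \|\omega\|_{L^{2+\epsilon}} = \|R_\mu^{-1} a\|_{L^{2+\epsilon}} \lesssim \|a\|_{L^{2+\epsilon}},
\end{align*}
which takes care of the prefactor $\|\omega\|_{L^{2+\epsilon}}^{\epsilon/(2+\epsilon)}$ in \eqref{u:Lip}.

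The key step is the treatment of $\|\dtau\omega\|_{L^{2+\epsilon}}$. Writing $\omega = R_\mu^{-1} a$ and using the operator identity $\dtau R_\mu^{-1} = R_\mu^{-1}\dtau - R_\mu^{-1}[R_\mu,\dtau]R_\mu^{-1}$, we get
\begin{align*}
    \dtau\omega = R_\mu^{-1}\dtau a - R_\mu^{-1}[R_\mu,\dtau]\omega,
\end{align*}
so that, again by Lemma \ref{Rmu-inv:prop},
\begin{align*}
    \|\dtau\omega\|_{L^{2+\epsilon}} \lesssim \|\nabla a\|_{L^{2+\epsilon}} + \|[R_\mu,\dtau]\omega\|_{L^{2+\epsilon}}.
\end{align*}
The commutator estimate \eqref{Rmu:comm} with $X = \taub$, $g = \omega$, $p = 2+\epsilon$ (noting that $R^2\omega$ captures $\nabla u$ up to Riesz-bounded components, while $R_\mu\omega = a$) gives
\begin{align*}
    \|[R_\mu,\dtau]\omega\|_{L^{2+\epsilon}} \lesssim \bigl(\|\nabla\taub\|_{L^{2+\epsilon}} + \|\dtau\mu\|_{L^{2+\epsilon}}\bigr)\bigl(\|\nabla u\|_{L^\infty} + \|a\|_{L^\infty}\bigr).
\end{align*}

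Finally, plugging these bounds into \eqref{u:Lip} with $p = 2+\epsilon$ absorbs the term $\|(\nabla\taub,\dtau\mu)\|_{L^{2+\epsilon}}\|\nabla^2\phi\|_{L^\infty}$ already present there into the same structure, and produces exactly \eqref{u:Lip:eps}. The only subtle point is verifying that the commutator output controls $\|R^2\omega\|_{L^\infty}$ by $\|\nabla u\|_{L^\infty}$, which is immediate from $\nabla u = R R^\perp\omega$ and the fact that $R^2\omega$ and $\nabla\nabla^\perp\phi$ differ by tensorial reshuffling; no genuinely new analysis is needed beyond Lemmas \ref{Rmu-inv:prop} and \ref{commutator:lem}.
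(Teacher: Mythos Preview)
Your proof is correct and follows essentially the same route as the paper: bound $\|\omega\|_{L^{2+\epsilon}}$ by $\|a\|_{L^{2+\epsilon}}$ via $R_\mu^{-1}$, express $\dtau\omega$ through $R_\mu^{-1}\dtau a$ and the commutator $[R_\mu,\dtau]\omega$, estimate the latter by \eqref{Rmu:comm}, and substitute into \eqref{u:Lip} with $p=2+\epsilon$. The only slip is a sign in your commutator identity (it should read $\dtau R_\mu^{-1} = R_\mu^{-1}\dtau + R_\mu^{-1}[R_\mu,\dtau]R_\mu^{-1}$), but this is irrelevant once norms are taken.
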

\begin{proof}
By definition of $a=R_\mu\omega$ and Lemma \ref{Rmu-inv:prop} we derive that
\begin{align*}
    \Vert \omega \Vert_{L^{2+\epsilon}} = \Vert R_\mu^{-1} R_\mu \omega \Vert_{L^{2+\epsilon}} \lesssim \Vert a \Vert_{L^{2+\epsilon}}.
\end{align*}
Now we rewrite  
\[\dtau \omega = R_\mu^{-1}R_\mu \dtau \omega = R_\mu^{-1}(\dtau a + [R_\mu, \dtau]\omega) .\]
By virtue of the commutator estimate (\ref{Rmu:comm}) and Lemma \ref{Rmu-inv:prop} again, we arrive at
\begin{equation}\label{dtauomega:2+eps}
\Vert \dtau \omega \Vert_{L^{2+\epsilon}} \lesssim \Vert \nabla a \Vert_{L^{2+\epsilon}} + (\Vert \nabla \taub \Vert_{L^{2+\epsilon}}+ \Vert \dtau\mu \Vert_{L^{2+\epsilon}}) (\Vert \nabla u \Vert_{L^\infty}+\Vert a \Vert_{L^\infty}) .
\end{equation}
%(with straightforward modifications if $\tilde \Omega$ is a bounded Lipschitz domain). 
Choosing $p=2+\epsilon$ in (\ref{u:Lip}) and using the above estimates for $\Vert \omega \Vert_{L^{2+\epsilon}}$ and $\Vert \dtau \omega \Vert_{L^{2+\epsilon}}$ we arrive at (\ref{u:Lip:eps}).
\end{proof}
 
\begin{remark}[Time-independent estimates of $\nabla u$  revisited]\label{rem:omega-a:Lp}
\begin{enumerate}[(i)]
    \item We can express $a,b$ in terms of $\mu,\omega$ in complex coordinates in $\R^2$:
\begin{align*}
    z=x_1+ix_2, \quad \bar z=x_1-ix_2,\quad x_1=\frac12(z+\bar z), \quad x_2=\frac1{2i}(z-\bar z),
\end{align*}
as follows (noticing $\d_1= (\d_z+\d_{\bar z})$, $\d_2=\frac{1}{i}(\d_z-\d_{\bar z})$, $\Delta=4\d_{z\bar z}$, $\frac{\d_z}{\d_{\bar z}}=\frac{4\d_{zz}^2}{\Delta}$, $\frac{\d_{\bar z}}{\d_{  z}}=\frac{4\d_{\bar z\bar z}^2}{\Delta}$) 
\begin{align*}
    &a=R_\mu\omega%=\frac{1}{ \d_{z\bar z}}\Bigl( (\d_z^2+\d_{\bar z}^2)\mu (\d_z^2+\d_{\bar z}^2)-(\d_z^2-\d_{\bar z}^2)\mu (\d_z^2-\d_{\bar z}^2)\Bigr)\phi \\
%&\qquad     =\frac{2}{ \d_{z\bar z}}\Bigl(  \d_{\bar z}^2\mu \d_z^2+ \d_z^2 \mu  \d_{\bar z}^2\Bigr)\phi
%=4\Re \frac{\d_{ z}^2\mu \d_{\bar z}^2}{ \d_{z\bar z}} \phi
=\Re[\frac{\d_z}{\d_{\bar z}}\mu \frac{\d_{\bar z}}{\d_z}\omega],
\quad b=Q_\mu\omega%=\frac{1}{ i\d_{z\bar z}}\Bigl( -(\d_z^2+\d_{\bar z}^2)\mu (\d_z^2-\d_{\bar z}^2)+(\d_z^2-\d_{\bar z}^2)\mu (\d_z^2+\d_{\bar z}^2)\Bigr)\phi\\
%&\qquad   =\frac{2}{ i\d_{z\bar z}}\Bigl(   \d_z^2 \mu  \d_{\bar z}^2-\d_{\bar z}^2\mu \d_z^2\Bigr)\phi
%=4\Im \frac{\d_{  z}^2\mu \d_{\bar z}^2}{ \d_{z\bar z}} \phi
=\mathrm{Im}[\frac{\d_z}{\d_{\bar z}}\mu \frac{\d_{\bar z}}{\d_z}\omega].
\end{align*}
Thus $\omega$ can be respresented in terms of $a,b,\mu$ as
\begin{align*}
\omega= \frac{\d_z}{\d_{\bar z}}\frac1\mu \frac{\d_{\bar z}}{\d_z}(a+ib).
\end{align*}
This shows that the vorticity $\omega$ can be written in terms of $a$ and $b$, and that in general, $a$ alone does not suffice to represent $\omega$.
As  is shown in \cite[Corollary 1.9, Theorem 1.11]{he2020solvability} that
the curl-free part (imaginary part)  $ \nabla b, \nabla\omega\not\in L^1_{\mathrm{loc}}$ for the stationary case with piecewise-constant viscosity,  we don't have energy estimates for $\nabla b, \nabla\omega$ in the presence of jumping viscosity coefficient. 

\item 
If $\mu\in [\mu_\ast,\mu^\ast]$, then we can control $a$ in terms of $\omega$ by use of the boundedness of the Riesz transform: 
\begin{equation}\label{Lp:a,omega}
    \|a\|_{L^p(\R^2)}\leq C(p,\mu^\ast)\|\omega\|_{L^p(\R^2)},
    \,\,  \forall p\in (1,\infty).
\end{equation}
We have already seen   in Lemma \ref{Rmu-inv:prop} that the reverse estimate holds for $p=2+\epsilon$, i.e. we can control the $L^{2+\epsilon}$-norm of $\omega$ by $\Vert a \Vert_{L^{2+\epsilon}}$.
We have taken $p=2+\epsilon$ close to $2$ when applying \eqref{u:Lip} to derive \eqref{u:Lip:eps}, since, in the proof,  when we control  $\|\dtau\omega\|_{L^p}$  by $\|\dtau a\|_{L^p}$, we make  use of the inverse $R_\mu^{-1}$, which in general is \textit{a priori} bounded in $L^{p}$ only for $p>2$ close to $2$.
\item In \cite{chemin1993persistance, chemin1991mouvement} J.-Y. Chemin established the celebrated (time-independent) Lipschitz estimate for the velocity field with a logarithm growth in the tangential regularity of $\omega$ with respect to the vector field $\tau$:
\begin{align*} 
    \Vert \nabla u \Vert_{L^\infty(\R^2)} \lesssim \Vert \omega \Vert_{L^p(\R^2)} + \Vert \omega \Vert_{L^\infty(\R^2)} \log \Bigl(e+ \Vert \frac{1}{\vert \tau \vert} \Vert_{L^\infty(\R^2)} \frac{\Vert \omega\Vert_{L^\infty(\R^2)} \Vert \tau \Vert_{C^\alpha(\R^2)} + \Vert \div(\tau \omega) \Vert_{C^{\alpha-1}(\R^2)}}{\Vert \omega \Vert_{L^\infty(\R^2)}}\Bigr)
\end{align*}
for $p\in [1,\infty)$.
This estimate comes essentially from the analysis of the elliptic equation $ \Delta\phi=(-\d_{\overline{\tau}}^\ast \, \d_{\overline{\tau}} -\d_n^\ast\,  \d_n)\phi=\omega$.
When taking time into account, the logarithmic growth in the $\tau$-norms, which grows exponentially   in $\|\nabla u\|_{L^1_tL^\infty}$ as   in \eqref{tau:exp},  implies finally the linear growth in  $\int^t_0\|\nabla u\|_{L^\infty}$ on the right hand side.
An application of  Gronwall's inequality yields the boundedness of $\|\nabla u\|_{L^\infty}$ on any bounded time interval. 
This is key in the regularity  propagation  of the vortex patch.
%\begin{align*}
%    \Delta\phi=\omega, 
%\end{align*}
%for which he obtained  firstly $\d_\tau \nabla\phi\in L^\infty(\R^2)$ from the striated regularity of $\omega$, and then  $\nabla^2\phi\in L^\infty(\R^2)$ by use of the Laplace equation $\Delta\phi=(-\d_{\overline{\tau}}^\ast \, \d_{\overline{\tau}} -\d_n^\ast\,  \d_n)\phi=\omega$ and the fact that $\omega\in L^\infty(\R^2)$.

Our estimate \eqref{u:Lip:eps} is essentially of interpolation type, and we do not have an a priori $L^\infty$-estimate  for $\omega$.
When taking into account of time, we can not avoid the exponential growth in $\|\nabla u\|_{L^1_tL^\infty}$  on the right hand side.

\end{enumerate}
    
\end{remark}

%\subsection{Proof of Proposition \ref{u:L1Lip:prop}: The $L^1_t\mathrm{Lip}(\R^2)$-Estimate}\label{sect:L1Lip}

\subsection{Step III. The $L^1_t\mathrm{Lip}(\R^2)$-estimate}\label{sect:L1Lip}

In this subsection we combine the results from the previous sections to deduce the $L^1Lip$-estimate for the velocity vector field.

%We are only going to treat the whole plane case $\Omega=\R^2$ here; the case when $\Omega$ is a bounded Lipschitz domain is handled with similar arguments using (\ref{u:Lip:bdd:eps}) instead of (\ref{u:Lip:eps}).

\begin{proposition}[$L^1_t\mathrm{Lip}(\R^2)$-estimate]\label{u:L1Lip:prop}
Let $(\mu,u,\tau)$ be a sufficiently smooth solution of \eqref{muNS}-\eqref{eq:tau} on some time interval $[0,T^\ast)$, $T^\ast>0$. Then, under the assumptions of Theorem \ref{exthm} there exists a constant $C>0$ depending only on $\mu_\ast,\mu^\ast$ such that 
\begin{align}
    \Vert \nabla u \Vert_{L^1_tL^\infty} + \Vert t'^{\frac12} \nabla u \Vert_{L^2_tL^\infty} \leq 
    &C \Bigl( \Vert u_0 \Vert_{L^2}^{\frac{\epsilon}{2}}  (\|u_0\|_{\dot H^{-1}}+\|\mu_0-1\|_{L^2(\R^2)}\|u_0\|_{L^2})
\times \nonumber\\
&\qquad \times \bigl(\|\nabla u_0\|_{L^2}+\|(\nabla\taub_0, \d_{\taub_0}\mu_0)\|_{L^{2+\epsilon}}^{\frac{2+\epsilon}{\epsilon}}\bigr) \Bigr)^{\frac{2\epsilon}{(2+\epsilon)^2}},
    \quad t\in (0,T^\ast). \label{u:L1Lip-0} 
\end{align}
\end{proposition}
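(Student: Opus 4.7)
The argument is a bootstrap/continuity argument for the scaling-invariant quantity $X(t):=\|\nabla u\|_{L^1_tL^\infty}+\|{t'}^{1/2}\nabla u\|_{L^2_tL^\infty}$. As anticipated at the end of Subsection \ref{subss:epsilon}, I first rescale the unknowns by $(\mu_\lambda,u_\lambda,\tau_\lambda)(t,x):=(\mu,\lambda u,\tau)(\lambda^2t,\lambda x)$ and fix $\lambda$ so as to remove the scaling mismatch between the factor $\|u_0\|_{L^2}$ (invariant in two dimensions) and the other norms in \eqref{u0:cond}; this reduces everything to a single small parameter $\eta\leq c_0$ controlled by \eqref{u0:cond} and makes the subsequent bookkeeping homogeneous.

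Pointwise in $t$, I apply Corollary \ref{coro} and absorb the $\|\nabla u\|_{L^\infty}$ appearing on its own right-hand side by Young's inequality (possible because $2/(2+\epsilon)<1$), obtaining
\[
\|\nabla u\|_{L^\infty}\lesssim \|a\|_{L^{2+\epsilon}}^{\epsilon/(2+\epsilon)}\|\nabla a\|_{L^{2+\epsilon}}^{2/(2+\epsilon)}+\|(\nabla\taub,\d_\tau\mu)\|_{L^{2+\epsilon}}^{2/\epsilon}\|a\|_{L^{2+\epsilon}}+\|a\|_{L^\infty}\cdot(\text{tangential}).
\]
I then use the two-dimensional Gagliardo-Nirenberg inequalities
\[
\|a\|_{L^{2+\epsilon}}\lesssim \|a\|_{L^2}^{2/(2+\epsilon)}\|\nabla a\|_{L^2}^{\epsilon/(2+\epsilon)},\qquad \|\nabla a\|_{L^{2+\epsilon}}\lesssim \|\nabla a\|_{L^2}^{2/(2+\epsilon)}\|\Delta a\|_{L^2}^{\epsilon/(2+\epsilon)},
\]
so that the leading term becomes a product of powers of $\|a\|_{L^2}$, $\|\nabla a\|_{L^2}$, $\|\Delta a\|_{L^2}$ with exponents summing to $1$, each of which is controlled by the weighted energy estimates \eqref{a:energy-low}, \eqref{a:tenergy-low}, \eqref{a:tenergy-high} and \eqref{a:tenergy-low+} of Proposition \ref{energy:prop}.

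Integrating in time, the near-$0$ integrability is provided by writing $\|\Delta a\|_{L^2}={t'}^{-1/2}({t'}^{1/2}\|\Delta a\|_{L^2})$ and using \eqref{a:tenergy-high}; the large-$t$ decay comes from the $\dot H^{-1}$ assumption on $u_0$ via Proposition \ref{u-decay:prop} and \eqref{a:tenergy-low+}, which is exactly what supplies the factor $\|u_0\|_{\dot H^{-1}}+\|\mu_0-1\|_{L^2}\|u_0\|_{L^2}$ appearing in \eqref{u0:cond}. The weighted norm $\|{t'}^{1/2}\nabla u\|_{L^2_tL^\infty}$ is handled analogously by redistributing the $t^{1/2}$ weight across the Gagliardo-Nirenberg factors. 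Meanwhile the $\tau$-dependent terms are controlled by \eqref{tau:exp}, whose $\d_\tau\mu$ content is conserved a priori by \eqref{eq:dtaumu}, and by the commutator bound \eqref{dtauomega:2+eps}, which translates $\|\dtau\omega\|_{L^{2+\epsilon}}$ into $\|\nabla a\|_{L^{2+\epsilon}}$ plus tangential corrections. Throughout, every bound carries the exponential factor $V(t)=\exp(C\|\nabla u\|_{L^1_tL^\infty})$ from Proposition \ref{energy:prop} and the additional $\exp(C\|\nabla u\|_{L^1_tL^\infty})$ from \eqref{tau:exp}.

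Collecting everything leads to a nonlinear inequality of the schematic form
\[
X(t)\leq C\,\eta^{2\epsilon/(2+\epsilon)^2}\,\Phi\bigl(X(t)\bigr),\qquad \Phi\text{ continuous, nondecreasing},\ \Phi(0)=1,
\]
where $\eta$ is the small factor obtained from \eqref{u0:cond} after rescaling. Since $X(0)=0$ and $\eta\leq c_0$ can be chosen small enough, a standard continuity argument yields the claimed bound \eqref{u:L1Lip-0} with exponent $2\epsilon/(2+\epsilon)^2$. The main obstacle is the exponent bookkeeping: one must verify that the chain ``Gagliardo-Nirenberg $\to$ Hölder in time $\to$ energy bounds of Propositions \ref{u-decay:prop}--\ref{energy:prop}'' reproduces precisely the three factors $\|u_0\|_{L^2}^{\epsilon/2}$, $\|u_0\|_{\dot H^{-1}}+\|\mu_0-1\|_{L^2}\|u_0\|_{L^2}$ and $\|\nabla u_0\|_{L^2}+\|(\nabla\taub_0,\d_{\taub_0}\mu_0)\|_{L^{2+\epsilon}}^{(2+\epsilon)/\epsilon}$ with the exact total exponent $2\epsilon/(2+\epsilon)^2$; the three come respectively from the $\|a\|_{L^2}$-weight \eqref{a:tenergy-low}, the decay estimate \eqref{a:tenergy-low+}, and the high-frequency bound \eqref{a:energy-low} combined with the tangential input \eqref{dtauomega:2+ess}+\eqref{tau:exp}. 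The rescaling performed in the first step is precisely what makes this clean single-inequality closure possible, avoiding a case split.
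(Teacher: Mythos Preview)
Your plan is essentially the paper's proof: rescale, feed the (time-weighted) $H^1$-energy estimates for $a$ into Corollary~\ref{coro} via Gagliardo--Nirenberg, control $(\nabla\taub,\dtau\mu)$ by the transport estimates \eqref{tau:LinftyL2+e}--\eqref{dtaumu-Linfty}, and close with a continuity argument. The rescaling choice and the time-splitting near/away from $0$ are also what the paper does (its Steps~1--2, equations \eqref{a:L1L2+}--\eqref{a:L1Linfty}).

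The one substantive deviation is your pointwise Young's inequality to absorb the $\|\nabla u\|_{L^\infty}$ on the right of Corollary~\ref{coro}. The paper deliberately does \emph{not} do this (see the remark following \eqref{intro:u,Lip}); it instead applies H\"older in $t$ first and keeps the factor $\|(\nabla u_\lambda,a_\lambda)\|_{L^1_{\lambda^2 t}L^\infty}$ inside the bracket $(\,\cdot\,)^{2/(2+\epsilon)}$ all the way to the bootstrap inequality. The reason is that your Young step produces a term $\|(\nabla\taub,\dtau\mu)\|_{L^{2+\epsilon}}^{2/\epsilon}\|a\|_{L^{2+\epsilon}}$, and once you insert \eqref{tau:LinftyL2+e} the contribution $\|\nabla a_\lambda\|_{L^1_tL^{2+\epsilon}}^{2/\epsilon}$ appears, carrying both a factor $\tilde V(t)^{2/\epsilon}$ and a prefactor $\sigma_0^{1+2\theta_1/\epsilon}(\sigma_{-1}\sigma_1)^{2\theta_2/\epsilon}$ (in the paper's notation \eqref{grada:L1L2+}) that is a pure power of $\eta=\sigma_0^{\epsilon/2}\sigma_{-1}\sigma_1$ only in the limit $\delta\to(1/(2+\epsilon))_+$. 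The paper's route avoids this by never raising the tangential norm to the large power $2/\epsilon$: the coefficient in its final bootstrap inequality is exactly $\sigma_0^{\epsilon^2/(2+\epsilon)^2}(\sigma_{-1}\sigma_1)^{2\epsilon/(2+\epsilon)^2}=\eta^{2\epsilon/(2+\epsilon)^2}$, coming from $\|a_\lambda\|_{L^1_tL^{2+\epsilon}}^{\epsilon/(2+\epsilon)}$ times the initial tangential data $\lambda^{-\epsilon/(2+\epsilon)}\|(\nabla\taub_0,\d_{\taub_0}\mu_0)\|_{L^{2+\epsilon}}$ raised only to $2/(2+\epsilon)$.

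Your route can still be made to close---the extra terms are higher-order in $\eta$ once $\delta$ is taken sufficiently close to $1/(2+\epsilon)$---but the ``clean single-inequality closure'' you advertise is precisely what the paper's no-Young approach buys and what yours loses.
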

\begin{proof}
Let $t\in (0,T^\ast)$ be arbitrary but fixed. The goal is to prove that the $L^1_tLip$-norm of $u$ can be controlled independently of $t$.

\noindent\textbf{Step 1: Scaling consideration.}
For notational simplicity, we introduce 
\begin{align}\label{def:sigma}
\begin{split}
    \sigma_{-1} &:=\sigma_{-1}(\mu_0,u_0)= \Vert u_0(x) \Vert_{\dot H^{-1}(\R^2)} + \Vert \mu_0(x)-1 \Vert_{L^2(\R^2)} \Vert u_0(x) \Vert_{L^2(\R^2)}, \\
    \sigma_0 &:=\sigma_0(u_0)= \Vert u_0(x) \Vert_{L^2(\R^2)}, \\
    \sigma_1 &:=\sigma_1(\mu_0,u_0,\bar\tau_0)= \Vert u_0(x) \Vert_{\dot H^1(\R^2)} 
    +  \Vert (\d_{\taub_0}\mu_0(x), \nabla_x \taub_0 (x))\Vert_{L^{2+\epsilon}(\R^2)}  ^\frac{2+\epsilon}{\epsilon} ,
\end{split}
\end{align}
where $\epsilon$ depends only on $\mu_\ast$, $\mu^\ast$ by Lemma \ref{Rmu-inv:prop},
and 
\[
\tilde V(t):=\tilde V(u(t))= \exp\bigl( C(\|\nabla_x u(t',x)\|_{L^1_tL^\infty} + \Vert t'^{\frac12} \nabla_x u(t',x) \Vert_{L^2_tL^\infty}) \bigr),
\]
where $C$ is a big enough constant depending only on $\mu_\ast,\mu^\ast$.
We assume without loss of generality $\sigma_j>0$, $j=-1,0,1$.

For $\lambda>0$ we define the rescaled initial data
\begin{align*}
    \mu_{0,\lambda}(x):= \mu_0 (\lambda^{-1}x), \quad u_{0,\lambda}(x) := \lambda^{-1}u_0(\lambda^{-1}x), \quad \tau_{0,\lambda}(x) := \lambda^{-1}\tau_0 (\lambda^{-1}x),
    \quad \bar\tau_{0,\lambda}(x):=\frac{\tau_{0,\lambda}}{|\tau_{0,\lambda}|}(x).
\end{align*}
It is straightforward to verify that  $(\mu,u, \pi, \tau)$ is a solution of \eqref{muNS}-\eqref{eq:tau} with initial data $(\mu_0,u_0,\tau_0)$ on some time interval $[0,T^\ast)$, if and only if the rescaled triplet
\begin{align*}
    (\mu_\lambda,u_\lambda, \pi_\lambda, \tau_\lambda)(t,x):= (\mu, \lambda^{-1} u, \lambda^{-2}\pi, \lambda^{-1}\tau)(\lambda^{-2}t, \lambda^{-1}x)
    %u_\lambda (t,x):= \lambda^{-1} u(\lambda^{-2}t, \lambda^{-1}x), \quad \mu_\lambda(t,x):= \mu (\lambda^{-2}t,\lambda^{-1}x), \quad \taub_\lambda(t,x):= \taub (\lambda^{-2}t, \lambda^{-1}x), 
\end{align*}
solves \eqref{muNS}-\eqref{eq:tau} with initial data $(\mu_{0,\lambda}, u_{0,\lambda}, \tau_{0,\lambda})$ on the time interval $[0,\lambda^2 T^\ast)$. 
Observe that after rescaling 
\begin{equation}\label{scale:sigma,lambda}\begin{split}
    \sigma_{-1,\lambda}&:=\sigma_{-1}(\mu_{0,\lambda}, u_{0,\lambda})=\lambda \sigma_{-1},
    \\
    \sigma_{0,\lambda}&:=\sigma_0(u_{0,\lambda})=\sigma_0,
    \\
    \sigma_{1,\lambda}&:=\sigma_1(\mu_{0,\lambda}, u_{0,\lambda}, \bar\tau_{0,\lambda})
    =\lambda^{-1}\sigma_1,
    \\
    \tilde V_\lambda(\lambda^2 t)
    &:=\tilde V(u_{\lambda}(\lambda^2 t))=\tilde V(t),
    \quad t\in (0,T^\ast).
\end{split}\end{equation}
In the following we fix 
\begin{align}\label{lambda}
   \lambda=\frac{\sigma_0}{\sigma_{-1}} = \frac{\Vert u_0(x) \Vert_{L^2(\R^2)}}{\Vert u_0(x) \Vert_{\dot H^{-1}(\R^2)} + \Vert \mu_0(x)-1 \Vert_{L^2(\R^2)} \Vert u_0(x) \Vert_{L^2(\R^2)}}  ,
\end{align}
such that
\begin{equation}\label{scale:sigma} 
  \sigma_{0,\lambda}+ \sigma_{-1,\lambda}=\sigma_0+\lambda\sigma_{-1}=2\sigma_0,
  \quad \sigma_{1,\lambda}=\lambda^{-1}\sigma_1=\sigma_0^{-1}(\sigma_1\sigma_{-1}).
\end{equation}
We consider the solution $(\mu_\lambda,u_\lambda,\tau_\lambda)$ of the system \eqref{muNS}-\eqref{eq:tau} with initial data $(\mu_{0,\lambda}, u_{0,\lambda}, \tau_{0,\lambda})$ on the time interval $[0,\lambda^2 T^\ast)$.
We define also $\bar\tau_{\lambda}(t,x)=\frac{\tau_\lambda}{|\tau_\lambda|}(t,x)$.
\smallbreak 

\noindent\textbf{Step 2: Preliminary estimates for $a$.}
We first summarize the energy estimates for $a$ from Section \ref{sect:energy} as follows  (noticing $\Vert a \Vert_{L^2_tL^2} \lesssim \Vert \nabla u \Vert_{L^2_tL^2}$)
%\begin{align}
%        &\|a\|_{L^2_tL^2}\leq C\|u_0\|_{L^2},
%        \quad \|\langle t'\rangle^{\delta}a\|_{L^2_tL^2}\leq C_0,        \nonumber\\
%         &\Vert a \Vert_{L^\infty_t L^2\cap L^2_t\dot H^1}        + \Vert \langle t'\rangle^{\frac{1}{2}}a \Vert_{L^\infty_t L^2\cap L^2_t\dot H^1} +\Vert \langle t'\rangle^{\frac12+\delta}a \Vert_{L^\infty_t L^2\cap L^2_t\dot H^1}   \leq C_0V,\label{summary:a}\\ 
%        &\Vert  t'^{\frac{1}{2}}\nabla a \Vert_{L^\infty_t L^2\cap L^2_t \dot H^1}  \leq C_0(1+\|{t'}^{\frac12}\nabla u\|_{L^2_tL^\infty})V,\nonumber 
%\end{align}
\begin{align}\label{summary:a}
\begin{split}
 \Vert( t'^{\delta} a , t'^{\frac12+\delta} \nabla a)\Vert_{L^2_tL^2} 
    %+ \Vert t'^{\frac12+\delta} \nabla a \Vert_{L^2_tL^2} 
    \leq C (\sigma_0+\sigma_{-1}) \tilde V(t),
    \,\,\Vert a \Vert_{L^2_tL^2} \leq C \sigma_0, \,\, 
    \Vert (\nabla a, t'^{\frac12} \Delta a) \Vert_{L^2_tL^2} 
    %+ \Vert t'^{\frac12} \Delta a \Vert_{L^2_tL^2}
    \leq C \sigma_1 \tilde V(t),   
    \, t\in (0,T^\ast),
\end{split}
\end{align}
where $\delta$  can be an arbitrary number in $(0,\frac12)$, as we have assumed initially $u_0\in L^2\cap \dot H^{-1}$.
In this paper we choose  $\delta$  such that
\begin{align}\label{delta}
    \delta\in (\frac{1}{2+\epsilon}, \frac{4+\epsilon}{4(2+\epsilon)})\subset (\frac{1}{2+\epsilon}, \frac{1}{2})\subset (\frac{1}{2+\epsilon}, \frac1{\epsilon}),
\end{align}
where we have   taken $\epsilon\leq 2$ (without loss of generality).
Thus the constant $C$ in \eqref{summary:a} depends
only on $\mu_\ast,\mu^\ast$. % and $C_0$ depends additionally on   $\delta, \|u_0\|_{H^1\cap\dot H^{-1}},\|\mu_0-1\|_{L^2}, \Vert \d_{\tau_0}\mu_0\Vert_{L^2\cap L^p},  \|\nabla\tau_0\|_{L^{2+\epsilon}}, \|\frac{1}{|\tau_0|}\|_{L^\infty}$, which may vary from line to line below. 
In the following we aim to achieve the $L^1_t W^{1,2+\epsilon}_x$-estimate  for the rescaled $a_\lambda$ by applying the interpolation idea.

Let $\omega_\lambda(t,x)=\nabla_x^\perp\cdot u_\lambda(t,x)=\lambda^{-2}\omega(\lambda^{-2}t,\lambda^{-1}x)$ be the rescaled vorticity and $a_\lambda(t,x)=(R_{\mu_\lambda}\omega_\lambda)(t,x)=\lambda^{-2}a(\lambda^{-2}t,\lambda^{-1}x)$ be the rescaled version of $a$.
Then by virtue of \eqref{scale:sigma,lambda} and \eqref{scale:sigma}, \eqref{summary:a} is rescaled into
%\begin{align*} 
%\begin{split}
%\Vert( t'^{\delta} a_{\lambda}, t'^{\frac12+\delta} \nabla a_{\lambda})\Vert_{L^2_{\lambda^2t}L^2} 
    %+ \Vert t'^{\frac12+\delta} \nabla a \Vert_{L^2_tL^2} 
 %   \leq 2C  \sigma_0 \tilde V(t),
 %   \,\, \Vert a_{\lambda} \Vert_{L^2_{\lambda^2t}L^2} \leq C \sigma_0, \,\, 
%    \Vert (\nabla a_\lambda, t'^{\frac12} \Delta a_\lambda) \Vert_{L^2_{\lambda^2t}L^2} 
    %+ \Vert t'^{\frac12} \Delta a \Vert_{L^2_tL^2}
%    \leq C  \sigma_{1,\lambda} \tilde V(t),  
%    \,\, t\in (0,T^\ast),
%\end{split}
%\end{align*} 
%and thus
\begin{align}\label{summary:a,lambda}
\begin{split}
\Vert(a_\lambda, t'^{\delta} a_{\lambda}, t'^{\frac12+\delta} \nabla a_{\lambda})\Vert_{L^2_{\lambda^2t}L^2} 
    %+ \Vert t'^{\frac12+\delta} \nabla a \Vert_{L^2_tL^2} 
    \leq  C  \sigma_0 \tilde V(t),
    \,\, 
    \Vert (\nabla a_\lambda, t'^{\frac12} \Delta a_\lambda) \Vert_{L^2_{\lambda^2t}L^2} 
    %+ \Vert t'^{\frac12} \Delta a \Vert_{L^2_tL^2}
    \leq C  \sigma_0^{-1}(\sigma_{-1}\sigma_1) \tilde V(t),  
    \,\, t\in (0,T^\ast).
\end{split}
\end{align} 
By the interpolation inequality \eqref{ip-lr22} with $r=2+\epsilon$:
\begin{equation}\label{Interpolation:eps}
    \|g\|_{L^{2+\epsilon}}\lesssim \|g\|_{L^2}^{\frac{2}{2+\epsilon}}\|\nabla g\|_{L^2}^{\frac{\epsilon}{2+\epsilon}},
\end{equation}
we   derive  from \eqref{summary:a,lambda} that
\begin{align}\label{a:L1L2+}
    \Vert a_\lambda \Vert_{L^1_{\lambda^2t}L^{2+\epsilon}} &\lesssim \Bigl\Vert \Vert a_{\lambda} \Vert_{L^2}^\frac{2}{2+\epsilon} \Vert t'^{\frac12+\delta}\nabla a_{\lambda} \Vert_{L^2}^\frac{\epsilon}{2+\epsilon} t'^{-(\frac12+\delta)\frac{\epsilon}{2+\epsilon}}\Bigr\Vert_{L^1(0,{\lambda^2t})} \nonumber \\
    &\lesssim 
    \Vert t'^{\frac12+\delta}\nabla a_{\lambda} \Vert_{L^2_{\lambda^2t}L^2}^{\frac{\epsilon}{2+\epsilon}} \Bigl\Vert \Vert a_{\lambda} 
    \Vert_{L^2}^{\frac{2}{2+\epsilon}} 
    {t'}^{-(\frac12+\delta) \frac{\epsilon}{2+\epsilon} } \Bigr\Vert_{L^{\frac{2(2+\epsilon)}{2(2+\epsilon)-\epsilon}}(0,{\lambda^2t})}
    \lesssim \sigma_0\tilde V(t),\quad t\in (0,T^\ast),
\end{align}
where for the last inequality we used
\begin{itemize}
    \item If $\lambda^2 t<1$, then  (by \eqref{delta} such that $\frac12-(\frac12+\delta)\frac{\epsilon}{2+\epsilon}>0$, i.e. $\delta<\frac1\epsilon$)
\begin{align*}
    \Vert a_\lambda \Vert_{L^1_{\lambda^2t}L^{2+\epsilon}}  
    &\lesssim \sigma_0^{\frac{\epsilon}{2+\epsilon}}
    \tilde V(t)  \Vert a_{\lambda} \Vert_{L^2_{\lambda^2t}L^2}^\frac{2}{2+\epsilon}
    \Vert t'^{-(\frac12+\delta)\frac{\epsilon}{2+\epsilon}} \Vert_{L^2(0,1)}   
     \lesssim  \sigma_0  \tilde V(t).
\end{align*} 
\item If $\lambda^2t\geq 1$,  then we decompose the interval $(0,\lambda^2 t)$ into $(0,1)$ and $(1,\lambda^2t)$, such that (by \eqref{delta}: $\frac12-(\frac12+\delta)\frac{\epsilon}{2+\epsilon}
-\frac{2\delta}{2+\epsilon}<0$, i.e. $\delta>\frac{1}{2+\epsilon}$)
    \begin{align*}
    \Vert a_\lambda \Vert_{L^1_{\lambda^2t}L^{2+\epsilon}}  
    &\lesssim \sigma_0^{\frac{\epsilon}{2+\epsilon}}
    \tilde V(t) \Bigl(\Vert a_{\lambda} \Vert_{L^2_{\lambda^2t}L^2}^\frac{2}{2+\epsilon}
    \Vert t'^{-(\frac12+\delta)\frac{\epsilon}{2+\epsilon}} \Vert_{L^2(0,1)}
    \\
    &\qquad + \Vert t'^\delta a_{\lambda} \Vert_{L^2_{\lambda^2t}L^2}^\frac{2}{2+\epsilon} \Vert t'^{-(\frac12+\delta)\frac{\epsilon  }{2+\epsilon}-\frac{ 2\delta}{2+\epsilon}} \Vert_{L^2(1,\infty)}\Bigr)  \lesssim  \sigma_0  \tilde V(t).
\end{align*} 
%where $\delta$ is chosen to satisfy further $\frac12-(\frac12+\delta)\frac{\epsilon}{2+\epsilon}
%-\frac{2\delta}{2+\epsilon}<0$, i.e. $\delta>\frac{1}{2+\epsilon}$.
\end{itemize}
Similarly, we obtain 
\begin{align}
    \Vert \nabla a_{\lambda} &\Vert_{L^1_{\lambda^2t}L^{2+\epsilon}}  \lesssim \Bigl\Vert \Vert \nabla a_{\lambda} \Vert_{L^2}^\frac{2}{2+\epsilon} \Vert t'^{\frac12} \Delta a_{\lambda} \Vert_{L^2}^\frac{\epsilon}{2+\epsilon} t'^{-\frac{\epsilon}{2(2+\epsilon)}} \Bigr\Vert_{L^1(0,{\lambda^2t})}\nonumber \\
    &\lesssim \Vert t'^{\frac12} \Delta a_{\lambda} \Vert_{L^2_{\lambda^2 t}L^2}^\frac{\epsilon}{2+\epsilon} \Bigl\Vert  \Vert \nabla a_{\lambda} \Vert_{L^2}^\frac{2}{2+\epsilon} t'^{-\frac{\epsilon}{2(2+\epsilon)}} \Bigr\Vert_{L^\frac{2(2+\epsilon)}{2(2+\epsilon)-\epsilon}(0,{\lambda^2t})} 
    \lesssim   \sigma_0^{\theta_1}(\sigma_1\sigma_{-1})^{\theta_2} \tilde V(t), 
    \quad t\in (0,T^\ast),\label{grada:L1L2+}
\end{align}
where $\theta_1=\frac{2\frac{1-2\delta}{1+2\delta}-\epsilon}{2+\epsilon},$  $\theta_2=\frac{2\frac{2\delta}{1+2\delta}+\epsilon}{2+\epsilon}$, and for the last inequality  we performed as follows:
\begin{itemize}
    \item Firstly, for some $t_1\in (0,\lambda^2t]$, we can bound
    \begin{align*}
         \Vert \nabla a_{\lambda} \Vert_{L^1_{\lambda^2t}L^{2+\epsilon}}  
    &\lesssim  (\sigma_0^{-1}\sigma_{1}\sigma_{-1})^\frac{\epsilon}{2+\epsilon}\tilde V(t) \Bigl( \Vert \nabla a _\lambda \Vert_{L^2_{\lambda^2 t}L^2}^\frac{2}{2+\epsilon} \Vert t'^{-\frac{\epsilon}{2(2+\epsilon)}} \Vert_{L^2(0, t_1)}
    \\
    &\qquad + \Vert t'^{\frac12+\delta} \nabla a_\lambda \Vert_{L^2_{\lambda^2 t}L^2}^\frac{2}{2+\epsilon} \Vert t'^{-\frac{\epsilon}{2(2+\epsilon)}- (\frac12+\delta)\frac{2}{2+\epsilon}} \Vert_{L^2( t_1,\lambda^2t)}\Bigr) 
    \\
    &\lesssim  (\sigma_0^{-1}\sigma_{1}\sigma_{-1})^\frac{\epsilon}{2+\epsilon}\tilde V(t) \Bigl( (\sigma_0^{-1}\sigma_1\sigma_{-1})^\frac{2}{2+\epsilon} 
      t_1 ^{ \frac{1}{ 2+\epsilon}}
    +  \sigma_0^\frac{2}{2+\epsilon} t_1^{ -  \frac{2\delta}{2+\epsilon}}  \Bigr). 
    \end{align*}
    \item Secondly, if $\lambda^2t\geq t_0:=(\frac{\sigma_0^2}{\sigma_1\sigma_{-1}})^{\frac{2}{1+2\delta}}$, then we take $t_1=t_0$ above, 
    while if $\lambda^2t<t_0$ we can simply bound with the first term in the bracket with $t_1=t_0$.   
\end{itemize}
Now we can interpolate between \eqref{a:L1L2+} and \eqref{grada:L1L2+} to achieve
\begin{align}\label{a:L1Linfty}
    \|a_\lambda\|_{L^1_{\lambda^2t}L^\infty}
    \lesssim  \|a_\lambda\|_{L^1_{\lambda^2t}L^{2+\epsilon}}^{\frac{\epsilon}{2+\epsilon}} \|\nabla a_\lambda\|_{L^1_{\lambda^2t}L^{2+\epsilon}}^{\frac{2}{2+\epsilon}}\lesssim  \sigma_0^{\theta_3}(\sigma_1\sigma_{-1})^{\theta_4}\tilde V(t), \quad t\in (0,T^\ast),
\end{align}
where $\theta_3=\frac{\epsilon}{2+\epsilon}+\frac{2}{2+\epsilon}\theta_1=\frac{4\frac{1-2\delta}{1+2\delta}+\epsilon^2}{(2+\epsilon)^2}>0$, $\theta_4=\frac{2}{2+\epsilon}\theta_2=\frac{4\frac{2\delta}{1+2\delta}+2\epsilon}{(2+\epsilon)^2}>0$.
 
Very similar calculations show that $\Vert t'^{\frac12}a \Vert_{L^2_tL^{2+\epsilon}}$, $\Vert t'^{\frac12}\nabla a \Vert_{L^2_tL^{2+\epsilon}}$ and $\Vert t'^{\frac12} a \Vert_{L^2_tL^\infty}$ can also be bounded by the right hand sides of \eqref{a:L1L2+}, \eqref{grada:L1L2+} and \eqref{a:L1Linfty}, respectively. We omit the details here.

\smallbreak

\noindent\textbf{Step 3:  $L^\infty L^{2+\epsilon}$-estimates for $(\nabla \taub, \dtau\mu)$.} 
%\subsection{The $L^\infty L^{2+\epsilon}$ estimate for $\tau$}
%
We derive the evolution equation for $\taub=\frac{\tau}{|\tau|}$ from the equation \eqref{eq:tau} for $\tau$  as
\begin{align}\label{eq:taub}
    \d_t \taub + u\cdot \nabla \taub = \dtau u - \taub (\taub\otimes \taub : \nabla u),
\end{align}
%$\frac{\nabla \tau}{\vert \tau \vert}$ reads
%\begin{align*}
%    \frac{D}{Dt}(\frac{\d_{x_i}\tau}{\vert \tau \vert}) = -\d_{x_i}u \cdot \frac{\nabla\tau}{\vert \tau \vert} + \frac{\d_{x_i}\tau}{\vert \tau \vert}\cdot \nabla u + \d_{\frac{\tau}{\vert\tau\vert}}\nabla u - \frac{\d_{x_i}\tau}{\vert \tau \vert} \d_{\frac{\tau}{\vert \tau \vert}}u\cdot \frac{\tau}{\vert \tau \vert}
%\end{align*}
%for $i=1,2$. Thus,
so that by an application of the gradient to this equation we find that
\begin{align*}
    \Vert \nabla \taub \Vert_{L^{2+\epsilon}} \lesssim \Vert \nabla \taub_0 \Vert_{L^{2+\epsilon}} + \int_0^t \Vert \nabla \taub \Vert_{L^{2+\epsilon}} \Vert \nabla u \Vert_{L^\infty} + \Vert \dtau \nabla u \Vert_{L^{2+\epsilon}} dt'.
    %\Vert \frac{\nabla \tau}{\vert \tau \vert} \Vert_{L^{2+\epsilon}} \lesssim \Vert \frac{\nabla \tau_0}{\vert \tau_0 \vert} \Vert_{L^{2+\epsilon}} + \int_0^t \Vert \frac{\nabla \tau}{\vert \tau \vert} \Vert_{L^{2+\epsilon}} \Vert \nabla u \Vert_{L^\infty} + \Vert \dtau \nabla u \Vert_{L^{2+\epsilon}} dt',
\end{align*}
%We rewrite $\dtau\nabla u$ in terms of $\dtau a$, $a=R_\mu\omega$ (formally) as
%\begin{align*}
 %   \dtau \nabla u = \dtau RR^\perp \omega=  RR^\perp (R_\mu^{-1}R_\mu) \dtau \omega  +[\dtau,RR^\perp]\omega
    %&= \nabla \nabla^\perp \Delta^{-1} \d_{\frac{\tau}{\vert \tau \vert}} \omega + [\d_{\frac{\tau}{\vert \tau \vert}},\nabla \nabla^\perp \Delta^{-1}]\omega \\
%    = RR^\perp R_\mu^{-1} \bigl( \dtau a + [R_\mu, \dtau] \omega \bigr) + [\dtau,RR^\perp]\omega.
%\end{align*}
%which makes sense in $L^{2+\epsilon}(\R^2)$ in view of Lemma \ref{Rmu-inv:prop}. 
%Thanks to Lemma \ref{Rmu-inv:prop}, $\dtau\mu=0$ and the commutator estimate (\ref{Rmu:comm}), the above identity makes sense in $L^{2+\epsilon}$:
By virtue of \eqref{dtau2phi:Lp} and \eqref{dtauomega:2+eps} we have
\begin{align*}
    \Vert \dtau \nabla u \Vert_{L^{2+\epsilon}} \lesssim \Vert \nabla \taub \Vert_{L^{2+\epsilon}} (\Vert \nabla u \Vert_{L^\infty} + \Vert a \Vert_{L^\infty}) + \Vert \nabla a \Vert_{L^{2+\epsilon}} ,
\end{align*}
and hence
\begin{align}\label{tau:LinftyL2+e}
    \Vert \nabla \taub \Vert_{L^\infty_tL^{2+\epsilon}} \lesssim \bigl(\Vert \nabla \taub_0 \Vert_{L^{2+\epsilon}} + \Vert \nabla a \Vert_{L^1_tL^{2+\epsilon}} \bigr) \exp\bigl(  C\|a\|_{L^1_tL^\infty} \bigr) V(t).
\end{align} 
%The estimates \eqref{summary:a,2+} and %\eqref{summary:a,infty} for $a$ above yield
%\begin{align*}
%    \Vert \nabla \taub \Vert_{L^\infty_tL^{2+\epsilon}} 
%    \lesssim \bigl(\Vert \nabla \taub_0 \Vert_{L^{2+\epsilon}} + \Vert \langle t' \rangle^{\frac12+\delta} \nabla a \Vert_{L^2_tL^2}^\frac{2}{2+\epsilon} \Vert \langle t' \rangle^\frac12 \Delta a \Vert_{L^2_tL^2}^\frac{\epsilon}{2+\epsilon} \bigr) \expa V .
    %\lesssim \Bigl(\Vert \frac{\nabla \tau_0}{\vert \tau_0 \vert} \Vert_{L^{2+\epsilon}} + \Vert \langle t'\rangle^\frac12 \nabla a \Vert_{L^2_tL^2}^{\frac{2}{2+\epsilon}} \Vert \langle t'\rangle^{\frac12+ \frac{\epsilon}{4}} \Delta a \Vert_{L^2_tL^2}^{\frac{\epsilon}{2+\epsilon}}\Bigr) \expa V .
%\end{align*}
%In the following we denote by $C_a$ a constant only depending on the initial data and on
%\[\Vert \nabla a \Vert_{L^2_tL^2}, \quad \Vert \Delta a \Vert_{L^2_tL^2}, \quad \epsilon.\]
%It follows from (\ref{a:tenergy-low}) that
%\begin{align}\label{tau:LinftyL2+e}
%\begin{split}
%    \Vert \nabla \taub \Vert_{L^\infty_tL^{2+\epsilon}} 
%    \leq C_0(1+\Vert t'^{\frac12}\nabla u \Vert_{L^2_tL^\infty}^\frac{\epsilon}{2+\epsilon}) \exp\Bigl( C_0(1+\|t'^{\frac12}\nabla u\|_{L^2_tL^\infty}^{\theta})V\Bigr).
%\end{split}
%\end{align}

Next, we deduce the evolution equation for $\dtau\mu$ from the equations of $\d_\tau\mu$ and $\frac{1}{\vert\tau\vert}$:
\begin{equation}\label{eq:dtaubmu}
\d_t \dtau\mu + u\cdot\nabla \dtau\mu = -\dtau\mu (\taub\cdot \dtau u) ,
\end{equation}
from which it follows that 
\begin{align}\label{dtaumu-Linfty}
    \Vert \dtau\mu\Vert_{L^\infty_tL^{2+\epsilon}} \leq \Vert \d_{\overline{\tau}_0}\mu_0\Vert_{L^{2+\epsilon}} V(t), \quad  \text{with } \overline{\tau}_0=\frac{\tau_0}{\vert\tau_0 \vert}.
\end{align}

By the definition \eqref{def:sigma},   the choice of $\lambda$ in \eqref{lambda} and the scaling relation \eqref{scale:sigma} 
%and the estimates \eqref{grada:L1L2+} and \eqref{a:L1Linfty} 
we obtain
\begin{align}\label{tau,mu:LinftyL2+}
\begin{split}
    &\Vert \nabla \taub_\lambda \Vert_{L^\infty_{\lambda^2t}L^{2+\epsilon}} + \Vert \d_{\taub_\lambda}\mu_\lambda \Vert_{L^\infty_{\lambda^2t}L^{2+\epsilon}} \\
    &\lesssim \bigl(\lambda^{-\frac{\epsilon}{2+\epsilon}} (\Vert \nabla \taub_{0} \Vert_{L^{2+\epsilon}} + \Vert \d_{\taub_{0}} \mu_{0} \Vert_{L^{2+\epsilon}}) + \Vert \nabla a_\lambda \Vert_{L^1_{\lambda^2t} L^{2+\epsilon}}\bigr) \exp(C \Vert a_\lambda \Vert_{L^1_{\lambda^2t}L^\infty)}) \tilde V_\lambda(\lambda^2 t) \\
    &= \bigl((\sigma_0^{-1} \sigma_{-1} \sigma_1)^\frac{\epsilon}{2+\epsilon} +  \Vert \nabla a_\lambda \Vert_{L^1_{\lambda^2t} L^{2+\epsilon}}\bigr)\exp(C \Vert a_\lambda \Vert_{L^1_{\lambda^2t}L^\infty}) \tilde V(t), \quad t\in (0,T^\ast).
\end{split}
\end{align}

\noindent\textbf{Step 4: Lipschitz estimates  for $u$.} 
%Firstly, by the scaling-invariance of the Lipschitz norms we have
%\begin{align*}
%     \Vert \nabla u \Vert_{L^1_tL^\infty} +\|t'^{\frac12}\nabla u\|_{L^2_t L^\infty} = \Vert \nabla u_\lambda \Vert_{L^1_{\lambda^2t}L^\infty} +\|t'^{\frac12}\nabla u_\lambda\|_{L^2_{\lambda^2t} L^\infty} .
%\end{align*}
The time-independent Lipschitz estimate (\ref{u:Lip:eps}) for the rescaled solution $(\mu_\lambda, u_\lambda, \tau_\lambda)$  and H\"older's inequality with respect to the time variable yields
\begin{align*}
    \Vert \nabla u_\lambda \Vert_{L^1_{\lambda^2t}L^\infty} \lesssim 
    \Vert a_\lambda \Vert_{L^1_{\lambda^2t}L^{2+\epsilon}}^{\frac{\epsilon}{2+\epsilon}} \Bigl( \Vert \nabla a_\lambda \Vert_{L^1_{\lambda^2t} L^{2+\epsilon}}
    + & \Vert(  \nabla\taub_\lambda,  \d_{\taub_\lambda}\mu_\lambda)\Vert_{L^\infty_{\lambda^2t}L^{2+\epsilon}}   \Vert  (\nabla u_\lambda,   a_\lambda)\Vert_{L^1_{\lambda^2t}L^\infty}  \Bigr)^{\frac{2}{2+\epsilon}} , \\
    \|t'^{\frac12}\nabla u_\lambda\|_{L^2_{\lambda^2t} L^\infty} \lesssim \Vert t'^{\frac12} a_\lambda \Vert_{L^2_{\lambda^2t}L^{2+\epsilon}}^{\frac{\epsilon}{2+\epsilon}}
    \Bigl( \Vert t'^{\frac12}\nabla a_\lambda \Vert_{L^2_{\lambda^2t} L^{2+\epsilon}}
    + & \Vert (\nabla\taub_\lambda, \d_{\taub_\lambda}\mu_\lambda) \Vert_{L^\infty_{\lambda^2t}L^{2+\epsilon}}  
    \Vert t'^{\frac12} (\nabla u_\lambda,   a_\lambda)\Vert_{L^2_{\lambda^2t}L^\infty} \Bigr)^{\frac{2}{2+\epsilon}}, 
\end{align*}
for $t\in (0,T^\ast)$.
By use of the estimates \eqref{a:L1L2+}, \eqref{grada:L1L2+}, \eqref{a:L1Linfty} (together with the version with respect to the time-weighted norm $L^2(tdt)$) and \eqref{tau,mu:LinftyL2+} above, we obtain 
\begin{align*}
   \Vert \nabla u_\lambda \Vert_{L^1_{\lambda^2t}L^\infty}
   + \|t'^{\frac12}\nabla u_\lambda\|_{L^2_{\lambda^2t} L^\infty} &\lesssim  \sigma_0^\frac{\epsilon}{2+\epsilon} \Bigl((\sigma_0^{-1}\sigma_{-1}\sigma_1)^\frac{\epsilon}{2+\epsilon} + \sigma_0^{\theta_1}(\sigma_{-1}\sigma_1)^{\theta_2}\Bigr)^\frac{2}{2+\epsilon}  \tilde V(t) \exp \bigl(C \sigma_0^{\theta_3}(\sigma_{-1}\sigma_1)^{\theta_4} \tilde V(t)\bigr) \\
    &\lesssim  \sigma_0^{\frac{\epsilon^2}{(2+\epsilon)^2}} (\sigma_{-1} \sigma_1)^{\frac{2\epsilon}{(2+\epsilon)^2}} 
    \tilde V(t)\exp\bigl(C  \sigma_0^{\theta_3} (\sigma_{-1} \sigma_1)^{\theta_4} \tilde V(t)\bigr),
    \quad t\in (0,T^\ast).
\end{align*}
%for some $\theta,\beta>0$, both depending on $\epsilon,\delta$. Here we controlled the polynomial growth in the norms $\Vert \nabla u \Vert_{L^1_tL^\infty}$ , $\|t'^{\frac12}\nabla u\|_{L^2_t L^\infty} $ and $\Vert a \Vert_{L^1_tL^\infty}$, $\Vert t'^{\frac12}a \Vert_{L^2_tL^\infty}$ by the exponential function.

%\begin{align*}
%    \Vert \nabla u \Vert_{L^1_tL^\infty} +\|t'^{\frac12}\nabla u\|_{L^2_t L^\infty} 
%    &\leq  C_0\|u_0\|_{L^2}^{\sigma\cdot\frac{\epsilon}{2+\epsilon}} \exp\Bigl( C_0(1+\|t'^{\frac12}\nabla u\|_{L^2_tL^\infty}^{\theta})V\Bigr),
%\end{align*}
%where we used the exponential function to control the polynomial growth in the norms   $\Vert \nabla u \Vert_{L^1_tL^\infty} , \|t'^{\frac12}\nabla u\|_{L^2_t L^\infty} $.
%We recall that $\sigma\in (0,\frac{2}{2+\epsilon})$, and can be chosen as $\sigma=\frac{\epsilon}{2(2+\epsilon)}$.

We now perform the bootstrap argument. 
Let 
$$A(t):=A(u(t))=\Vert \nabla u \Vert_{L^1_tL^\infty} +\|t'^{\frac12}\nabla u\|_{L^2_t L^\infty},\quad t\in (0,T^\ast),$$
denote a time-dependent nonnegative function, such that
\begin{align*}
    \tilde V(t)=e^{CA(t)},\quad A_\lambda(\lambda^2t):=A(u_\lambda(\lambda^2 t))=A(u(t))=A(t),
    \quad t\in (0,T^\ast).
\end{align*}
Thus from the above it satisfies 
\begin{align*}
    A(t) &\leq C\sigma_0^{\frac{\epsilon^2}{(2+\epsilon)^2}} (\sigma_{-1} \sigma_1)^{\frac{2\epsilon}{(2+\epsilon)^2}} 
    \exp\bigl(CA(t)+C \sigma_0^{\theta_3}  (\sigma_{-1} \sigma_1)^{\theta_4}  e^{CA(t)}\bigr).
    %C_0\|u_0\|_{L^2}^{\sigma\cdot\frac{\epsilon}{2+\epsilon}} \exp\bigl( e^{C_0 A(t)}\bigr).
\end{align*}
%If $\|u_0\|_{L^2}$ is small enough, say $\|u_0\|_{L^2}^{\sigma\cdot\frac{\epsilon}{2+\epsilon}}\leq \frac12\exp(-e^{C_0^2})$, then by the standard bootstrap argument we have the uniform bound 
%$$A(t)\leq C_0,\quad \forall t\in (0,\infty).$$
Recall  the definition of $\theta_3,\theta_4$ in \eqref{a:L1Linfty} and the restriction of $\delta$ in \eqref{delta}, where we have taken $\epsilon\in (0,2]$, 
such  that 
\begin{align*}
    &\frac{\theta_3}{\theta_4}=\frac{4\frac{1-2\delta}{1+2\delta}+\epsilon^2}{4\frac{2\delta}{1+2\delta}+2\epsilon}
    =-\frac{4-\epsilon^2}{2(2+\epsilon)}
    +\frac{2+\epsilon}{\delta(4+2\epsilon)+\epsilon}\in 
    \bigl( \frac{\epsilon(2+3\epsilon)}{2(4+3\epsilon)}, \frac{\epsilon}{2}\bigr), 
    \hbox{ is close to }\frac{\epsilon}{2} \hbox{ if }\delta\rightarrow (\frac{1}{2+\epsilon})_+,
    \\
    &\theta_4=\frac{4\frac{2\delta}{1+2\delta}+2\epsilon}{(2+\epsilon)^2}=\frac{2}{2+\epsilon}-(\frac{2}{2+\epsilon})^2\frac{1}{1+2\delta }\in \bigl( \frac{2}{4+\epsilon}, \frac{2(4+3\epsilon)}{(2+\epsilon)(8+3\epsilon)}\bigr)
    \hbox{ is uniformly bounded in terms of  }\mu_\ast,\mu^\ast.
\end{align*}
Under the smallness assumption 
\begin{align}\label{smallness:proof}
    2C^2\bigl( \sigma_0^{\frac{\epsilon}{2}}  \sigma_{-1} \sigma_1\bigr)^{\frac{2\epsilon}{(2+\epsilon)^2}}
    +C\sqrt{e} \Bigl(\sigma_0^{\frac{\theta_3}{\theta_4} }\sigma_{-1} \sigma_1\Bigr)^ { \theta_4} \leq \frac12,
\end{align}
%$(\sigma_0^\theta \sigma_{-1}\sigma_1)^\beta$ is small enough  say $(\sigma_0^\theta \sigma_{-1}\sigma_1)^\beta \leq \frac12\exp(-C(\sigma_0^\theta \sigma_{-1}\sigma_1)^\beta e^{C^2})$, then 
by the standard bootstrap argument we have the uniform bound 
$$A(t)\leq 2C\bigl( \sigma_0^{\frac{\epsilon}{2}}  \sigma_{-1} \sigma_1\bigr)^{\frac{2\epsilon}{(2+\epsilon)^2}},\quad \forall t\in (0,T^\ast).$$
Notice that if the smallness assumption \eqref{u0:cond}: $\sigma_0^{\frac{\epsilon}{2}}\sigma_{-1}\sigma_1\leq c_0$ is satisfied, then we can choose $\delta\in (\frac{1}{2+\epsilon}, \frac{4+\epsilon}{4(2+\epsilon)})$ (recalling \eqref{delta}) close to $\frac{1}{2+\epsilon}$ such that  $\frac{\theta_3}{\theta_4}$ is close to $\frac{\epsilon}{2}$, and hence \eqref{smallness:proof} holds if $c_0$ is small enough.
This completes the proof.

\end{proof}

\subsection{Proof of Theorem \ref{exthm}}\label{sect:proofs}
%\subsection{Proof of Theorem \ref{exthm}: Gloabl Well-posedness of \eqref{muNS}-\eqref{eq:tau}}\label{sect:proofs} this somehow causes an error

In this subsection we prove Theorem \ref{exthm} by use of the a priori estimates from the previous subsections.

\begin{proof}[Proof of Theorem \ref{exthm}]
We start with the proof of existence. The idea is to smooth out the given initial data and then show the convergence of the approximation solution sequence by uniform bounds and compactness.

\noindent
\textbf{Step 1: Approximation solution sequence.} 
Given the initial data as in the hypotheses of Theorem \ref{exthm} we are going to smooth them out using the standard Friedrich's mollifier. Let $\eta \in C_c^\infty((0,\infty); [0,1])$ be a smooth cut-off function with $\int_{\R}\eta=1$.
Denote $\eta_j(x)=j^2 \eta (j\vert x\vert)$, $j\in \N$. Define the regularized initial data by the convolution with $\eta_j$ as
\begin{align*}
  &  \mu_0^j = \mu_0\ast \eta_j, \quad u_0^j = u_0\ast \eta_j.
\end{align*}
Then we have
\begin{align}
&\mu_\ast \leq \mu_0^j \leq \mu^\ast,
\,\, \|\mu_0^j-1\|_{L^2}\leq \|\mu_0-1\|_{L^2},
\,\,  
    \|u_0^j\|_{H}\leq \|u_0\|_{H},\,\, H=\dot H^1, L^2, \dot H^{-1}, 
\nonumber\\
&  \|\d_{\bar\tau_0}\mu^j_0\|_{L^{2+\epsilon}} \leq 
  \Vert (\d_{\taub_0}\mu_0)\ast\eta_j \Vert_{L^{2+\epsilon}}
  +\Vert [\d_{\taub_0}, \eta_j\ast]\mu_0 \Vert_{L^{2+\epsilon}} 
  %+  \to \Vert \d_{\tau_0}\mu_0 \Vert_{L^{2+\epsilon}} 
  \leq \|\d_{\bar\tau_0}\mu_0\|_{L^{2+\epsilon}}
  +C\mu^\ast\|\nabla\taub_0\|_{L^{2+\epsilon}}.
  \label{uniform:initial}
    \end{align}

We regularize the transported velocity and the viscosity coefficient in  the Cauchy problem of the coupled system \eqref{muNS}-\eqref{eq:tau} as follows: 
\begin{equation}\label{muNS-reg}
    \begin{cases}
        \d_t\mu+(u\ast\eta_j)\cdot\nabla \mu=0, 
        \quad \d_t\tau+(u\ast\eta_j)\cdot \nabla\tau = \d_\tau (u\ast\eta_j) ,\quad (t,x)\in (0,\infty)\times \R^2,\\
        \d_t  u + (u\ast\eta_j)\cdot \nabla u  - \div ((\mu \ast \eta_j) Su)+\nabla \pi = 0,\quad 
        \div u=0 , \\ 
        (\mu^j,u^j,\tau^j)|_{t=0}=(\mu_0^j,u_0^j,\tau_0),
        \hbox{ with }\taub_0^j=\taub_0.
    \end{cases}
\end{equation}
By the classical existence theory (see e.g. \cite{lions1996mathematical}) there exists for big enough $j\in\N$ a smooth global-in-time solution $(\mu^j,u^j,\nabla\pi^j,\tau^j)$ of  \eqref{muNS-reg}.

We remark that with the regularized ``material derivative" $$D_t^j=\d_t+(u\ast\eta_j)\cdot\nabla,$$
the first two equations in $\eqref{muNS-reg}$ mean that  $D_t^j\mu=0$ and    $\d_\tau=\tau\cdot\nabla$ commutes with $D_t^j$.
Hence \eqref{muNS-reg} implies the free transport of the tangential derivative $\d_\tau\mu$
\begin{align}\label{strreg-j}
    D_t^j(\d_\tau\mu)=\d_\tau (D_t^j\mu)=0.
\end{align}
Consequently, similar as in \eqref{eq:taub} and \eqref{eq:dtaubmu}, we have the following equations for $\taub^j=\frac{\tau^j}{|\tau^j|}$   and $\d_{\taub^j}\mu^j$:
\begin{align}
    & \d_t \taub  + (u\ast\eta_j)\cdot \nabla \taub = \dtau (u\ast\eta_j) - \taub (\taub\otimes \taub : \nabla u\ast\eta_j),\label{eq:taub,j}
    \\
    &\d_t \dtau\mu + (u\ast\eta_j)\cdot\nabla \dtau\mu = -\dtau\mu (\taub\cdot \dtau (u\ast\eta_j)).\label{eq:dtaubmu,j}
\end{align}
We notice that the $\tau$-equation in \eqref{muNS-reg} implies the boundedness and nondegenerity of the vector field $\tau^j$  
\begin{align*}
\|\tau^j\|_{L^\infty_tL^\infty}
\leq \|\tau_0\|_{L^\infty}V^j(t),\quad
    \|\frac{1}{|\tau^j|}\|_{L^\infty_t L^\infty}
    \leq \|\frac{1}{|\tau_0|}\|_{L^\infty} V^j(t),
    \quad V^j(t):=\exp(C  \|\nabla u^j\|_{L^1_tL^\infty}),
\end{align*}
as long as $V^j(t)<\infty$. 
We have this estimate for all time in \eqref{u:L1Lip-reg} below, which implies the legitimacy of the definition of $\taub^j$.

\noindent\textbf{Step 2: Uniform bounds.} 
We show that the a priori estimates in the previous Sections \ref{sect:energy}, \ref{sect:Lip} and \ref{sect:L1Lip} stay valid for solutions $(\mu^j,u^j,\nabla\pi^j,\tau^j)$ of \eqref{muNS-reg} and  we denote $a^j:=R_{\mu^j\ast\eta_j}\omega^j$ with $\omega^j = \nabla^\perp \cdot u^j$. 
Recall the uniform bounds \eqref{uniform:initial} for the initial data.

Observe that $\mu_\ast\leq \mu^j(t,x)\leq \mu^\ast$.
Firstly,  the energy estimates \eqref{u:energy} and \eqref{u:decay+} for $u^j$ follow exactly as before
\begin{align}\label{uniform:u}
    \|\langle t'\rangle^{\delta}u^j\|_{L^\infty_tL^2\cap L^2_t\dot H^1}\leq C(\mu_\ast,\mu^\ast)(\|u_0\|_{L^2\cap \dot H^{-1}}+\|\mu_0-1\|_{L^2}\|u_0\|_{L^2}),
\end{align}
where we choose $\delta\in (\frac{1}{2+\epsilon}, \frac{4+\epsilon}{4(2+\epsilon)})$ as in \eqref{delta}.
Next, an application of the curl operator to the regularized velocity equation \eqref{muNS-reg}$_2$ yields the following analogue of the vorticity equation \eqref{omega:eq} for $\omega^j$ and $a^j$:
\begin{align}\label{omega:eq-reg}
    D_t^j \omega^j -\Delta a^j = - (\nabla^\perp u^j\ast\eta_j): (\nabla u^j)^T , \quad a^j=R_{\mu^j\ast\eta_j} \omega^j, \quad u^j = \nabla^\perp \Delta^{-1} \omega^j .
\end{align}
We have the $L^2$-energy estimate \eqref{a:energy-low}, \eqref{a:tenergy-low} and\eqref{a:tenergy-low+} as well as  $H^1(\R^2)$-estimates \eqref{a:tenergy-high} for $a^j$:
\begin{align}\label{uniform:a}
    &\|a^j\|_{L^\infty_tL^2\cap L^2_t\dot H^1}\leq C\|\nabla u_0\|_{L^2} V^j(t),
    \,\, \|t'^{\frac12}a^j\|_{L^\infty_tL^2\cap L^2_t\dot H^1}\leq C\|u_0\|_{L^2}V^j(t),\\
    &\|t'^{\frac12+\delta}a^j\|_{L^\infty_tL^2\cap L^2_t\dot H^1}\leq C (\|u_0\|_{L^2\cap \dot H^{-1}}+\|\mu_0-1\|_{L^2}\|u_0\|_{L^2})V^j(t),\,\,  V^j(t)=\exp(C  \|\nabla u^j\|_{L^1_tL^\infty}),\label{uniform:a,delta}\\
   & \|t'^{\frac12}\nabla a^j\|_{L^\infty_tL^2\cap L^2_t\dot H^1}\leq C\|\nabla u_0\|_{L^2}\tilde V^j(t),
    \,\, \tilde V^j(t)=V^j(t)\exp(C\|t'^{\frac12}\nabla u^j\|_{L^2_tL^\infty}).\label{uniform:grada}
\end{align}
Indeed, as in the proof of \eqref{a:energy-low}, 
we take the $L^2$-inner product between \eqref{omega:eq-reg} and $a^j=R_{\mu^j\ast\eta_j}\omega^j$ to derive \eqref{a:low-en-eq}, with $\mu$ replaced by $\mu^j\ast \eta_j$, $u^j\cdot\nabla$ replaced by $(u^j\ast\eta_j)\cdot\nabla$ and the following additional   terms on the right hand side:
\begin{align*}
     -\int_{\R^2} (\nabla^\perp u^j \ast \eta_j):(\nabla u)^T R_{\mu^j\ast \eta_j} \omega^j dx 
     +\frac12\int_{\R^2} \bigl([D_t^j,\ast\eta_j]\mu^j \bigr) \Bigl(((R_2R_2-R_1R_1)\omega^j)^2+(2R_1R_2\omega^j)^2\Bigr) dx
\end{align*}
which can be bounded by $\|\nabla u^j\|_{L^\infty} \|\omega^j\|_{L^2}^2$.
The $L^2$-estimates \eqref{uniform:a} and \eqref{uniform:a,delta} follow from  (the modified version) of \eqref{a:low-en-eq} immediately.
Similarly,  we take the $L^2$-inner product between  \eqref{omega:eq-reg} and $R_{\mu^j\ast\eta_j}\Delta R_{\mu^j\ast\eta_j} \omega^j$ to derive \eqref{grad:en-inequality}, with $\mu$, $u$ replaced by $\mu^j\ast\eta_j$, $u^j\ast\eta_j$ respectively, and with the following additional integral on the right hand side
\begin{align*}
  %  &\frac12 \frac{d}{dt}\int_{\R^2}\vert \nabla a \vert^2 dx + \int_{\R^2} (\mu\ast\eta_j) \Bigl(((R_2R_2-R_1R_1) \Delta a)^2+(2R_1R_2 \Delta a)^2\Bigr) dx \\
%    &=
\int_{\R^2} R_{\mu^j\ast\eta_j} \Bigl((\nabla^\perp u^j\ast \eta_j):(\nabla u^j)^T\Bigr) \Delta a^j dx, 
 %   - \int_{\R^2} \nabla a \cdot \nabla (u\ast\eta_j) \cdot \nabla a dx + \int_{\R^2} \Delta a [R_{\mu\ast \eta_j}, D_t^j] \omega dx ,
\end{align*}
which can be bounded by $\|\nabla u^j\|_{L^\infty} \|a^j\|_{L^2}\|\Delta a^j\|_{L^2}$. 
The $H^1$-estimate \eqref{uniform:grada} follows.
%where the right hand side is bounded by (note that $\Vert \nabla a \Vert_{L^4}^2 \lesssim \Vert \nabla a \Vert_{L^2} \Vert \Delta a \Vert_{L^2}$)
%\[
%\Vert \nabla u \Vert_{L^2} \Vert \nabla a \Vert_{L^2} \Vert \Delta a \Vert_{L^2} + \Vert \nabla u \Vert_{L^\infty} \Vert \omega \Vert_{L^2} \Vert \Delta a \Vert_{L^2}. 
%\]
%The remainder of the proof is analogous to the one for \eqref{a:tenergy-high}.

As Corollary \ref{coro} holds via the consideration of $\alpha^j =(\frac{\tau^j}{\vert \tau^j\vert}\otimes \frac{(\tau^j)^\perp}{\vert \tau^j\vert}):  ((\mu^j\ast \eta_j) Su^j)$, under the smallness assumption \eqref{u0:cond} (with possibly a slightly smaller $c_0$), along the same lines as in the proof for Proposition \ref{u:L1Lip:prop},  we  deduce 
%from the uniform estimates for $a$ that
%any solution $u$ of \eqref{muNS-reg} satsifies 
\begin{align}\label{u:L1Lip-reg}
   & \Vert \nabla u^j \Vert_{L^1_tL^\infty} + \Vert t'^{\frac12} \nabla u^j \Vert_{L^2_tL^\infty}
   \\
   &\leq  C(\mu_\ast,\mu^\ast) \Bigl( \Vert u_0 \Vert_{L^2}^{\frac{\epsilon}{2}}  (\|u_0\|_{\dot H^{-1}}+\|\mu_0-1\|_{L^2(\R^2)}\|u_0\|_{L^2})
  \bigl(\|\nabla u_0\|_{L^2}+\|(\nabla\taub_0, \d_{\taub_0}\mu_0)\|_{L^{2+\epsilon}}^{\frac{2+\epsilon}{\epsilon}}\bigr) \Bigr)^{\frac{2\epsilon}{(2+\epsilon)^2}},
  \nonumber
\end{align} 
where we have in between used the uniform bounds for $(\nabla\taub^j, \d_{\taub^j}\mu^j)$ (recalling \eqref{tau:LinftyL2+e}, \eqref{dtaumu-Linfty} and \eqref{eq:taub,j}, \eqref{eq:dtaubmu,j})
\begin{align*}
    \Vert (\nabla \taub^j, \d_{\taub^j}\mu^j) \Vert_{L^\infty_tL^{2+\epsilon}} \lesssim
    \bigl(\Vert (\nabla \taub_0, \d_{\taub_0}\mu^j_0) \Vert_{L^{2+\epsilon}} + \Vert \nabla a^j \Vert_{L^1_tL^{2+\epsilon}} \bigr) \exp\bigl(  C\|a^j\|_{L^1_tL^\infty} \bigr) V^j(t).
\end{align*}
To conclude,
\begin{align}\label{uniform:u,a}
    \|(\langle t'\rangle^{\delta}u^j, \langle t'\rangle^{\frac12+\delta}a, t'^{\frac12}\nabla a^j)\|_{L^\infty_tL^2\cap L^2_t\dot H^1}
    +\tilde V^j(t)
    +\|a^j\|_{L^1_tW^{1,2+\epsilon}}
    +\Vert (\nabla \taub^j, \d_{\taub^j}\mu^j) \Vert_{L^\infty_tL^{2+\epsilon}}\leq C_0, 
\end{align}
forall $j\in \N$ and $t\in (0,\infty)$, where $C_0$ is some constant depending   on the initial data.
Applying \eqref{dtauu:Lp}, \eqref{gradalpha} with $p=2+\epsilon$ and using \eqref{dtauomega:2+eps}, \eqref{uniform:a} and \eqref{uniform:grada} we deduce
\begin{align}\label{alpha-dtauu}
    \Vert (\alpha^j ,\d_{\tau^j} u^j) \Vert_{L^1_tW^{1,2+\epsilon}} \leq C_0
\end{align}
uniformly in $t\in (0,\infty)$ and $j\in\N$.
 
 \smallbreak
 
Now we turn to the uniform estimates for the stress tensor
\[ 
T_{\mu^j}(u^j,\pi^j) := (\mu^j\ast\eta_j) Su^j - \pi^j Id  .
\]
By  Lemma \ref{lemma:decomp} and the $u$-equation in \eqref{muNS-reg}  we have the following equality 
\[
\nabla^\perp a^j - \nabla \tilde \pi^j=\div T_{\mu^j}(u^j,\pi^j)  = D_t^j u^j , \quad \text{with } a^j=R_{\mu^j\ast\eta_j}\omega^j,\quad  \nabla\tilde\pi^j := \nabla(\pi^j-Q_{\mu^j\ast \eta_j}\omega^j) .
\]
The curl-free part of the above equation (noticing $\div u^j=0$)
\begin{align*}
    -\nabla\tilde \pi^j =  \nabla \Delta^{-1}\div D_t^j u^j 
    =  \nabla \Delta^{-1}\div ((u^j\ast\eta_j) \cdot \nabla u^j)
    =  \nabla \Delta^{-1} \bigl(  (\nabla u^j\ast\eta_j) : (\nabla u^j)^T\bigr)
\end{align*}
implies  from \eqref{uniform:u,a} that for any $t\in (0,\infty)$,
\begin{align}
    \Vert \nabla \tilde \pi^j \Vert_{L^2_tL^2} \lesssim \Vert (u^j\ast\eta_j)\cdot\nabla u^j \Vert_{L^2_tL^2} 
    &\lesssim \Bigl\Vert \Vert u^j \Vert_{L^2}^\frac{\epsilon}{2+\epsilon} \Vert \nabla u^j \Vert_{L^2}^\frac{2}{2+\epsilon} \Vert \nabla u^j \Vert_{L^{2+\epsilon}} \Bigr\Vert_{L^2(0,t)} \nonumber\\
    %&\lesssim \Bigl\Vert \Vert u \Vert_{L^2}^\frac{\epsilon}{2+\epsilon} \Vert a \Vert_{L^2}^\frac{2}{2+\epsilon} \Vert a \Vert_{L^{2+\epsilon}} \Bigl\Vert_{L^2_t} 
    &\lesssim \Vert u^j \Vert_{L^\infty_t L^2}^\frac{\epsilon}{2+\epsilon} \Vert a^j \Vert_{L^2_tL^2}^\frac{2}{2+\epsilon} \Vert a^j \Vert_{L^\infty_t L^2}^\frac{2}{2+\epsilon} \Vert \nabla a^j \Vert_{L^2_tL^2}^\frac{\epsilon}{2+\epsilon} \leq C_0, \label{pi:est}\\
%\end{align}
%by \eqref{u:energy}, \eqref{a:energy-low}, where we used the uniform boundedness of the operator norms of the inverse $R_{\mu\ast \eta_j}^{-1}$ in $L^2(\R^2)$ and in $L^{2+\epsilon}(\R^2)$ in $j\in\N$. 
%Furthermore, it follows from the identity
%\begin{align*}
%    \nabla^2\tilde\pi = -\nabla^2\Delta^{-1} \div((u\ast\eta_j)\cdot\nabla u) = -\nabla^2\Delta^{-1}(\nabla (u\ast\eta_j) : (\nabla u)^T)
%\end{align*}
%and from \eqref{u:energy}, \eqref{u:L1Lip-reg} that
%\begin{align*}
    \Vert t'^{\frac12} \nabla^2\tilde \pi^j \Vert_{L^2_tL^2}
    \lesssim \|t'^{\frac12} (\nabla u^j\ast\eta_j) : (\nabla u^j)^T \|_{L^2_tL^2}&\lesssim \Vert \nabla u^j \Vert_{L^\infty_t L^2} \Vert t'^{\frac12}\nabla u^j \Vert_{L^2_tL^\infty} \leq C_0.\nonumber
\end{align}
%for some constant $C$ independent of $j$. 
Thus
\begin{align}\label{tensor-j}
\begin{split}
    &\Vert (\div T_{\mu^j}(u^j,\pi^j), t'^{\frac12}\nabla\div T_{\mu^j}(u^j,\pi^j)) \Vert_{L^2_tL^2} \leq \Vert (\nabla^\perp a, t'^{\frac12}\Delta a^j) \Vert_{L^2_tL^2} 
    + \Vert (\nabla \tilde \pi^j, t'^{\frac12}\nabla^2\tilde\pi^j) \Vert_{L^2_tL^2} \leq C_0. \\
%    &\Vert t^\frac12 \nabla \div T(u,\pi)\Vert_{L^2L^2} \leq \Vert t^\frac12 \nabla \nabla^\perp a\Vert_{L^2L^2} + \Vert t^\frac12 \nabla^2\tilde \pi \Vert_{L^2L^2} \leq C .
\end{split}
\end{align}

\noindent\textbf{Step 3: Convergence.} 
By virtue of the above uniform estimates and standard compactness arguments, there exists a subsequence of the approximation solution sequence, still denoted by  $(\mu^j,u^j,\nabla\pi^j,\tau^j)$, converging to the limit   $(\mu,u,\nabla\tilde\pi, \tau)$ which satisfies the properties stated in Theorem \ref{exthm}.  %$\mu\in L^\infty([0,\infty)\times\R^2;[0,\infty))$, $u\in L^\infty([0,\infty);L^2(\R^2)) \cap L^2([0,\infty);\dot H^1(\R^2))$ with $\nabla u \in L^1([0,\infty);L^\infty(\R^2)) \cap L^4([0,\infty);L^4(\R^2))$ and $\tau\in L^\infty([0,\infty);L^\infty \cap \dot H^1 \cap \dot W^{1,p}(\R^2))$ and $\nabla \tilde \pi \in L^2([0,\infty);L^2(\R^2))$ such that up to a subsequence
Indeed,
\begin{align*}
    \begin{array}{lll}
        \mu^j \overset{\ast}{\rightharpoonup} \mu & \text{in} & L^\infty([0,\infty)\times\R^2;[0,\infty)),  \\
        u^j \overset{\ast}{\rightharpoonup} u & \text{in} & L^\infty([0,\infty);L^2(\R^2)),  \\
        \nabla u^j \rightharpoonup \nabla u & \text{in} & L^2([0,\infty);L^2(\R^2)),  \\
        \tau^j \overset{\ast}{\rightharpoonup} \tau & \text{in} & L^\infty([0,\infty);L^\infty \cap   \dot W^{1,2+\epsilon}(\R^2)),  \\
        \nabla \tilde\pi^j \rightharpoonup \nabla \tilde \pi & \text{in} & L^2((0,\infty);L^2(\R^2)) .
    \end{array} 
\end{align*}
Since $\d_t\mu^j = \div(\mu^j (u\ast \eta_j))$ is uniformly bounded in $L^2_tH^{-1}$ for any $t>0$, the sequence $(\mu^j)$ is relatively compact in $L^p_tL^2_{\mathrm{loc}}$ for any $p\in [1,\infty)$. Consequently, we have $\mu^j \to \mu$ almost everywhere on $[0,\infty)\times\R^2$, which implies that
\[
(\mu^j\ast \eta_j) Su^j \rightharpoonup \mu Su \quad \text{in } L^2_tL^2_{\mathrm{loc}},  \; \forall t>0 .
\]
Furthermore, by the $u$-equation in \eqref{muNS-reg}  and the  uniform estimates in Step 2, $\d_tu^j$ is bounded in $L^2_tL^2$, and hence $u^j$ is relatively compact in $L^p_tL^2_{\mathrm{loc}}$ for all $p\in [1,\infty)$ and $t>0$, which implies that at $u^j\to u$ almost everywhere on $(0,t)\times\R^2$. Together with the fact that $u^j$ is uniformly bounded in $L^4_tL^4$ we conclude that
\[
(u^j\ast\eta_j) \otimes u^j \rightharpoonup u\otimes u \quad \text{in } L^2_tL^2,\; \forall t>0 .
\]
Similarly $\mu^j (u^j\ast\eta_j)  \overset{\ast}{\rightharpoonup} \mu u$,
$\tau^j (u^j\ast\eta_j) \overset{\ast}{\rightharpoonup} \tau u$ in e.g. $L^\infty_t L^2$.
It follows that  $(\mu,u,\nabla\tilde\pi, \tau)$ weakly solves \eqref{muNS}-\eqref{eq:tau}. 
The properties \eqref{exspace}, \eqref{a:reg}, \eqref{alpha:intro0} and \eqref{Tensor} follow from the estimates in Step 2.

\noindent
\textbf{Step 4: Uniqueness.} 
The uniqueness follows from   the $L^1_t\textrm{Lip}$-bound for the velocity field.
More precisely, let $(\mu_i,u_i,\nabla\pi_i,\tau_i)$, $i=1,2$, be two solutions of (\ref{muNS})-\eqref{eq:tau} satisfying (\ref{exspace}). 
For the uniqueness of the viscosity function we make use of Lagrangian coordinates (see also \cite[Section 4]{danchin2019incompressible}). Let the flow $X_i:[0,\infty)\times \R^2 \to \R^2$ denote the flow of $u_i$ defined as $ 
  X_i(t,\xi)=\xi + \int_0^t u_i(t',X_i(t',\xi))dt',$ 
for $i=1,2$. Let $\tilde \mu_i(t,\xi) = \mu_i(t,X_i(t,\xi))$. Then the transport equation (\ref{muNS})$_1$ implies that $\d_t \tilde \mu_i=0$, and thus $\tilde \mu_i(t,\xi)=\mu_i(0,\xi)$ for any $\xi \in\R^2$. 

The uniqueness of the velocity follows from the energy estimate
\begin{align}\label{unique:est}
\begin{split}
    \Vert \delta u \Vert_{L^\infty_tL^2}^2 + \Vert \nabla \delta u \Vert_{L^2_tL^2}^2 \lesssim  \Vert (\delta u)(0) \Vert_{L^2}^2   \exp\Bigl(\int_0^t \Vert \nabla u_1 \Vert_{L^\infty} dt'\Bigr)
\end{split}
\end{align}
for the velocity difference $\delta u = u_2-u_1$.
%\begin{align*}
%    \delta \mu = \mu_2-\mu_1, \quad \delta u = u_2-u_1, \quad \delta \nabla \pi = \nabla \pi_2- \nabla \pi_1.
%\end{align*} 
Indeed, (\ref{unique:est}) follows by testing the difference of the momentum equations (\ref{muNS})$_2$ for $u_1, u_2$ by $\delta u$ and then applying Gronwall's inequality. 

Finally we have  $\nabla \pi_1 = \nabla \pi_2$ from the momentum equations, and $\tau_1=\tau_2$ from the $\tau$-equation.
\end{proof}

\subsection{Proof of Corollary \ref{propthm}}\label{sect:d-prop}
We follow the strategy performed for the density patch problem, cf. \cite[Section 2]{liao2019global} and \cite[Theorem 1.3]{paicu2020striated}, to show the regularity propagation of the viscosity patch problem.
\begin{proof}[Proof of  Corollary \ref{propthm} - \ref{Coro1}]
 
As the assumptions in   Theorem \ref{exthm} are fulfilled for the viscosity patch-type problem stated in   Corollary \ref{propthm} - \ref{Coro1}., there exists a unique global-in-time solution $(\mu, u, \nabla\pi)$ of \eqref{muNS}, satisfying all the estimates in Theorem \ref{exthm}.

The Lipschitz regularity of the velocity field (\ref{exspace})$_3$ guarantees the existence of the flow $X:[0,\infty)\times\R^2 \to \R^2$, defined by the initial value problem
 $X(t,\xi)=\xi + \int_0^t u(t',X(t',\xi))dt', $
such that $X(t,\cdot)\in C^1(\R^2)$ and $\|\nabla X\|_{L^\infty_tL^\infty}\leq \exp(\|\nabla u\|_{L^1_tL^\infty})<\infty$ for all $t\in[0,\infty)$. By classical transport theory we know that the fluid viscosity is given by  
$\mu(t,x)=\mu^+(t,x)1_{D_t}(x)+\mu^-(t,x) 1_{D_t^C}(x)$ with the time-evolved domain $D_t=X(t,D)$ and $\mu^\pm(t,x)=\mu_0^\pm(X^{-1}(t,x))$, where $X^{-1}(t,\cdot)$ denotes the inverse of $X(t,\cdot)$ with respect to the spatial variable. From the fact that $\mu_0^+\in W^{1,2+\epsilon}(\overline{D})$ and $\mu_0^--1\in L^2\cap W^{1,2+\epsilon}(\overline{D^C})$,  we deduce $\mu^+(t,\cdot) \in W^{1,2+\epsilon}(\overline{D_t})$  and $\mu^-(t,\cdot)-1\in L^2\cap W^{1,2+\epsilon}(\overline{D_t^C})$ for $t>0$.

Now we parametrize the boundary $\d D$ of the initial domain  with a function $\gamma_0\in W^{2-\frac{1}{2+\epsilon},2+\epsilon}(\mathbb{S}^1)$ defined as
\begin{align*}
    %\gamma^{(j)}_0:\mathbb{S}^1\to \d \Dj, \quad \text{such that} \quad \d_s \gamma^{(j)}_0 (s)=\tauj_0(\gamma^{(j)}_0(s)).
    \gamma_0:\mathbb{S}^1\to \d D, \quad \text{such that} \quad \d_s \gamma_0 (s)=\tau_0(\gamma_0(s)).
\end{align*} 
Then the boundary of $D_t$ can be parametrized by $X(t,\gamma_0):\mathbb{S}^1\to \d D_t$. Differentiating with respect to $s$ yields
\begin{align}\label{dsXg}
    %\d_s(X(t,\gamma^{(j)}_0(s)))= \tauj_0(\gamma^{(j)}_0(s))\cdot \nabla X(t,\gamma^{(j)}_0(s)) = \tauj(t,X(t,\gamma^{(j)}_0(s))).
    \d_s(X(t,\gamma_0(s)))= \tau_0(\gamma_0(s))\cdot \nabla X(t,\gamma_0(s)) = \tau(t,X(t,\gamma_0(s))) .
\end{align}
Due to the uniform bound of $\tau\in L^\infty_t(L^\infty\cap \dot W^{1,2+\epsilon})$, 
  the trace theorem implies the right hand side of (\ref{dsXg}) lies in $W^{1-\frac{1}{2+\epsilon},2+\epsilon}(\mathbb{S}^1)$. This shows that the parametrization $X(t,\gamma_0)$ 
%$X(t,\gamma^{(j)}_0)$ 
is contained in $W^{2-\frac{1}{2+\epsilon},2+\epsilon}(\mathbb{S}^1)$. By another application of the trace theorem we conclude that $\d D_t \in W^{2,2+\epsilon}(\R^2)$.

Finally, due to the continuity of $u$ (see \eqref{exspace}) and $T(u,\pi)n$ (see \eqref{Tensor} or \eqref{Tn}) on the interface $\Gamma_t=\d D_t$, the solution $(\mu, u, \nabla\pi)$ also solves \eqref{mu2pNS} with $\Omega_t^+=D_t$, $\Omega_t^-=D_t^C$.
\end{proof}

%%%%%%%%%%%%%%%%%%%%%%%%%%%%%%%%%%%%%%%%%%%%%%%%%%%%%%%%%%%

\begin{proof}[Proof of  Corollary \ref{propthm} - \ref{Coro2}]
The assumptions and hence the results of Theorem \ref{exthm} hold.
The propagation of the viscosity coefficient $\nabla \mu\in L^\infty_tL^q$ follows immediately from  the Lipschitz regularity of the velocity field (\ref{exspace})$_3$ and  the evolution equation for $\nabla \mu$:
$\d_t\nabla\mu + u\cdot\nabla\mu = -\nabla u\cdot\nabla\mu$. 

Now we apply  the gradient to the velocity equation \eqref{muNS}$_2$ and then take the $L^2$-inner product with $\nabla u$ and use integration by parts (similarly as for the derivation of \eqref{u:energy}), to derive
\begin{align*}
    \frac12 \frac{d}{dt} \int_{\R^2} \vert \nabla u \vert^2 dx + \int_{\R^2} \mu \vert S \nabla u\vert^2 dx
    &= \int_{\R^2} [D_t,\nabla]u  : \nabla u dx + \int_{\R^2} Su : ((\nabla\mu\cdot\nabla)\nabla u) dx 
    \\
    &\leq \int_{\R^2}(\nabla u\cdot\nabla u) : \nabla u dx 
    +\Vert \nabla \mu \Vert_{L^q} \Vert \nabla u \Vert_{L^\frac{2q}{q-2}} \Vert \nabla^2 u \Vert_{L^2} 
    \\
    &\leq \|\nabla u\|_{L^\infty}\|\nabla u\|_{L^2}^2
    + \Vert \nabla \mu \Vert_{L^q} \Vert \nabla u \Vert_{L^2}^{1-\frac2q} \Vert \nabla^2 u\Vert_{L^2}^{1+\frac2q}. 
\end{align*} 
We deduce \eqref{gradu:H1} from the fact $\|S\nabla u\|_{L^2}\sim \|\nabla^2 u\|_{L^2}$ and the estimates $\|\nabla\mu\|_{L^\infty_tL^q}, \|\nabla u\|_{L^2_tL^2}, \|\nabla u\|_{L^1_tL^\infty}<\infty$ for all $t>0$, together with  Young's and Gronwall's inequalities.
\end{proof}

%%%%%%%%%%%%%%%%%%%%%%%%%%%%%%%%%%%%%%%%%%%%%%%%%%%%%%%%%%%%

Next, we sketch the proof of the local-in-time well-posedness result for the Boussinesq equations \eqref{eq:theta} without heat conduction, coupled with \eqref{eq:tau}.
Recall the main consequences of   Step I, II and the main inequality, which were used in the proof of  $L^1_t\textrm{Lip}$-estimate (Step III), in Subsection \ref{sect:L1Lip} for the system \eqref{muNS}-\eqref{eq:tau}:
\begin{itemize}
    \item Step I. Estimates for $\|a\|_{L^1_t W^{1,2+\epsilon}}$, $\|t'^{\frac12}a\|_{L^2_t W^{1,2+\epsilon}}$, $\|a\|_{L^1_tL^\infty}$ in terms of 
   \begin{equation}\label{tildeV,V}
   \tilde V(t)=V(t)\exp(C\|t'^{\frac12}\nabla u\|_{L^2_tL^\infty}),
   \hbox{ with }V(t)=\exp(C\|\nabla u\|_{L^1_tL^\infty}).
   \end{equation}
    \item Step II.
    Estimates  which follow from Corollary \ref{coro}
    \begin{align}\label{nablau,L1tLinfty}
        &\|\nabla u\|_{L^1_tL^\infty}
        \leq \|a\|_{L^1_t L^{2+\epsilon}}^{\frac{\epsilon}{2+\epsilon}}
        \Bigl( \|\nabla a\|_{L^1_t L^{2+\epsilon}}
        +\|(\nabla\taub, \d_{\taub}\mu)\|_{L^\infty_t L^{2+\epsilon}}\|(\nabla u, a)\|_{L^1_tL^\infty}\Bigr)^{\frac{2}{2+\epsilon}},\\
        &\|t'^{\frac12}\nabla u\|_{L^2_tL^\infty}
        \leq \|t'^{\frac12}a\|_{L^2_t L^{2+\epsilon}}^{\frac{\epsilon}{2+\epsilon}}
        \Bigl( \|t'^{\frac12}\nabla a\|_{L^2_t L^{2+\epsilon}}
        +\|(\nabla\taub, \d_{\taub}\mu)\|_{L^\infty_t L^{2+\epsilon}}\|t'^{\frac12}(\nabla u, a)\|_{L^2_tL^\infty}\Bigr)^{\frac{2}{2+\epsilon}},
        \nonumber
    \end{align}
    where by use of the transport equations $\eqref{muNS}_1$, \eqref{eq:tau} for $\mu$, $\tau$ respectively,
    \begin{align}\label{nablataub,dtaubmu}
        \|(\nabla\bar\tau, \d_{\taub}\mu)\|_{L^\infty_t L^{2+\epsilon}}
        \lesssim
        (\|(\nabla\bar\tau_0, \d_{\taub_0}\mu_0)\|_{L^{2+\epsilon}}+\|\nabla a\|_{L^1_t L^{2+\epsilon}})\exp(C\|a\|_{L^1_tL^\infty})V(t).
    \end{align}
    \item Step III. Inequality for $A(t)=\|\nabla u\|_{L^1_tL^\infty}+\|t'^{\frac12}\nabla u\|_{L^2_tL^\infty}$ of type $A(t)\leq C\sigma \exp(CA(t)+\tilde\sigma\exp(CA(t)))$, 
    with $\sigma, \tilde\sigma$ depends only on the initial data.
\end{itemize}
As the estimates in Step II hold  universally, it suffices to derive the $W^{1,2+\epsilon}$-estimates for $a$ in Step I, such that the 
  the bootstrap argument in Step III works. 
  Different as for the system \eqref{muNS} where we derived directly the energy estimates for $a$ in Step I, for the Boussinesq equation \eqref{eq:theta} we derive below the $H^1$-energy estimate directly for $a_\vartheta=a-\mathcal{R}_{-1}\vartheta$, which takes into account the buoyancy force $\vartheta e_2$. 
\begin{proof}[Proof of   Corollary \ref{propthm} - \ref{Coro3}]
We aim to establish a priori estimates for 
\begin{align*}
  \Vert \vartheta \Vert_{L^\infty_tL^1\cap L^r}
  +\Vert u \Vert_{L^\infty_tL^2\cap L^2_t \dot H^1} +\Vert a \Vert_{L^\infty_t L^2 \cap L^2_t \dot H^1} + \Vert t'^{\frac12} \nabla a \Vert_{L^\infty_tL^2 \cap L^2_t \dot H^1} .
\end{align*} 
Firstly, the transport equation with divergence-free velocity vector for the temperature  \eqref{eq:theta}$_1$   yields 
\begin{align}\label{B:theta-est}
    \Vert \vartheta \Vert_{L^\infty_tL^{r_1}} = \Vert \vartheta_0 \Vert_{L^{r_1}}, \quad \forall r_1\in [1,r]\supset [1,2+\epsilon].
\end{align} 

Compared with the system \eqref{muNS}, there is an additional term $\vartheta e_2$ on the right hand side of the velocity equation   $\eqref{eq:theta}_2$.
Consequently, the vorticity equation \eqref{intro:omega} is replaced by 
\begin{align}\label{B:omega-eq}
    \d_t\omega +u\cdot\nabla \omega - \Delta a = \d_1 \vartheta ,\hbox{ with }\omega=\nabla^\perp\cdot u,\quad  a=R_\mu\omega,
\end{align}
which is the application of  the curl operator to the velocity equation \eqref{eq:theta}$_2$.
We follow the proofs of Proposition \ref{u-decay:prop} and Proposition \ref{energy:prop} to derive the energy estimates for $u$ and $a$.
Taking the $L^2$-inner product between the velocity equation \eqref{eq:theta}$_2$ and $u$ we derive by Cauchy-Schwarz inequality, Young's inequality and \eqref{B:theta-est}
\begin{align}\label{B:u-en}
    \Vert u \Vert_{L^\infty_tL^2}^2 + \Vert \nabla u \Vert_{L^2_tL^2}^2 \lesssim_{\mu_\ast}  
    \Vert (u_0, t\,\vartheta_0) \Vert_{L^2}^2.
\end{align}
Next, using the same arguments as in the proof for \eqref{a:energy-low} we deduce from the vorticity equation
\eqref{B:omega-eq} the following estimate
\begin{align}\label{B:a-en}
    \Vert a \Vert_{L^\infty_tL^2}^2 + \Vert \nabla a \Vert_{L^2_tL^2}^2 \lesssim_{\mu_\ast,\mu^\ast} 
    \Vert (\omega_0, t^{\frac12}\vartheta_0) \Vert_{L^2}^2   V(t),\quad V(t)=\exp(C\|\nabla u\|_{L^1_t L^\infty}).
\end{align}

\smallbreak 

\noindent\textbf{$H^1$-estimate for $\Gamma$.}
To obtain higher order energy estimates for $a$, motivated by e.g. \cite{HmidiKeraani}, we define the quantity 
$$\Gamma=\omega-R_\mu^{-1} \mathcal{R}_{-1}\vartheta,
\quad \hbox{ with }\mathcal{R}_{-1}= \d_1 (-\Delta)^{-1}.$$ 
From the energy estimate \eqref{B:a-en} above and the relation $\nabla R_\mu \Gamma = \nabla a - \nabla \mathcal{R}_{-1}\vartheta=\nabla a_\vartheta$ we deduce from \eqref{B:theta-est}
\begin{align}\label{Gamma-low}
    \Vert \nabla R_\mu \Gamma \Vert_{L^2_tL^2}^2 \lesssim \Vert \nabla a \Vert_{L^2_tL^2}^2 +  \Vert \vartheta \Vert_{L^2_tL^2}^2 \lesssim_{\mu_\ast,\mu^\ast} \Vert (\omega_0, t^{\frac12}\vartheta_0) \Vert_{L^2}^2 V(t).
\end{align} %Notice that the vorticity equation \eqref{B:omega-eq} can be rewritten as
%\[
%\d_t\omega+u\cdot\nabla\omega -\Delta (a-\mathcal{R}_{-1}\vartheta) =0 .
%\]
%Observe that the commutator on the right hand side can be written as
%\begin{align}\label{B:comm}
%    [R_\mu^{-1}\mathcal{R}_{-1}, \frac{D}{Dt}] \vartheta = R_\mu^{-1}[\mathcal{R}_{-1},\frac{D}{Dt}]\vartheta + [R_\mu^{-1},\frac{D}{Dt}]\mathcal{R}_{-1}\vartheta 
%    = R_\mu^{-1} \bigl([\mathcal{R}_{-1},u\cdot\nabla] \vartheta + [R_\mu,\frac{D}{Dt}]R_\mu^{-1}\mathcal{R}_{-1} \vartheta\bigr).
%\end{align}

Now we derive the $\dot H^1$-energy estimate for $R_\mu\Gamma$, similar as in the proof for \eqref{a:tenergy-high}. 
Applying the operator $R_\mu^{-1}\mathcal{R}_{-1}$ to the temperature equation \eqref{eq:theta}$_1$,
%yields
%\[
%\d_t R_\mu^{-1}\mathcal{R}_{-1}\theta = - [R_\mu^{-1}\mathcal{R}_{-1}, \frac{D}{Dt}] \vartheta
%\]
%with $\frac{D}{Dt}=\d_t+u\cdot\nabla$, 
and then subtracting this equation from the vorticity equation \eqref{B:omega-eq} we obtain (noticing $\Delta a+\d_1\vartheta=\Delta(R_\mu\Gamma)$ and $\frac{D}{Dt}=\d_t+u\cdot\nabla$)
\begin{align}\label{Gamma:eq}
    \d_t \Gamma +u\cdot\nabla \Gamma - \Delta R_\mu \Gamma 
    &=   [R_\mu^{-1}\mathcal{R}_{-1}, \frac{D}{Dt}] \vartheta.
\end{align}
We take the $L^2$-inner product between \eqref{Gamma:eq} and $R_\mu \dot\Gamma =R_\mu \frac{D}{Dt}\Gamma$ and perform similar calculations as for \eqref{a:tenergy-high} to derive
\begin{align}
    &\frac12 \frac{d}{dt} \int_{\R^2} \vert \nabla R_\mu \Gamma \vert^2 dx + \int_{\R^2} \mu \Bigl(((R_2R_2-R_1R_1) \dot \Gamma)^2+(2R_1R_2\dot \Gamma)^2\Bigr) dx \nonumber\\
    &=  \int_{\R^2} \bigl( [R_\mu^{-1}\mathcal{R}_{-1},\frac{D}{Dt}] \vartheta\bigr)\cdot ( R_\mu \dot\Gamma) dx - \int_{\R^2} \nabla R_\mu \Gamma \cdot\nabla u \cdot \nabla R_\mu \Gamma dx + \int_{\R^2} (\Delta R_\mu\Gamma)  \cdot ( [R_\mu,\frac{D}{Dt}]\Gamma) dx.
    %\nonumber \\
   % &=: J_1+J_2+J_3 . 
   \label{B:high-en}
\end{align}
Notice that applying \eqref{comm} implies the estimate for the commutator term $[R_\mu,\frac{D}{Dt}]$:
\begin{align}\label{comm:Rmu,Dt}
    \|[R_\mu, \frac{D}{Dt}]f\|_{L^2}\lesssim_{\mu^\ast, p_1, p_2} \|\nabla u\|_{L^{p_2}}\|f\|_{L^{p_1}},\quad \frac{1}{p_1}+\frac{1}{p_2}=\frac12,\quad p_1\in [2,\infty),\, p_2\in (2,\infty].
\end{align}
Hence we bound the commutator
\begin{align*}
    \|[R_\mu,\frac{D}{Dt}]\Gamma\|_{L^2}
    &\leq \|[R_\mu,\frac{D}{Dt}]\omega\|_{L^2}
    +\|[R_\mu,\frac{D}{Dt}] R_\mu^{-1}\mathcal{R}_{-1}\vartheta\|_{L^2}
    \\
    &\lesssim \|\nabla u\|_{L^\infty}\|\omega\|_{L^2}
    +\|\nabla u\|_{L^{\frac{2(2+\epsilon)}{\epsilon}}}
    \|R_\mu^{-1}\mathcal{R}_{-1}\vartheta\|_{L^{2+\epsilon}},
\end{align*}
where by Sobolev embedding we can bound
\[
\Vert R_\mu^{-1} \mathcal{R}_{-1} \vartheta \Vert_{L^{2+\epsilon}} \lesssim \Vert \mathcal{R}_{-1} \vartheta \Vert_{L^{2+\epsilon}} \lesssim \Vert \nabla \mathcal{R}_{-1} \vartheta \Vert_{L^{\frac{2(2+\epsilon)}{4+\epsilon}}} \lesssim \Vert \vartheta \Vert_{L^{\frac{2(2+\epsilon)}{4+\epsilon}}}.
\]
Similarly the commutator
\begin{align*}
 [R_\mu^{-1}\mathcal{R}_{-1}, \frac{D}{Dt}] \vartheta
 &   =R_\mu^{-1}[\mathcal{R}_{-1},\frac{D}{Dt}]\vartheta + [R_\mu^{-1},\frac{D}{Dt}]\mathcal{R}_{-1}\vartheta 
    \\
&     = R_\mu^{-1}   (\mathcal{R}_{-1}\div(u\vartheta)-u\cdot\nabla \mathcal{R}_{-1} \vartheta)- R_\mu^{-1}[R_\mu,\frac{D}{Dt}]R_\mu^{-1}\mathcal{R}_{-1} \vartheta
\end{align*}
can be bounded by (recalling the diffeomorphism of $R_\mu$ in $L^{2+\epsilon}$)  
\begin{align*}
    \|[R_\mu^{-1}\mathcal{R}_{-1}, \frac{D}{Dt}] \vartheta\|_{L^2}
    \lesssim \|u\|_{L^{ \infty}}\|\vartheta\|_{L^{2}}
    +\Vert \nabla u \Vert_{L^\frac{2(2+\epsilon)}{\epsilon}} \Vert \vartheta \Vert_{L^{\frac{2(2+\epsilon)}{4+\epsilon}}}.
\end{align*} 
To conclude, we obtain together with Young's inequality and $\Delta R_\mu \Gamma = \dot \Gamma + [R_\mu^{-1}\mathcal{R}_{-1}, \frac{D}{Dt}] \vartheta$
\begin{align*}
  & \frac{d}{dt} \Vert \nabla R_\mu \Gamma \Vert_{L^2}^2 +   \Vert (\dot \Gamma, \Delta R_\mu\Gamma) \Vert_{L^2}^2 
    \lesssim _{\mu_\ast,\mu^\ast}
    \|[R_\mu^{-1}\mathcal{R}_{-1}, \frac{D}{Dt}] \vartheta\|_{L^2}^2
    +\Vert \nabla u \Vert_{L^\infty} \Vert \nabla R_\mu \Gamma \Vert_{L^2}^2
    + \|[R_\mu,\frac{D}{Dt}]\Gamma\|_{L^2}^2
    \\
    &\qquad 
    \lesssim_{\mu_\ast,\mu^\ast} \Vert \nabla u \Vert_{L^\infty} \Vert \nabla R_\mu \Gamma \Vert_{L^2}^2 
    +\|\nabla u\|_{L^\infty}^2\|\omega\|_{L^2}^2+\|u\|_{L^{ \infty}}^2\|\vartheta\|_{L^{2}}^2  
     + \Vert \nabla u \Vert_{L^\frac{2(2+\epsilon)}{\epsilon}}^2  \Vert \vartheta \Vert_{L^{\frac{2(2+\epsilon)}{4+\epsilon}}}^2  .
\end{align*}

Next, we multiply by $t$ and make use of Gronwall's inequality and interpolation inequality to obtain (recalling the definition \eqref{tildeV,V} for $\tilde V$)
\begin{align*}
    &\Vert t^\frac12 \nabla R_\mu \Gamma(t) \Vert_{L^2}^2 +  \Vert t'^{\frac12} (\dot\Gamma, \Delta R_\mu\Gamma) \Vert_{L^2_t L^2}^2 
    \\
    &\lesssim_{\mu_\ast,\mu^\ast} \Bigl(\Vert \nabla R_\mu \Gamma\Vert_{L^2_tL^2}^2 +\|\omega\|_{L^\infty_tL^2}^2+ \int_0^t \Bigl[t' \|u\|_{L^{ \infty}}^2\|\vartheta\|_{L^{2}}^2
    + t'\Vert \nabla u \Vert_{L^\frac{2(2+\epsilon)}{\epsilon}}^2  \Vert \vartheta \Vert_{L^{\frac{2(2+\epsilon)}{4+\epsilon}}}^2  \Bigr] dt'\Bigr)\tilde  V(t)
    \\
    & \lesssim_{\mu_\ast,\mu^\ast} 
    (\Vert \nabla R_\mu \Gamma\Vert_{L^2_tL^2}^2+\|a\|_{L^\infty_t L^2}^2) \tilde V(t)
    +   \Vert u \Vert_{L^\infty_tL^2}  \Vert \nabla u \Vert_{L^1_tL^\infty}  \Vert t'^{\frac12}\vartheta \Vert_{L^\infty_tL^{2}}^2 \tilde V(t)
    \\
    &\qquad 
    +   \Vert t'^{\frac12} \nabla u \Vert_{L^2_tL^\infty}^\frac{4}{2+\epsilon}
    \Vert \nabla u \Vert_{L^2_tL^2}^\frac{2\epsilon}{2+\epsilon}
    \Vert t'^{\frac12}\vartheta \Vert_{L^\infty_tL^1} ^\frac{4 }{2+\epsilon}
    \|t'\vartheta\|_{L^\infty_t L^2}^{\frac{2\epsilon}{2+\epsilon}}V(t).
\end{align*}
Inserting the estimates  \eqref{B:theta-est}, \eqref{B:u-en}, \eqref{B:a-en} and \eqref{Gamma-low}, we conclude the time weighted $\dot H^1$-estimate for $R_\mu\Gamma$
\begin{align}\label{Gamma-en0}
 &   \Vert t'^{\frac12} \nabla R_\mu \Gamma \Vert_{L^\infty_tL^2}^2 
    + \Vert t'^{\frac12}(\dot \Gamma, \Delta R_\mu\Gamma) \Vert_{L^2_tL^2}^2
    \\
    &\lesssim  \Bigl( \Vert (\omega_0, t^{\frac12}\vartheta_0) \Vert_{L^2}^2  +  \| (u_0, t\vartheta_0)\|_{L^2} \| t^{ \frac{1}{2}}\vartheta_0\|_{L^{2}}^2  
   +  \Vert  (u_0, t\vartheta_0) \Vert_{ L^2}^\frac{2\epsilon}{2+\epsilon}
   % \Vert t^\frac{\epsilon}{2(2+\epsilon)} \vartheta_0 \Vert_{ L^\frac{2(2+\epsilon)}{4+\epsilon}}^2
   \|t^{\frac12}\vartheta_0\|_{L^1}^{\frac{4}{2+\epsilon}}
   \|t\vartheta_0\|_{L^2}^{\frac{2\epsilon}{2+\epsilon}}\Bigr)\tilde V(t).\nonumber
%    \leq C_0 (1+t^3) (1+\Vert t'^{\frac12} \nabla u \Vert_{L^2_tL^\infty}^\frac{4}{2+\epsilon})V =: J ,
\end{align}
%where the constant $C_0$ depends only on $r, \mu_\ast,\mu^\ast$, $\Vert u_0 \Vert_{H^1}$ and $\Vert \vartheta_0 \Vert_{L^1\cap L^r}$. 

\noindent\textbf{$W^{1,2+\epsilon}(\R^2)$-estimate for $a$.}
We set
\begin{align*}
     & \sigma_0  = \Vert  u_0   \Vert_{L^2}, \quad 
    \sigma_\vartheta 
    = \sigma_\vartheta(t)  = \Vert t^\frac12 \vartheta_0 \Vert_{L^1}+\Vert t  \vartheta_0 \Vert_{L^2} 
     + \Vert t^{\frac32-\frac{1}{2+\epsilon}} \vartheta_0 \Vert_{L^{2+\epsilon}} ,
    \\
    &\tilde\sigma_0=\tilde\sigma_0(t) =\sigma_0+\sigma_\vartheta,
    \\
   & \sigma_1  = \Vert \omega_0 \Vert_{L^2} +  \Vert (\nabla \taub_0 , \d_{\taub_0} \mu_0 )\Vert_{L^{2+\epsilon}} ^\frac{2+\epsilon}{\epsilon}, \\
    &\tilde \sigma_1 = \tilde \sigma_1(t)  = \sigma_1
    +t^{-\frac12}\sigma_\vartheta
    (1+\sigma_0 ^{\frac12}   +\sigma_\vartheta^{\frac12} \bigr), \hbox{ i.e. }t^{\frac12}\tilde\sigma_1=t^{\frac12}\sigma_1+\sigma_\vartheta(1+\sigma_0^{\frac12}+\sigma_\vartheta^{\frac12}) .
\end{align*}
%and
%\begin{align*}
 %   V(t)=\exp(C\Vert \nabla u \Vert_{L^1_tL^\infty}), \quad \tilde V(t) = \exp\bigl(C(\Vert \nabla u \Vert_{L^1_tL^\infty} + \Vert t'^{\frac12}\nabla u \Vert_{L^2_tL^\infty})\bigr).
%\end{align*}
Notice that the Boussinesq equations \eqref{eq:theta} are invariant under the following scaling:
\begin{align*}
    (\vartheta_\lambda, u_\lambda, \pi_\lambda)(t,x)
    =(\lambda^{-3}\vartheta, \lambda^{-1} u, \lambda^{-2}\pi)(\lambda^{-2}t, \lambda^{-1}x),\quad \lambda>0,
\end{align*}
and hence $\sigma_0, \sigma_\vartheta, t^{\frac12}\sigma_1, t^{\frac12}\tilde\sigma_1, V(t), \tilde V(t)$ are also scaling invariant.
Let us recall the estimates \eqref{B:u-en}, \eqref{B:a-en} and \eqref{Gamma-en0} we established above (noticing $\sigma^{\frac{\epsilon}{2+\epsilon}}\lesssim 1+\sigma^{\frac12}$):
\begin{align*}
    \Vert a \Vert_{L^2_tL^2} \leq C  \tilde\sigma_0, \quad 
    \Vert (\nabla a,\nabla R_\mu \Gamma) \Vert_{L^2_tL^2} \leq C  \tilde \sigma_1 V(t), \quad 
    \Vert t'^{\frac12}\Delta R_\mu \Gamma \Vert_{L^2_tL^2} \leq C  \tilde \sigma_1 \tilde V(t) .
\end{align*}
Using interpolation and H\"older's inequality we estimate
\begin{align*}
    \Vert a \Vert_{L^1_tL^{2+\epsilon}} &\lesssim t^\frac12 \Vert a \Vert_{L^2_tL^{2+\epsilon}}
    \lesssim t^\frac12 \Vert a \Vert_{L^2_tL^2}^\frac{2}{2+\epsilon} \Vert \nabla a \Vert_{L^2_tL^2}^\frac{\epsilon}{2+\epsilon} 
    \lesssim t^\frac{1}{2+\epsilon} \tilde \sigma_0^\frac{2}{2+\epsilon} (t^\frac12 \tilde \sigma_1)^\frac{\epsilon}{2+\epsilon} V(t), \\
    %\lesssim t^\frac12 \sigma_0^\frac{2}{2+\epsilon} \sigma_1^\frac{\epsilon}{2+\epsilon} V, \\
    %
    \Vert \nabla a \Vert_{L^1_tL^{2+\epsilon}} &\leq \Vert \nabla R_\mu \Gamma \Vert_{L^1_tL^{2+\epsilon}} + \Vert \nabla \mathcal{R}_{-1} \vartheta \Vert_{L^1_tL^{2+\epsilon}} 
    \lesssim \Vert \nabla R_\mu \Gamma \Vert_{L^2_tL^2}^\frac{2}{2+\epsilon} \Vert t'^{\frac12} \Delta R_\mu \Gamma \Vert_{L^2_tL^2}^\frac{\epsilon}{2+\epsilon} t^{\frac12-\frac{\epsilon}{2(2+\epsilon)}} + t \Vert \vartheta_0 \Vert_{L^{2+\epsilon}} \\
    &\lesssim t^{-\frac{\epsilon}{2(2+\epsilon)}} \bigl(   t^\frac12\tilde \sigma_1  + \sigma_\vartheta \bigr) \tilde V(t)\lesssim t^{-\frac{\epsilon}{2(2+\epsilon)}}   \bigl(t^\frac12\tilde \sigma_1\bigr)   \tilde V(t), \\
    %&\lesssim \sigma_1^\frac{2}{2+\epsilon} \tilde \sigma_1^\frac{\epsilon}{2+\epsilon} t^{\frac{1}{2+\epsilon}}  + t^{-\frac{\epsilon}{2(2+\epsilon)}} \sigma_\vartheta , \\
    %
    \Vert a \Vert_{L^1_tL^\infty} &\lesssim \|a\|_{L^1_tL^{2+\epsilon}}^{\frac{\epsilon}{2+\epsilon}} \|\nabla a\|_{L^1_t L^{2+\epsilon}}^{\frac{2}{2+\epsilon}}
    %\lesssim \Bigl\Vert \Vert a \Vert_{L^2}^\frac{2}{2+\epsilon} \Vert \nabla a \Vert_{L^2}^\frac{\epsilon}{2+\epsilon} \Bigr\Vert_{L^1_t}^\frac{\epsilon}{2+\epsilon} \Bigl\Vert \Vert \nabla R_\mu \Gamma \Vert_{L^2}^\frac{2}{2+\epsilon} \Vert \Delta R_\mu \Gamma \Vert_{L^2}^\frac{\epsilon}{2+\epsilon} + \Vert \nabla \mathcal{R}_{-1} \vartheta \Vert_{L^{2+\epsilon}} \Bigr\Vert_{L^1_t}^\frac{2}{2+\epsilon} \\
   % &\lesssim \Vert a \Vert_{L^1_tL^2}^\frac{2\epsilon}{(2+\epsilon)} \Vert \nabla a \Vert_{L^1_tL^2}^\frac{\epsilon^2}{(2+\epsilon)^2} \Bigl(\Vert \nabla R_\mu \Gamma \Vert_{L^2_tL^2}^\frac{2}{2+\epsilon} \Vert t'^{\frac12} \Delta R_\mu \Gamma \Vert_{L^2_tL^2}^\frac{\epsilon}{2+\epsilon} t^{\frac12 - \frac{\epsilon}{2(2+\epsilon)}} + t \Vert \vartheta_0 \Vert_{L^{2+\epsilon}}\Bigr)^\frac{2}{2+\epsilon} \\
     \lesssim \tilde\sigma_0^\frac{2\epsilon}{(2+\epsilon)^2} (t^\frac12\tilde\sigma_1)^{\frac{\epsilon^2}{(2+\epsilon)^2}+\frac{2}{2+\epsilon}}
     %\bigl(  t^\frac12\tilde \sigma_1  + \sigma_\vartheta\bigr)^\frac{2}{2+\epsilon} 
     \tilde V(t),
    %&\lesssim t^\frac{\epsilon}{2(2+\epsilon)} \sigma_0^\frac{2\epsilon}{(2+\epsilon)^2} \sigma_1^\frac{\epsilon^2}{(2+\epsilon)^2} \bigl(\sigma_1^\frac{4}{(2+\epsilon)^2} \tilde \sigma_1^\frac{2\epsilon}{(2+\epsilon)^2} t^\frac{2}{(2+\epsilon)^2} + t^{-\frac{\epsilon}{(2+\epsilon)^2}} \sigma_\vartheta^\frac{2}{2+\epsilon}\bigr) \tilde V
\end{align*}
and similarly for the quantities $\Vert t'^{\frac12} a \Vert_{L^2_tL^{2+\epsilon}}$, $\Vert t'^{\frac12} \nabla a \Vert_{L^2_tL^{2+\epsilon}}$ and $\Vert t'^{\frac12} a \Vert_{L^2_tL^\infty}$.

\smallbreak 

\noindent\textbf{Conclusion.}
Recalling \eqref{nablataub,dtaubmu}: 
%\eqref{tau:LinftyL2+e} and \eqref{dtaumu-Linfty}: 
\begin{align*}
\|(\nabla\taub, \d_{\taub}\mu)\|_{L^\infty_t L^{2+\epsilon}}
%&\leq 
%(\|(\nabla\taub_0, \d_{\taub_0}\mu_0)\|_{L^{2+\epsilon}}+\|\nabla a\|_{L^1_t L^{2+\epsilon}})\exp (C\|a\|_{L^1_tL^\infty})V(t)
%\\
&\leq   t^{-\frac{\epsilon}{2(2+\epsilon)}} 
\bigl( (t^\frac12 \sigma_1)^\frac{\epsilon}{2+\epsilon}
%+(t^\frac12\sigma_1)^\frac{2}{2+\epsilon} (t^\frac12\tilde \sigma_1)^\frac{\epsilon}{2+\epsilon}  + \sigma_\vartheta 
+t^{\frac12}\tilde\sigma_1 \bigr) \tilde V(t)
\exp (C\|a\|_{L^1_tL^\infty}),
\end{align*}
and \eqref{nablau,L1tLinfty}:
%and integrating the time independent Lipschitz estimate \eqref{u:Lip:eps} in time, we deduce 
\begin{align*}
    &\Vert \nabla u \Vert_{L^1_tL^\infty} + \Vert t'^{\frac12} \nabla u \Vert_{L^2_tL^\infty}
  %  \lesssim \|a\|_{L^1_t L^{2+\epsilon}}^{\frac{\epsilon}{2+\epsilon}} (\|\nabla a\|_{L^1_t L^{2+\epsilon}}+\|(\nabla\taub, \d_{\taub}\mu)\|_{L^\infty_t L^{2+\epsilon}})^{\frac{2}{2+\epsilon}}\exp(C\|a\|_{L^1_tL^\infty})V(t)\\
  \\
&     \lesssim \Bigl(t^\frac{1}{2+\epsilon} \tilde\sigma_0^\frac{2}{2+\epsilon} (t^\frac12 \tilde\sigma_1)^\frac{\epsilon}{2+\epsilon}\Bigr)^\frac{\epsilon}{2+\epsilon} 
    \Bigl(t^{-\frac{\epsilon}{2(2+\epsilon)}} \bigl((t^\frac12 \sigma_1)^\frac{\epsilon}{2+\epsilon}
    +t^{\frac12}\tilde\sigma_1\bigr)
    %+(t^\frac12 \sigma_1)^\frac{2}{2+\epsilon} (t^\frac12 \tilde \sigma_1)^\frac{\epsilon}{2+\epsilon} + \sigma_\vartheta\bigr) 
    \Bigr)^\frac{2}{2+\epsilon}\tilde V(t)   \exp \bigl(C\tilde\sigma_0^\frac{2\epsilon}{(2+\epsilon)^2} (t^\frac12\tilde\sigma_1)^{\frac{\epsilon^2}{(2+\epsilon)^2}+\frac{2}{2+\epsilon}}
    %\bigl((t^\frac12\sigma_1)^\frac{4}{(2+\epsilon)^2} (t^\frac12\tilde \sigma_1)^\frac{2\epsilon}{(2+\epsilon)^2} + \sigma_\vartheta^\frac{2}{2+\epsilon}\bigr)
    \tilde V(t)\bigr) \\ 
    &\leq C\tilde\sigma_0^{\frac{2\epsilon}{(2+\epsilon)^2}} (t^{\frac12}\tilde\sigma_1 )^{\frac{\epsilon}{2+\epsilon}}  \tilde V(t)\exp \bigl(C\tilde\sigma_0^\frac{2\epsilon}{(2+\epsilon)^2} (t^\frac12\tilde\sigma_1)^{\frac{\epsilon^2}{(2+\epsilon)^2} +\frac{2}{2+\epsilon}}
    %\bigl((t^\frac12\sigma_1)^\frac{4}{(2+\epsilon)^2} (t^\frac12\tilde \sigma_1)^\frac{2\epsilon}{(2+\epsilon)^2} + \sigma_\vartheta^\frac{2}{2+\epsilon}\bigr)
    \tilde V(t)\bigr) .
\end{align*}
With $ 
A(t)=\Vert \nabla u \Vert_{L^1_tL^\infty} + \Vert t'^{\frac12} \nabla u \Vert_{L^2_tL^\infty}, $ 
the above shows that
\[
A(t) \leq C\tilde\sigma_0^{\frac{2\epsilon}{(2+\epsilon)^2}} (t^{\frac12}\tilde\sigma_1 )^{\frac{\epsilon}{2+\epsilon}} \exp \bigl(CA(t)+C\tilde \sigma_0^\frac{2\epsilon}{(2+\epsilon)^2} (t^\frac12\tilde\sigma_1)^{\frac{\epsilon^2}{(2+\epsilon)^2} +\frac{2}{2+\epsilon}}
%\bigl((t^\frac12\sigma_1)^\frac{4}{(2+\epsilon)^2} (t^\frac12\tilde \sigma_1)^\frac{2\epsilon}{(2+\epsilon)^2} + \sigma_\vartheta^\frac{2}{2+\epsilon}\bigr) 
e^{CA(t)}\bigr).
\]
We now choose $T>0$ such that  the following smallness condition is satisfied
\begin{align}\label{B:cond-1}
    2C^2\tilde\sigma_0(T)^{\frac{2\epsilon}{(2+\epsilon)^2}} (T^{\frac12}\tilde\sigma_1 (T))^{\frac{\epsilon}{2+\epsilon}} + C \sqrt{e} \tilde\sigma_0(T)^\frac{2\epsilon}{(2+\epsilon)^2} (T^\frac12\tilde\sigma_1(T))^{\frac{\epsilon^2}{(2+\epsilon)^2} +\frac{2}{2+\epsilon}}
    %\bigl((t^\frac12\tilde\sigma_1(T))^\frac{4}{(2+\epsilon)^2} (t^\frac12\tilde \sigma_1(T))^\frac{2\epsilon}{(2+\epsilon)^2} + \sigma_\vartheta^\frac{2}{2+\epsilon}\bigr) 
    \leq \frac12 ,
\end{align}
so that we obtain via a bootstrap argument the uniform bound
\[
A(T) \leq 2C \tilde\sigma_0(T)^{\frac{2\epsilon}{(2+\epsilon)^2}} (T^{\frac12}\tilde\sigma_1 (T))^{\frac{\epsilon}{2+\epsilon}} \leq \frac{1}{2C}.
\]
Observe that if $T$ satisfies \eqref{B:cond} with a sufficiently small constant $c_1$ and suitable exponents $\theta_1^B, \theta_2^B, \theta_3^B, \theta_4^B$ (all of which depend only on the constant $C$ from above or $\epsilon$, and hence only on $\mu_\ast,\mu^\ast$), then the smallness condition \eqref{B:cond-1} is fulfilled. 

Finally, following the proof of Theorem \ref{exthm} in Subsection \ref{sect:proofs} we complete the proof of Corollary \ref{propthm} - \ref{Coro3}.
\end{proof}

%%%%%%%%%%%%%%%%%%%%%%%%%%%%%%%%%%%%%%%%%%%%%%%%%%%%%%%%%%%

Lastly,  as for the Boussinesq equations \eqref{eq:theta} above, it suffices to establish  the $W^{1,2+\epsilon}(\R^2)$-estimates for $a$, which may follow from the energy estimates 
for the density-dependent incompressible Navier-Stokes system (\ref{NS}), to conclude the fourth statement of Corollary \ref{propthm}.
%to conclude the a priori estimate in Proposition \ref{u:L1Lip:prop}.
\begin{proof}[Proof of   Corollary \ref{propthm} - \ref{d-propthm}]
Firstly, since the density function $\rho(t,x)$ and the viscosity coefficient $\mu(t,x)=\mu_\rho(\rho(t,x))$ both satisfy the free transport equation, the initial lower and upper bounds are preserved by the Navier-Stokes flow a priori
\begin{align*}
    0<\rho_\ast\leq \rho(t,x)\leq \rho^\ast,
    \quad 0<\mu_\ast\leq \mu(t,x)\leq \mu^\ast.
\end{align*}
In the following the constant $C$ depends only on the four positive constants $\rho_\ast, \rho^\ast, \mu_\ast, \mu^\ast$ and $\|\mu_\rho'\|_{L^\infty([\rho_\ast,\rho^\ast])}$, which may vary from line to line.

With appropriately adapted modifications,  we set  as in Subsection \ref{sect:L1Lip} 
\begin{align*}
    \sigma_0 &= \Vert u_0 \Vert_{L^2} + \Vert \rho_0-1 \Vert_{L^2}\Vert \nabla u_0 \Vert_{L^2}, \\ 
    \sigma_1 &= \Vert \nabla u_0 \Vert_{L^2} +  \Vert (\nabla \taub_0, \d_{\taub_0}\mu_0 ) \Vert_{L^{2+\epsilon}}  ^\frac{2+\epsilon}{\epsilon}, \\ 
    \sigma_{-1} &= \Vert u_0 \Vert_{\dot H^{-1}} + \Vert \rho_0-1 \Vert_{L^2} \Vert u_0 \Vert_{L^2} , \\
    V(t)&=\exp(C\Vert \nabla u \Vert_{L^1_tL^\infty}), \quad 
    \tilde V(t)= \exp \bigl(C(\Vert \nabla u \Vert_{L^1_tL^\infty}+ \Vert t'^{\frac12}\nabla u \Vert_{L^2_tL^\infty})\bigr).
\end{align*}
The energy estimates for $u$ in Proposition \ref{u-decay:prop} are still valid for equations (\ref{NS}):% under the assumptions of Corollary \ref{d-propthm} as follows:
\begin{align}
\Vert\sqrt{\rho} u \Vert_{L^\infty_tL^2}+ \Vert \nabla u \Vert_{L^2_tL^2} &
    \leq C(\mu_\ast)\sigma_0 ,%\Vert u_0 \Vert_{L^2}, 
    \label{u:energy2}\\
    %\Vert u(t) \Vert_{L^2} &\lesssim \langle t \rangle^{-\delta_-} V, \label{u:decay2} \\
    \Vert \langle t \rangle^{\delta}u\Vert_{L^2} + \Vert \langle t' \rangle^{\delta} \nabla u \Vert_{L^2_tL^2} %&\leq C(\mu_\ast, \delta,\delta_-) (\|u_0\|_{L^2\cap \dot H^{-2\delta}}+ \|\mu_0-1\|_{L^2}\|u_0\|_{L^2}) . 
    &\leq C(\mu_\ast, \mu^\ast) (\sigma_0+\sigma_{-1}) V(t) e^{C\sigma_0^2} %(\|u_0\|_{L^2\cap \dot H^{-2\delta}}+ \|\mu_0-1\|_{L^2}\|u_0\|_{L^2} + \Vert \rho_0-1 \Vert_{L^2} \Vert u_0\Vert_{H^1} V e^{C\Vert u_0 \Vert_{L^2}^2})
    , \label{u:decay+2}
\end{align}
where   we have taken $ \delta\in (\frac{1}{2+\epsilon}, \frac{4+\epsilon}{4(2+\epsilon)})$,   with $\epsilon\in (0,2]$ given in Lemma \ref{Rmu-inv:prop}, as in \eqref{delta}. 
Indeed, \eqref{u:energy2} is the classical energy estimates by taking the $L^2$-inner product between $u$-equation and $u$ itself, see e.g. \cite{lions1996mathematical}.
The estimate  \eqref{u:decay+2} was also known   in e.g. \cite{abidi2015global2D, wiegner1987decay}, and we sketch its proof at the  end of Appendix \ref{sect:u-en} with  minor changes in the proof of Proposition \ref{u-decay:prop}. 

\smallbreak 

\noindent\textbf{Higher-order energy estimates.}
We claim the following estimates (similar as the energy estimates in Proposition \ref{energy:prop})
\begin{align}
    \Vert \nabla u \Vert_{L^\infty_tL^2} + \Vert \dot u \Vert_{L^2_tL^2} &\leq C(\mu_\ast,\mu^\ast, \rho_\ast, \rho^\ast) \sigma_1 e^{C\sigma_0^2} V(t) \label{a:en-low2}\\
    \Vert t'^{\frac12} \nabla u \Vert_{L^\infty_tL^2} + \Vert t'^{\frac12} \dot u \Vert_{L^2_tL^2} &\leq C(\mu_\ast,\mu^\ast, \rho_\ast, \rho^\ast) \sigma_0 e^{C\sigma_0^2} V(t), \label{a:ten-low2}\\
    \Vert t'^{\frac12} \sqrt{\rho}\dot u \Vert_{L^\infty_tL^2} + \Vert t'^{\frac12} \dot \omega \Vert_{L^2_tL^2} &\leq C(\mu_\ast,\mu^\ast,\rho_\ast,\rho^\ast) \sigma_1(1+\sigma_0) \tilde V(t) e^{C\sigma_0^2}, \label{a:ten-highNS} \\
    %\Vert t'^{\frac12} \nabla a \Vert_{L^\infty_tL^2} + \Vert t'^{\frac12} \Delta a \Vert_{L^2_tL^2} + \Vert t'^{\frac12} \dot \omega \Vert_{L^2_tL^2} &\leq C(\mu_\ast,\mu^\ast, \rho_\ast, \rho^\ast) \sigma_1 (1+\sigma_0) e^{C\sigma_0^2} \tilde V , \label{a:ten-high2} \\
    \Vert t'^{\frac12+\delta }a \Vert_{L^\infty_tL^2} + \Vert t'^{\frac12+\delta } \nabla a \Vert_{L^2_tL^2} &\leq C(\mu_\ast,\mu^\ast, \rho_\ast, \rho^\ast)(\sigma_0+\sigma_{-1})e^{C\sigma_0^2} V(t). \label{a:ten-high+}
\end{align}
%where
%\begin{align*}
%    \sigma_0 &= \Vert u_0 \Vert_{L^2}, \quad 
%    \sigma_1 = \Vert \nabla u_0 \Vert_{L^2} + \Vert \nabla \rho_0 \Vert_{L^q}^\frac{q}{q-2}, \quad 
%    \sigma_{-1} = \Vert u_0 \Vert_{\dot H^{-1}} + \Vert \mu_0-1 \Vert_{L^2} \Vert u_0 \Vert_{L^2}, \\
%    V(t) &= \exp(\Vert \nabla u \Vert_{L^1_tL^\infty}), \quad 
%    \tilde V(t) = \exp\bigl(C(\Vert \nabla u \Vert_{L^1_tL^\infty} + \Vert t'^{\frac12} \nabla u \Vert_{L^2_tL^\infty})\bigr).
%\end{align*}
We only explain the main ideas.  
  \eqref{a:en-low2} is established in e.g. \cite{abidi2015global2D}: taking the $L^2$ inner product of (\ref{NS})$_2$ with $\dot u$, performing integration by parts, using the duality between 
$$\pi =-(-\Delta)^{-1} \div \div(\mu Su)+ (-\Delta)^{-1} \div(\rho \dot u)\in L^2+\hbox{BMO}\,\,  \hbox{ and }\,\,  \div\dot u=\d_i u_j\d_j u_i\in L^2\cap \hbox{Hardy space }\mathcal{H}^1,$$
and finally applying Young's inequality and then Gronwall's inequality yield \eqref{a:en-low2}. 

The time-weighted version \eqref{a:ten-low2} of \eqref{a:en-low2} follows similarly.

The decay estimate \eqref{a:ten-high+} follows from \eqref{u:decay+2}.% yields
%\begin{align*}
%    &\int_{\R^2} \rho \vert\dot u\vert^2 dx + \frac12\frac{d}{dt} \int_{\R^2} \mu Su:Su dx =  \int_{\R^2} \pi \div(\dot u)dx \\
%    &- \int_{\R^2} 2\mu \Bigl(4\d_1u_1 \d_i u\cdot\nabla u_1 + 4\d_2u_2 \d_2u\cdot \nabla u_2 + 2(\d_1u_2+\d_2u_1)(\d_1u\cdot\nabla u_2+\d_2u\cdot \nabla u_1)\Bigr) dx .
%\end{align*}
%By the formula
%\begin{align*}
%    \pi = (-\Delta)^{-1} \div(\rho \dot u)-(-\Delta)^{-1} \div \div(\mu Su)
%\end{align*}
%and the duality of the Hardy-space $\mathcal{H}^1$ and the $BMO$-space we deduce (noticing $\div \dot u=\d_i u_j\d_j u_i$)
%\begin{align*}
%    \Bigl\vert \int_{\R^2} \pi \div \dot u dx \Bigr\vert &\lesssim \Vert \nabla u \Vert_{L^2} \Vert \omega \Vert_{L^4}^2 + \Vert (-\Delta)^{-1}\div(\rho \dot u)\Vert_{BMO} \Vert \d_ju_i \d_i u_j \Vert_{\mathcal{H}^1} \\
%    &\lesssim \Vert \nabla u \Vert_{L^2} \Vert \omega \Vert_{L^4}^2 + \Vert \dot u \Vert_{L^2} \Vert \nabla u \Vert_{L^2}^2.
%\end{align*}
%In the second estimate we made use of the continuous embedding $\dot H^1(\R^2) \subset BMO(\R^2)$ and \cite[Theorem II.1]{coifman1993compensated}. Thus,
%\begin{align}\label{a:en-low2-calc}
%    \int_{\R^2} \rho \vert \dot u\vert^2 dx + \frac12 \frac{d}{dt} \int_{\R^2} \mu Su:Su dx \lesssim \Vert \nabla u \Vert_{L^2}^2 \Vert \nabla u \Vert_{L^\infty} + \Vert \dot u \Vert_{L^2} \Vert \nabla u \Vert_{L^2}^2 .
%\end{align}
%First applying Young's and then Gronwall's inequality results in (\ref{a:en-low2}).

We now show time-weighted $L^2$-estimate for $\dot u$ in \eqref{a:ten-highNS}.
With the decomposition \eqref{divmuSu:decomp} the momentum equation \eqref{NS}$_2$ reads
\begin{equation}\label{inNS:a}
\rho \dot u - \nabla^\perp a + \nabla \tilde\pi=0,\quad \tilde\pi=\pi-b.
\end{equation}
We apply $\frac{D}{Dt}$ onto both sides, take the $L^2$-innder product with $\dot u$ and use the transport equation $\frac{D}{Dt}\rho=0$ to derive
\[
\int_{\R^2} \rho \frac{D}{Dt}\dot u \cdot \dot u dx - \int_{\R^2} \frac{D}{Dt}\nabla^\perp a \cdot \dot u dx + \int_{\R^2} \frac{D}{Dt} \nabla \tilde\pi \cdot \dot u dx =0 .
\]
In the following we reformulate each integral one by one. 
\begin{itemize}
    \item By \eqref{NS}$_1$ the first integral is equal to
$ \frac12 \frac{d}{dt} \int_{\R^2} \rho \vert \dot u \vert^2 dx . $
\item 
The second integral can be rewritten as
\begin{align*}
&- \int_{\R^2} \frac{D}{Dt}\nabla^\perp a \cdot \dot u dx 
 =  - \int_{\R^2} [\frac{D}{Dt},\nabla^\perp]a \cdot \dot u dx-\int_{\R^2} \nabla^\perp \Bigl(R_\mu \dot \omega + [\frac{D}{Dt},R_\mu]\omega\Bigr) \cdot \dot u dx 
 \\
&\quad =\int_{\R^2} \mu \Bigl(((R_2R_2-R_1R_1)\dot \omega)^2+(2R_1R_1\dot \omega)^2\Bigr) dx + \int_{\R^2} [\frac{D}{Dt},R_\mu] \omega\,  \dot \omega dx + \int_{\R^2} (\nabla^\perp u \nabla a) \cdot \dot u dx .
\end{align*}
%where we apply integration by parts and use $\dot a = R_\mu \dot \omega + [\frac{D}{Dt},R_\mu]\omega$ and $[\frac{D}{Dt},\nabla^\perp] = -\nabla^\perp u \nabla$ to derive
%\begin{align}
 %   - \int_{\R^2} \frac{D}{Dt}\nabla^\perp a \cdot \dot u dx = \int_{\R^2} \mu \Bigl(((R_2R_2-R_1R_1)\dot \omega)^2+(2R_1R_1\dot \omega)^2\Bigr) dx + \int_{\R^2} [\frac{D}{Dt},R_\mu] \omega \dot \omega dx + \int_{\R^2} (\nabla^\perp u \nabla a) \cdot \dot u dx .
%\end{align}

\item  Using integration by parts and the fact that $\div u=0$, $\div \dot u = \nabla u : (\nabla u)^T$ we obtain
\begin{align*}
    &\int_{\R^2} \frac{D}{Dt} \nabla\tilde\pi\cdot \dot u dx = \int_{\R^2} \nabla \frac{D}{Dt}\tilde\pi \cdot \dot u + [\frac{D}{Dt},\nabla]\tilde\pi \cdot \dot u\,  dx \\
    &= - \frac{d}{dt} \int_{\R^2} \tilde\pi \nabla u : (\nabla u)^T dx + \int_{\R^2} \tilde\pi \frac{D}{Dt}(\nabla u : (\nabla u)^T) dx + \int_{\R^2} \tilde\pi \nabla u : (\nabla \dot u)^T dx \\
    &= - \frac{d}{dt} \int_{\R^2} \tilde\pi \nabla u : (\nabla u)^T dx + 3\int_{\R^2} \tilde\pi\nabla u :(\nabla \dot u)^T dx ,
\end{align*}
where we used in the third line that (due to $\div u=0$)
\begin{align*}
    \frac{D}{Dt}(\nabla u : (\nabla u)^T)  = 2 \nabla u : (\nabla \dot u)^T - 2 (\d_iu\cdot\nabla u)\cdot \nabla u_i = 2 \nabla u : (\nabla \dot u)^T.
\end{align*} 
\end{itemize} 
Summing up, we showed that
\begin{align*}
    &\frac{d}{dt} \Bigl(\frac12 \int_{\R^2} \rho \vert \dot u \vert^2 dx - \int_{\R^2}\tilde\pi\nabla u: (\nabla u)^T dx\Bigr) + \int_{\R^2} \mu \Bigl(((R_2R_2-R_1R_1)\dot \omega)^2+(2R_1R_1\dot \omega)^2\Bigr) dx \\
    &= - \int_{\R^2} [\frac{D}{Dt},R_\mu]\omega\,  \dot \omega dx - \int_{\R^2} (\nabla^\perp u \nabla a) \cdot \dot u\, dx - 3\int_{\R^2} \tilde\pi \nabla u :(\nabla \dot u)^T dx .
\end{align*}

Applying the commutator estimate \eqref{comm} we see that the first two terms on the right hand side are bounded up to a constant by
\[
\Vert \nabla u \Vert_{L^\infty} (\Vert \omega \Vert_{L^2} \Vert \dot \omega \Vert_{L^2} + \Vert \dot u \Vert_{L^2} \Vert \nabla a \Vert_{L^2}) 
\lesssim  \Vert \nabla u \Vert_{L^\infty} (\Vert \omega \Vert_{L^2} \Vert \dot \omega \Vert_{L^2} + \Vert \dot u \Vert_{L^2} \|\rho\dot u\|_{L^2}), 
\]
where the second inequality holds 
%$\Vert \nabla a \Vert_{L^2} \leq \Vert \rho \dot u \Vert_{L^2}$
due to $\nabla^\perp a = \mathbb{P}(\rho \dot u)$ with the Helmholtz projection $\mathbb{P}$ by \eqref{inNS:a}.
The formula $\nabla\tilde\pi = -\nabla \Delta^{-1}\div (\rho \dot u)$,
%and the duality between the $BMO$ space and Hardy space $\mathcal{H}^1$ yield 
%{\color{blue}{We don't have $\div\dot u=0$. We use $\int\tilde\pi \div(\dot u\cdot\nabla u)=-\int\nabla\tilde\pi\cdot(\dot u\cdot\nabla u)$?}}
the fact that $\nabla u:(\nabla \dot u)^T = \div(\dot u \cdot \nabla u)$ and integration by parts yield
\[
\vert \int_{\R^2} \tilde\pi \nabla u :(\nabla \dot u)^T dx \vert 
=\vert -\int\nabla\tilde\pi\cdot(\dot u\cdot\nabla u) dx \vert 
%\lesssim  \Vert \tilde\pi \Vert_{BMO} \Vert \nabla u :(\nabla \dot u)^T \Vert_{\mathcal{H}^1}
%\lesssim \Vert \rho \dot u \Vert_{L^2} \Vert \nabla u \Vert_{L^2} \Vert \nabla \dot u \Vert_{L^2}.
\lesssim \Vert \rho \dot u \Vert_{L^2} \Vert \dot u \Vert_{L^2} \Vert \nabla u \Vert_{L^\infty} .
\]
%Here we made use of the continuous embedding $\dot H^1(\R^2) \subset BMO(\R^2)$ and \cite[Theorem II.1]{coifman1993compensated} to estimate the Hardy norm.
%Since $\Vert \nabla a \Vert_{L^2} \leq \Vert \rho \dot u \Vert_{L^2}$ due to $\nabla^\perp a = \mathbb{P}(\rho \dot u)$ with the Helmholtz projection $\mathbb{P}$ by \eqref{inNS:a}, we get
%\begin{align*}
%    &\frac{d}{dt} \Bigl(\frac12 \int_{\R^2} \rho \vert \dot u \vert^2 dx - \int_{\R^2}\tilde\pi\nabla u: (\nabla u)^T dx\Bigr) + \int_{\R^2} \mu \Bigl(((R_2R_2-R_1R_1)\dot \omega)^2+(2R_1R_1\dot \omega)^2\Bigr) dx \\
%    &\lesssim \Vert \nabla u \Vert_{L^\infty} (\Vert \omega \Vert_{L^2} \Vert \dot \omega \Vert_{L^2} + \Vert \dot u \Vert_{L^2}^2)
%    + \Vert \rho \dot u \Vert_{L^2} \Vert \dot u \Vert_{L^2} \Vert \nabla u \Vert_{L^\infty} .%\Vert \nabla u \Vert_{L^2} \Vert \nabla \dot u \Vert_{L^2}.
%\end{align*} 
We multiply the above equality by $t$, integrate in time to derive 
\begin{align*}
    &\Vert t^\frac12 \sqrt{\rho}\dot u \Vert_{L^2}^2 + \Vert t'^{\frac12}\dot \omega \Vert_{L^2_tL^2}^2  \\
    &\lesssim \int_0^t \|\sqrt{\rho}\dot u \Vert_{L^2}^2 dt'
    + \int_0^t \Bigl\vert \int_{\R^2}\tilde\pi\nabla u : (\nabla u)^T dx \Bigr\vert dt' 
     + t \Bigl\vert \int_{\R^2} \tilde\pi\nabla u : (\nabla u)^T dx \Bigr\vert \\
   &\qquad 
%   +\int_0^t \Vert \rho \dot u \Vert_{L^2} \Vert \nabla u \Vert_{L^2}^2 dt' + t \Vert \rho \dot u \Vert_{L^2} \Vert \nabla u \Vert_{L^2}^2 \\
    + \int_0^t \Vert t'^{\frac12}\nabla u \Vert_{L^\infty} 
    \bigl( \Vert \omega \Vert_{L^2} \Vert t'^{\frac12} \dot \omega \Vert_{L^2} 
    %+ \Vert \nabla u \Vert_{L^\infty} \Vert t'^{\frac12} \dot u \Vert_{L^2}^2 
    + \Vert t'^{\frac12}\rho \dot u \Vert_{L^2} \Vert \dot u \Vert_{L^2} \bigr)
    %\Vert t'^{\frac12}\nabla  u \Vert_{L^\infty} 
    dt' \\
    &\lesssim_{\rho_\ast,\rho^\ast} \Vert \sqrt{\rho}\dot u \Vert_{L^2_tL^2}^2 + \Vert \rho \dot u \Vert_{L^2_tL^2} \Vert \nabla u \Vert_{L^\infty_tL^2}\Vert \nabla u \Vert_{L^2_tL^2} + \Vert t^\frac12 \rho \dot u \Vert_{L^2} \Vert t'^{\frac12} \nabla u \Vert_{L^\infty_tL^2} \Vert \nabla u \Vert_{L^\infty_tL^2} \\
    &\quad + \Vert t'^{\frac12}\nabla u \Vert_{L^2_tL^\infty} \Vert \omega \Vert_{L^\infty_tL^2} \Vert t'^{\frac12} \dot \omega \Vert_{L^2_tL^2} + \Vert \dot u \Vert_{L^2_tL^2}^2 + \int_0^t  \Vert t'^{\frac12}\nabla u \Vert_{L^\infty}^2  \Vert t'^{\frac12} \sqrt{\rho} \dot u \Vert_{L^2}^2 dt', %+ \Vert t'^{\frac12} \nabla \dot u \Vert_{L^2_tL^2}^2 + \int_0^t (\Vert \nabla u \Vert_{L^\infty} + \Vert \nabla u \Vert_{L^2}^2) \Vert t'^{\frac12} \sqrt{\rho} \dot u \Vert_{L^2}^2 dt' .
\end{align*}
where for the second inequality we used $\Bigl\vert \int_{\R^2} \tilde\pi \nabla u : (\nabla u)^T dx \Bigr\vert \lesssim \Vert \rho \dot u \Vert_{L^2} \Vert \nabla u \Vert_{L^2}^2 $.
We find by Young's and Gronwall's inequality
\begin{align*}
    \Vert t'^{\frac12} \sqrt{\rho}\dot u \Vert_{L^\infty_t L^2}^2 + \Vert t'^{\frac12}\dot \omega \Vert_{L^2}^2 dt' 
    %&\lesssim \exp\bigl(C\int_0^t \Vert \nabla u \Vert_{L^\infty} + \Vert \nabla u \Vert_{L^2}^2 dt'\bigr) \Bigl(\Vert \sqrt{\rho}\dot u \Vert_{L^2_tL^2}^2 + \int_0^t \Vert \rho \dot u \Vert_{L^2} \Vert \nabla u \Vert_{L^2}^2 dt' + \Vert t'^{\frac14}\nabla u \Vert_{L^\infty_tL^2}^4 \\
    %&\quad + \int_0^t \Vert t'^{\frac12}\nabla u \Vert_{L^\infty}^2 \Vert \omega \Vert_{L^2}^2  dt' \Bigr) \\
    &\lesssim_{\rho_\ast,\rho^\ast} \tilde V(t) \Bigl(
    \Vert \sqrt{\rho}\dot u \Vert_{L^2_tL^2}^2 + \Vert \nabla u \Vert_{L^\infty_tL^2}^2 \Vert \nabla u \Vert_{L^2_tL^2}^2 \\
    &\quad + \Vert t'^{\frac12}\nabla u \Vert_{L^\infty_tL^2}^2 \Vert \nabla u \Vert_{L^\infty_tL^2}^2 + \Vert t'^{\frac12}\nabla u \Vert_{L^2_tL^\infty}^2 \Vert \omega \Vert_{L^\infty_tL^2}^2 + \Vert \dot u \Vert_{L^2_tL^2}^2
    \Bigr).
\end{align*}
Inserting the estimates \eqref{u:energy2}, \eqref{a:en-low2} and \eqref{a:ten-low2} results in \eqref{a:ten-highNS}.
 
\smallbreak 

\noindent\textbf{$W^{1,2+\epsilon}(\R^2)$-estimate for $a$.}
First, notice that it follows from the Helmholtz-decomposition $\nabla \dot u = RR^\perp \dot \omega + RR(\nabla u : (\nabla u)^T)$ with the Riesz-transform $R=\frac{\frac1i \nabla}{\sqrt{-\Delta}}$, that
\begin{align}\label{gradDtu:L2L2}
    \Vert t'^{\frac12}\nabla \dot u \Vert_{L^2_tL^2} \lesssim \Vert t'^{\frac12}\dot \omega \Vert_{L^2_tL^2} + \Vert t'^{\frac12} \nabla u \Vert_{L^2_t L^\infty} \Vert \nabla u \Vert_{L^\infty_tL^2}
    \lesssim \sigma_1(1+\sigma_0)e^{C\sigma_0^2}\tilde V(t)
    \lesssim \sigma_1 e^{C\sigma_0^2}\tilde V(t).
\end{align}
For the last inequality above we estimated the polynomial growth in $\sigma_0$ by the exponential function.

We derive from \eqref{a:en-low2}, \eqref{a:ten-low2}, \eqref{a:ten-highNS}, \eqref{gradDtu:L2L2} and \eqref{a:ten-high+} the following estimates for $a$:
\begin{align*}
    \|(t'^\delta a, t'^{\frac12+\delta}\nabla a)\|_{L^2_t L^2}
    \leq C(\sigma_0+\sigma_{-1})e^{C\sigma_0^2}V(t),
    \quad \|a\|_{L^2_t L^2}\leq C\sigma_0,
    \quad \|(\dot u, t'^{\frac12}\nabla \dot u)\|_{L^2_t L^2}\leq C\sigma_1 e^{C\sigma_0^2}\tilde V(t).
\end{align*}
where $a$ and $\dot u$ is related by $\nabla^\perp a = \mathbb{P}(\rho \dot u)$.
These estimates are very similar as \eqref{summary:a} in Subsection \ref{sect:L1Lip}, up to an extra factor $e^{C\sigma_0^2}$ and  the replacement of $\nabla a$-estimate by  $\dot u$-estimate.
Thus we can proceed exactly as in Subsection \ref{sect:L1Lip}.
Scaling with $\lambda=\frac{\sigma_0}{\sigma_{-1}}$ yields the following for $a_\lambda$:
\begin{align*}
    \Vert a_\lambda \Vert_{L^1_{\lambda^2t}L^{2+\epsilon}} + \Vert t'^{\frac12} a_\lambda \Vert_{L^2_{\lambda^2t}L^{2+\epsilon}} &\lesssim \sigma_0 \tilde V(t) e^{C\sigma_0^2}, \\
    \Vert \nabla a_\lambda \Vert_{L^1_{\lambda^2t}L^{2+\epsilon}} + \Vert t'^{\frac12} \nabla a_\lambda \Vert_{L^2_{\lambda^2t}L^{2+\epsilon}} &\lesssim (1+\sigma_0)^\frac{\epsilon}{2+\epsilon} \sigma_0^{\theta_1} (\sigma_{-1}\sigma_1)^{\theta_2} \tilde V(t) e^{C\sigma_0^2} \lesssim \sigma_0^{\theta_1} (\sigma_{-1}\sigma_1)^{\theta_2} \tilde V(t) e^{C\sigma_0^2}, \\
    \Vert a_\lambda \Vert_{L^1_{\lambda^2t} L^\infty} + \Vert t'^{\frac12} a_\lambda \Vert_{L^2_{\lambda^2t} L^\infty} &\lesssim (1+\sigma_0)^\epsilon \sigma_0^{\theta_3} (\sigma_{-1}\sigma_1)^{\theta_4} \tilde V(t) e^{C\sigma_0^2} \lesssim \sigma_0^{\theta_3} (\sigma_{-1}\sigma_1)^{\theta_4} \tilde V(t) e^{C\sigma_0^2},
\end{align*}
with the same exponents $\theta_1,\theta_2,\theta_3,\theta_4$ as in Subsection \ref{sect:L1Lip}.

\smallbreak 

\noindent\textbf{Conclusion.} 
%\begin{align*}
%    \Vert \nabla u \Vert_{L^1_tL^\infty} + \Vert t'^{\frac12} \nabla u \Vert_{L^2_tL^\infty} \lesssim \sigma_0^{\frac{\epsilon^2}{(2+\epsilon)^2}} (\sigma_{-1} \sigma_1)^\frac{2\epsilon}{(2+\epsilon)^2} e^{C\sigma_0^2} \tilde V(t) \exp\bigl(C\sigma_0^{\theta_3}(\sigma_{-1}\sigma_1)^{\theta_4} e^{C\sigma_0^2} \tilde V(t)\bigr).
%\end{align*}
%Let
With $
A(t)= \Vert \nabla u \Vert_{L^1_tL^\infty} + \Vert t'^{\frac12} \nabla u \Vert_{L^2_tL^\infty},$  we have derived
\[
A(t)\leq C \sigma_0^{\frac{\epsilon^2}{(2+\epsilon)^2}} (\sigma_{-1} \sigma_1)^\frac{2\epsilon}{(2+\epsilon)^2} e^{C\sigma_0^2} \exp\bigl(CA(t)+ C \sigma_0^{\theta_3}(\sigma_{-1}\sigma_1)^{\theta_4} e^{C\sigma_0^2} e^{CA(t)}\bigr).
\]
If the initial data satisfies
\begin{align}\label{NS:smallness}
    2C^2e^{2C\sigma_0^2} \bigl(\sigma_0^\frac{\epsilon}{2}\sigma_{-1}\sigma_1\bigr)^\frac{2\epsilon}{(2+\epsilon)^2} + Ce^{C\sigma_0^2} \sqrt{e} \bigl( \sigma_0^\frac{\theta_3}{\theta_4}\sigma_{-1}\sigma_1\bigr)^{\theta_4} \leq \frac12 ,
\end{align}
then with a bootstrap argument we arrive at the uniform bound
\[
A(t) \leq 2Ce^{C\sigma_0^2}\bigl(\sigma_0^\frac{\epsilon}{2}\sigma_{-1}\sigma_1\bigr)^\frac{2\epsilon}{(2+\epsilon)^2}.
\]
Notice that as before, the smallness condition \eqref{NSu0:cond} implies the condition \eqref{NS:smallness} above.
Following the proof of Theorem \ref{exthm} in Subsection \ref{sect:proofs} completes the first part of the proof of Corollary \ref{propthm} - \ref{d-propthm}. The statement about the density-patch is proved similarly as Corollary \ref{propthm} - \ref{Coro1}. We omit the details here.

\end{proof}

%%%%%%%%%%%%%%%%%%%%%%%%%%%%%%%%%%%%%%%%%%%%%%%%%%%%%%%%%%
%%%% APPENDIX

\appendix

\section{Proof of Lemma  \ref{Rmu-inv:prop}: The $L^{2+\epsilon}(\R^2)$-estimate}\label{sect:intro-proofs}

We sketch the proof of  the invertibility in $L^{2+\epsilon}(\R^2)$ of the operator $$R_\mu=(R_2R_2-R_1R_1)\mu(R_2R_2-R_1R_1) + (2R_1R_2)\mu (2R_1R_2),$$
given the positive lower and upper bounds of the coefficient: $\mu\in [\mu_\ast, \mu^\ast]$.
The ideas can be generalized to   a wider class of elliptic operators.  

\begin{proof}[Proof of Lemma \ref{Rmu-inv:prop}]
%Firstly, we define $\hilb$ to be $H^2_0(\Omega)$ if $\Omega$ is a bounded Lipschitz domain and as $\dot H^2(\R^2)$ modulo polynomials of order $1$ if $\Omega=\R^2$. Then $\hilb$ is a Hilbert space, on which we define the bilinear, symmetric form 
%We start with the whole plane case $\Omega=\R^2$. 

\noindent\textbf{Step 1: $L^2$-invertibility.} 
This is another proof of \eqref{L2:omega,a}, by use of the ellipticity of the operator $L_\mu$.

Firstly, we define the homogeneous space $\dot H^2(\R^2)$ in such a way that it is complete, for example by factoring out polynomials of order $1$. Then $\dot H^2(\R^2)$ is a Hilbert space, on which we define the bilinear, symmetric form 
\begin{align*}
    &\mathfrak{a} : \dot H^2(\R^2)\times \dot H^2(\R^2) \to \R, \\
    %&\mathfrak{a} : \hilb\times \hilb \to \R, \\
    &(v,w)\mapsto \int_{\R^2} \mu \Bigl((\d_{22}-\d_{11})v(\d_{22}-\d_{11})w + 4\d_{12}v\d_{12}w\Bigr)dx.
    %&(v,w)\mapsto \int_{\Omega} \mu \Bigl((\d_{22}-\d_{11})v(\d_{22}-\d_{11})w + 4\d_{12}v\d_{12}w\Bigr)dx.
\end{align*}
The bilinear form $\mathfrak{a}$ is bounded and coercive with lower and upper bounds as follows
\begin{align*}
    \mathfrak{a}(v,v)\geq \frac{\mu_\ast}{2} \Vert \nabla^2 v \Vert_{L^2}^2, \quad \vert \mathfrak{a}(v,w) \vert \leq 2\mu^\ast \Vert \nabla^2 v \Vert_{L^2} \Vert \nabla^2 w \Vert_{L^2},\quad \forall v,w\in \dot H^2(\R^2).
\end{align*}
%for $v,w\in \hilb$. By the Lax-Milgram lemma there exists for all $g$ in the dual space $\hilb'$ a unique element $v \in \hilb$ such that
%for $v,w\in \dot H^2(\R^2)$. 
By the Lax-Milgram lemma there exists for all $g\in \dot H^{-2}(\R^2)$, the dual space of $\dot H^2(\R^2)$, a unique element $v\in \dot H^2(\R^2)$ such that
\begin{align}\label{lax-m}
    \mathfrak{a}(v,w)= %\langle w,g \rangle_{\hilb\times \hilb'} \quad \forall w\in \hilb 
    \langle w,g \rangle_{\dot H^2\times \dot H^{-2}}, \quad \forall w\in \dot H^2(\R^2).
\end{align}
That is, for any $g\in \dot H^{-2}(\R^2)$, there exists a unique weak solution $v\in \dot H^2(\R^2)$ of the elliptic equation
\begin{equation*}
    L_\mu v=g,\quad \hbox{with }L_\mu= (\d_{22}-\d_{11})\mu (\d_{22}-\d_{11})+(2\d_{12})\mu(2\d_{12}).
\end{equation*}

Now we define the bounded operator $\ddiv:L^2(\R^2;\R^3) \to \dot H^{-2}(\R^2; \R)$ as follows. 
For $G=(G_1,G_2,G_3)^T\in L^2(\R^2;\R^3)$, we define $\ddiv G\in \dot H^{-2}(\R^2)$   by 
%$\ddiv:L^2(\Omega;\R^3) \to \hilb'$ via
\begin{align*}
    \langle w,\ddiv G \rangle_{\dot H^2\times \dot H^{-2}} = \int_{\R^2} \Bigl( G_1\d_{11}w+G_2\d_{22}w+G_3 \d_{12}w\Bigr) dx, \quad \forall w\in \dot H^2(\R^2).
    %\langle w,\ddiv G \rangle_{\hilb\times \hilb'} = \int_{\Omega} G_1\d_{11}w+G_2\d_{22}w+G_3 \d_{12}w dx \quad \forall w\in \hilb
\end{align*}
%for $G\in L^2(\R^2;\R^3)$. 
%for $G\in L^2(\Omega;\R^3)$. 
Then the operator
\begin{align*}
    \mathfrak{A}:L^2(\R^2;\R^3) \to L^2(\R^2;\R^3), 
    %T:L^2(\Omega;\R^3) \to L^2(\Omega;\R^3),
    \quad G\mapsto \nabla^2 L_\mu^{-1} \ddiv G ,
\end{align*}
is bounded on $L^2(\R^2;\R^3)$, where we identify $\nabla^2 \cong (\d_{11}, \d_{22}, \d_{12})^T$. 
%Observe that $\mathfrak{A}$ is bounded on $L^2(\R^2;\R^3)$.
Indeed, for $G\in L^2(\R^2;\R^3)$, let $v_G\in \dot H^2(\R^2)$ be the Lax-Milgram solution of $L_\mu v_G=\ddiv G$ in the sense of (\ref{lax-m}). 
%Observe that $\mathfrak{A}$ is bounded on $L^2(\Omega;\R^3)$. Indeed, for $G\in L^2(\Omega;\R^3)$ let $v_G\in \hilb$ be the Lax-Milgram solution of $L_\mu v_G=\ddiv G$ in the sense of (\ref{lax-m}). 
Choosing $w=v_G$ in (\ref{lax-m}) and using the coercivity of the sesquilinearform $\mathfrak{a}$ yields the boundedness of $\mathfrak{A}$ on 
%$L^2(\Omega;\R^3)$. 
$L^2(\R^2;\R^3)$ as follows
\begin{align*}
    \frac{\mu_\ast}{2} \Vert \nabla^2 v_G \Vert_{L^2}^2 \leq \mathrm{Re}\, \mathfrak{a} (v_G,v_G) 
    = \mathrm{Re} \langle v_G, \ddiv G \rangle_{\dot H^2\times \dot H^{-2}} \leq\Vert v_G\Vert_{\dot H^2} \Vert\ddiv G \Vert_{\dot H^{-2}} 
    %= \mathrm{Re} \langle v_G, \ddiv G \rangle_{\hilb\times \hilb'} \leq\Vert v_G\Vert_{\hilb} \Vert\ddiv G \Vert_{\hilb'} 
    \lesssim \Vert \nabla^2 v_G \Vert_{L^2} \Vert G \Vert_{L^2}.
\end{align*}
%This proves the boundedness of $\mathfrak{A}$ on 
%$L^2(\Omega;\R^3)$. 
%$L^2(\R^2;\R^3)$.

\noindent\textbf{Step 2: $L^{2+\epsilon}$-invertibility.} 
In order to prove that the operator $\mathfrak{A}$ is bounded on $L^{2+\epsilon}(\R^2;\R^3)$ for some $\epsilon>0$ we are going to make use of Z. Shen's theorem \cite[Theorem 3.1]{shen2005bounds}, which is a version of the Calder\'on-Zygmund Lemma.
More precisely, if   there exists some constant $C>0$ such that %for some constants $C>0$, $\beta>\alpha>1$ and $q\in (2,\infty)$, 
the  following holds   for all $x_0\in\R^2$, $r>0$ and   $G\in L^\infty(\R^2;\R^3)$ with compact support outside    $B_{3 r}(x_0)$ 
\begin{align}\label{shen:cond}
    \Bigl(\frac{1}{r^2} \int_{B_r(x_0)} \vert \mathfrak{A}G \vert^q dx\Bigr)^\frac1q \leq C \Bigl(\frac{1}{4r^2} \int_{B_{3 r}(x_0)} \vert \mathfrak{A}G\vert^2 dx \Bigr)^\frac12 ,
\end{align}  
then $\mathfrak{A}$ is bounded on $L^p(\R^2;\R^3)$ for any $p\in (2,q)$. 

We sketch the proof of (\ref{shen:cond}). For this let $x_0\in\R^2$, $r>0$ and $G\in L^\infty(\R^2;\R^3)$ have compact support with $G\equiv 0$ in $B_{3r}(x_0)$. Then $v_G=L_\mu^{-1}\ddiv G$ is the solution to
\begin{align*}
    \mathfrak{a}(v_G,w)=\langle w,\ddiv G\rangle_{\dot H^2\times \dot H^{-2}} =0 \quad \forall w\in C_c^\infty(B_{2r}(x_0)),
\end{align*}
and hence, $L_\mu v_G=0$ in $B_{2r}(x_0)$ in the sense of distributions. Thus A. Barton's higher order version of Meyer's reverse H\"older estimate \cite[Theorem 24]{barton2016gradient} yields the existence of some $q\in (2,\infty)$ such that (\ref{shen:cond}) holds.

Consequently, $\mathfrak{A}=\nabla^2 L_\mu^{-1}\ddiv$ is bounded on $L^{2+\epsilon}(\R^2;\R^3)$ for some $\epsilon >0$. In particular, $R_\mu^{-1}=\Delta L_\mu^{-1}\Delta$ is bounded on $L^{2+\epsilon}(\R^2)$, which concludes the proof.

%If $\Omega$ is a bounded Lipschitz domain, we follow the same steps as above replacing $\dot H^2(\R^2)$ by $H_0^2(\Omega)$ and $\dot H^{-2}(\R^2)$ by $H^{-2}(\Omega)$. Moreover, we use Shen's theorem \cite[Theorem 3.3]{shen2005bounds}, which is the version of \cite[Theorem 3.1]{shen2005bounds} for bounded Lipschitz domains. 
\end{proof}

\section{Proof of Lemma \ref{commutator:lem}: Commutator estimates}\label{sect:proof,comm}

\begin{proof}[Proof of Lemma \ref{commutator:lem}]
The proof of the first estimate (\ref{comm}) can be found in A. P. Calder\'on's article \cite[Theorem 1]{calderon1965commutators}. 
We sketch the proof of the second statement in Lemma \ref{commutator:lem}.

Recall Bony's decomposition for any product into low-high frequency, high-low frequency and remainder parts below:
\begin{align*}
    FG=\mathcal{T}_F G+\mathcal{T}_G F+\mathcal{R}(F,G) ,
    %\text{with} \quad &T_fg=\sum_{j\in\mathbb{Z}} S_{j-1} f \Delta_jg, \quad R(f,g)= \sum_{j\in\mathbb{Z}} \Delta_jf \tilde \Delta_j g, 
\end{align*}
and we refer to \cite{bahouri2011fourier} for the precise definitions of the paraproduct $\mathcal{T}_FG$ and the remainder term $\mathcal{R}(F,G)$.
%of the operators on the right hand side. 
We apply Bony's decomposition to the product $\d_X R^2g=X_k (R^2\d_k g)$ and $\div(Xg)=\d_k(X_k g)$,   for $X=(X_1, X_2)^T$, to achieve
\begin{align*}
    \d_XR^2g   &=  [\mathcal{T}_{X_k},R^2\d_k]g+  \mathcal{T}_{R^2\d_kg}X_k + \mathcal{R}(X_k,R^2\d_kg) \\
    &\quad +R^2 \div(Xg)  - R^2 \d_k\mathcal{R}(X_k,g) - R^2 \d_k \mathcal{T}_gX_k,
\end{align*}
where we used the Einstein's summation convention to omit $\sum_k$ above.
Observe that for $q>2$ (see for example \cite{bahouri2011fourier} or the proofs of \cite[Lemma 5.1]{paicu2020striated} and \cite[Lemma 2.10]{danchin2020well})
\begin{align}\label{para:g-X}
    \Vert \bigl(\mathcal{T}_{ \d_k h} X_k,\d_k \mathcal{T}_h X_k, \mathcal{R}(X_k, \d_k h),\d_k\mathcal{R}(X_k, h), [\mathcal{T}_{X_k},R^2\d_k]h \bigr) \Vert_{L^q} \lesssim \Vert \nabla X \Vert_{L^p} \Vert h \Vert_{L^\infty}.
\end{align}
This (with $h=R^2g$ or $g$),  together with 
%Following the calculations in the proof of \cite[Lemma 5.1]{paicu2020striated} we derive
\begin{align*}
    \Vert R^2 \div(Xg) \Vert_{L^p} &\lesssim \Vert \d_Xg \Vert_{L^p} + \Vert \nabla X \Vert_{L^p} \Vert g \Vert_{L^\infty}, %\\
%    \Vert [T_{X_k},R^2\d_k]f \Vert_{L^q(\R^n)} + \Vert R^2 \d_k T_fX_k \Vert_{L^q(\R^n)} + \Vert R^2 \d_kR(X_k,f)\Vert_{L^q(\R^n)} &\lesssim \Vert \nabla X \Vert_{L^q(\R^n)} \Vert f \Vert_{L^\infty(\R^n)}, \\
%    \Vert T_{R^2\d_kf}X_k\Vert_{L^q(\R^n)} + \Vert R(X_k,R^2\d_kf) \Vert_{L^q(\R^n)} &\lesssim \Vert \nabla X \Vert_{L^q(\R^n)} \Vert R^2f \Vert_{L^\infty(\R^n)} .
\end{align*}
and the fact that 
%Observing that 
$\Vert g \Vert_{L^\infty} =\|(R_1R_1+R_2R_2)g\|_{L^\infty} \leq 2\Vert R^2 g \Vert_{L^\infty}$ 
%and summing up the estimates we arrive at 
yields \eqref{comm:infty-0}, \eqref{comm:infty-1}.

Next, we show \eqref{Rmu:comm}. Denoting $\rttoo = R_2R_2-R_1R_1$, $\rto=2R_1R_2$, such that $R_\mu=P_1\mu P_1+P_2\mu P_2$ and the commutator reads (noticing $\d_X h=\div(X h)-h\div X$)
\begin{align*}
     [R_\mu, \d_X]g & = \rttoo\mu [\rttoo, \d_X] g + \rttoo[\mu,\d_X]\rttoo g + [\rttoo,\d_X]\mu \rttoo g + \rto\mu [\rto,\d_X]g + \rto [\mu,\d_X]\rto g + [\rto, \d_X]\mu\rto g \\
    &= -\rttoo\mu \bigl(\d_X \rttoo g - \rttoo \div(Xg) + \rttoo (g \div X)\bigr) 
    - \bigl(\d_X \rttoo\mu \rttoo g - \rttoo \div(X\mu \rttoo g) + \rttoo(\mu \rttoo  g\, \div X)\bigr) \\
    &\quad -\rto\mu \bigl(\d_X \rto g - \rto \div(Xg) + \rto (g \div X)\bigr) 
    - \bigl(\d_X \rto\mu \rto g - \rto \div(X\mu \rto g) + \rto(\mu \rto  g \,\div X)\bigr)
    \\
    &\quad - \rttoo (\d_X\mu \rttoo g) - \rto (\d_X\mu\rto g)
    \\
    &=-\Bigl(  P_1\mu(\d_X \rttoo g - \rttoo \div(Xg))
    +P_2\mu (\d_X \rto g - \rto \div(Xg))\Bigr)
    \\
    &\quad -\Bigl(R_\mu(g\div X)
    +\rttoo(\mu \rttoo  g\, \div X) + \rto(\mu \rto  g \,\div X) \Big)
    \\
    &\quad -\Bigl( \bigl( \d_X\rttoo \mu \rttoo g - \rttoo \div(X\mu \rttoo g) \bigr)
    +\bigl( \d_X \rto \mu \rto g - \rto \div(X\mu \rto g)\bigr)\Bigr) \\
    &\quad - \Bigl(\rttoo (\d_X\mu \rttoo g) + \rto (\d_X\mu\rto g) \Bigr) .
\end{align*}
We apply  \eqref{comm:infty-1}  and the $L^p$-boundedness of Riesz operators to bound the first  and second brackets on the right hand side in $L^p(\R^2)$ by $\Vert \nabla X \Vert_{L^p} \Vert R^2g \Vert_{L^\infty}$, respectively. The fourth bracket is bounded in $L^p(\R^2)$ by $\Vert \d_X\mu \Vert_{L^q} \Vert R^2 g \Vert_{L^\frac{qp}{q-p}}$. 
%all terms on the right hand side are bounded in $L^q(\R^2)$ by $\Vert \nabla X \Vert_{L^q} \Vert R^2g \Vert_{L^\infty}$, except for
%\begin{align}\label{Rmu:comm-rem}
 %   \d_X\rttoo \mu \rttoo g - \rttoo \div(X\mu \rttoo g) + \d_X \rto \mu \rto g - \rto \div(X\mu \rto g).
%\end{align}
Similarly as above, we use Bony's decomposition to rewrite the third bracket on the right hand side above  as
\begin{align*}
    &[\mathcal{T}_{X_k}, \d_k \rttoo]\mu \rttoo g + \mathcal{T}_{\d_k \rttoo\mu \rttoo g} X_k + \mathcal{R}(X_k,\d_k \rttoo\mu \rttoo g) \\
    &- \rttoo \d_k\bigl(  \mathcal{T}_{\mu \rttoo g} X_k + \mathcal{R}(X_k,\mu \rttoo g)\bigr) \\
    &+[\mathcal{T}_{X_k},\d_k \rto]\mu \rto g + \mathcal{T}_{\d_k \rto\mu \rto g} X_k + \mathcal{R}(X_k,\d_k \rto\mu \rto g) \\
    &- \rto \d_k\bigl(  \mathcal{T}_{\mu \rto g} X_k + \mathcal{R}(X_k,\mu \rto g)\bigr),
\end{align*}
where by \eqref{para:g-X} all terms can be bounded in $L^p(\R^2)$ by $\Vert \nabla X \Vert_{L^p} \Vert R^2g \Vert_{L^\infty}$, except for
\begin{align*}
    &\mathcal{T}_{\d_k \rttoo \mu \rttoo g}X_k + \mathcal{R}(X_k,\d_k\rttoo \mu \rttoo g)
    + \mathcal{T}_{\d_k \rto \mu \rto g} X_k + \mathcal{R}(X_k, \d_k \rto \mu \rto g) \\
    &= \mathcal{T}_{\d_k R_\mu g} X_k + \mathcal{R}(X_k,\d_k R_\mu g).
\end{align*}
Again by \eqref{para:g-X}, these last terms satisfy
\begin{align*}
    \Vert \mathcal{T}_{\d_k R_\mu g} X_k \Vert_{L^q} + \Vert \mathcal{R}(X_k,\d_k R_\mu g) \Vert_{L^q} \lesssim \Vert \nabla X \Vert_{L^q} \Vert R_\mu g \Vert_{L^\infty}.
\end{align*}
This finishes the proof of \eqref{Rmu:comm}.

\end{proof}

\section{Proof of Proposition \ref{u-decay:prop}: Energy estimates for the velocity}\label{sect:u-en}

In this section we prove Proposition \ref{u-decay:prop}, and at the end we mention the minor changes in the proof of \eqref{u:decay+2} for the density-dependent Navier-Stokes equations \eqref{NS}. 
We recall the definition of the Fourier transform of a Schwartz function $f(x)\in \mathcal{S}(\R^2)$ as
\begin{align*}
    \hat{f}(\xi)=\mathcal{F}(f)(\xi)=\frac{1}{2\pi}\int_{\R^2} e^{-  ix\cdot\xi} f(x) \,dx,\quad \xi\in \R^2,
\end{align*}
and   we define the Fourier transform of a tempered distribution $g\in \mathcal{S}'(\R^2)$ by duality: $\langle \hat{g}, f\rangle_{\mathcal{S}', \mathcal{S}}=\langle g, \hat{f}\rangle_{\mathcal{S}', \mathcal{S}}$.

\begin{proof}[Proof of Proposition \ref{u-decay:prop}]

\begin{itemize}
    \item \textbf{Proof of (\ref{u:energy}):} 
    Multiplying the momentum equation (\ref{muNS})$_2$ by $u$, integrating over $\R^2$ and using integration by parts results in
\begin{align}\label{meq-u:energy}
    \frac12 \frac{d}{dt} \Vert u(t)\Vert_{L^2}^2 + 2\mu_\ast  \Vert \nabla u(t)\Vert_{L^2}^2 \leq 0.
\end{align}
The estimate (\ref{u:energy}) then follows from integrating in time over $[0,t]$.

\item \textbf{Proof of (\ref{u:decay+}):} 
We claim the following decay estimate
\begin{equation}\label{u:decay}
\Vert u(t)\Vert_{L^2}  \leq C_\delta C_0  \langle t \rangle^{-\delta_-},
\end{equation}
where $\delta_-\in (0,\delta)$, $C_0= \Vert u_0 \Vert_{L^2\cap \dot H^{-2\delta}} + \Vert \mu_0-1 \Vert_{L^2} \Vert u_0 \Vert_{L^2}$, and $C_\delta$ is a constant depending only on $\delta_-,\delta,\mu_\ast$.

Now multiplying both sides of (\ref{meq-u:energy}) by $\langle t \rangle^{2\delta'}=(e+t)^{2\delta'}$, $\delta'>0$ and integrating in time we obtain
\begin{align*}
    \Vert \langle t \rangle^{\delta'} u \Vert_{L^2}^2 + 2\mu_\ast\Vert \langle t' \rangle^{\delta'} \nabla u \Vert_{L^2_tL^2}^2 \lesssim \Vert u_0 \Vert_{L^2}^2 + \int_0^t \langle t' \rangle^{2\delta'-1} \Vert u(t')\Vert_{L^2}^2 dt'.
\end{align*}
Thus \eqref{u:decay+} follows from the claim \eqref{u:decay} by choosing $\delta'\in (0,\delta_-)$.

\textbf{Proof of the claim  (\ref{u:decay}):}
We now turn to showing (\ref{u:decay}). The idea is to use a time-dependent cut-off in frequency space. 
Let $g(t)$ be a positive function to be determined later, and let $S(t)$ denote a low-frequency set with respect to $g(t)$ as
\[S(t)=\Bigl\{\xi\in\R^2 : \vert \xi \vert \leq \sqrt{\frac{1}{2\mu_\ast}}g(t)\Bigr\}.\]
Then we deduce from (\ref{meq-u:energy}) that (noticing $\widehat{\d_{x_j}f}(\xi)=i\xi_j\hat{f}(\xi)$)
\begin{equation}\label{u:appendix,L2}\frac{d}{dt}\Vert u(t)\Vert_{L^2}^2 + g^2(t) \Vert u(t) \Vert_{L^2}^2 \leq g^2(t) \int_{S(t)} \vert \hat u (t,\xi) \vert^2 d\xi .\end{equation}

Now we rewrite  the velocity equation $\eqref{muNS}_2$: $(\d_t-\Delta) u=-u\cdot\nabla u+\div((\mu-1)Su)-\nabla\pi$ in the form of Duhamel's formula as follows
\begin{align}\label{u:formula}
    u(t)=e^{t\Delta}u_0 + \int_0^t e^{(t-t')\Delta} \mathbb{P}\Bigl(\div((\mu-1)Su)- u\cdot\nabla u\Bigr)(t')dt',
\end{align}
where $\mathbb{P}=\hbox{Id}+\nabla(-\Delta)^\perp\div$ denotes the Leray-Helmholtz projector.
Then \eqref{u:formula} implies for any fixed time $t>0$,
\begin{align*}
    \vert \hat u (t,\xi)\vert \lesssim e^{-t\vert \xi \vert^2} \vert \hat u_0 (\xi)\vert + \int_0^t e^{-(t-t')\vert \xi \vert^2} \vert \xi \vert \vert \mathcal{F}((\mu-1)Su)-\mathcal{F}(u\otimes u) \vert(t')dt',
\end{align*}
and thus (noticing $\int_{S(t)}|\xi|^2 \,d\xi\lesssim \frac{1}{(\mu_\ast)^2}g^4(t)$)
\begin{align*}
    g^2(t)\int_{S(t)} \vert \hat u(t,\xi)\vert^2 d\xi \lesssim_{\mu_\ast} g^2(t)\int_{S(t)}e^{-2t\vert \xi\vert^2} \vert \hat u_0 (\xi) \vert^2 d\xi + g^6(t)\Bigl(\int_0^t \Vert \mathcal{F}((\mu-1)Su- u\otimes u)(t')\Vert_{L^\infty_\xi} dt'\Bigr)^2.
\end{align*} 
The first integral on the right hand side satisfies
\begin{align*}
g^2(t)\int_{S(t)}e^{-2t\vert \xi\vert^2} \vert \hat u_0 (\xi) \vert^2 d\xi 
&\leq g^2(t)\int_{\R^2} \langle t\rangle^{-2\delta}\Bigl( e^{-2t|\xi|^2} (\langle t\rangle |\xi|^2)^{2\delta}\Bigr) \bigl( |\xi|^{-4\delta}|\hat u_0(\xi)|^2 \bigr)\,d\xi 
\\
&\lesssim 1_{\{t\leq 1\}}g^2(t)\|u_0\|_{L^2}^2
+1_{\{t\geq 1\}}g^2(t)  t ^{-2\delta} \Vert u_0 \Vert_{\dot H^{-2\delta}}^2,
\end{align*}
and the second one can be bounded as
\begin{align*}
    g^6(t)\Bigl(\int_0^t \Vert \mathcal{F}((\mu-1)Su- u\otimes u)(t')\Vert_{L^\infty_\xi} dt'\Bigr)^2 
    &\lesssim g^6(t)\Bigl(\int_0^t \Vert ((\mu-1)Su- u\otimes u)(t')\Vert_{L^1_x} dt'\Bigr)^2 \\
    &\lesssim g^6(t) t\Vert \mu-1\Vert_{L^\infty_tL^2}^2 \Vert \nabla u \Vert_{L^2_tL^2}^2 + g^6(t)\Bigl(\int_0^t \Vert u(t')\Vert_{L^2}^2 dt'\Bigr)^2 \\
     &\lesssim g^6(t) t \Vert \mu_0-1\Vert_{L^2}^2 \Vert u_0 \Vert_{L^2}^2  +  g^6(t)\Vert u\Vert_{L^2_tL^2}^2 .
\end{align*}
Inserting these estimates into \eqref{u:appendix,L2} we obtain  
\begin{align*}
     \exp\Bigl(\int_0^t g^2(t')dt'\Bigr) \Vert u(t)\Vert_{L^2}^2 
     &\lesssim \|u_0\|_{L^2}^2 + 
    \|u_0\|_{L^2}^2 \int^{1}_0 \exp\Bigl(\int_0^{t'} g^2(t'')dt''\Bigr)g^2(t') dt'
    \\
&    +\|u_0\|_{\dot H^{-2\delta}}^2 \int_{1}^t \exp\Bigl(\int_0^{t'} g^2(t'')dt''\Bigr)  g^2(t')   {t'}  ^{-2\delta}  dt'
    \\
    &+  \|\mu_0-1\|_{L^2}^2\|u_0\|_{L^2} ^2  \int_0^t \exp\Bigl(\int_0^{t'} g^2(t'')dt''\Bigr)  g^6(t')t'   dt' 
    \\
    &+     \int_0^t \exp\Bigl(\int_0^{t'} g^2(t'')dt''\Bigr)    g^6(t')  \Vert u \Vert_{L^2_{t'}L^2}^2    dt'.
\end{align*}
Choosing $g^2(t)=2\delta_- \langle t \rangle^{-1}$ such that  $\int^t_0 g^2=2\delta_-(\log\langle t\rangle-1)$ and $e^{\int^t_0 g^2}=e^{-2\delta_-}\langle t\rangle ^{2\delta_-}$  yields
\begin{align}\label{u:L2:t2d}
    \langle t \rangle^{2\delta_-} \Vert u(t) \Vert_{L^2}^2 \lesssim  C_0^2+ \int_0^t \langle t' \rangle^{-3+2\delta_-}  \Vert u  \Vert_{L^2 _{t'}L^2}^2   dt' .
\end{align}

We now define
\begin{align*}
    y(t) =\int_{t-1}^t \Vert u(t')\Vert_{L^2}^2 \langle t' \rangle^{2\delta_-} dt', \quad t\geq 1, \quad \hbox{and}\quad 
    Y(t) = \max_{1\leq t'\leq t} y(t')  .
\end{align*}
Notice that by the above definition
 $\Vert u  \Vert_{L^2_{t}L^2}^2\leq CY(t)\int^t_0\langle t'\rangle^{-2\delta_-}dt'  =CY(t)\frac{\langle t \rangle^{1-2\delta_-}}{1-2\delta_-}$.
Using this inequality after integrating (\ref{u:L2:t2d}) over $[t-1,t]$, we obtain
\begin{align*}
    y(t)\lesssim C_0^2+\int_{t-1}^t \int_0^{t'} \langle t'' \rangle^{-3+2\delta_-}    \Vert u  \Vert_{L^2_{t''}L^2}^2  dt''dt' 
    \lesssim C_0+ \int_0^t \langle t' \rangle^{-2}   Y(t')dt',
\end{align*}
and therefore by Gronwall's inequality it follows that
$Y(t)\lesssim C_0^2$.
Finally
\[\Vert u  \Vert_{L^2_{t}L^2}^2\leq C Y(t)\frac{\langle t \rangle^{1-2\delta_-}}{1-2\delta_-} \lesssim  C_0\langle t \rangle^{1-2\delta_-}.\]
Applying this inequality to (\ref{u:L2:t2d}) we finally arrive at
\begin{align*}
    \langle t \rangle^{2\delta_-} \Vert u(t) \Vert_{L^2}^2 \lesssim  C_0^2+ C_0^2\int_0^t \langle t' \rangle^{-2}  dt' \lesssim  C_0^2.
\end{align*}
This completes the proof of (\ref{u:decay}).
\end{itemize}
\end{proof}

In order to show \eqref{u:decay+2} for the system \eqref{NS}, we replace  the formula (\ref{u:formula})   by
\begin{align*}
    u(t) = e^{t\Delta}u_0 + \int_0^t e^{(t-t')\Delta} \mathbb{P} \Bigl(\div((\mu-1)Su)+ (1-\rho)\dot u-u\cdot\nabla u\Bigr)(t')dt'.
\end{align*}
The additional term can be estimated as
\begin{align*}
    \Bigl(\int_0^t \Vert \mathcal{F}((1-\rho)\dot u) \Vert_{L^\infty} dt'\Bigr)^2 &\lesssim \Vert 1-\rho_0 \Vert_{L^2}^2 \log\langle t\rangle \Vert \langle t' \rangle^\frac12 \dot u \Vert_{L^2_tL^2}^2 \\
    %&\lesssim \Vert 1-\rho_0 \Vert_{L^2}^2 \Vert u_0 \Vert_{L^2} \Vert u_0\Vert_{H^1}^2 \log\langle t\rangle V,
    &\lesssim \Vert 1-\rho_0 \Vert_{L^2}^2 \Vert u_0\Vert_{H^1}^2 \log\langle t\rangle V(t) e^{C\Vert u_0 \Vert_{L^2}^2},
\end{align*}
where the second inequality follows from \eqref{a:en-low2}, \eqref{a:ten-low2}. We then proceed similarly as above (see also \cite[pp. 310-311]{wiegner1987decay} or \cite{abidi2015global2D}).

\subsection*{Acknowledgements}
X. Liao is partially supported by   the Deutsche Forschungsgemeinschaft (DFG, German Research Foundation)— Project-ID 258734477-SFB 1173.
The authors   thank sincerely their colleague Dr. Patrick Tolksdorf
for pointing out the references \cite{barton2016gradient, shen2005bounds} and his help in the proof of Lemma \ref{Rmu-inv:prop}.

%\bibliographystyle{acm}
%\bibliography{bibliography.bib}
\printbibliography

\bigbreak
\vspace{1cm}

\noindent Xian Liao \& Rebekka Zimmermann\\

\noindent
\textit{Institute for Analysis, Karlsruhe Institute of Technology\\
 Englerstraße 2, 76131 Karlsruhe, Germany\\}
 \vspace{.2cm}
\noindent  xian.liao@kit.edu, rebekka.zimmermann@kit.edu

\end{document}